\newcommand{\R}{\mathbb{R}}
\newcommand{\Z}{\mathbb{Z}}
\newcommand{\N}{\mathbb{N}}
\newcommand{\Q}{\mathbb{Q}}
\newcommand{\C}{\mathbb{C}}
\newcommand{\F}{\mathcal{F}}
\newcommand{\wt}{\widetilde}
\newcommand{\wh}{\widehat}
\newcommand{\Hom}{\operatorname{Hom}}
\newcommand{\RHom}{\operatorname{RHom}}
\newcommand{\End}{\operatorname{End}}
\newcommand{\digon}{di-gon}
\newcommand{\RS}{Y}
\newcommand{\lra}[1]{\stackrel{#1}{\longrightarrow}}
\newcommand{\lla}[1]{\stackrel{#1}{\longleftarrow}}
\newcommand{\del}{\partial}
\newcommand{\const}[1]{\underline{#1}}
\newcommand{\isom}{\cong}
\newcommand{\TT}{\mathbb{T}}
\newcommand{\Tg}{\TT}
\newcommand{\Tb}{\TT_{in}}
\newcommand{\Tm}{\TT_{o}}
\newcommand{\Cx}{\C^*}
\newcommand{\Ng}{N}
\newcommand{\Nb}{N_{in}}
\newcommand{\Nm}{N_{o}}
\newcommand{\Mg}{M}
\newcommand{\Mm}{M_{o}}
\newcommand{\Xg}{X}
\newcommand{\Xm}{X_{o}}
\newcommand{\subs}{\subset}
\newcommand{\subg}{\leq}
\newcommand{\Aut}{\operatorname{Aut}}
\newcommand{\pf}{\pi}
\newcommand{\Qcov}{\wt{Q}}
\newcommand{\zzf}{\wt{\eta}}
\newcommand{\zzff}{\wt{\beta}}
\newcommand{\face}{f}
\newcommand{\mof}[1]{\iota_{#1}}
\newcommand{\Meq}[1]{[#1]_\Mg}
\newcommand{\wind}{\operatorname{Wind}_v}
\newcommand{\windf}{\operatorname{Wind}_{\face}}
\newcommand{\moa}{u}
\newcommand{\mob}{y}
\newcommand{\bp}{p}
\newcommand{\bq}{q}
\newcommand{\Rsym}{R}
\newcommand{\sbf}{S}
\newcommand{\hp}{\mathcal{H}}
\newcommand{\mew}{\mu}
\newcommand{\nat}{\alpha}
\newcommand{\tg}{d}
\newcommand{\pth}{p}
\newcommand{\ZF}[1]{\mathcal{Z}(#1)}
\newcommand{\ZC}{\mathcal{X}}
\newcommand{\Moth}{M^{gen}}
\newcommand{\finitep}{p^{(\leq n)}}
\newcommand{\finiteq}{q^{(\leq k)}}
\newcommand{\finiteph}{\wh{p}^{(\leq n)}}
\newcommand{\finiteqh}{\wh{q}^{(\leq k)}}
\newcommand{\prii}{{\bf p}}
\DeclareMathAlphabet{\mathpzc}{OT1}{pzc}{mb}{it}
\newtheorem{theorem}{Theorem}[chapter]
\newtheorem{lemma}[theorem]{Lemma}
\newtheorem{proposition}[theorem]{Proposition}
\newtheorem{corollary}[theorem]{Corollary}
\theoremstyle{definition}
\newtheorem{definition}[theorem]{Definition}
\newtheorem{example}[theorem]{Example}
\theoremstyle{remark}
\newtheorem{remark}[theorem]{Remark}
\numberwithin{section}{chapter}
\numberwithin{equation}{chapter}
\begin{document}

\frontmatter

\title{Dimer models and Calabi-Yau algebras}


\author{Nathan Broomhead}
\address{Department of Mathematical Sciences, University of Bath, Bath, BA2 7AY, UK}
\curraddr{Institute of Algebraic Geometry, Leibniz Universität Hannover, Welfengarten 1, 30167 Hannover, Germany}
\email{broomhead@math.uni-hannover.de}
\thanks{}

\date{June 8, 2009}

\subjclass[2000]{Primary 14M25, 14A22;\\Secondary 82B20}

\keywords{Dimer models, Calabi-Yau algebras, Noncommutative crepant resolutions}

\begin{abstract}
In this article we use techniques from algebraic geometry and homological algebra, together with ideas from string theory to construct a class of 3-dimensional Calabi-Yau algebras. The Calabi-Yau property appears throughout geometry and string theory and is increasingly being studied in algebra. We further show that the algebras constructed are examples of non-commutative crepant resolutions (NCCRs), in the sense of Van den Bergh, of Gorenstein affine toric threefolds.

Dimer models, first studied in theoretical physics, give a way of writing down a class of non-commutative algebras, as the path algebra of a quiver with relations obtained from a `superpotential'. Some examples are Calabi-Yau and some are not. We consider two types of `consistency' condition on dimer models, and show that a `geometrically consistent' dimer model is `algebraically consistent'. We prove that the algebras obtained from algebraically consistent dimer models are 3-dimensional Calabi-Yau algebras. This is the key step which allows us to prove that these algebras are NCCRs of the Gorenstein affine toric threefolds associated to the dimer models. 
\end{abstract}

\maketitle


\setcounter{page}{4}

\tableofcontents

\chapter*{Acknowledgements}
First and foremost I would like to thank my supervisor Alastair King for his support and guidance. I will always appreciate our long and stimulating discussions, scattered with his deep and insightful questions and comments. I would also like to thank my other supervisor Gregory Sankaran for his help and advice. I am grateful to Ami Hanany for his insights conveyed during visits to the Perimeter Institute, Waterloo and the Newton Institute, Cambridge, and to Ali Craw, Bal$\acute{\text{a}}$zs Szendr\"oi, Kasushi Ueda, Raf Bocklandt and Joe Chuang for their thoughts.

Thanks also to members of the geometry group in Bath, particularly Fran Burstall and David Calderbank, for their stimulating mathematical conversation, and the postgraduates who I've had the pleasure of studying with. Finally I'd like to thank my friends and family for keeping me sane and making my time in Bath such a happy one.

\mainmatter
\chapter{Introduction}

\section{Overview}

In this article we use techniques from algebraic geometry and homological algebra, together with ideas from string theory to construct a class of 3-dimensional Calabi-Yau algebras which are non-commutative crepant resolutions of Gorenstein affine toric threefolds. The Calabi-Yau property appears throughout geometry and string theory where, for example, 3-dimensional Calabi-Yau manifolds play an important role in mirror symmetry. A characteristic property of an $n$-dimensional Calabi-Yau manifold $X$, is that the $n$th power of the shift functor on $D(X):=D^b(coh(X))$, the bounded derived category of coherent sheaves on $X$, is a Serre functor. That is, there exists a natural isomorphism
$$ \Hom_{D(X)}(A,B) \cong \Hom_{D(X)}(B, A[n])^* \qquad \forall A,B \in D(X)$$
This property is not restricted to categories of the form $D(X)$.
The idea behind Calabi-Yau algebras is to write down conditions on the algebra $A$ such that $D(A):=D^b(mod(A))$, the bounded derived category of modules over $A$, has the same property. In this form the Calabi-Yau property is increasingly being studied in algebra.

Calabi-Yau algebras are important in the context of non-commutative resolutions of singularities.
One way of studying resolutions of singularities is by considering their derived categories.  Of particular interest are crepant (i.e. suitably minimal) resolutions of toric Gorenstein singularities. Crepant resolutions do not always exist and if they do, they are not in general unique. However it is conjectured (Bondal and Orlov) that if $f_1:Y_1 \rightarrow X$ and $f_2:Y_2 \rightarrow X$ are crepant resolutions then there is a derived equivalence $D(Y_1) \cong D(Y_2)$. Thus the derived category of a crepant resolution is an invariant of the singularity.

One way to try to understand the derived category of a toric crepant resolution $Y$, is to look for a tilting bundle $T$, i.e. a bundle which determines a derived equivalence $D(Y) \cong D(A)$, where $A=\End(T)$. If an equivalence of this form exists, then one could consider the algebra $A$ as a type of non-commutative crepant resolution (NCCR) of the singularity. Van den Bergh \cite{VdB} formalised this idea with a definition of an NCCR which depends only on the singularity.
Given an NCCR $A$, a (commutative) crepant resolution $Y$ such that $D(Y)\cong D(A)$ can be constructed as a moduli space of certain stable $A$-representations (Van den Bergh \cite{VdB}). This is a generalisation of the approach to the McKay correspondence in \cite{BKR}.

If $X=\operatorname{Spec}(R)$ is a Gorenstein singularity, then any crepant resolution is a Calabi-Yau variety. 
Therefore if $A$ is an NCCR it must be a Calabi-Yau algebra. The center of $A$ must also be the coordinate ring $R$ of the singularity.
Thus any 3-dimensional Calabi-Yau algebra whose center $R$ is the coordinate ring of toric Gorenstein 3-fold, is potentially an NCCR of $X=\operatorname{Spec}(R)$.

In \cite{Bocklandt}, Bocklandt proved that every graded Calabi-Yau algebra of global dimension 3 is isomorphic to a superpotential algebra. A superpotential algebra
$A = \C Q/(dW)$ is the quotient of the path algebra of a quiver $Q$ by an ideal of relations $(dW)$, where the relations are generated by taking (formal) partial derivatives of a single element $W$ called the `superpotential'. 
The superpotential not only gives a very concise way of writing down the relations in a non-commutative algebra, it also encodes some information about the syzygies, i.e. relations between the relations. The Calabi-Yau condition is actually equivalent to saying that all the syzygies can be obtained from the superpotential.
Not all superpotential algebras are Calabi-Yau, and it is an open question to understand which ones are.

\section{Structure of the article and main results}
Chapter 2 acts as a general introduction to dimer models. A dimer model is a finite bipartite tiling of a compact (oriented) Riemann surface $\RS$. The examples that will be of particular interest are tilings of the 2-torus, and in this case, we can consider the dimer model as a doubly periodic tiling of the plane. 
Given a dimer model we also consider its dual tiling, where faces are dual to vertices and edges dual to edges. The edges of this dual tiling inherit an orientation from the bipartiteness of the dimer model. This is usually chosen so that the arrows go clockwise around a face dual to a white vertex. Therefore the dual tiling is actually a quiver $Q$, with faces. In the example below we draw the dimer model and the quiver together, with the dimer model highlighted in the left hand diagram. The dotted lines show a fundamental domain.

\begin{center}
\newrgbcolor{zzzzzz}{0.8 0.8 0.8}
\psset{xunit=1.0cm,yunit=1.0cm,algebraic=true,dotstyle=*,dotsize=5pt 0,linewidth=0.8pt,arrowsize=3pt 2,arrowinset=0.25}
\begin{pspicture*}(-3,-1)(10,4)
\psline(-2,1)(-1,1)
\psline(-1,1)(-0.5,1.87)
\psline(-0.5,1.87)(-1,2.73)
\psline(-1,2.73)(-2,2.73)
\psline(-1,2.73)(-0.5,1.87)
\psline(-0.5,1.87)(0.5,1.87)
\psline(0.5,1.87)(1,2.73)
\psline(1,2.73)(0.5,3.6)
\psline(0.5,3.6)(-0.5,3.6)
\psline(-0.5,3.6)(-1,2.73)
\psline(0.5,1.87)(-0.5,1.87)
\psline(-0.5,1.87)(-1,1)
\psline(-1,1)(-0.5,0.13)
\psline(-0.5,0.13)(0.5,0.13)
\psline(0.5,0.13)(1,1)
\psline(1,1)(0.5,1.87)
\psline(0.5,1.87)(1,1)
\psline(1,1)(2,1)
\psline(2,2.73)(1,2.73)
\psline(1,2.73)(0.5,1.87)
\psline(5.5,0.13)(7,1)
\psline(7,1)(8.5,0.13)
\psline(8.5,1.87)(8.5,0.13)
\psline(7,1)(8.5,1.87)
\psline(8.5,1.87)(7,2.73)
\psline(7,2.73)(7,1)
\psline(7,1)(5.5,1.87)
\psline(5.5,1.87)(5.5,0.13)
\psline(5.5,1.87)(7,2.73)
\psline(7,2.73)(5.5,3.6)
\psline(5.5,3.6)(5.5,1.87)
\psline(7,2.73)(8.5,3.6)
\psline(8.5,3.6)(8.5,1.87)
\psline[ArrowInside=->, linecolor=zzzzzz](-1.5,0.13)(0,1)
\psline[ArrowInside=->, linecolor=zzzzzz](1.5,0.13)(0,1)
\psline[ArrowInside=->, linecolor=zzzzzz](1.5,1.87)(1.5,0.13)
\psline[ArrowInside=->, linecolor=zzzzzz](0,1)(1.5,1.87)
\psline[ArrowInside=->, linecolor=zzzzzz](1.5,1.87)(0,2.73)
\psline[ArrowInside=->, linecolor=zzzzzz](0,2.73)(0,1)
\psline[ArrowInside=->, linecolor=zzzzzz](0,1)(-1.5,1.87)
\psline[ArrowInside=->, linecolor=zzzzzz](-1.5,1.87)(-1.5,0.13)
\psline[ArrowInside=->, linecolor=zzzzzz](-1.5,1.87)(0,2.73)
\psline[ArrowInside=->, linecolor=zzzzzz](0,2.73)(-1.5,3.6)
\psline[ArrowInside=->, linecolor=zzzzzz](-1.5,3.6)(-1.5,1.87)
\psline[ArrowInside=->, linecolor=zzzzzz](0,2.73)(1.5,3.6)
\psline[ArrowInside=->, linecolor=zzzzzz](1.5,3.6)(1.5,1.87)
\psline[linecolor=zzzzzz](5,1)(6,1)
\psline[linecolor=zzzzzz](6,1)(6.5,1.87)
\psline[linecolor=zzzzzz](6.5,1.87)(6,2.73)
\psline[linecolor=zzzzzz](6,2.73)(5,2.73)
\psline[linecolor=zzzzzz](6,2.73)(6.5,1.87)
\psline[linecolor=zzzzzz](6.5,1.87)(7.5,1.87)
\psline[linecolor=zzzzzz](7.5,1.87)(8,2.73)
\psline[linecolor=zzzzzz](8,2.73)(7.5,3.6)
\psline[linecolor=zzzzzz](7.5,3.6)(6.5,3.6)
\psline[linecolor=zzzzzz](6.5,3.6)(6,2.73)
\psline[linecolor=zzzzzz](7.5,1.87)(6.5,1.87)
\psline[linecolor=zzzzzz](6.5,1.87)(6,1)
\psline[linecolor=zzzzzz](6,1)(6.5,0.13)
\psline[linecolor=zzzzzz](6.5,0.13)(7.5,0.13)
\psline[linecolor=zzzzzz](7.5,0.13)(8,1)
\psline[linecolor=zzzzzz](8,1)(7.5,1.87)
\psline[linecolor=zzzzzz](7.5,1.87)(8,1)
\psline[linecolor=zzzzzz](8,1)(9,1)
\psline[linecolor=zzzzzz](9,2.73)(8,2.73)
\psline[linecolor=zzzzzz](8,2.73)(7.5,1.87)
\psline[ArrowInside=->](5.5,0.13)(7,1)
\psline[ArrowInside=->](8.5,0.13)(7,1)
\psline[ArrowInside=->](8.5,1.87)(8.5,0.13)
\psline[ArrowInside=->](7,1)(8.5,1.87)
\psline[ArrowInside=->](8.5,1.87)(7,2.73)
\psline[ArrowInside=->](7,2.73)(7,1)
\psline[ArrowInside=->](7,1)(5.5,1.87)
\psline[ArrowInside=->](5.5,1.87)(5.5,0.13)
\psline[ArrowInside=->](5.5,1.87)(7,2.73)
\psline[ArrowInside=->](7,2.73)(5.5,3.6)
\psline[ArrowInside=->](5.5,3.6)(5.5,1.87)
\psline[ArrowInside=->](7,2.73)(8.5,3.6)
\psline[ArrowInside=->](8.5,3.6)(8.5,1.87)
\psline[linestyle=dashed,dash=3pt 3pt](-0.26,2.29)(-0.26,0.56)
\psline[linestyle=dashed,dash=3pt 3pt](-0.26,0.56)(1.24,1.43)
\psline[linestyle=dashed,dash=3pt 3pt](1.24,1.43)(1.24,3.16)
\psline[linestyle=dashed,dash=3pt 3pt](1.24,3.16)(-0.26,2.29)
\psline[linestyle=dashed,dash=3pt 3pt](6.74,2.29)(6.74,0.56)
\psline[linestyle=dashed,dash=3pt 3pt](6.74,0.56)(8.24,1.43)
\psline[linestyle=dashed,dash=3pt 3pt](8.24,1.43)(8.24,3.16)
\psline[linestyle=dashed,dash=3pt 3pt](8.24,3.16)(6.74,2.29)
\psdots[dotstyle=o](-2,1)
\psdots[dotstyle=o](-0.5,1.87)
\psdots(-1,2.73)
\psdots[dotstyle=o](-2,2.73)
\psdots(0.5,1.87)
\psdots[dotstyle=o](1,2.73)
\psdots(0.5,3.6)
\psdots[dotstyle=o](-0.5,3.6)
\psdots(-1,1)
\psdots[dotstyle=o](-0.5,0.13)
\psdots(0.5,0.13)
\psdots[dotstyle=o](1,1)
\psdots(2,1)
\psdots(2,2.73)
\psdots[dotstyle=o](1,2.73)
\psdots[dotsize=2pt 0,linecolor=zzzzzz](-1.5,1.87)
\psdots[dotsize=2pt 0,linecolor=zzzzzz](0,1)
\psdots[dotsize=2pt 0](5.5,1.87)
\psdots[dotsize=2pt 0](7,1)
\psdots[dotsize=2pt 0](7,2.73)
\psdots[dotsize=2pt 0](8.5,1.87)
\psdots[dotsize=2pt 0](5.5,3.6)
\psdots[dotsize=2pt 0](5.5,0.13)
\psdots[dotsize=2pt 0](8.5,0.13)
\psdots[dotsize=2pt 0](8.5,3.6)
\psdots[dotsize=2pt 0,linecolor=zzzzzz](-1.5,0.13)
\psdots[dotsize=2pt 0,linecolor=zzzzzz](0,1)
\psdots[dotsize=2pt 0,linecolor=zzzzzz](0,1)
\psdots[dotsize=2pt 0,linecolor=zzzzzz](1.5,0.13)
\psdots[dotsize=2pt 0,linecolor=zzzzzz](1.5,1.87)
\psdots[dotsize=2pt 0,linecolor=zzzzzz](1.5,0.13)
\psdots[dotsize=2pt 0,linecolor=zzzzzz](0,1)
\psdots[dotsize=2pt 0,linecolor=zzzzzz](1.5,1.87)
\psdots[dotsize=2pt 0,linecolor=zzzzzz](1.5,1.87)
\psdots[dotsize=2pt 0,linecolor=zzzzzz](0,2.73)
\psdots[dotsize=2pt 0,linecolor=zzzzzz](0,2.73)
\psdots[dotsize=2pt 0,linecolor=zzzzzz](0,1)
\psdots[dotsize=2pt 0,linecolor=zzzzzz](0,1)
\psdots[dotsize=2pt 0,linecolor=zzzzzz](-1.5,1.87)
\psdots[dotsize=2pt 0,linecolor=zzzzzz](-1.5,1.87)
\psdots[dotsize=2pt 0,linecolor=zzzzzz](-1.5,0.13)
\psdots[dotsize=2pt 0,linecolor=zzzzzz](-1.5,1.87)
\psdots[dotsize=2pt 0,linecolor=zzzzzz](0,2.73)
\psdots[dotsize=2pt 0,linecolor=zzzzzz](0,2.73)
\psdots[dotsize=2pt 0,linecolor=zzzzzz](-1.5,3.6)
\psdots[dotsize=2pt 0,linecolor=zzzzzz](-1.5,3.6)
\psdots[dotsize=2pt 0,linecolor=zzzzzz](-1.5,1.87)
\psdots[dotsize=2pt 0,linecolor=zzzzzz](0,2.73)
\psdots[dotsize=2pt 0,linecolor=zzzzzz](1.5,3.6)
\psdots[dotsize=2pt 0,linecolor=zzzzzz](1.5,3.6)
\psdots[dotsize=2pt 0,linecolor=zzzzzz](1.5,1.87)
\psdots[dotstyle=o,linecolor=zzzzzz](5,1)
\psdots[linecolor=zzzzzz](6,1)
\psdots[dotstyle=o,linecolor=zzzzzz](6.5,1.87)
\psdots[linecolor=zzzzzz](6,2.73)
\psdots[dotstyle=o,linecolor=zzzzzz](5,2.73)
\psdots[linecolor=zzzzzz](6,2.73)
\psdots[dotstyle=o,linecolor=zzzzzz](6.5,1.87)
\psdots[linecolor=zzzzzz](7.5,1.87)
\psdots[dotstyle=o,linecolor=zzzzzz](8,2.73)
\psdots[linecolor=zzzzzz](7.5,3.6)
\psdots[dotstyle=o,linecolor=zzzzzz](6.5,3.6)
\psdots[linecolor=zzzzzz](7.5,1.87)
\psdots[dotstyle=o,linecolor=zzzzzz](6.5,1.87)
\psdots[linecolor=zzzzzz](6,1)
\psdots[dotstyle=o,linecolor=zzzzzz](6.5,0.13)
\psdots[linecolor=zzzzzz](7.5,0.13)
\psdots[dotstyle=o,linecolor=zzzzzz](8,1)
\psdots[linecolor=zzzzzz](7.5,1.87)
\psdots[dotstyle=o,linecolor=zzzzzz](8,1)
\psdots[linecolor=zzzzzz](9,1)
\psdots[linecolor=zzzzzz](9,2.73)
\psdots[dotstyle=o,linecolor=zzzzzz](8,2.73)
\psdots[dotsize=2pt 0](5.5,0.13)
\psdots[dotsize=2pt 0](7,1)
\psdots[dotsize=2pt 0](7,1)
\psdots[dotsize=2pt 0](8.5,0.13)
\psdots[dotsize=2pt 0](8.5,1.87)
\psdots[dotsize=2pt 0](8.5,0.13)
\psdots[dotsize=2pt 0](7,1)
\psdots[dotsize=2pt 0](8.5,1.87)
\psdots[dotsize=2pt 0](8.5,1.87)
\psdots[dotsize=2pt 0](7,2.73)
\psdots[dotsize=2pt 0](5.5,1.87)
\psdots[dotsize=2pt 0](5.5,0.13)
\psdots[dotsize=2pt 0](5.5,1.87)
\psdots[dotsize=2pt 0](7,2.73)
\psdots[dotsize=2pt 0](7,2.73)
\psdots[dotsize=2pt 0](5.5,3.6)
\psdots[dotsize=2pt 0](5.5,3.6)
\psdots[dotsize=2pt 0](5.5,1.87)
\psdots[dotsize=2pt 0](7,2.73)
\psdots[dotsize=2pt 0](8.5,3.6)
\psdots[dotsize=2pt 0](8.5,3.6)
\psdots[dotsize=2pt 0](8.5,1.87)
\end{pspicture*}
\end{center}

The faces of the quiver encode a superpotential $W$, and so there is a superpotential algebra $A= \C Q/(dW)$ associated to every dimer model.

In \cite{HananyKen} Hanany {\it et al} describe a way of using `perfect matchings' to construct a commutative ring $R$ from a dimer model. A perfect matching is subset of the edges of a dimer model with the property that every vertex of the dimer model is the end point of precisely one of these edges. For example the following diagram shows the three perfect matchings of the hexagonal tiling, where the edges in the perfect matchings are shown as thick grey lines.

\begin{center}
\newrgbcolor{zzzzzz}{0.6 0.6 0.6}
\psset{xunit=1.0cm,yunit=1.0cm,algebraic=true,dotstyle=*,dotsize=5pt 0,linewidth=0.8pt,arrowsize=3pt 2,arrowinset=0.25}
\begin{pspicture*}(-3,-4.5)(8.5,6)
\psline(5,1)(6,1)
\psline(6,1)(6.5,1.87)
\psline[linewidth=4.4pt,linecolor=zzzzzz](6.5,1.87)(6,2.73)
\psline(6,2.73)(5,2.73)
\psline(5,2.73)(4.5,1.87)
\psline[linewidth=4.4pt,linecolor=zzzzzz](4.5,1.87)(5,1)
\psline[linewidth=4.4pt,linecolor=zzzzzz](6,2.73)(6.5,1.87)
\psline(6.5,1.87)(7.5,1.87)
\psline(7.5,1.87)(8,2.73)
\psline[linewidth=4.4pt,linecolor=zzzzzz](8,2.73)(7.5,3.6)
\psline(7.5,3.6)(6.5,3.6)
\psline(6.5,3.6)(6,2.73)
\psline(5,2.73)(6,2.73)
\psline(6,2.73)(6.5,3.6)
\psline(6.5,3.6)(6,4.46)
\psline(6,4.46)(5,4.46)
\psline(5,4.46)(4.5,3.6)
\psline[linewidth=4.4pt,linecolor=zzzzzz](4.5,3.6)(5,2.73)
\psline[linewidth=4.4pt,linecolor=zzzzzz](6,4.46)(6.5,3.6)
\psline(6.5,3.6)(7.5,3.6)
\psline(7.5,3.6)(8,4.46)
\psline[linewidth=4.4pt,linecolor=zzzzzz](8,4.46)(7.5,5.33)
\psline(7.5,5.33)(6.5,5.33)
\psline(6.5,5.33)(6,4.46)
\psline[linestyle=dashed,dash=3pt 3pt](5.5,3.6)(7,4.46)
\psline[linestyle=dashed,dash=3pt 3pt](7,4.46)(7,2.73)
\psline[linestyle=dashed,dash=3pt 3pt](7,2.73)(5.5,1.87)
\psline[linestyle=dashed,dash=3pt 3pt](5.5,1.87)(5.5,3.6)
\psline[linewidth=4.4pt,linecolor=zzzzzz](-2,1)(-1,1)
\psline(-1,1)(-0.5,1.87)
\psline(-0.5,1.87)(-1,2.73)
\psline[linewidth=4.4pt,linecolor=zzzzzz](-1,2.73)(-2,2.73)
\psline(-2,2.73)(-2.5,1.87)
\psline(-2.5,1.87)(-2,1)
\psline(-1,2.73)(-0.5,1.87)
\psline[linewidth=4.4pt,linecolor=zzzzzz](-0.5,1.87)(0.5,1.87)
\psline(0.5,1.87)(1,2.73)
\psline(1,2.73)(0.5,3.6)
\psline[linewidth=4.4pt,linecolor=zzzzzz](0.5,3.6)(-0.5,3.6)
\psline(-0.5,3.6)(-1,2.73)
\psline[linewidth=4.4pt,linecolor=zzzzzz](-2,2.73)(-1,2.73)
\psline(-1,2.73)(-0.5,3.6)
\psline(-0.5,3.6)(-1,4.46)
\psline[linewidth=4.4pt,linecolor=zzzzzz](-1,4.46)(-2,4.46)
\psline(-2,4.46)(-2.5,3.6)
\psline(-2.5,3.6)(-2,2.73)
\psline(-1,4.46)(-0.5,3.6)
\psline[linecolor=zzzzzz](-0.5,3.6)(0.5,3.6)
\psline(0.5,3.6)(1,4.46)
\psline(1,4.46)(0.5,5.33)
\psline[linewidth=4.4pt,linecolor=zzzzzz](0.5,5.33)(-0.5,5.33)
\psline(-0.5,5.33)(-1,4.46)
\psline[linestyle=dashed,dash=3pt 3pt](-1.5,3.6)(0,4.46)
\psline[linestyle=dashed,dash=3pt 3pt](0,4.46)(0,2.73)
\psline[linestyle=dashed,dash=3pt 3pt](0,2.73)(-1.5,1.87)
\psline[linestyle=dashed,dash=3pt 3pt](-1.5,1.87)(-1.5,3.6)
\psline(0.7,-3.94)(1.7,-3.94)
\psline[linewidth=4.4pt,linecolor=zzzzzz](1.7,-3.94)(2.2,-3.07)
\psline(2.2,-3.07)(1.7,-2.21)
\psline(1.7,-2.21)(0.7,-2.21)
\psline[linewidth=4.4pt,linecolor=zzzzzz](0.7,-2.21)(0.2,-3.07)
\psline(0.2,-3.07)(0.7,-3.94)
\psline(1.7,-2.21)(2.2,-3.07)
\psline(2.2,-3.07)(3.2,-3.07)
\psline[linewidth=4.4pt,linecolor=zzzzzz](3.2,-3.07)(3.7,-2.21)
\psline(3.7,-2.21)(3.2,-1.34)
\psline(3.2,-1.34)(2.2,-1.34)
\psline[linewidth=4.4pt,linecolor=zzzzzz](2.2,-1.34)(1.7,-2.21)
\psline(0.7,-2.21)(1.7,-2.21)
\psline[linewidth=4.4pt,linecolor=zzzzzz](1.7,-2.21)(2.2,-1.34)
\psline(2.2,-1.34)(1.7,-0.48)
\psline(1.7,-0.48)(0.7,-0.48)
\psline[linewidth=4.4pt,linecolor=zzzzzz](0.7,-0.48)(0.2,-1.34)
\psline(0.2,-1.34)(0.7,-2.21)
\psline(1.7,-0.48)(2.2,-1.34)
\psline(2.2,-1.34)(3.2,-1.34)
\psline[linewidth=4.4pt,linecolor=zzzzzz](3.2,-1.34)(3.7,-0.48)
\psline(3.7,-0.48)(3.2,0.39)
\psline(3.2,0.39)(2.2,0.39)
\psline[linewidth=4.4pt,linecolor=zzzzzz](2.2,0.39)(1.7,-0.48)
\psline[linestyle=dashed,dash=3pt 3pt](1.2,-1.34)(2.7,-0.48)
\psline[linestyle=dashed,dash=3pt 3pt](2.7,-0.48)(2.7,-2.21)
\psline[linestyle=dashed,dash=3pt 3pt](2.7,-2.21)(1.2,-3.07)
\psline[linestyle=dashed,dash=3pt 3pt](1.2,-3.07)(1.2,-1.34)
\psdots[dotstyle=o](-2,1)
\psdots[dotstyle=o](5,1)
\psdots(6,1)
\psdots(6,1)
\psdots[dotstyle=o](6.5,1.87)
\psdots(6,2.73)
\psdots[dotstyle=o](5,2.73)
\psdots(4.5,1.87)
\psdots(7.5,1.87)
\psdots[dotstyle=o](8,2.73)
\psdots(7.5,3.6)
\psdots[dotstyle=o](6.5,3.6)
\psdots[dotstyle=o](6.5,3.6)
\psdots(6,4.46)
\psdots[dotstyle=o](5,4.46)
\psdots(4.5,3.6)
\psdots(7.5,3.6)
\psdots[dotstyle=o](8,4.46)
\psdots(7.5,5.33)
\psdots[dotstyle=o](6.5,5.33)
\psdots[linecolor=darkgray](-1,1)
\psdots[dotstyle=o](-0.5,1.87)
\psdots[linecolor=darkgray](-1,2.73)
\psdots[dotstyle=o](-2,2.73)
\psdots[linecolor=darkgray](-2.5,1.87)
\psdots[linecolor=darkgray](0.5,1.87)
\psdots[dotstyle=o](1,2.73)
\psdots[linecolor=darkgray](0.5,3.6)
\psdots[dotstyle=o](-0.5,3.6)
\psdots[linecolor=darkgray](-1,4.46)
\psdots[dotstyle=o](-2,4.46)
\psdots[linecolor=darkgray](-2.5,3.6)
\psdots[dotstyle=o](1,4.46)
\psdots[linecolor=darkgray](0.5,5.33)
\psdots[dotstyle=o](-0.5,5.33)
\psdots[linecolor=darkgray](1.7,-3.94)
\psdots[dotstyle=o](0.7,-3.94)
\psdots[dotstyle=o](2.2,-3.07)
\psdots[linecolor=darkgray](1.7,-2.21)
\psdots[dotstyle=o](0.7,-2.21)
\psdots[linecolor=darkgray](0.2,-3.07)
\psdots[linecolor=darkgray](3.2,-3.07)
\psdots[dotstyle=o](3.7,-2.21)
\psdots[linecolor=darkgray](3.2,-1.34)
\psdots[dotstyle=o](2.2,-1.34)
\psdots[linecolor=darkgray](1.7,-0.48)
\psdots[dotstyle=o](0.7,-0.48)
\psdots[linecolor=darkgray](0.2,-1.34)
\psdots[dotstyle=o](3.7,-0.48)
\psdots[linecolor=darkgray](3.2,0.39)
\psdots[dotstyle=o](2.2,0.39)
\end{pspicture*}
\end{center}

The difference of two perfect matchings defines a homology class of the 2-torus and so, by choosing a fixed `reference matching' to subtract from each perfect matching, we obtain a set of points in the integer homology lattice of the 2-torus $H_1(T) \cong \Z^{2}$. The convex hull of these points is a lattice polygon. Taking the cone on the polygon and using the machinery of toric geometry, this defines $R=\C [X]$, the coordinate ring of an affine toric Gorenstein 3-fold $X$. Given a lattice point in the polygon, its multiplicity is defined to be the number of perfect matchings corresponding to that point. A perfect matching is said to be extremal if it corresponds to a vertex of the polygon.
Looking at the example above we see that the polygon is a triangle, with each of the three perfect matching corresponding to a vertex, and no other lattice points. The ring $R$ is the polynomial ring in three variables.

Since we have a superpotential algebra associated to every dimer model, it is natural to ask if these algebras are Calabi-Yau. In fact there are some examples which are Calabi-Yau and some which are not. Therefore we ask what conditions can be placed on a dimer model so that its superpotential algebra is Calabi-Yau. In Chapter~3 we discuss the two ideas of `consistency' first understood by Hanany and Vegh \cite{HananyVegh}. Consistency conditions are a strong type of non-degeneracy condition.  
Following Kenyon and Schlenker \cite{Kenyon}, we give necessary and sufficient conditions for a dimer model to be `geometrically consistent' in terms of the intersection properties of special paths called zig-zag flows on the universal cover $\Qcov$ of the quiver $Q$. Geometric consistency amounts to saying that zig-zag flows behave effectively like straight lines.

In Chapter 4 we study some properties of zig-zag flows in a geometrically consistent dimer model. The homology class of a zig-zag flow encodes information about the `direction' of that flow. We show that at each quiver face $\face$, there is a `local zig-zag fan' in the homology lattice of the torus generated by the homology classes of zig-zag flows which intersect the boundary of $\face$. Furthermore, the cyclic order of the intersections around the face, is the same as the order of the rays in the local zig-zag fan. There is also a `global zig-zag fan' generated by the homology classes of all zig-zag flows. Using these fans we construct, in a very explicit way, a collection of perfect matchings indexed by the 2-dimensional cones in the global zig-zag fan. We prove that these perfect matchings are extremal perfect matchings and they are all the extremal perfect matchings. We also see that each perfect matching of this form corresponds to a different vertex of the polygon described above, so these vertices are of multiplicity one. 

In Chapter 5 we introduce the concept of (non-commutative, affine, normal) toric algebras. 
These are non-commutative algebras which have an underlying combinatoric structure. They are defined by knowledge of a lattice containing a strongly convex rational polyhedral cone, a set and a lattice map. It is hoped that they may play a similar role in non-commutative algebraic geometry to that played by toric varieties in algebraic geometry. We prove that toric algebras are examples of toric orders which we define. Finally we show that there is a toric algebra $B$ naturally associated to every dimer model, and moreover, the center of this algebra is the ring $R$ associated to the dimer model in the way we described above.

Therefore a given dimer model has two non-commutative algebras $A$ and $B$ and there is a natural algebra map $ \mathpzc{h}: A \longrightarrow B$. We call a dimer model `algebraically consistent' if this map is an isomorphism.
Algebraic and geometric consistency are the two consistency conditions that we will study in the core of the article.

In Chapter 6 we prove the following main theorem
\begin{theorem}
A geometrically consistent dimer model is algebraically consistent.
\end{theorem}
The proof relies on the explicit description of extremal perfect matchings from Chapter 5. We actually prove the following proposition, which provides the surjectivity of the map $\mathpzc{h}: A \longrightarrow B$, while the injectivity is provided by a result of Hanany, Herzog and Vegh (see Theorem~\ref{Uniqueness} and Remark~\ref{hinject}).
\begin{proposition}
Given a geometrically consistent dimer model, for all vertices $i,j$ in the universal cover of $Q$, there exists a path from $i$ to $j$ which avoids some extremal perfect matching.
\end{proposition}
Thus we see that the extremal perfect matchings play a key role in the theory.

In Chapter 7 we discuss Ginzburg's definition of a Calabi-Yau algebra and prove the following main theorem
\begin{theorem} \label{introcy}
If a dimer model on a torus is algebraically consistent then the algebra $A$ obtained from it is a Calabi-Yau algebra of global dimension 3.
\end{theorem}
Therefore we have shown that for dimer models on the torus, algebraic consistency  and consequently geometric consistency, is a sufficient condition to obtain a Calabi-Yau algebra. Thus we have produced a class of superpotential algebras which are Calabi-Yau.  

Finally, in Chapter~8 we show explicitly that the algebra $A$, with centre $R$, obtained from an algebraically consistent dimer model on a torus, is isomorphic to the endomorphism algebra of a given reflexive $R$-module. Together with Theorem~\ref{introcy} and a result of Stafford and Van den Bergh from \cite{VdB2}, this is enough to prove the following Theorem.
\begin{theorem}
Given an algebraically consistent dimer model on a torus, the algebra $A$ obtained from it is an NCCR of the (commutative) ring $R$ associated to that dimer model. 
\end{theorem}
 
We note in Section~\ref{stiengul} that work of Gulotta \cite{Gulotta} and Stienstra \cite{Stienstra2} proves that for any lattice polygon $V$, there exists a geometrically consistent dimer model which has $V$ as its perfect matching polygon. Thus to every Gorenstein affine toric threefold, there is an associated geometrically consistent model. Therefore, using results from this article, we are able to conclude the following result.
\begin{theorem}
Every Gorenstein affine toric threefold admits an NCCR, which can be obtained via a geometrically consistent dimer model.
\end{theorem}


\section{Related results}
Recently there have been several papers proving results that are related
to parts of this article.
In \cite{Mozgovoy} Mozgovoy and Reineke prove that if an algebra
obtained from a dimer model satisfies two conditions then it is a 3
dimensional Calabi-Yau algebra. Davison \cite{Davison} goes on to show
that the second of these conditions is actually a consequence of the
first. The first condition states that the algebra should satisfy a
cancellation property. This holds in algebraically consistent cases as
cancellation is a property of toric algebras. Therefore the condition
also holds in the geometrically consistent case. In fact, Ishii and Ueda in \cite{Ishii} and Bocklandt in \cite{Bocklandt2} have recently shown that this first condition is equivalent to a slightly weakened form of geometric consistency, also
defined in terms of intersection properties of zig-zag flows.


In \cite{Bocklandt2}, Bocklandt also states a result in the opposite direction. He shows that any graded ``toric order" which is a Calabi-Yau algebra of global dimension 3 comes from a dimer model on the torus which satisfies the first condition of Mozgovoy and Reineke. Bocklandt's definition of a toric order and the one we give in Chapter~\ref{ch:ToricA} are closely related, however his definition includes an additional condition. The idea of a toric order was first brought to our attention during conversations with Bocklandt, however the precise definitions were developed separately. We note that his additional condition is certainly satisfied by any toric algebra. 

In his paper, Bocklandt actually proves a more general result where he considers algebras with a certain cancellation property. He shows that on this class of algebras, the CY3 property is equivalent to the existence of a ``weighted quiver polyhedron" which he introduces as a generalisation of a dimer model. 

There are also several recent papers which give methods of constructing dimer  
models with desired properties.
We will describe in Section~\ref{stiengul} a
method of Gulotta \cite{Gulotta}, which associates a
geometrically consistent dimer model to any lattice polygon. This is  
done iteratively by removing triangles from a larger polygon (with a known geometrically
consistent dimer model), and considering how this affects the dimer
model. This can be done in such a way that
geometric consistency is preserved. The process 
has been reformulated 
in terms of
adjacency matrices by Stienstra in \cite{Stienstra2}. More recently,
Ishii and Ueda \cite{Ishii} have generalised this method, describing
the process of changing a dimer model as one alters the polygon by removing  
any vertex and taking the convex hull of the remaining vertices.
This generalisation is more complicated, and relies on use of the
special McKay correspondence. The resulting dimer model is not
necessarily geometrically consistent but can be shown to satisfy the weakened version of geometric consistency mentioned above. 
This is not the only property they show is preserved by the process. Given a lattice polygon and a corresponding (non-degenerate) dimer model, the same authors proved in \cite{Ishii1} that the moduli space of $\theta$-stable $(1, \dots,1)$-dimensional representataions of the associated quiver $Q$, for some generic character $\theta$, is a smooth toric Calabi-Yau threefold. It is a crepant resolution of the Gorenstein affine toric threefold corresponding to a lattice
polygon. In particular, this resolution comes with a tautological bundle. If this 
is a tilting
bundle whose endomorphism algebra is the superpotential algebra of $Q$, they prove that this property also holds for the corresponding objects after changing the dimer model by an application of their process.
Using this method, they can also describe the bounded derived categories of crepant resolutions of all Gorenstein
toric threefold singularities.


\chapter{Introduction to the dimer model} \label{Introto}
The aim of this chapter is to provide a mathematical introduction to the theory of dimer models as introduced by Hanany {\it et al} (see
\cite{Kennaway, HananyKen, FHKVW} for example). It is intended to be self contained and should require very little prior knowledge. Most of
the ideas and concepts that appear here, are covered in some form in the physics literature and we will endeavor to provide references where
appropriate. However we note that, in order to produce a fluent mathematical introduction, the way we present these ideas will often be different.
\section{Quivers and algebras from dimer models}\label{Quivalgdim}
The theory begins with a finite bipartite tiling of a compact (oriented) Riemann surface $\RS$.
By `bipartite tiling' we mean a polygonal cell decomposition of the surface, whose vertices and edges form a bipartite graph i.e. the vertices may be coloured black and white in such a way that all edges join a black vertex to a white vertex. In particular we note that each face must have an even number of vertices (and edges) and each vertex has valence at least 2.
We call a tiling of this type, a dimer model. This definition is quite general and as we progress we will describe additional non-degeneracy conditions, the strongest of which is `consistency'.
We note here that in principle faces in a dimer model which have two edges (\digon s) and bivalent vertices are allowed. However we will see that models with these features fail to satisfy certain of the non-degeneracy conditions.
Furthermore we shall observe that the `consistency' condition forces the Riemann surface $\RS$ to be a 2-torus $T$.
Therefore in the majority of this article, we shall focus on bipartite tilings of $T$. In this case, we may (and shall) consider the dimer model instead as a doubly periodic tiling of the plane.
\begin{remark} \label{balanced} In some of the literature it is included as part of the definition of a bipartite graph, that there are the same number of black and white vertices. We do not impose this as a condition here, however if it does hold we call the dimer model `balanced'.
\end{remark}
\subsection{Examples} \label{basicexamples}
The two simplest bipartite tilings of the 2-torus $T$, are the regular ones by squares and by hexagons. In each case, a fundamental domain is indicated by the dotted line.
\begin{center}
\psset{xunit=0.85cm,yunit=0.85cm,algebraic=true,dotstyle=*,dotsize=5pt 0,linewidth=0.8pt,arrowsize=3pt 2,arrowinset=0.25}
\begin{pspicture*}(-3.5,0.4)(9,7)
\psline(-2,3)(-1,2)
\psline(-1,2)(0,3)
\psline(0,3)(-1,4)
\psline(-1,4)(-2,3)
\psline(0,3)(-1,2)
\psline(-1,2)(0,1)
\psline(0,1)(1,2)
\psline(1,2)(0,3)
\psline(-1,4)(0,3)
\psline(0,3)(1,4)
\psline(1,4)(0,5)
\psline(0,5)(-1,4)
\psline(0,3)(1,2)
\psline(1,2)(2,3)
\psline(2,3)(1,4)
\psline(1,4)(0,3)
\psline(-2,3)(-1,4)
\psline(-1,4)(-2,5)
\psline(-2,5)(-3,4)
\psline(-3,4)(-2,3)
\psline(-1,4)(0,5)
\psline(0,5)(-1,6)
\psline(-1,6)(-2,5)
\psline(-2,5)(-1,4)
\psline[linestyle=dotted](-1.5,4.5)(0.5,4.5)
\psline[linestyle=dotted](0.5,4.5)(0.5,2.5)
\psline[linestyle=dotted](0.5,2.5)(-1.5,2.5)
\psline[linestyle=dotted](-1.5,2.5)(-1.5,4.5)
\psline(5,1)(6,1)
\psline(6,1)(6.5,1.87)
\psline(6.5,1.87)(6,2.73)
\psline(6,2.73)(5,2.73)
\psline(5,2.73)(4.5,1.87)
\psline(4.5,1.87)(5,1)
\psline(6,2.73)(6.5,1.87)
\psline(6.5,1.87)(7.5,1.87)
\psline(7.5,1.87)(8,2.73)
\psline(8,2.73)(7.5,3.6)
\psline(7.5,3.6)(6.5,3.6)
\psline(6.5,3.6)(6,2.73)
\psline(5,2.73)(6,2.73)
\psline(6,2.73)(6.5,3.6)
\psline(6.5,3.6)(6,4.46)
\psline(6,4.46)(5,4.46)
\psline(5,4.46)(4.5,3.6)
\psline(4.5,3.6)(5,2.73)
\psline(6,4.46)(6.5,3.6)
\psline(6.5,3.6)(7.5,3.6)
\psline(7.5,3.6)(8,4.46)
\psline(8,4.46)(7.5,5.33)
\psline(7.5,5.33)(6.5,5.33)
\psline(6.5,5.33)(6,4.46)
\psline[linestyle=dotted](5.5,3.6)(7,4.46)
\psline[linestyle=dotted](7,4.46)(7,2.73)
\psline[linestyle=dotted](7,2.73)(5.5,1.87)
\psline[linestyle=dotted](5.5,1.87)(5.5,3.6)
\psdots[dotstyle=o](-1,2)
\psdots(-2,3)
\psdots(0,3)
\psdots[dotstyle=o](-1,4)
\psdots(0,1)
\psdots(0,1)
\psdots[dotstyle=o](1,2)
\psdots[dotstyle=o](1,4)
\psdots(0,5)
\psdots(2,3)
\psdots[dotstyle=o](1,4)
\psdots[dotstyle=o](-3,4)
\psdots(-2,5)
\psdots[dotstyle=o](-3,4)
\psdots[dotstyle=o](-1,6)
\psdots(-2,5)
\psdots[dotstyle=o](5,1)
\psdots(6,1)
\psdots(6,1)
\psdots[dotstyle=o](6.5,1.87)
\psdots(6,2.73)
\psdots[dotstyle=o](5,2.73)
\psdots(4.5,1.87)
\psdots(7.5,1.87)
\psdots[dotstyle=o](8,2.73)
\psdots(7.5,3.6)
\psdots[dotstyle=o](6.5,3.6)
\psdots[dotstyle=o](6.5,3.6)
\psdots(6,4.46)
\psdots[dotstyle=o](5,4.46)
\psdots(4.5,3.6)
\psdots(7.5,3.6)
\psdots[dotstyle=o](8,4.46)
\psdots(7.5,5.33)
\psdots[dotstyle=o](6.5,5.33)
\end{pspicture*}
\end{center}
For a tiling of $T$ which is not balanced, consider the following tiling by three rhombi:
\begin{center}
\psset{xunit=1.2cm,yunit=1.2cm}
\begin{pspicture*}(-2,1.3)(3,6.3)
\psset{xunit=1.2cm,yunit=1.2cm,runit=1.2cm, algebraic=true,dotstyle=*,dotsize=5pt 0,linewidth=0.8pt,arrowsize=3pt 2,arrowinset=0.25}
\psline(0.5,3.87)(0,3)
\psline(0.5,3.87)(1,3)
\psline(0.5,3.87)(1.5,3.87)
\psline(0.5,3.87)(1,4.73)
\psline(0.5,3.87)(0,4.73)
\psline(0.5,3.87)(-0.5,3.87)
\psline(1,3)(2,3)
\psline(1.5,3.87)(2,3)
\psline(1.5,3.87)(2,4.73)
\psline(1,4.73)(2,4.73)
\psline(0.5,5.6)(0,4.73)
\psline(0.5,5.6)(1,4.73)
\psline(0,4.73)(-1,4.73)
\psline(-1,4.73)(-0.5,3.87)
\psline(-0.5,3.87)(-1,3)
\psline(-1,3)(0,3)
\psline(-0.5,5.6)(0.5,5.6)
\psline(-0.5,5.6)(-1,4.73)
\psline(0.5,5.6)(1.5,5.6)
\psline(1.5,5.6)(2,4.73)
\psline(0,3)(0.5,2.13)
\psline(1,3)(0.5,2.13)
\psline(1.5,2.13)(0.5,2.13)
\psline(1.5,2.13)(2,3)
\psline(-1,3)(-0.5,2.13)
\psline(-0.5,2.13)(0.5,2.13)
\psline[linestyle=dotted](-0.2,5.02)(1.3,4.15)
\psline[linestyle=dotted](1.3,4.15)(1.3,2.42)
\psline[linestyle=dotted](1.3,2.42)(-0.2,3.29)
\psline[linestyle=dotted](-0.2,3.29)(-0.2,5.02)
\psdots[dotstyle=o](0,3)
\psdots(1,3)
\psdots[dotstyle=o](1.5,3.87)
\psdots(1,4.73)
\psdots(0,4.73)
\psdots[dotstyle=o](-0.5,3.87)
\psdots(0.5,3.87)
\psdots(2,4.73)
\psdots[dotstyle=o](1.5,5.6)
\psdots(0.5,5.6)
\psdots[dotstyle=o](0,4.73)
\psdots[dotstyle=o](1,4.73)
\psdots[dotstyle=o](-0.5,5.6)
\psdots(-1,4.73)
\psdots(-1,3)
\psdots[dotstyle=o](-0.5,2.13)
\psdots(0.5,2.13)
\psdots[dotstyle=o](1,3)
\psdots[dotstyle=o](0,3)
\psdots(0.5,2.13)
\psdots[dotstyle=o](1.5,2.13)
\psdots(2,3)
\end{pspicture*}
\end{center}

On the 2-sphere, the only regular (Platonic) bipartite tiling is the cube (shown here in stereographic projection):

\begin{center}
\psset{xunit=1.0cm,yunit=1.0cm,algebraic=true,dotstyle=*,dotsize=5pt 0,linewidth=0.8pt,arrowsize=3pt 2,arrowinset=0.25}
\begin{pspicture*}(-1,1)(4,6)
\psline(0,5)(0,2)
\psline(0,2)(3,2)
\psline(3,2)(3,5)
\psline(3,5)(0,5)
\psline(1,4)(0,5)
\psline(2,4)(3,5)
\psline(2,3)(3,2)
\psline(1,3)(0,2)
\psline(1,3)(2,3)
\psline(2,3)(2,4)
\psline(2,4)(1,4)
\psline(1,4)(1,3)
\psdots[dotstyle=o](0,5)
\psdots(3,5)
\psdots[dotstyle=o](3,2)
\psdots(0,2)
\psdots(1,4)
\psdots[dotstyle=o](2,4)
\psdots(2,3)
\psdots[dotstyle=o](1,3)
\end{pspicture*}
\end{center}

\subsection{The quiver} \label{quiversect}
Given a dimer model one can consider the dual tiling (or dual cell decomposition) with a vertex dual to every face, an edge dual to every edge and a face dual to every vertex. Crucially, since the dimer model is bipartite, the edges of the dual tiling inherit a consistent choice of orientation. In particular, it is the convention that faces dual to white vertices are oriented clockwise and faces dual to black vertices are oriented anti-clockwise. This is equivalent to requiring that black vertices are on the left and white vertices on the right of every arrow dual to a dimer edge.
Thus the dual graph is a quiver $Q$ (i.e. a directed graph), with the additional structure that it provides
a tiling of the Riemann surface $\RS$ with oriented faces. We will refer to the faces of the quiver dual to black/white vertices of the dimer model, as black/white faces.

In the usual way, we denote by $Q_0$ and $Q_1$ the sets of vertices and arrows (directed edges) of the quiver and by $h,t \colon Q_1 \rightarrow Q_0$ the maps which take an arrow to its head and tail.
To this information we add the set $Q_2$ of oriented faces. We may write down a homological chain
complex for the Riemann surface $\RS$, using the fact that this `quiver with faces' forms a cell
decomposition,
\begin{equation}
\label{eq:chain}
\Z_{Q_2} \lra{\del} \Z_{Q_1} \lra{\del} \Z_{Q_0}
\end{equation}
where $\Z_{Q_i}$ denotes the free abelian group generated by the elements in $Q_i$.
We also have the dual cochain complex
\begin{equation}
\label{eq:cochain}
\Z^{Q_0} \lra{d} \Z^{Q_1} \lra{d} \Z^{Q_2}
\end{equation}
where $\Z^{Q_i}$ denotes the $\Z$-linear functions on $\Z_{Q_i}$.
Note that, because of the way the faces are oriented, the coboundary
map $d\colon \Z^{Q_1}\to \Z^{Q_2}$ simply sums any function of the edges around each face (without any signs).
Let $\face \mapsto (-1)^f$ be the map which takes the value +1 on black faces of $Q$, and -1 on white faces. Then the cycle
$$
\sum_{\face \in Q_2} (-1)^\face \face \in \Z_{Q_2}
$$
is a generator for the kernel of the boundary map $\del$. Thus it defines a fundamental class, i.e. a choice of generator of $H_{2}(\RS)\isom \Z$. We note that the function $\const{1}\in\Z^{Q_2}$, which takes the constant value 1 on every face, evaluates to $|B|-|W|$ on this fundamental cycle (where $|B|$ and $|W|$ denote the number of black and white faces respectively). If we have a balanced dimer model, i.e. $|B|=|W|$, then this implies that the function $\const{1}\in\Z^{Q_2}$ is exact.

\subsection{The superpotential algebra} \label{quivalgsec}
We construct the path algebra $\C Q$ of the quiver; this is a complex algebra with generators $\{ e_i \mid i \in Q_0 \}$ and $\{ x_a \mid a \in Q_1 \}$ subject to the relations $e_i^{2} = e_i  $, $e_ie_j =0 $ for $i \neq j$ and $e_{ta} x_a e_{ha} = x_a$. The $e_i$ are idempotents of the algebra which, we observe, has a monomial basis of paths in $Q$. Following Ginzburg \cite{Ginz}, let $[ \C Q , \C Q ]$ be the complex vector space in $\C Q$ spanned by commutators and denote by $\C Q_\text{cyc}:= \C Q /[ \C Q , \C Q ] $ the quotient space. This space has a basis of elements corresponding to cyclic paths in the quiver.  The consistent orientation of any face $f$ of the quiver means that we may interpret $\del f$ as a cyclic path in the quiver. Therefore the set of faces determines an element of $\C Q_\text{cyc}$ which, following the physics literature \cite{FHKVW}, we refer to as the `superpotential'
\begin{equation}
\label{eq:superpot}
W=\sum_{f\in Q_2} (-1)^f \del f.
\end{equation}
\begin{remark}
In this article we will use the notation $\del f$ for several distinct objects, namely the boundary of $f$ considered as an element of $\Z_{Q_1}$, the boundary of $f$ considered as an element of $\C Q_\text{cyc}$, and the set of arrows which are contained in the boundary of $f$. It should be clear from context which of these we mean in any given instance.
\end{remark}
For each arrow $a \in Q_1$ there is a linear map $\frac{\del}{\del x_a}: \C Q_\text{cyc} \rightarrow \C Q $ which is a (formal) cyclic derivative. The image of a cyclic path is obtained by taking all the representatives in $\C Q$ which start with $x_a$, removing this and then summing them. We note that because of the cyclicity, this is equivalent to taking all the representatives in $\C Q$ which end with $x_a$, removing this and then summing.
The image of a cycle is in fact an element in $e_{ha} \C Q e_{ta}$, and if the cycle contains no repeated arrows then this is just the path which begins at $ha$ and follows the cycle around to $ta$.

This superpotential $W$ determines an ideal of relations in the path algebra $\C Q$
\begin{equation}
\label{eq:ideal}
I_W= \left ( \frac{\del}{\del x_a} W : a\in Q_1 \right ).
\end{equation}
The quotient of the path algebra $\C Q$ by this ideal is the superpotential (or `Jacobian', or quiver) algebra
$$A= \C Q/ I_W$$
From an algebraic point of view, the output of a dimer model is this algebra, and we are interested in understanding the special properties that such algebras exhibit.

\begin{remark} \label{Ftermremark}
Each arrow $a \in Q_1$ occurs in precisely two oppositely oriented faces $\face^+ ,\face^- \in Q_2$ (this also implies that no arrow is repeated in the boundary of any one face). Therefore each relation $\frac{\del}{\del x_a} W$ can be written explicitly as a difference of two paths,
$$ \frac{\del}{\del x_a} W = p_a^+ - p_a^-  $$
where $p_a^\pm$ is the path from $ha$ around the boundary of $\face^\pm$ to $ta$. The relations $p_a^+ = p_a^-$ for $a \in Q_1$ are called `F-term' relations. They generate an equivalence relation on paths in the quiver such that the equivalence classes form a natural basis for the algebra $A$.
\end{remark}

\subsection{Examples}
\begin{example}
We return to the tilings of the torus by regular hexagons and by squares that we saw in Section~\ref{basicexamples}. We start by considering the hexagonal tiling of the torus. The figures below both show the bipartite tiling and the dual quiver, drawn together so it is clear how they are related. The left hand figure highlights the bipartite tiling, and the right hand figure highlights the quiver.
\begin{center}
\newrgbcolor{zzzzzz}{0.8 0.8 0.8}
\psset{xunit=0.9cm,yunit=0.9cm,algebraic=true,dotstyle=*,dotsize=5pt 0,linewidth=1.0pt,arrowsize=3pt 2,arrowinset=0.25}
\begin{pspicture*}(-3,-1)(10,4.3)
\psline(-2,1)(-1,1)
\psline(-1,1)(-0.5,1.87)
\psline(-0.5,1.87)(-1,2.73)
\psline(-1,2.73)(-2,2.73)
\psline(-1,2.73)(-0.5,1.87)
\psline(-0.5,1.87)(0.5,1.87)
\psline(0.5,1.87)(1,2.73)
\psline(1,2.73)(0.5,3.6)
\psline(0.5,3.6)(-0.5,3.6)
\psline(-0.5,3.6)(-1,2.73)
\psline(0.5,1.87)(-0.5,1.87)
\psline(-0.5,1.87)(-1,1)
\psline(-1,1)(-0.5,0.13)
\psline(-0.5,0.13)(0.5,0.13)
\psline(0.5,0.13)(1,1)
\psline(1,1)(0.5,1.87)
\psline(0.5,1.87)(1,1)
\psline(1,1)(2,1)
\psline(2,2.73)(1,2.73)
\psline(1,2.73)(0.5,1.87)
\psline(5.5,0.13)(7,1)
\psline(7,1)(8.5,0.13)
\psline(8.5,1.87)(8.5,0.13)
\psline(7,1)(8.5,1.87)
\psline(8.5,1.87)(7,2.73)
\psline(7,2.73)(7,1)
\psline(7,1)(5.5,1.87)
\psline(5.5,1.87)(5.5,0.13)
\psline(5.5,1.87)(7,2.73)
\psline(7,2.73)(5.5,3.6)
\psline(5.5,3.6)(5.5,1.87)
\psline(7,2.73)(8.5,3.6)
\psline(8.5,3.6)(8.5,1.87)
\psline[ArrowInside=->, linecolor=zzzzzz](-1.5,0.13)(0,1)
\psline[ArrowInside=->, linecolor=zzzzzz](1.5,0.13)(0,1)
\psline[ArrowInside=->, linecolor=zzzzzz](1.5,1.87)(1.5,0.13)
\psline[ArrowInside=->, linecolor=zzzzzz](0,1)(1.5,1.87)
\psline[ArrowInside=->, linecolor=zzzzzz](1.5,1.87)(0,2.73)
\psline[ArrowInside=->, linecolor=zzzzzz](0,2.73)(0,1)
\psline[ArrowInside=->, linecolor=zzzzzz](0,1)(-1.5,1.87)
\psline[ArrowInside=->, linecolor=zzzzzz](-1.5,1.87)(-1.5,0.13)
\psline[ArrowInside=->, linecolor=zzzzzz](-1.5,1.87)(0,2.73)
\psline[ArrowInside=->, linecolor=zzzzzz](0,2.73)(-1.5,3.6)
\psline[ArrowInside=->, linecolor=zzzzzz](-1.5,3.6)(-1.5,1.87)
\psline[ArrowInside=->, linecolor=zzzzzz](0,2.73)(1.5,3.6)
\psline[ArrowInside=->, linecolor=zzzzzz](1.5,3.6)(1.5,1.87)
\psline[linecolor=zzzzzz](5,1)(6,1)
\psline[linecolor=zzzzzz](6,1)(6.5,1.87)
\psline[linecolor=zzzzzz](6.5,1.87)(6,2.73)
\psline[linecolor=zzzzzz](6,2.73)(5,2.73)
\psline[linecolor=zzzzzz](6,2.73)(6.5,1.87)
\psline[linecolor=zzzzzz](6.5,1.87)(7.5,1.87)
\psline[linecolor=zzzzzz](7.5,1.87)(8,2.73)
\psline[linecolor=zzzzzz](8,2.73)(7.5,3.6)
\psline[linecolor=zzzzzz](7.5,3.6)(6.5,3.6)
\psline[linecolor=zzzzzz](6.5,3.6)(6,2.73)
\psline[linecolor=zzzzzz](7.5,1.87)(6.5,1.87)
\psline[linecolor=zzzzzz](6.5,1.87)(6,1)
\psline[linecolor=zzzzzz](6,1)(6.5,0.13)
\psline[linecolor=zzzzzz](6.5,0.13)(7.5,0.13)
\psline[linecolor=zzzzzz](7.5,0.13)(8,1)
\psline[linecolor=zzzzzz](8,1)(7.5,1.87)
\psline[linecolor=zzzzzz](7.5,1.87)(8,1)
\psline[linecolor=zzzzzz](8,1)(9,1)
\psline[linecolor=zzzzzz](9,2.73)(8,2.73)
\psline[linecolor=zzzzzz](8,2.73)(7.5,1.87)
\psline[ArrowInside=->](5.5,0.13)(7,1)
\psline[ArrowInside=->](8.5,0.13)(7,1)
\psline[ArrowInside=->](8.5,1.87)(8.5,0.13)
\psline[ArrowInside=->](7,1)(8.5,1.87)
\psline[ArrowInside=->](8.5,1.87)(7,2.73)
\psline[ArrowInside=->](7,2.73)(7,1)
\psline[ArrowInside=->](7,1)(5.5,1.87)
\psline[ArrowInside=->](5.5,1.87)(5.5,0.13)
\psline[ArrowInside=->](5.5,1.87)(7,2.73)
\psline[ArrowInside=->](7,2.73)(5.5,3.6)
\psline[ArrowInside=->](5.5,3.6)(5.5,1.87)
\psline[ArrowInside=->](7,2.73)(8.5,3.6)
\psline[ArrowInside=->](8.5,3.6)(8.5,1.87)
\psline[linestyle=dashed,dash=3pt 3pt](-0.26,2.29)(-0.26,0.56)
\psline[linestyle=dashed,dash=3pt 3pt](-0.26,0.56)(1.24,1.43)
\psline[linestyle=dashed,dash=3pt 3pt](1.24,1.43)(1.24,3.16)
\psline[linestyle=dashed,dash=3pt 3pt](1.24,3.16)(-0.26,2.29)
\psline[linestyle=dashed,dash=3pt 3pt](6.74,2.29)(6.74,0.56)
\psline[linestyle=dashed,dash=3pt 3pt](6.74,0.56)(8.24,1.43)
\psline[linestyle=dashed,dash=3pt 3pt](8.24,1.43)(8.24,3.16)
\psline[linestyle=dashed,dash=3pt 3pt](8.24,3.16)(6.74,2.29)
\psdots[dotstyle=o](-2,1)
\psdots[dotstyle=o](-0.5,1.87)
\psdots(-1,2.73)
\psdots[dotstyle=o](-2,2.73)
\psdots(0.5,1.87)
\psdots[dotstyle=o](1,2.73)
\psdots(0.5,3.6)
\psdots[dotstyle=o](-0.5,3.6)
\psdots(-1,1)
\psdots[dotstyle=o](-0.5,0.13)
\psdots(0.5,0.13)
\psdots[dotstyle=o](1,1)
\psdots(2,1)
\psdots(2,2.73)
\psdots[dotstyle=o](1,2.73)
\psdots[dotsize=2pt 0,linecolor=zzzzzz](-1.5,1.87)
\psdots[dotsize=2pt 0,linecolor=zzzzzz](0,1)
\psdots[dotsize=2pt 0](5.5,1.87)
\psdots[dotsize=2pt 0](7,1)
\psdots[dotsize=2pt 0](7,2.73)
\psdots[dotsize=2pt 0](8.5,1.87)
\psdots[dotsize=2pt 0](5.5,3.6)
\psdots[dotsize=2pt 0](5.5,0.13)
\psdots[dotsize=2pt 0](8.5,0.13)
\psdots[dotsize=2pt 0](8.5,3.6)
\psdots[dotsize=2pt 0,linecolor=zzzzzz](-1.5,0.13)
\psdots[dotsize=2pt 0,linecolor=zzzzzz](0,1)
\psdots[dotsize=2pt 0,linecolor=zzzzzz](0,1)
\psdots[dotsize=2pt 0,linecolor=zzzzzz](1.5,0.13)
\psdots[dotsize=2pt 0,linecolor=zzzzzz](1.5,1.87)
\psdots[dotsize=2pt 0,linecolor=zzzzzz](1.5,0.13)
\psdots[dotsize=2pt 0,linecolor=zzzzzz](0,1)
\psdots[dotsize=2pt 0,linecolor=zzzzzz](1.5,1.87)
\psdots[dotsize=2pt 0,linecolor=zzzzzz](1.5,1.87)
\psdots[dotsize=2pt 0,linecolor=zzzzzz](0,2.73)
\psdots[dotsize=2pt 0,linecolor=zzzzzz](0,2.73)
\psdots[dotsize=2pt 0,linecolor=zzzzzz](0,1)
\psdots[dotsize=2pt 0,linecolor=zzzzzz](0,1)
\psdots[dotsize=2pt 0,linecolor=zzzzzz](-1.5,1.87)
\psdots[dotsize=2pt 0,linecolor=zzzzzz](-1.5,1.87)
\psdots[dotsize=2pt 0,linecolor=zzzzzz](-1.5,0.13)
\psdots[dotsize=2pt 0,linecolor=zzzzzz](-1.5,1.87)
\psdots[dotsize=2pt 0,linecolor=zzzzzz](0,2.73)
\psdots[dotsize=2pt 0,linecolor=zzzzzz](0,2.73)
\psdots[dotsize=2pt 0,linecolor=zzzzzz](-1.5,3.6)
\psdots[dotsize=2pt 0,linecolor=zzzzzz](-1.5,3.6)
\psdots[dotsize=2pt 0,linecolor=zzzzzz](-1.5,1.87)
\psdots[dotsize=2pt 0,linecolor=zzzzzz](0,2.73)
\psdots[dotsize=2pt 0,linecolor=zzzzzz](1.5,3.6)
\psdots[dotsize=2pt 0,linecolor=zzzzzz](1.5,3.6)
\psdots[dotsize=2pt 0,linecolor=zzzzzz](1.5,1.87)
\psdots[dotstyle=o,linecolor=zzzzzz](5,1)
\psdots[linecolor=zzzzzz](6,1)
\psdots[dotstyle=o,linecolor=zzzzzz](6.5,1.87)
\psdots[linecolor=zzzzzz](6,2.73)
\psdots[dotstyle=o,linecolor=zzzzzz](5,2.73)
\psdots[linecolor=zzzzzz](6,2.73)
\psdots[dotstyle=o,linecolor=zzzzzz](6.5,1.87)
\psdots[linecolor=zzzzzz](7.5,1.87)
\psdots[dotstyle=o,linecolor=zzzzzz](8,2.73)
\psdots[linecolor=zzzzzz](7.5,3.6)
\psdots[dotstyle=o,linecolor=zzzzzz](6.5,3.6)
\psdots[linecolor=zzzzzz](7.5,1.87)
\psdots[dotstyle=o,linecolor=zzzzzz](6.5,1.87)
\psdots[linecolor=zzzzzz](6,1)
\psdots[dotstyle=o,linecolor=zzzzzz](6.5,0.13)
\psdots[linecolor=zzzzzz](7.5,0.13)
\psdots[dotstyle=o,linecolor=zzzzzz](8,1)
\psdots[linecolor=zzzzzz](7.5,1.87)
\psdots[dotstyle=o,linecolor=zzzzzz](8,1)
\psdots[linecolor=zzzzzz](9,1)
\psdots[linecolor=zzzzzz](9,2.73)
\psdots[dotstyle=o,linecolor=zzzzzz](8,2.73)
\psdots[dotsize=2pt 0](5.5,0.13)
\psdots[dotsize=2pt 0](7,1)
\psdots[dotsize=2pt 0](7,1)
\psdots[dotsize=2pt 0](8.5,0.13)
\psdots[dotsize=2pt 0](8.5,1.87)
\psdots[dotsize=2pt 0](8.5,0.13)
\psdots[dotsize=2pt 0](7,1)
\psdots[dotsize=2pt 0](8.5,1.87)
\psdots[dotsize=2pt 0](8.5,1.87)
\psdots[dotsize=2pt 0](7,2.73)
\psdots[dotsize=2pt 0](5.5,1.87)
\psdots[dotsize=2pt 0](5.5,0.13)
\psdots[dotsize=2pt 0](5.5,1.87)
\psdots[dotsize=2pt 0](7,2.73)
\psdots[dotsize=2pt 0](7,2.73)
\psdots[dotsize=2pt 0](5.5,3.6)
\psdots[dotsize=2pt 0](5.5,3.6)
\psdots[dotsize=2pt 0](5.5,1.87)
\psdots[dotsize=2pt 0](7,2.73)
\psdots[dotsize=2pt 0](8.5,3.6)
\psdots[dotsize=2pt 0](8.5,3.6)
\psdots[dotsize=2pt 0](8.5,1.87)
\rput[bl](7.6,1.5){$a$}
\rput[bl](7.76,2.38){$b$}
\rput[bl](7.1,1.76){$c$}
\rput[bl](7.6,3.23){$a$}
\rput[bl](8.58,2.62){$c$}
\end{pspicture*}
\end{center}
Observe that the bipartite tiling has one face, three edges and two vertices, and dually, the quiver has one vertex, three arrows and two faces. Since the quiver has one vertex, this is the head and tail of each of the arrows. Therefore the path algebra $\C Q$ is actually the free algebra $\C \langle a, b, c \rangle$ with three generators corresponding to the three arrows. We see that the cyclic elements corresponding to the boundaries of the black and white faces, are $(abc)$ and $(acb)$ respectively. Thus the superpotential is:
$$
W = (abc) - (acb)
$$  and differentiating this with respect to each arrow, we obtain the three relations:
$$\begin{matrix}
     \frac{\del W}{\del c}=&ab-ba &=0 \cr
      \frac{\del W}{\del b}=&ca-ac &=0 \cr
     \frac{\del W}{\del a}=& bc-cb &=0 \cr
\end{matrix}
 $$
Therefore, the ideal $I_W$ is generated by the commutation relations between all the generators of $\C Q$, and the algebra
$$A= \C \langle a, b, c \rangle/ I_W = \C [a,b,c]$$
is the polynomial ring in three variables.
\end{example}

\begin{example} \label{conifold}
The tiling of the torus by squares has two faces and is therefore slightly more complicated. We show the bipartite tiling and the quiver below.
\begin{center}
\newrgbcolor{zzzzzz}{0.8 0.8 0.8}
\newrgbcolor{zzttqq}{0.8 0.8 0.8}
\psset{xunit=0.85cm,yunit=0.85cm,algebraic=true,dotstyle=*,dotsize=5pt 0,linewidth=1.0pt,arrowsize=3pt 2,arrowinset=0.25}
\begin{pspicture*}(-4,0)(10,7)
\psline(-2,3)(-1,2)
\psline(-1,2)(0,3)
\psline(0,3)(-1,4)
\psline(-1,4)(-2,3)
\psline(0,3)(-1,2)
\psline(-1,2)(0,1)
\psline(0,1)(1,2)
\psline[linecolor=zzzzzz](1,2)(0,3)
\psline(-1,4)(0,3)
\psline(0,3)(1,4)
\psline(1,4)(0,5)
\psline(0,5)(-1,4)
\psline(0,3)(1,2)
\psline(1,2)(2,3)
\psline(2,3)(1,4)
\psline(1,4)(0,3)
\psline(-2,3)(-1,4)
\psline(-1,4)(-2,5)
\psline(-2,5)(-3,4)
\psline(-3,4)(-2,3)
\psline(-1,4)(0,5)
\psline(0,5)(-1,6)
\psline(-1,6)(-2,5)
\psline(-2,5)(-1,4)
\psline[linestyle=dashed,dash=3pt 3pt](-1.5,4.5)(0.5,4.5)
\psline[linestyle=dashed,dash=3pt 3pt](0.5,4.5)(0.5,2.5)
\psline[linestyle=dashed,dash=3pt 3pt](0.5,2.5)(-1.5,2.5)
\psline[linestyle=dashed,dash=3pt 3pt](-1.5,2.5)(-1.5,4.5)
\psline[ArrowInside=->, linecolor=zzzzzz](-3,5)(-2,4)
\psline[ArrowInside=->, linecolor=zzzzzz](-2,4)(-1,5)
\psline[ArrowInside=->, linecolor=zzzzzz](-1,5)(0,4)
\psline[ArrowInside=->, linecolor=zzzzzz](0,4)(-1,3)
\psline[ArrowInside=->, linecolor=zzzzzz](-1,3)(-2,4)
\psline[ArrowInside=->, linecolor=zzzzzz](-1,5)(-2,6)
\psline[ArrowInside=->, linecolor=zzzzzz](0,6)(-1,5)
\psline[ArrowInside=->, linecolor=zzzzzz](0,4)(1,5)
\psline[ArrowInside=->, linecolor=zzzzzz](1,3)(0,4)
\psline[ArrowInside=->, linecolor=zzzzzz](2,4)(1,3)
\psline[ArrowInside=->, linecolor=zzzzzz](1,3)(2,2)
\psline[ArrowInside=->, linecolor=zzzzzz](0,2)(1,3)
\psline[ArrowInside=->, linecolor=zzzzzz](1,1)(0,2)
\psline[ArrowInside=->, linecolor=zzzzzz](0,2)(-1,1)
\psline[ArrowInside=->, linecolor=zzzzzz](-2,2)(-1,3)
\psline[ArrowInside=->, linecolor=zzzzzz](-1,3)(0,2)
\psline[ArrowInside=->, linecolor=zzzzzz](-2,4)(-3,3)
\psline[linecolor=zzttqq](5,3)(6,2)
\psline[linecolor=zzttqq](6,2)(7,3)
\psline[linecolor=zzttqq](7,3)(6,4)
\psline[linecolor=zzttqq](6,4)(5,3)
\psline[linecolor=zzttqq](7,3)(6,2)
\psline[linecolor=zzttqq](6,2)(7,1)
\psline[linecolor=zzttqq](7,1)(8,2)
\psline[linecolor=zzttqq](8,2)(7,3)
\psline[linecolor=zzttqq](6,4)(7,3)
\psline[linecolor=zzttqq](7,3)(8,4)
\psline[linecolor=zzttqq](8,4)(7,5)
\psline[linecolor=zzttqq](7,5)(6,4)
\psline[linecolor=zzttqq](7,3)(8,2)
\psline[linecolor=zzttqq](8,2)(9,3)
\psline[linecolor=zzttqq](9,3)(8,4)
\psline[linecolor=zzttqq](8,4)(7,3)
\psline[linecolor=zzttqq](5,3)(6,4)
\psline[linecolor=zzttqq](6,4)(5,5)
\psline[linecolor=zzttqq](5,5)(4,4)
\psline[linecolor=zzttqq](4,4)(5,3)
\psline[linecolor=zzttqq](6,4)(7,5)
\psline[linecolor=zzttqq](7,5)(6,6)
\psline[linecolor=zzttqq](6,6)(5,5)
\psline[linecolor=zzttqq](5,5)(6,4)
\psline[linestyle=dashed,dash=3pt 3pt](5.5,4.5)(7.5,4.5)
\psline[linestyle=dashed,dash=3pt 3pt](7.5,4.5)(7.5,2.5)
\psline[linestyle=dashed,dash=3pt 3pt](7.5,2.5)(5.5,2.5)
\psline[linestyle=dashed,dash=3pt 3pt](5.5,2.5)(5.5,4.5)
\psline[ArrowInside=->](4,5)(5,4)
\psline[ArrowInside=->](5,4)(6,5)
\psline[ArrowInside=->](6,5)(7,4)
\psline[ArrowInside=->](7,4)(6,3)
\psline[ArrowInside=->](6,3)(5,4)
\psline[ArrowInside=->](6,5)(5,6)
\psline[ArrowInside=->](7,6)(6,5)
\psline[ArrowInside=->](7,4)(8,5)
\psline[ArrowInside=->](8,3)(7,4)
\psline[ArrowInside=->](9,4)(8,3)
\psline[ArrowInside=->](8,3)(9,2)
\psline[ArrowInside=->](7,2)(8,3)
\psline[ArrowInside=->](8,1)(7,2)
\psline[ArrowInside=->](7,2)(6,1)
\psline[ArrowInside=->](5,2)(6,3)
\psline[ArrowInside=->](6,3)(7,2)
\psline[ArrowInside=->](5,4)(4,3)
\psdots[dotstyle=o](-1,2)
\psdots(-2,3)
\psdots(0,3)
\psdots[dotstyle=o](-1,4)
\psdots(0,1)
\psdots(0,1)
\psdots[dotstyle=o](1,2)
\psdots[dotstyle=o](1,4)
\psdots(0,5)
\psdots(2,3)
\psdots[dotstyle=o](1,4)
\psdots[dotstyle=o](-3,4)
\psdots(-2,5)
\psdots[dotstyle=o](-3,4)
\psdots[dotstyle=o](-1,6)
\psdots(-2,5)
\psdots[dotsize=2pt 0,linecolor=zzzzzz](-1,5)
\psdots[dotsize=2pt 0,linecolor=zzzzzz](-2,4)
\psdots[dotsize=2pt 0,linecolor=zzzzzz](0,4)
\psdots[dotsize=2pt 0,linecolor=zzzzzz](-1,3)
\psdots[dotsize=2pt 0,linecolor=zzzzzz](0,2)
\psdots[dotsize=2pt 0,linecolor=zzzzzz](1,3)
\psdots[dotsize=2pt 0,linecolor=zzzzzz](-3,5)
\psdots[dotsize=2pt 0,linecolor=zzzzzz](-2,6)
\psdots[dotsize=2pt 0,linecolor=zzzzzz](0,6)
\psdots[dotsize=2pt 0,linecolor=zzzzzz](1,5)
\psdots[dotsize=2pt 0,linecolor=zzzzzz](2,4)
\psdots[dotsize=2pt 0,linecolor=zzzzzz](2,2)
\psdots[dotsize=2pt 0,linecolor=zzzzzz](1,1)
\psdots[dotsize=2pt 0,linecolor=zzzzzz](-1,1)
\psdots[dotsize=2pt 0,linecolor=zzzzzz](-2,2)
\psdots[dotsize=2pt 0,linecolor=zzzzzz](-3,3)
\psdots[dotsize=2pt 0,linecolor=zzzzzz](4,5)
\psdots[linecolor=zzzzzz](5,3)
\psdots[dotstyle=o,linecolor=zzzzzz](6,2)
\psdots[linecolor=zzzzzz](7,3)
\psdots[dotstyle=o,linecolor=zzzzzz](6,4)
\psdots[linecolor=zzzzzz](7,1)
\psdots[linecolor=zzzzzz](7,3)
\psdots[dotstyle=o,linecolor=zzzzzz](6,2)
\psdots[linecolor=zzzzzz](7,1)
\psdots[dotstyle=o,linecolor=zzzzzz](8,2)
\psdots[dotstyle=o,linecolor=zzzzzz](6,4)
\psdots[linecolor=zzzzzz](7,3)
\psdots[dotstyle=o,linecolor=zzzzzz](8,4)
\psdots[linecolor=zzzzzz](7,5)
\psdots[linecolor=zzzzzz](7,3)
\psdots[dotstyle=o,linecolor=zzzzzz](8,2)
\psdots[linecolor=zzzzzz](9,3)
\psdots[dotstyle=o,linecolor=zzzzzz](8,4)
\psdots[dotstyle=o,linecolor=zzzzzz](4,4)
\psdots[linecolor=zzzzzz](5,3)
\psdots[dotstyle=o,linecolor=zzzzzz](6,4)
\psdots[linecolor=zzzzzz](5,5)
\psdots[dotstyle=o,linecolor=zzzzzz](4,4)
\psdots[dotstyle=o,linecolor=zzzzzz](6,4)
\psdots[linecolor=zzzzzz](7,5)
\psdots[dotstyle=o,linecolor=zzzzzz](6,6)
\psdots[linecolor=zzzzzz](5,5)
\psdots[dotsize=2pt 0](4,5)
\psdots[dotsize=2pt 0](5,4)
\psdots[dotsize=2pt 0](5,4)
\psdots[dotsize=2pt 0](6,5)
\psdots[dotsize=2pt 0](6,5)
\psdots[dotsize=2pt 0](7,4)
\rput[bl](6.5,4.5){$y_2$}
\psdots[dotsize=2pt 0](7,4)
\psdots[dotsize=2pt 0](6,3)
\rput[bl](6.26,3.74){$x_1$}
\psdots[dotsize=2pt 0](6,3)
\psdots[dotsize=2pt 0](5,4)
\rput[bl](5.5,3.5){$y_1$}
\psdots[dotsize=2pt 0](6,5)
\psdots[dotsize=2pt 0](5,6)
\psdots[dotsize=2pt 0](7,6)
\psdots[dotsize=2pt 0](6,5)
\psdots[dotsize=2pt 0](7,4)
\psdots[dotsize=2pt 0](8,5)
\psdots[dotsize=2pt 0](8,3)
\psdots[dotsize=2pt 0](7,4)
\rput[bl](7.5,3.5){$y_1$}
\psdots[dotsize=2pt 0](9,4)
\psdots[dotsize=2pt 0](8,3)
\psdots[dotsize=2pt 0](8,3)
\psdots[dotsize=2pt 0](9,2)
\psdots[dotsize=2pt 0](7,2)
\psdots[dotsize=2pt 0](8,3)
\psdots[dotsize=2pt 0](8,1)
\psdots[dotsize=2pt 0](7,2)
\psdots[dotsize=2pt 0](7,2)
\psdots[dotsize=2pt 0](6,1)
\psdots[dotsize=2pt 0](5,2)
\psdots[dotsize=2pt 0](6,3)
\psdots[dotsize=2pt 0](6,3)
\psdots[dotsize=2pt 0](7,2)
\rput[bl](6.5,2.5){$y_2$}
\psdots[dotsize=2pt 0](5,4)
\psdots[dotsize=2pt 0](4,3)
\rput[bl](7.5,2.2){$x_2$}
\rput[bl](5.1,4.52){$x_2$}
\end{pspicture*}
\end{center}
Observe that the quiver has two vertices, with two arrows in each direction between them:
\begin{center}
\psset{xunit=1.0cm,yunit=1.0cm}
\begin{pspicture*}(0,-1.5)(6,1.5)
\psset{xunit=1.0cm,yunit=1.0cm,algebraic=true,dotstyle=*,dotsize=4pt 0,linewidth=0.8pt,arrowsize=4pt 2,arrowinset=0.25}
\psarc{>-}(3,-1.5){2.5}{90}{143.13}
\psarc{->}(3,-1.5){2.5}{36.87}{90}
\psarc{>-}(3,1.5){2.5}{270}{323.13}
\psarc{->}(3,1.5){2.5}{216.87}{270}
\psdots(1,0)
\psdots(5,0)
\rput[bl](2.7,1.18){$y_1$}
\rput[bl](3.14,1.18){$y_2$}
\rput[bl](3.14,-0.85){$x_1$}
\rput[bl](2.7,-0.85){$x_2$}
\end{pspicture*}
\end{center}
 Thus, the path algebra $\C Q$ is generated by two idempotents and elements corresponding to the four arrows. As in the previous example, the quiver has two faces so the superpotential has two terms. The cyclic elements corresponding to the boundaries of the black and white faces, are $(x_1 y_2 x_2 y_1)$ and $(x_1 y_1 x_2 y_2)$ respectively, and so the superpotential is:
$$
W = (x_1 y_2 x_2 y_1) - (x_1 y_1 x_2 y_2)
$$
Applying the cyclic derivative with respect to each arrow, we obtain four relations:
$$\begin{matrix}
     \frac{\del W}{\del x_1}=& y_2 x_2 y_1 - y_1 x_2 y_2 &=0 \cr
      \frac{\del W}{\del x_2}=&y_1 x_1 y_2 - y_2 x_1 y_1 &=0 \cr
     \frac{\del W}{\del y_1}=& x_1 y_2 x_2 - x_2 y_2 x_1 &=0 \cr
      \frac{\del W}{\del y_2}=&x_2 y_1 x_1 - x_1 x_1 x_2 &=0 \cr
\end{matrix}
 $$
We note that unlike in the hexagonal tiling example above, the algebra $A= \C Q / I_W$ is a non-commutative algebra. We will see later that its centre is isomorphic to the coordinate ring of the threefold ordinary double point, or conifold singularity. It is known that $A$ is a non-commutative crepant resolution of this singularity (see Proposition~7.3 of \cite{VdB}, \cite{Balazs}), and for this reason the example is usually referred to as the `non-commutative conifold' in the physics literature \cite{Kennaway, HananyKen, HananyVegh}.
\end{example}

\begin{example} \label{nonmin}
We now give an example which has the same algebra $A$, as the conifold example above.

\begin{center}
\newrgbcolor{zzqqtt}{0 0 0}
\newrgbcolor{qqcccc}{0.8 0.8 0.8}
\newrgbcolor{qqccqq}{0.8 0.8 0.8}
\psset{xunit=0.42cm,yunit=0.42cm,runit=0.42cm,algebraic=true,dotstyle=*,dotsize=2pt 0,linewidth=1.0pt,arrowsize=3pt 2,arrowinset=0.25}
\begin{pspicture*}(-0.5,-3)(27.5,11)
\psline[linecolor=zzqqtt](2,0)(3,1)
\psline[linecolor=zzqqtt](3,1)(4,2)
\psline[linecolor=zzqqtt](4,2)(5,3)
\psline[linecolor=zzqqtt](5,3)(6,4)
\psline[linecolor=zzqqtt](2,0)(1,1.5)
\psline[linecolor=zzqqtt](2,0)(3.5,-1)
\psline[linecolor=zzqqtt](3.5,4)(5,3)
\psline[linecolor=zzqqtt](5,3)(6,1.5)
\psline[linecolor=zzqqtt](7,0)(8,1)
\psline[linecolor=zzqqtt](8,1)(9,2)
\psline[linecolor=zzqqtt](9,2)(10,3)
\psline[linecolor=zzqqtt](10,3)(11,4)
\psline[linecolor=zzqqtt](7,0)(6,1.5)
\psline[linecolor=zzqqtt](7,0)(8.5,-1)
\psline[linecolor=zzqqtt](8.5,4)(10,3)
\psline[linecolor=zzqqtt](10,3)(11,1.5)
\psline[linecolor=zzqqtt](2,5)(3,6)
\psline[linecolor=zzqqtt](3,6)(4,7)
\psline[linecolor=zzqqtt](4,7)(5,8)
\psline[linecolor=zzqqtt](2,5)(1,6.5)
\psline[linecolor=zzqqtt](2,5)(3.5,4)
\psline[linecolor=zzqqtt](3.5,9)(5,8)
\psline[linecolor=zzqqtt](5,8)(6,6.5)
\psline[linecolor=zzqqtt](6,4)(7,5)
\psline[linecolor=zzqqtt](7,5)(8,6)
\psline[linecolor=zzqqtt](8,6)(9,7)
\psline[linecolor=zzqqtt](9,7)(10,8)
\psline[linecolor=zzqqtt](7,5)(6,6.5)
\psline[linecolor=zzqqtt](7,5)(8.5,4)
\psline[linecolor=zzqqtt](8.5,9)(10,8)
\psline[linecolor=zzqqtt](10,8)(11,6.5)
\psline[linecolor=zzqqtt](2,5)(0,8)
\psline[linecolor=zzqqtt](0,8)(2,10)
\psline[linecolor=zzqqtt](2,10)(5,8)
\psline[linecolor=zzqqtt](5,8)(7,10)
\psline[linecolor=zzqqtt](7,10)(10,8)
\psline[linecolor=zzqqtt](2,0)(0,3)
\psline[linecolor=zzqqtt](0,3)(2,5)
\psline[linecolor=zzqqtt](10,-2)(7,0)
\psline[linecolor=zzqqtt](10,-2)(12,0)
\psline[linecolor=zzqqtt](12,0)(10,3)
\psline[linecolor=zzqqtt](12,5)(10,8)
\psline[linecolor=zzqqtt](12,5)(10,3)
\psline[linecolor=zzqqtt](5,-2)(7,0)
\psline[linecolor=zzqqtt](5,-2)(2,0)
\psline[ArrowInside=->, linecolor=qqccqq](5,5)(2,8)
\psline[ArrowInside=->, linecolor=qqccqq](5,5)(7,8)
\psline[ArrowInside=->, linecolor=qqccqq](10,5)(7,8)
\psline[ArrowInside=->, linecolor=qqccqq](7,3)(5,5)
\psline[ArrowInside=->, linecolor=qqccqq](10,5)(7,3)
\psline[ArrowInside=->, linecolor=qqccqq](10,0)(7,3)
\psline[ArrowInside=->, linecolor=qqccqq](5,0)(7,3)
\psline[ArrowInside=->, linecolor=qqccqq](5,0)(2,3)
\psline[ArrowInside=->, linecolor=qqccqq](5,5)(2,3)
\psline[ArrowInside=->, linecolor=qqccqq](2,3)(0,5)
\psline[ArrowInside=->, linecolor=qqccqq](7,8)(5,10)
\psline[ArrowInside=->, linecolor=qqccqq](7,-2)(5,0)
\psline[ArrowInside=->, linecolor=qqccqq](10,0)(7,-2)
\psline[ArrowInside=->, linecolor=qqccqq](12,3)(10,5)
\psline[ArrowInside=->, linecolor=qqccqq](10,0)(12,3)
\psline[ArrowInside=->, linecolor=qqccqq](5,10)(2,8)
\psline[ArrowInside=->, linecolor=qqccqq](0,5)(2,8)
\psarc[ArrowInside=->, linecolor=qqccqq](2.88,5.88){2.3}{337.62}{112.38}
\psarc[ArrowInside=->, linecolor=qqccqq](4.12,7.12){2.3}{157.62}{292.38}
\psarc[ArrowInside=->, linecolor=qqccqq](7.88,5.88){2.3}{337.62}{112.38}
\psarc[ArrowInside=->, linecolor=qqccqq](9.12,7.12){2.3}{157.62}{292.38}
\psarc[ArrowInside=->, linecolor=qqccqq](2.88,0.88){2.3}{337.62}{112.38}
\psarc[ArrowInside=->, linecolor=qqccqq](4.12,2.12){2.3}{157.62}{292.38}
\psarc[ArrowInside=->, linecolor=qqccqq](7.88,0.88){2.3}{337.62}{112.38}
\psarc[ArrowInside=->, linecolor=qqccqq](9.12,2.12){2.3}{157.62}{292.38}
\psarc[ArrowInside=->, linecolor=qqccqq]{<-}(2.88,5.88){2.3}{45}{112.38}
\psarc[ArrowInside=->, linecolor=qqccqq]{->}(4.12,7.12){2.3}{157.62}{225}
\psarc[ArrowInside=->, linecolor=qqccqq]{<-}(7.88,5.88){2.3}{45}{112.38}
\psarc[ArrowInside=->, linecolor=qqccqq]{->}(9.12,7.12){2.3}{157.62}{225}
\psarc[ArrowInside=->, linecolor=qqccqq]{<-}(2.88,0.88){2.3}{45}{112.38}
\psarc[ArrowInside=->, linecolor=qqccqq]{->}(4.12,2.12){2.3}{157.62}{225}
\psarc[ArrowInside=->, linecolor=qqccqq]{<-}(7.88,0.88){2.3}{45}{112.38}
\psarc[ArrowInside=->, linecolor=qqccqq]{->}(9.12,2.12){2.3}{157.62}{225}
\psline[linecolor=qqcccc](17,0)(18,1)
\psline[linecolor=qqcccc](18,1)(19,2)
\psline[linecolor=qqcccc](19,2)(20,3)
\psline[linecolor=qqcccc](20,3)(21,4)
\psline[linecolor=qqcccc](18.5,4)(20,3)
\psline[linecolor=qqcccc](20,3)(21,1.5)
\psline[linecolor=qqcccc](22,0)(23,1)
\psline[linecolor=qqcccc](23,1)(24,2)
\psline[linecolor=qqcccc](24,2)(25,3)
\psline[linecolor=qqcccc](25,3)(26,4)
\psline[linecolor=qqcccc](22,0)(21,1.5)
\psline[linecolor=qqcccc](23.5,4)(25,3)
\psline[linecolor=qqcccc](25,3)(26,1.5)
\psline[linecolor=qqcccc](17,5)(18,6)
\psline[linecolor=qqcccc](18,6)(19,7)
\psline[linecolor=qqcccc](19,7)(20,8)
\psline[linecolor=qqcccc](17,5)(18.5,4)
\psline[linecolor=qqcccc](20,8)(21,6.5)
\psline[linecolor=qqcccc](21,4)(22,5)
\psline[linecolor=qqcccc](22,5)(23,6)
\psline[linecolor=qqcccc](23,6)(24,7)
\psline[linecolor=qqcccc](24,7)(25,8)
\psline[linecolor=qqcccc](22,5)(21,6.5)
\psline[linecolor=qqcccc](22,5)(23.5,4)
\psline[linecolor=qqcccc](17,5)(15,8)
\psline[linecolor=qqcccc](15,8)(17,10)
\psline[linecolor=qqcccc](17,10)(20,8)
\psline[linecolor=qqcccc](20,8)(22,10)
\psline[linecolor=qqcccc](22,10)(25,8)
\psline[linecolor=qqcccc](17,0)(15,3)
\psline[linecolor=qqcccc](15,3)(17,5)
\psline[linecolor=qqcccc](25,-2)(22,0)
\psline[linecolor=qqcccc](25,-2)(27,0)
\psline[linecolor=qqcccc](27,0)(25,3)
\psline[linecolor=qqcccc](27,5)(25,8)
\psline[linecolor=qqcccc](27,5)(25,3)
\psline[linecolor=qqcccc](20,-2)(22,0)
\psline[linecolor=qqcccc](20,-2)(17,0)
\psline[ArrowInside=->](20,5)(17,8)
\psline[ArrowInside=->](20,5)(22,8)
\psline[ArrowInside=->](25,5)(22,8)
\psline[ArrowInside=->](22,3)(20,5)
\psline[ArrowInside=->](25,5)(22,3)
\psline[ArrowInside=->](25,0)(22,3)
\psline[ArrowInside=->](20,0)(22,3)
\psline[ArrowInside=->](20,0)(17,3)
\psline[ArrowInside=->](20,5)(17,3)
\psline[ArrowInside=->](17,3)(15,5)
\psline[ArrowInside=->](22,8)(20,10)
\psline[ArrowInside=->](22,-2)(20,0)
\psline[ArrowInside=->](25,0)(22,-2)
\psline[ArrowInside=->](27,3)(25,5)
\psline[ArrowInside=->](25,0)(27,3)
\psline[ArrowInside=->](20,10)(17,8)
\psline[ArrowInside=->](15,5)(17,8)
\psarc[ArrowInside=->](17.88,5.88){2.3}{337.62}{112.38}
\psarc[ArrowInside=->](19.12,7.12){2.3}{157.62}{292.38}
\psarc[ArrowInside=->](22.88,5.88){2.3}{337.62}{112.38}
\psarc[ArrowInside=->](24.12,7.12){2.3}{157.62}{292.38}
\psarc[ArrowInside=->](17.88,0.88){2.3}{337.62}{112.38}
\psarc[ArrowInside=->](19.12,2.12){2.3}{157.62}{292.38}
\psarc[ArrowInside=->](22.88,0.88){2.3}{337.62}{112.38}
\psarc[ArrowInside=->](24.12,2.12){2.3}{157.62}{292.38}
\psarc[ArrowInside=->]{<-}(17.88,5.88){2.3}{45}{112.38}
\psarc[ArrowInside=->]{->}(19.12,7.12){2.3}{157.62}{225}
\psarc[ArrowInside=->]{<-}(22.88,5.88){2.3}{45}{112.38}
\psarc[ArrowInside=->]{->}(24.12,7.12){2.3}{157.62}{225}
\psarc[ArrowInside=->]{<-}(17.88,0.88){2.3}{45}{112.38}
\psarc[ArrowInside=->]{->}(19.12,2.12){2.3}{157.62}{225}
\psarc[ArrowInside=->]{<-}(22.88,0.88){2.3}{45}{112.38}
\psarc[ArrowInside=->]{->}(24.12,2.12){2.3}{157.62}{225}
\psline[linestyle=dashed,dash=3pt 3pt](4.49,5.51)(9.49,5.51)
\psline[linestyle=dashed,dash=3pt 3pt](9.49,5.51)(9.49,0.51)
\psline[linestyle=dashed,dash=3pt 3pt]](9.49,0.51)(4.49,0.51)
\psline[linestyle=dashed,dash=3pt 3pt](4.49,0.51)(4.49,5.51)
\psline[linestyle=dashed,dash=3pt 3pt](19.49,5.51)(24.49,5.51)
\psline[linestyle=dashed,dash=3pt 3pt](24.49,5.51)(24.49,0.51)
\psline[linestyle=dashed,dash=3pt 3pt](24.49,0.51)(19.49,0.51)
\psline[linestyle=dashed,dash=3pt 3pt](19.49,0.51)(19.49,5.51)
\psdots[dotsize=5pt 0](5,3)
\psdots[dotsize=5pt 0,dotstyle=o](2,0)
\psdots[dotsize=5pt 0](3,1)
\psdots[dotsize=5pt 0,dotstyle=o](4,2)
\psdots[dotsize=5pt 0,dotstyle=o](7,0)
\psdots[dotsize=5pt 0,dotstyle=o](7,0)
\psdots[dotsize=5pt 0,linecolor=black](8,1)
\psdots[dotsize=5pt 0,linecolor=black](8,1)
\psdots[dotsize=5pt 0,dotstyle=o](9,2)
\psdots[dotsize=5pt 0,dotstyle=o](9,2)
\psdots[dotsize=5pt 0,linecolor=black](10,3)
\psdots[dotsize=5pt 0,linecolor=black](10,3)
\psdots[dotsize=5pt 0,dotstyle=o](7,0)
\psdots[dotsize=5pt 0,dotstyle=o](7,0)
\psdots[dotsize=5pt 0,linecolor=black](10,3)
\psdots[dotsize=5pt 0,linecolor=black](10,3)
\psdots[dotsize=5pt 0,dotstyle=o](2,5)
\psdots[dotsize=5pt 0,dotstyle=o](2,5)
\psdots[dotsize=5pt 0,linecolor=black](3,6)
\psdots[dotsize=5pt 0,linecolor=black](3,6)
\psdots[dotsize=5pt 0,dotstyle=o](4,7)
\psdots[dotsize=5pt 0,dotstyle=o](4,7)
\psdots[dotsize=5pt 0,linecolor=black](5,8)
\psdots[dotsize=5pt 0,linecolor=black](5,8)
\psdots[dotsize=5pt 0,dotstyle=o](2,5)
\psdots[dotsize=5pt 0,dotstyle=o](2,5)
\psdots[dotsize=5pt 0,linecolor=black](5,8)
\psdots[dotsize=5pt 0,linecolor=black](5,8)
\psdots[dotsize=5pt 0,dotstyle=o](7,5)
\psdots[dotsize=5pt 0,dotstyle=o](7,5)
\psdots[dotsize=5pt 0,linecolor=black](8,6)
\psdots[dotsize=5pt 0,linecolor=black](8,6)
\psdots[dotsize=5pt 0,dotstyle=o](9,7)
\psdots[dotsize=5pt 0,dotstyle=o](9,7)
\psdots[dotsize=5pt 0,linecolor=black](10,8)
\psdots[dotsize=5pt 0,linecolor=black](10,8)
\psdots[dotsize=5pt 0,dotstyle=o](7,5)
\psdots[dotsize=5pt 0,dotstyle=o](7,5)
\psdots[dotsize=5pt 0,linecolor=black](10,8)
\psdots[dotsize=5pt 0,linecolor=black](10,8)
\psdots[dotsize=5pt 0](0,3)
\psdots[dotsize=5pt 0,dotstyle=o](2,10)
\psdots[dotsize=5pt 0](0,8)
\psdots[dotsize=5pt 0,dotstyle=o](7,10)
\psdots[dotsize=5pt 0,dotstyle=o](12,0)
\psdots[dotsize=5pt 0](10,-2)
\psdots[dotsize=5pt 0,dotstyle=o](12,5)
\psdots[dotsize=5pt 0,linecolor=black](10,8)
\psdots[dotsize=5pt 0](5,-2)
\psdots[linecolor=qqcccc](2,8)
\psdots[linecolor=qqcccc](5,5)
\psdots[linecolor=qqcccc](10,5)
\psdots[linecolor=qqcccc](7,3)
\psdots[linecolor=qqcccc](10,0)
\psdots[linecolor=qqcccc](5,0)
\psdots[linecolor=qqcccc](2,3)
\psdots[linecolor=qqcccc](7,8)
\psdots[linecolor=qqcccc](0,5)
\psdots[linecolor=qqcccc](5,10)
\psdots[linecolor=qqcccc](7,-2)
\psdots[linecolor=qqcccc](12,3)
\psdots[dotsize=5pt 0,linecolor=qqccqq](15,8)
\psdots[dotstyle=o,dotsize=5pt 0,linecolor=qqccqq](17,0)
\psdots[dotsize=5pt 0,linecolor=qqccqq](18,1)
\psdots[dotstyle=o,dotsize=5pt 0,linecolor=qqccqq](19,2)
\psdots[dotsize=5pt 0,linecolor=qqccqq](20,3)
\psdots[dotstyle=o,dotsize=5pt 0,linecolor=qqccqq](22,0)
\psdots[dotsize=5pt 0,linecolor=qqccqq](23,1)
\psdots[dotstyle=o,dotsize=5pt 0,linecolor=qqccqq](24,2)
\psdots[dotsize=5pt 0,linecolor=qqccqq](25,3)
\psdots[dotstyle=o,dotsize=5pt 0,linecolor=qqccqq](17,5)
\psdots[dotsize=5pt 0,linecolor=qqccqq](18,6)
\psdots[dotstyle=o,dotsize=5pt 0,linecolor=qqccqq](19,7)
\psdots[dotsize=5pt 0,linecolor=qqccqq](20,8)
\psdots[dotstyle=o,dotsize=5pt 0,linecolor=qqccqq](22,5)
\psdots[dotsize=5pt 0,linecolor=qqccqq](23,6)
\psdots[dotstyle=o,dotsize=5pt 0,linecolor=qqccqq](24,7)
\psdots[dotsize=5pt 0,linecolor=qqccqq](25,8)
\psdots[dotstyle=o,dotsize=5pt 0,linecolor=qqccqq](17,10)
\psdots[dotstyle=o,dotsize=5pt 0,linecolor=qqccqq](22,10)
\psdots[dotsize=5pt 0,linecolor=qqccqq](15,3)
\psdots[dotsize=5pt 0,linecolor=qqccqq](25,-2)
\psdots[dotstyle=o,dotsize=5pt 0,linecolor=qqccqq](27,0)
\psdots[dotstyle=o,dotsize=5pt 0,linecolor=qqccqq](27,5)
\psdots[dotsize=5pt 0,linecolor=qqccqq](20,-2)
\psdots[linecolor=black](20,5)
\psdots[linecolor=black](17,8)
\psdots[linecolor=black](22,8)
\psdots[linecolor=black](25,5)
\psdots[linecolor=black](22,3)
\psdots[linecolor=black](25,0)
\psdots[linecolor=black](20,0)
\psdots[linecolor=black](17,3)
\psdots[linecolor=black](15,5)
\psdots[linecolor=black](20,10)
\psdots[linecolor=black](22,-2)
\psdots[linecolor=black](27,3)
\rput[bl](22.5,5.6){$b_1$}
\rput[bl](19.7,2.3){$b_2$}
\rput[bl](24.7,2.3){$b_2$}
\rput[bl](22.5,0.6){$b_1$}
\rput[bl](21.0,6.0){$a_2$}
\rput[bl](21.17,3.98){$b_3$}
\rput[bl](23.6,3.5){$a_3$}
\rput[bl](23.55,1.4){$a_1$}
\rput[bl](21.0,1.0){$a_2$}
\rput[bl](18.6,3.5){$a_3$}
\end{pspicture*}
\end{center}

This quiver also has two vertices, there are three arrows in each direction between them. Thus, the path algebra $\C Q$ is generated by two idempotents and elements corresponding to the six arrows. The quiver has four faces so the superpotential has four terms:
$$
W = (a_3 b_1)-(a_3 b_3)+(b_3 a_2 b_2 a_1) - (b_1 a_1 b_2 a_2)
$$
Applying the cyclic derivative with respect to each arrow, we now obtain six relations:
$$\begin{matrix}
      \frac{\del W}{\del a_1}=& b_3 a_2 b_2 - b_2 a_2 b_1 &=0 \cr
      \frac{\del W}{\del a_2}=&b_2 a_1 b_3 - b_1 a_1 b_2 &=0 \cr
      \frac{\del W}{\del a_3}=& b_1-b_3 &=0 \cr
      \frac{\del W}{\del b_1}=&a_3 - a_1 b_2 a_2 &=0 \cr
      \frac{\del W}{\del b_2}=& a_1 b_3 a_2 - a_2 b_1 a_1 &=0 \cr
      \frac{\del W}{\del b_3}=&a_2 b_2 a_1 - a_3 &=0 \cr
\end{matrix}
 $$
We can use the relations $\frac{\del W}{\del a_3}$ and $\frac{\del W}{\del b_3}$ respectively, to write $b_3$ and $a_3$ in terms of the other generators, namely, $b_3 = b_1$ and $a_3 = a_2 b_2 a_1$. The path algebra subject to these two relations, is patently isomorphic to the path algebra of the quiver from the conifold example. Furthermore, substituting $b_3$ and $a_3$ into the relations, we obtain four relations:
$$\begin{matrix}
     \frac{\del W}{\del a_1}=& b_1 a_2 b_2 - b_2 a_2 b_1 &=0 \cr
      \frac{\del W}{\del a_2}=& b_2 a_1 b_1- b_1 a_1 b_2 &=0 \cr
     \frac{\del W}{\del b_1}=&  a_2 b_2 a_1- a_1 b_2 a_2 &=0 \cr
      \frac{\del W}{\del b_2}=& a_1 b_1 a_2 - a_2 b_1 a_1&=0 \cr
\end{matrix}
 $$
which are the relations in the conifold example. Therefore we have demonstrated a dimer model which outputs the same algebra $A$.
\end{example}

\subsection{Minimality} \label{minimality}
As we have just seen, it is possible for two distinct dimer models to have the same superpotential algebra $A$. In fact, if we choose any vertex of a dimer model, we can `split' this into two vertices of the same colour, connected together via a bivalent vertex of the other colour:

\begin{center}
\psset{xunit=0.9cm,yunit=0.9cm,algebraic=true,dotstyle=*,dotsize=5pt 0,linewidth=0.8pt,arrowsize=3pt 2,arrowinset=0.25}
\begin{pspicture*}(-3,-1)(7,5)
\psline(-1,2)(-2,1)
\psline(-1,2)(0,1)
\psarc[linestyle=dotted](-1,2){0.71}{225}{315}
\psline(5,3)(4,4)
\psline(6,4)(5,3)
\psline(4,0)(5,1)
\psline(5,1)(6,0)
\psarc[linestyle=dotted](5,3){0.71}{45}{135}
\psarc[linestyle=dotted](5,1){0.71}{225}{315}
\psline{->}(1.58,2.06)(2.58,2.06)
\psline(5,3)(5,2)
\psline(5,2)(5,1)
\psline(-1,2)(-2,3)
\psline(-1,2)(0,3)
\psarc[linestyle=dotted](-1,2){0.71}{45}{135}
\psdots(-1,2)
\psdots(5,3)
\psdots(5,1)
\psdots[dotstyle=o](5,2)
\end{pspicture*}
\end{center}

The resulting dimer model has two additional edges and so the quiver has two additional arrows. However the relations dual to these arrows equate the new arrows to paths which previously existed, and the resulting superpotential algebras for the two dimer models are in fact the same. We call a dimer model `non-minimal' if it can be obtained from a dimer model with fewer edges in this way. Note that Example~\ref{nonmin} is non-minimal as it can be obtained from Example~\ref{conifold}.

If a dimer model has a bivalent vertex which is connected to two distinct vertices, then it is possible to do the converse of the above process, i.e. remove the bivalent vertex and contract its two neighbours to a single vertex. 
For an example where it is not possible to remove the bivalent vertices, see (\ref{balwnopm}).

\section{Symmetries} \label{symmetries}

A \emph{global} symmetry is a one-parameter subgroup $\rho\colon \C^*\to \Aut(A)$
that arises from an action on the arrow fields
\[
\rho(t)\colon x_a \mapsto t^{v_a} x_a,
\]
for some $v\in \Z^{Q_1}$, which we may think of as a 1-cochain in the complex \eqref{eq:cochain}.
Then its coboundary $dv\in\Z^{Q_2}$ gives precisely the weights of the $\rho$-action on the terms in the superpotential $W$.

Thus, $\rho$ is a well-defined map to $\Aut(A)$ when it acts homogeneously on all terms in the superpotential $W$,
in other words, when
\begin{equation}
\label{eq:homog}
d v =  \lambda\const{1}
\end{equation}
for some constant $\lambda\in\Z$, which we will also call the \emph{degree} of $\rho$.

Using intentionally toric notation,
we shall write
\begin{equation}
\label{eq:Ndefs}
\Ng = d^{-1}(\Z\const{1})\subs \Z^{Q_1}
\quad\text{and}\quad
\Ng^+ = \Ng \cap \N^{Q_1}
\end{equation}
Then $\Ng$ is the one-parameter subgroup lattice of a complex torus
$\Tg\subg\Aut(A)$ containing all global symmetries.

The other differential in the cochain complex \eqref{eq:cochain}
also has a natural interpretation in this context.
The lattice $\Nb=\Z^{Q_0}$ 
is the one-parameter subgroup lattice
of a complex torus $\Tb$ of invertible elements of $A$,
namely
\[
\Tb = \bigl\{ \sum_{i\in Q_0} t_i e_i : t_i\in\Cx \bigr\},
\]
where $e_i\in A$ are the idempotents corresponding to the vertices of
$Q$.
Then the lattice map $d\colon \Z^{Q_0} \to \Ng$ corresponds
to the map $\Tb \to \Tg\subg\Aut(A)$ giving the action on $A$ by inner
automorphisms, i.e. by conjugation.
Then the cokernel of $d\colon \Z^{Q_0} \to \Ng$ is the lattice of one-parameter subgroups
of outer automorphisms arising from global symmetries.
In other words, we have an exact sequence of complex tori
\[
1 \to \Cx \to \Tb \to \Tg \to \Tm \to 1,
\]
with corresponding exact sequence of one parameter subgroup
lattices
\begin{equation}\label{Nseq}
0 \to \Z \to \Nb \to \Ng \to \Nm \to 0
\end{equation}
In the physics literature (e.g. \cite{Kennaway}), elements of $\Nb$ are usually referred to as \emph{baryonic} symmetries, and elements of $\Nm$ as \emph{mesonic} symmetries.

Finally in this section we define the notion of an \emph{R-symmetry}, whose name comes from physics, but which is important mathematically as it makes $A$ into a graded algebra with finite dimensional graded pieces. 
\begin{definition} \label{RSym}
An R-symmetry is a global symmetry that acts with strictly positive weights (or `charges') on all the arrows.
\end{definition}

The R-symmetries are the `interior lattice points' of the cone $\Ng^+$. In the physics literature (e.g. \cite{Kennaway}) it is traditional to normalise the R-symmetries so they are of degree 2, (i.e. they act homogeneously with weight 2 on the superpotential) but also to extend the definition to allow the weights to be real, i.e. in $\R^{Q_1}$. By this definition, R-symmetries are real one-parameter subgroups whose weights lie in the interior of the degree 2 slice of the real cone corresponding to $\Ng^+$. Since this is a rational polyhedral cone, the interior is non-empty if and only if it contains rational points and hence if and only if $\Ng^+$ itself contains integral points 
with all weights strictly positive. Therefore for the purposes of this article we consider R-symmetries to be integral, as we defined above, and we do not impose the degree 2 normalisation.

Note finally that the existence of an R-symmetry is equivalent to the
fact that $\Ng^+$ is a `full' cone, i.e. it spans $\Ng$.

\section{Perfect matchings} \label{Perfmatchsec}

A \emph{perfect matching} on a bipartite graph is a collection of edges such that each vertex is the end point of precisely one edge (see for example \cite{Kenyonintro}). The edges in a perfect matching are sometimes also referred to as dimers and the perfect matching as a dimer configuration. 

Using the notation from \eqref{eq:cochain}, we take the essentially equivalent
dual point of view; we consider a perfect matching to be a 1-cochain $\pf\in \Z^{Q_1}$, with all values
in $\{0,1\}$, such that $d\pf=\const{1}$.
This is equivalent to requiring that $\pf\in\N^{Q_1}$ and $d\pf=\const{1}$, and so perfect matchings are the degree 1 elements of $\Ng^+$.

Not every dimer model has a perfect matching. This is obvious for dimer models which are not balanced, but even those with equal numbers of black and white vertices need not have a perfect matching. For example, the following case drawn on the torus:

\begin{equation}\label{balwnopm}
\psset{xunit=1.3cm,yunit=1.3cm,runit=1.3cm,algebraic=true,dotstyle=*,dotsize=5pt 0,linewidth=0.8pt,arrowsize=3pt 2,arrowinset=0.25}
\begin{pspicture*}(-2,0.5)(3,4.3)  
\psline[linestyle=dashed](-1,4)(2,4)
\psline[linestyle=dashed](2,4)(2,1)
\psline[linestyle=dashed](2,1)(-1,1)
\psline[linestyle=dashed](-1,1)(-1,4)
\psline(0,1)(-0.5,2.5)
\psline(0.18,2.5)(0,1)
\psline(0,4)(0.18,2.5)
\psline(-0.5,2.5)(0,4)
\psline(1,4)(0.82,2.5)
\psline(0.82,2.5)(1,1)
\psline(1,1)(1.5,2.5)
\psline(1.5,2.5)(1,4)
\psarc(0.5,0.52){0.69}{43.83}{136.17}
\psarc(-0.86,5.03){1.34}{263.8}{309.6}
\psarc(1.86,5.03){1.34}{230.4}{276.2}
\psdots(0,4)
\psdots[dotstyle=o](1,4)
\psdots(0,1)
\psdots[dotstyle=o](1,1)
\psdots[dotstyle=o](-0.5,2.5)
\psdots[dotstyle=o](0.18,2.5)
\psdots(0.82,2.5)
\psdots(1.5,2.5)
\end{pspicture*}
\end{equation}

The condition that a perfect matching does exist is provided by Hall's (Marriage) Theorem \cite{Hall35}:
\begin{lemma} \label{Hall}
A bipartite graph admits a perfect matching if and only if
it has the same number of black and white vertices and
every subset of black vertices is connected to at least as
many white vertices.
\end{lemma}
In the example in figure \ref{balwnopm} above, it can be see that the two black vertices in the interior of the fundamental domain are connected to just one vertex and so there are no perfect matchings.
\begin{remark}

The above example contains bivalent vertices which can not be removed in the way explained in Section~\ref{minimality}. However these are not the source of the `problem', in fact by doubling one of the edges ending at each bivalent vertex:

\begin{center}
\psset{xunit=1.0cm,yunit=1.0cm,algebraic=true,dotstyle=*,dotsize=5pt 0,linewidth=0.8pt,arrowsize=3pt 2,arrowinset=0.25}
\begin{pspicture*}(-5,0)(8,4)
\psline(-1,2)(0.04,3)
\psline(0.02,1)(-1,2)
\psline(-4,3)(-3,2)
\psline(-3,2)(-4,1.02)
\psarc[linestyle=dotted](-1,2){0.71}{315.57}{43.88}
\psarc[linestyle=dotted](-3,2){0.71}{135}{224.42}
\psline{->}(1.02,1.98)(2.02,1.98)
\psline(-1,2)(-2,2)
\psline(-2,2)(-3,2)
\psline(6,2)(7.04,3)
\psline(7.02,1)(6,2)
\psline(3,3)(4,2)
\psline(4,2)(3,1.02)
\psarc[linestyle=dotted](6,2){0.71}{315.57}{43.88}
\psarc[linestyle=dotted](4,2){0.71}{135}{224.42}
\psline(5,2)(4,2)
\psarc(5.5,3){1.12}{243.43}{296.57}
\psarc(5.5,1){1.12}{63.43}{116.57}
\psdots(-1,2)
\psdots(-3,2)
\psdots[dotstyle=o](-2,2)
\psdots(6,2)
\psdots(4,2)
\psdots[dotstyle=o](5,2)
\end{pspicture*}
\end{center}
we obtain a new dimer model which, by Lemma~\ref{Hall}, admits a perfect matching if and only if the original dimer model did. The resulting dimer model doesn't have bivalent vertices, but it does have {\digon}s. 
We can in turn replace each {\digon} as follows:
\begin{center}
\psset{xunit=1.0cm,yunit=1.0cm,algebraic=true,dotstyle=*,dotsize=5pt 0,linewidth=0.8pt,arrowsize=3pt 2,arrowinset=0.25}
\begin{pspicture*}(-3,-1)(7,7)
\psarc(0.52,3){1.82}{146.6}{213.4}
\psarc(-2.52,3){1.82}{326.6}{33.4}
\psline(-1,2)(-2,1)
\psline(-1,2)(0,1)
\psline(-1,4)(-2,5)
\psline(-1,4)(0,5)
\psarc[linestyle=dotted](-1,4){0.71}{45}{135}
\psarc[linestyle=dotted](-1,2){0.71}{225}{315}
\psline(5,5)(4,3.5)
\psline(4,3.5)(5,3.5)
\psline(5,3.5)(6,3.5)
\psline(6,3.5)(6,2.5)
\psline(6,2.5)(5,2.5)
\psline(5,3.5)(5,2.5)
\psline(4,3.5)(4,2.5)
\psline(4,2.5)(5,2.5)
\psline(6,2.5)(5,1)
\psline(4,2.5)(5,1)
\psline(5,5)(6,3.5)
\psline(5,5)(4,6)
\psline(6,6)(5,5)
\psline(4,0)(5,1)
\psline(5,1)(6,0)
\psarc[linestyle=dotted](5,5){0.71}{45}{135}
\psarc[linestyle=dotted](5,1){0.71}{225}{315}
\psline{->}(1,3)(2,3)
\psdots[dotstyle=o](-1,4)
\psdots(-1,2)
\psdots(4,3.5)
\psdots[dotstyle=o](4,2.5)
\psdots[dotstyle=o](5,3.5)
\psdots(5,2.5)
\psdots[dotstyle=o](6,2.5)
\psdots(6,3.5)
\psdots[dotstyle=o](5,5)
\psdots(5,1)
\end{pspicture*}
\end{center}

It is simple to check (again using Lemma~\ref{Hall}) that the altered dimer model has a perfect matching if and only if the original did.
Therefore, in the example, if we replaced each bivalent vertex and then each {\digon} as above, we would obtain a dimer model without bivalent vertices or {\digon}s but which still has no perfect matchings. The moral of this is that for simplicity we can leave the bivalent vertices alone (and let {\digon}s be {\digon}s)!
\end{remark}

We will be particularly interested in dimer models which satisfy a slightly stronger condition.
\begin{definition}\label{nondegdefn}
We call a dimer model \emph{non-degenerate} when every edge in the bipartite graph is contained in some perfect matching.
\end{definition}
\noindent 
Given a non-degenerate dimer model, the sum all perfect matchings (as an element of $\Ng^+$) is strictly positive on every arrow. Therefore it defines an R-symmetry. 


In fact, the existence of an R-symmetry and the non-degeneracy condition are equivalent. This is a straightforward consequence of the following integral version of the famous Birkhoff-von~Neumann Theorem for doubly stochastic matrices \cite{Birkhoff}.

\begin{lemma}
\label{thm:BvN}
The cone $\Ng^+$ is integrally generated by the perfect matchings,
all of which are extremal elements.
\end{lemma}
\begin{proof}
We adapt the standard argument \cite{planetmath} to the integral case. Every perfect matching is an element of $\Ng^+$ and therefore the cone generated by the perfect matchings is contained in $\Ng^+$. Conversely, choose any element $v \in \Ng^+$, and suppose $\deg{v} >0$. We construct a graph $G_v$ whose vertex set is the same as the dimer model (bipartite graph) and whose edges are the subset of edges $e$ of the dimer model, such that $v$ evaluated on the dual arrow $a_e$ is non-zero.

We claim that $G_v$ satisfies the conditions of Lemma~\ref{Hall}. To see this, let $A$ be any subset of vertices of one colour (black or white) and denote by $N(A)$ the set of neighbours of $A$, i.e. the vertices (of the other colour) which have an edge connecting them to some element of $A$. In an abuse of notation we shall also consider $A,N(A)\subseteq Q_2$ as sets of faces of the dual quiver $Q$. We note that
$$\deg{v}.|A|= \sum_{f \in A} \langle d(v), f \rangle 
= \sum_{\genfrac{}{}{0pt}{}{f \in A}{g \in N(A)}} \langle v, \del f \cap \del g \rangle $$
where $\del f \cap \del g$ denotes the class in $\Z_{Q_1}$ corresponding to the sum of the arrows which are in the boundary of faces $f$ and $g$.
If $B= N(A)$, then since $A \subseteq N(B)$,
$$|B|= \frac{1}{\deg{v}} \sum_{\genfrac{}{}{0pt}{}{f \in B}{g \in N(B)}} \langle v, \del f \cap \del g \rangle
\geq \frac{1}{\deg{v}} \sum_{\genfrac{}{}{0pt}{}{f \in B}{g \in A}} \langle v, \del f \cap \del g \rangle = |A|$$
Thus $|N(A)| \geq |A|$ as required.

Applying Lemma~\ref{Hall}, we see that $G_v$ has a perfect matching which extends by zero to a perfect matching $\pf \in \Z^{Q_1}$ of the dimer model. Since by construction, $v$ takes strictly positive integral values on all the arrows on which $\pf$ is non-zero, we see that $v-\pf \in \Ng^+$, and has degree $\deg{v}-1$. We proceed inductively and, using the fact that 0 is the only degree zero element in $\Ng^+$, we see that $v$ is equal to a sum of $\deg{v}$ perfect matchings.

To show that all perfect matchings are extremal elements it is sufficient to prove that no perfect matching is a non-trivial convex sum of distinct perfect matchings. However if $\sum_{s=1}^n \kappa_s \pf_s$ is a non-trivial convex sum, i.e. $\kappa_s>0$ for all $s=1, \dots ,n$ and $\sum_{s=1}^n \kappa_s =1 $, then this sum evaluates to a number in the closed interval $[0,1]$ on every arrow in $Q$. Furthermore we see that the values $\{0,1\}$ are attained if and only if all of the perfect matchings evaluate to the same number on that arrow. Therefore if this convex sum is a perfect matching, i.e. a $\{0,1\}$-valued function, then $\pf_s$ evaluate to the same number on every arrow for all $s=1, \dots ,n$, so the perfect matchings are not distinct.
\end{proof}
%
\begin{remark}
Another straightforward corollary of Hall's theorem states that a dimer model is non-degenerate if and only if the bipartite graph has an equal numbers of black and white vertices and every proper subset of the black vertices of size $n$ is connected to at least $n+1$ white vertices. We shall refer to this condition as the `strong marriage' condition. Using this, it is easy to construct examples of dimer models which have a perfect matching but do not satisfy the non-degeneracy condition. In the following example the two white vertices in the interior of the fundamental domain are connected to two black vertices, so it is degenerate. It can also be checked directly that the edge marked in grey must be contained in every perfect matching. Therefore the other edges which share an end vertex with this edge are not contained in any perfect matching.

\begin{center}
\newrgbcolor{zzzzzz}{0.6 0.6 0.6}
\psset{xunit=1.3cm,yunit=1.3cm,runit=1.3cm,algebraic=true,dotstyle=*,dotsize=5pt 0,linewidth=0.8pt,arrowsize=3pt 2,arrowinset=0.25}
\begin{pspicture*}(-2,0.2)(3,5)
\psline(-1,4)(2,4)
\psline(2,4)(2,1)
\psline(2,1)(-1,1)
\psline(-1,1)(-1,4)
\psline(0,1)(-0.5,2.5)
\psline(0.18,2.5)(0,1)
\psline(0,4)(0.18,2.5)
\psline(1,4)(0.82,2.5)
\psline(0.82,2.5)(1,1)
\psline(1,1)(1.5,2.5)
\psarc(0.5,0.52){0.69}{43.83}{136.17}
\psarc(-0.86,5.03){1.34}{263.8}{309.6}
\psarc(1.86,5.03){1.34}{230.4}{276.2}
\psline[linewidth=5.2pt,linecolor=zzzzzz](0.18,2.5)(0.82,2.5)
\psline(0.18,2.5)(0.82,2.5)
\psline(1,4)(1.5,2.5)
\psline(0,4)(-0.5,2.5)
\psdots(0,4)
\psdots[dotstyle=o](1,4)
\psdots(0,1)
\psdots[dotstyle=o](1,1)
\psdots[dotstyle=o](-0.5,2.5)
\psdots[dotstyle=o](0.18,2.5)
\psdots(0.82,2.5)
\psdots(1.5,2.5)
\end{pspicture*}
\end{center}
\end{remark}

Let $\Nm^+$ be the saturation of the projection of the cone $\Ng^+\subs \Ng$
into the rank $2g +1$ lattice $\Nm$, where $g$ is the genus of $\RS$. In other words $\Nm^+$ is the intersection of $\Nm$ with the real cone generated by the image of $\Ng^+$ in $\Nm \otimes_{\Z} \R$.
Because of Theorem~\ref{thm:BvN}, it is natural to use the perfect matchings to describe $\Nm^+$.
From its construction there is a short exact sequence
\begin{equation}
\label{eq:Nmes}
0 \to H^{1}(\RS;\Z) \lra{} \Nm \lra{deg} \Z \to 0
\end{equation}
and, since every perfect matching has degree 1, their images in $\Nm$
span a lattice polytope in a rank 2$g$ affine sublattice such that $\Nm^+$ is the
cone on this polytope. By choosing some fixed reference matching $\pf_0$,
this polytope may be translated into $H^{1}(\RS;\Z)$ and described more
directly as follows:
for any perfect matching $\pf$, the difference $\pf-\pf_0$ is a
cocycle and hence has a well-defined cohomology class. We call this the relative cohomology class of $\pf$. The lattice polytope described above, is the convex hull of all relative cohomology classes of perfect matchings. We note that there is usually not a 1-1 correspondence between perfect matchings and lattice points in the polytope.
\begin{definition} \label{multipdefn}
The \emph{multiplicity} of a lattice point in the polytope is defined to be the number of perfect matchings whose relative cohomology class is that point.
\end{definition}

In the cases which will be of most interest, when the Riemann surface is a torus, then $\Nm$ is a rank 3 lattice, and the images of the perfect matchings span a polygon in a rank 2 affine sublattice.
\begin{definition} \label{externaldef}
A lattice point is called \emph{external} if it lies on a facet of the polygon, and \emph{extremal} if it lies at a vertex of the polygon. A perfect matching is external (extremal) if it corresponds to an external (extremal) lattice point.
\end{definition}
The translated polygon in $H^{1}(T;\Z) \cong \Z^{2}$ may be computed by various explicit methods,
e.g using the Kastelyn determinant as in \cite{HananyKen}.

From the point of view of toric geometry, it is natural to think of
$\Ng^+$ and $\Nm^+$ as describing two (normal) affine toric varieties
$\Xg$ and $\Xm$, such that $\Xm=\Xg / \Tb$ where the $\Tb$ action
is determined by the map $\Nb\to\Ng$.
Furthermore, $\Xg$ and $\Xm$ both have Gorenstein singularities, since the
corresponding cones are generated by hyperplane sections.

\chapter{Consistency} \label{consistchap}
In Chapter~\ref{Introto} we saw some non-degeneracy conditions which we can impose on dimer models. For example we saw what it means for a dimer
model to be balanced (\ref{balanced}) and non-degenerate (\ref{nondegdefn}). We now come to the most important and strongest of
these conditions which are called \emph{consistency} conditions. We describe two types of consistency which appear in the physics literature
and state how they relate to each other. Then in Section~\ref{stiengul} we briefly describe a construction due to Gulotta \cite{Gulotta}, which allows us
to produce examples of dimer models which satisfy these consistency conditions. We will see in later chapters that some kind of consistency
condition is needed in order to prove properties we are interested in, such as the Calabi-Yau property. 
While many of the ideas in this chapter come from the physics literature, as in the previous chapter, the way we present things here
will often be different in order to fit better within the overarching mathematical framework that we are constructing. Any proclaimed results
which are not otherwise referenced are new.
\section{A further condition on the R-symmetry}
We defined, in Section \ref{symmetries}, an \emph{R-symmetry} to be a global symmetry that acts with strictly positive weights on all the arrows. 
We recall that the existence of an R-symmetry is equivalent to non-degeneracy of a dimer model. The first definition of consistency is a strengthening of this, and states that a dimer model is consistent if there exists an `anomaly-free' R-symmetry. 

We recall that in the physics literature (e.g. in \cite{Kennaway}) it is traditional to allow real R-symmetries, but to normalise so they are of degree 2,  
i.e. a real R-symmetry $\Rsym \in \R^{Q_1}$ which acts on each arrow $a \in Q_1$ with weight $R_a$, satisfies
\begin{equation}\label{normnondegen}
\sum_{a \in \del f} R_a =2 \qquad \forall f \in Q_2
\end{equation}
Of these real R-symmetries, physicists are particularly interested in ones which have no `anomalies'.
Formulated mathematically, these are R-symmetries which satisfy the following `anomaly-vanishing' condition at each vertex of the quiver
\begin{equation}\label{anomfree}
\sum_{a \in H_v \cup T_v} (1-R_a) =2 \qquad \forall v \in Q_0
\end{equation}
where $H_v:= \{ b \in Q_1 \mid hb=v \}$ and $T_v:= \{ b \in Q_1 \mid tb=v \}$.

As stated before, we usually work with integral R-symmetries, without any normalisation, and let $\deg(\Rsym)$ be the degree of an R-symmetry $\Rsym \in \Z^{Q_1}$. Therefore, if $\Rsym$ acts on each arrow $a \in Q_1$ with weight $R_a \in \Z$, it satisfies
\begin{equation}\label{nondegen}
\sum_{a \in \del f} R_a =\deg(\Rsym) \qquad \forall f \in Q_2
\end{equation}
The corresponding un-normalised `anomaly-vanishing' condition at each vertex of the quiver is given by
\begin{equation}\label{anomfree1}
\sum_{ a \in H_v \cup T_v} R_a =
           \deg(\Rsym)(|H_v|-1)  \qquad \forall v \in Q_0
\end{equation}
We note that since $Q$ is dual to a bipartite tiling, the arrows around any given vertex $v$ alternate between outgoing and incoming arrows, so $|H_v|=|T_v|$.
\begin{remark} \label{realrat}
Since the conditions (\ref{anomfree1}) are rational linear equations, the intersection of their zero locus and the cone $\Ng^+$ is a rational cone. Thus, using a similar argument to that in Section~\ref{symmetries}, we see that there exists an `anomaly-free' real R-symmetry if and only if there exists an `anomaly free' integral R-symmetry.
\end{remark}

\begin{definition}\label{consistency}
A dimer model is called \emph{consistent} if there exists an R-symmetry satisfying the condition~(\ref{anomfree1}).
\end{definition}
We note that in particular a consistent dimer model has an R-symmetry and so it is non-degenerate (Definition~\ref{nondegdefn}). Up to this point we have considered dimer models on an arbitrary Riemann surface $\RS$. However the following argument, given by Kennaway in Section~3.1 of \cite{Kennaway}, shows that consistency forces this surface to be a torus.

Consider an R-symmetry which satisfies conditions~(\ref{nondegen}) and (\ref{anomfree1}). Summing these equations over all the quiver faces $Q_2$ and the quiver vertices $Q_0$ respectively and using the fact that each arrow is in exactly two faces and has two ends we observe that:
\begin{equation}\label{toruspf}
\deg(\Rsym) |Q_2| = 2 \sum_{a \in Q_1}R_a = \deg(\Rsym) (|Q_1|-|Q_0|)
\end{equation}
Hence $|Q_0| - |Q_1| + |Q_2| =0$. Since the quiver gives a cell decomposition of the surface $\RS$, this implies that $\RS$ has Euler characteristic zero and must be a 2-torus.

\section{Rhombus tilings} \label{Rhombtiling}
We now define another consistency condition which we call `geometric consistency'. This will imply the consistency condition (Definition~\ref{consistency}) and when it holds, it gives a geometric interpretation of the conditions (\ref{normnondegen}) and (\ref{anomfree}). This was first understood by Hanany and Vegh in \cite{HananyVegh}. In practice it is not easy to see directly if a dimer model is geometrically consistent, however we will explain an equivalent characterisation in terms of `train tracks' on the `quad graph' due to Kenyon and Schlenker which will be easier to check.

Given a dimer model on a torus, we construct the `quad graph' 
associated to it. This is a tiling of the torus whose set of vertices is the union of the vertices of the bipartite tiling and its dual quiver $Q$. The edges of the quad graph connect a dimer vertex $\face$ to a quiver vertex $v$ if and only if the face dual to $\face $ has vertex $v$ in its boundary. The faces of this new tiling, which we call `quads' to avoid confusion, are by construction quadrilaterals and are in 1-1 correspondence with the arrows in the quiver $Q$; given any arrow $a$ in the quiver we have remarked previously (Remark~\ref{Ftermremark}) that it lies in the boundary of exactly two quiver faces $\face_+$ and $\face_-$ of different colours. Therefore, there are edges in the quad graph between each of $\face_\pm$ and both $ha$ and $ta$. These four edges form the boundary of a quad, and every quad is of this form. In particular, the corresponding arrow and dual edge in the dimer are the two diagonals of the quad.

\begin{center}
\psset{xunit=1.0cm,yunit=1.0cm}
\newrgbcolor{zzzzzz}{0.85 0.85 0.85}
\begin{pspicture*}(-2,-2.5)(5.5,2.5)
\psset{xunit=0.45cm,yunit=0.45cm,algebraic=true,dotstyle=*,dotsize=3pt 0,linewidth=0.8pt,arrowsize=3pt 2,arrowinset=0.25}
\pspolygon[fillcolor=zzzzzz,fillstyle=solid](3,3)(-0.56,3.72)(-2.96,1.33)(-2.38,-2.64)(1,-4)(3,-3)
\psline[linewidth=1.6pt](0.5,0)(3,-3)
\psline[linewidth=1.6pt](3,-3)(5.5,0)
\psline[linewidth=1.6pt](5.5,0)(3,3)
\psline[linewidth=1.6pt](3,3)(0.5,0)
\psline[ArrowInside=->](3,-3)(3,3)
\psline[ArrowInside=->](3,3)(-0.56,3.72)
\psline[ArrowInside=->](-0.56,3.72)(-2.96,1.33)
\psline[ArrowInside=->](-2.96,1.33)(-2.38,-2.64)
\psline[ArrowInside=->](-2.38,-2.64)(1,-4)
\psline[ArrowInside=->](1,-4)(3,-3)
\psline[ArrowInside=->](3,-3)(3,3)
\psline[ArrowInside=->](3,3)(7.64,3.5)
\psline[ArrowInside=->](7.64,3.5)(9.62,-0.52)
\psline[ArrowInside=->](9.62,-0.52)(6.8,-3.76)
\psline[ArrowInside=->](6.8,-3.76)(3,-3)
\psline[ArrowInside=->](3,-3)(3,3)
\rput[tl](-0.58,0.7){$f_+$}
\rput[tl](5.95,0.7){$f_-$}
\psdots[dotsize=6pt 0](0.5,0)
\psdots[dotsize=2pt 0](3,3)
\psdots[dotsize=2pt 0](3,-3)
\psdots[dotsize=6pt 0,dotstyle=o](5.5,0)
\rput[bl](3.46,0.01){$a$}
\psdots[dotsize=2pt 0](-0.56,3.72)
\psdots[dotsize=2pt 0](-2.96,1.33)
\psdots[dotsize=2pt 0](-2.38,-2.64)
\psdots[dotsize=2pt 0](1,-4)
\psdots[dotsize=2pt 0](9.62,-0.52)
\psdots[dotsize=2pt 0](7.64,3.5)
\psdots[dotsize=2pt 0](6.8,-3.76)
\end{pspicture*}
\end{center}

We draw the bipartite graph, the quiver and the quad graph of the hexagonal tiling below as an example.

\begin{center}
\newrgbcolor{zzzzzz}{0.8 0.8 0.8}
\psset{xunit=0.9cm,yunit=0.9cm,algebraic=true,dotstyle=*,dotsize=5pt 0,linewidth=1.0pt,arrowsize=3pt 2,arrowinset=0.25}
\begin{pspicture*}(-3,-5)(10,4)
\psline(-2,1)(-1,1)
\psline(-1,1)(-0.5,1.87)
\psline(-0.5,1.87)(-1,2.73)
\psline(-1,2.73)(-2,2.73)
\psline(-1,2.73)(-0.5,1.87)
\psline(-0.5,1.87)(0.5,1.87)
\psline(0.5,1.87)(1,2.73)
\psline(1,2.73)(0.5,3.6)
\psline(0.5,3.6)(-0.5,3.6)
\psline(-0.5,3.6)(-1,2.73)
\psline(0.5,1.87)(-0.5,1.87)
\psline(-0.5,1.87)(-1,1)
\psline(-1,1)(-0.5,0.13)
\psline(-0.5,0.13)(0.5,0.13)
\psline(0.5,0.13)(1,1)
\psline(1,1)(0.5,1.87)
\psline(0.5,1.87)(1,1)
\psline(1,1)(2,1)
\psline(2,2.73)(1,2.73)
\psline(1,2.73)(0.5,1.87)
\psline(5.5,0.13)(7,1)
\psline(7,1)(8.5,0.13)
\psline(8.5,1.87)(8.5,0.13)
\psline(7,1)(8.5,1.87)
\psline(8.5,1.87)(7,2.73)
\psline(7,2.73)(7,1)
\psline(7,1)(5.5,1.87)
\psline(5.5,1.87)(5.5,0.13)
\psline(5.5,1.87)(7,2.73)
\psline(7,2.73)(5.5,3.6)
\psline(5.5,3.6)(5.5,1.87)
\psline(7,2.73)(8.5,3.6)
\psline(8.5,3.6)(8.5,1.87)
\psline(3.5,-3.58)(3,-4.45)
\psline(3,-4.45)(2,-4.45)
\psline(2,-4.45)(2.5,-3.58)
\psline(2.5,-3.58)(3.5,-3.58)
\psline(3.5,-3.58)(4,-4.45)
\psline(4,-4.45)(5,-4.45)
\psline(5,-4.45)(4.5,-3.58)
\psline(4.5,-3.58)(3.5,-3.58)
\psline(3.5,-3.58)(3,-2.71)
\psline(3,-2.71)(3.5,-1.85)
\psline(3.5,-1.85)(4,-2.71)
\psline(4,-2.71)(3.5,-3.58)
\psline(4,-2.71)(5,-2.71)
\psline(5,-2.71)(4.5,-1.85)
\psline(4.5,-1.85)(3.5,-1.85)
\psline(3.5,-1.85)(4,-2.71)
\psline(5,-2.71)(4.5,-3.58)
\psline(4.5,-3.58)(3.5,-3.58)
\psline(3.5,-3.58)(4,-2.71)
\psline(4,-2.71)(5,-2.71)
\psline(5,-2.71)(5.5,-3.58)
\psline(5.5,-3.58)(5,-4.45)
\psline(5,-4.45)(4.5,-3.58)
\psline(4.5,-3.58)(5,-2.71)
\psline(2,-2.71)(3,-2.71)
\psline(3,-2.71)(3.5,-1.85)
\psline(3.5,-1.85)(2.5,-1.85)
\psline(2.5,-1.85)(2,-2.71)
\psline(2,-2.71)(2.5,-3.58)
\psline(2.5,-3.58)(3.5,-3.58)
\psline(3.5,-3.58)(3,-2.71)
\psline(3,-2.71)(2,-2.71)
\psline(2,-2.71)(1.5,-1.85)
\psline(1.5,-1.85)(2,-0.98)
\psline(2,-0.98)(2.5,-1.85)
\psline(2.5,-1.85)(2,-2.71)
\psline(2,-2.71)(1.5,-3.58)
\psline(1.5,-3.58)(2,-4.45)
\psline(2,-4.45)(2.5,-3.58)
\psline(2.5,-3.58)(2,-2.71)
\psline(3.5,-1.85)(3,-0.98)
\psline(3,-0.98)(2,-0.98)
\psline(2,-0.98)(2.5,-1.85)
\psline(2.5,-1.85)(3.5,-1.85)
\psline(3.5,-1.85)(4,-0.98)
\psline(4,-0.98)(5,-0.98)
\psline(5,-0.98)(4.5,-1.85)
\psline(4.5,-1.85)(3.5,-1.85)
\psline(5,-2.71)(5.5,-1.85)
\psline(5.5,-1.85)(5,-0.98)
\psline[ArrowInside=->, linecolor=zzzzzz](-1.5,0.13)(0,1)
\psline[ArrowInside=->, linecolor=zzzzzz](0,1)(1.5,0.13)
\psline[ArrowInside=->, linecolor=zzzzzz](1.5,1.87)(1.5,0.13)
\psline[ArrowInside=->, linecolor=zzzzzz](0,1)(1.5,1.87)
\psline[ArrowInside=->, linecolor=zzzzzz](1.5,1.87)(0,2.73)
\psline[ArrowInside=->, linecolor=zzzzzz](0,2.73)(0,1)
\psline[ArrowInside=->, linecolor=zzzzzz](0,1)(-1.5,1.87)
\psline[ArrowInside=->, linecolor=zzzzzz](-1.5,1.87)(-1.5,0.13)
\psline[ArrowInside=->, linecolor=zzzzzz](-1.5,1.87)(0,2.73)
\psline[ArrowInside=->, linecolor=zzzzzz](0,2.73)(-1.5,3.6)
\psline[ArrowInside=->, linecolor=zzzzzz](-1.5,3.6)(-1.5,1.87)
\psline[ArrowInside=->, linecolor=zzzzzz](0,2.73)(1.5,3.6)
\psline[ArrowInside=->, linecolor=zzzzzz](1.5,3.6)(1.5,1.87)
\psline[linecolor=zzzzzz](5,1)(6,1)
\psline[linecolor=zzzzzz](6,1)(6.5,1.87)
\psline[linecolor=zzzzzz](6.5,1.87)(6,2.73)
\psline[linecolor=zzzzzz](6,2.73)(5,2.73)
\psline[linecolor=zzzzzz](6,2.73)(6.5,1.87)
\psline[linecolor=zzzzzz](6.5,1.87)(7.5,1.87)
\psline[linecolor=zzzzzz](7.5,1.87)(8,2.73)
\psline[linecolor=zzzzzz](8,2.73)(7.5,3.6)
\psline[linecolor=zzzzzz](7.5,3.6)(6.5,3.6)
\psline[linecolor=zzzzzz](6.5,3.6)(6,2.73)
\psline[linecolor=zzzzzz](7.5,1.87)(6.5,1.87)
\psline[linecolor=zzzzzz](6.5,1.87)(6,1)
\psline[linecolor=zzzzzz](6,1)(6.5,0.13)
\psline[linecolor=zzzzzz](6.5,0.13)(7.5,0.13)
\psline[linecolor=zzzzzz](7.5,0.13)(8,1)
\psline[linecolor=zzzzzz](8,1)(7.5,1.87)
\psline[linecolor=zzzzzz](7.5,1.87)(8,1)
\psline[linecolor=zzzzzz](8,1)(9,1)
\psline[linecolor=zzzzzz](9,2.73)(8,2.73)
\psline[linecolor=zzzzzz](8,2.73)(7.5,1.87)
\psline[ArrowInside=->](5.5,0.13)(7,1)
\psline[ArrowInside=->](7,1)(8.5,0.13)
\psline[ArrowInside=->](8.5,1.87)(8.5,0.13)
\psline[ArrowInside=->](7,1)(8.5,1.87)
\psline[ArrowInside=->](8.5,1.87)(7,2.73)
\psline[ArrowInside=->](7,2.73)(7,1)
\psline[ArrowInside=->](7,1)(5.5,1.87)
\psline[ArrowInside=->](5.5,1.87)(5.5,0.13)
\psline[ArrowInside=->](5.5,1.87)(7,2.73)
\psline[ArrowInside=->](7,2.73)(5.5,3.6)
\psline[ArrowInside=->](5.5,3.6)(5.5,1.87)
\psline[ArrowInside=->](7,2.73)(8.5,3.6)
\psline[ArrowInside=->](8.5,3.6)(8.5,1.87)
\psdots[dotstyle=o](-2,1)
\psdots[dotstyle=o](-0.5,1.87)
\psdots(-1,2.73)
\psdots[dotstyle=o](-2,2.73)
\psdots(0.5,1.87)
\psdots[dotstyle=o](1,2.73)
\psdots(0.5,3.6)
\psdots[dotstyle=o](-0.5,3.6)
\psdots(-1,1)
\psdots[dotstyle=o](-0.5,0.13)
\psdots(0.5,0.13)
\psdots[dotstyle=o](1,1)
\psdots(2,1)
\psdots(2,2.73)
\psdots[dotstyle=o](1,2.73)
\psdots[dotsize=2pt 0,linecolor=zzzzzz](-1.5,1.87)
\psdots[dotsize=2pt 0,linecolor=zzzzzz](0,1)
\psdots[dotsize=2pt 0](5.5,1.87)
\psdots[dotsize=2pt 0](7,1)
\psdots[dotsize=2pt 0](7,2.73)
\psdots[dotsize=2pt 0](8.5,1.87)
\psdots[dotsize=2pt 0](5.5,3.6)
\psdots[dotsize=2pt 0](5.5,0.13)
\psdots[dotsize=2pt 0](8.5,0.13)
\psdots[dotsize=2pt 0](8.5,3.6)
\psdots[dotstyle=o](1.5,-3.58)
\psdots[dotstyle=o](1.5,-3.58)
\psdots(2.5,-3.58)
\psdots[dotstyle=o](3,-2.71)
\psdots(2.5,-1.85)
\psdots[dotstyle=o](1.5,-1.85)
\psdots(2.5,-1.85)
\psdots[dotstyle=o](3,-2.71)
\psdots(4,-2.71)
\psdots[dotstyle=o](4.5,-1.85)
\psdots(4,-0.98)
\psdots[dotstyle=o](3,-0.98)
\psdots(4,-2.71)
\psdots[dotstyle=o](3,-2.71)
\psdots(2.5,-3.58)
\psdots[dotstyle=o](3,-4.45)
\psdots(4,-4.45)
\psdots[dotstyle=o](4.5,-3.58)
\psdots(4,-2.71)
\psdots[dotstyle=o](4.5,-3.58)
\psdots(5.5,-3.58)
\psdots(5.5,-1.85)
\psdots[dotstyle=o](4.5,-1.85)
\psdots[dotsize=2pt 0](2,-2.71)
\psdots[dotsize=2pt 0](3.5,-3.58)
\psdots[dotsize=2pt 0](3.5,-1.85)
\psdots[dotsize=2pt 0](5,-2.71)
\psdots[dotsize=2pt 0](2,-0.98)
\psdots[dotsize=2pt 0](2,-4.45)
\psdots[dotsize=2pt 0](5,-4.45)
\psdots[dotsize=2pt 0](5,-0.98)
\psdots[dotsize=2pt 0,linecolor=zzzzzz](-1.5,0.13)
\psdots[dotsize=2pt 0,linecolor=zzzzzz](0,1)
\psdots[dotsize=2pt 0,linecolor=zzzzzz](0,1)
\psdots[dotsize=2pt 0,linecolor=zzzzzz](1.5,0.13)
\psdots[dotsize=2pt 0,linecolor=zzzzzz](1.5,1.87)
\psdots[dotsize=2pt 0,linecolor=zzzzzz](1.5,0.13)
\psdots[dotsize=2pt 0,linecolor=zzzzzz](0,1)
\psdots[dotsize=2pt 0,linecolor=zzzzzz](1.5,1.87)
\psdots[dotsize=2pt 0,linecolor=zzzzzz](1.5,1.87)
\psdots[dotsize=2pt 0,linecolor=zzzzzz](0,2.73)
\psdots[dotsize=2pt 0,linecolor=zzzzzz](0,2.73)
\psdots[dotsize=2pt 0,linecolor=zzzzzz](0,1)
\psdots[dotsize=2pt 0,linecolor=zzzzzz](0,1)
\psdots[dotsize=2pt 0,linecolor=zzzzzz](-1.5,1.87)
\psdots[dotsize=2pt 0,linecolor=zzzzzz](-1.5,1.87)
\psdots[dotsize=2pt 0,linecolor=zzzzzz](-1.5,0.13)
\psdots[dotsize=2pt 0,linecolor=zzzzzz](-1.5,1.87)
\psdots[dotsize=2pt 0,linecolor=zzzzzz](0,2.73)
\psdots[dotsize=2pt 0,linecolor=zzzzzz](0,2.73)
\psdots[dotsize=2pt 0,linecolor=zzzzzz](-1.5,3.6)
\psdots[dotsize=2pt 0,linecolor=zzzzzz](-1.5,3.6)
\psdots[dotsize=2pt 0,linecolor=zzzzzz](-1.5,1.87)
\psdots[dotsize=2pt 0,linecolor=zzzzzz](0,2.73)
\psdots[dotsize=2pt 0,linecolor=zzzzzz](1.5,3.6)
\psdots[dotsize=2pt 0,linecolor=zzzzzz](1.5,3.6)
\psdots[dotsize=2pt 0,linecolor=zzzzzz](1.5,1.87)
\psdots[dotstyle=o,linecolor=zzzzzz](5,1)
\psdots[linecolor=zzzzzz](6,1)
\psdots[dotstyle=o,linecolor=zzzzzz](6.5,1.87)
\psdots[linecolor=zzzzzz](6,2.73)
\psdots[dotstyle=o,linecolor=zzzzzz](5,2.73)
\psdots[linecolor=zzzzzz](6,2.73)
\psdots[dotstyle=o,linecolor=zzzzzz](6.5,1.87)
\psdots[linecolor=zzzzzz](7.5,1.87)
\psdots[dotstyle=o,linecolor=zzzzzz](8,2.73)
\psdots[linecolor=zzzzzz](7.5,3.6)
\psdots[dotstyle=o,linecolor=zzzzzz](6.5,3.6)
\psdots[linecolor=zzzzzz](7.5,1.87)
\psdots[dotstyle=o,linecolor=zzzzzz](6.5,1.87)
\psdots[linecolor=zzzzzz](6,1)
\psdots[dotstyle=o,linecolor=zzzzzz](6.5,0.13)
\psdots[linecolor=zzzzzz](7.5,0.13)
\psdots[dotstyle=o,linecolor=zzzzzz](8,1)
\psdots[linecolor=zzzzzz](7.5,1.87)
\psdots[dotstyle=o,linecolor=zzzzzz](8,1)
\psdots[linecolor=zzzzzz](9,1)
\psdots[linecolor=zzzzzz](9,2.73)
\psdots[dotstyle=o,linecolor=zzzzzz](8,2.73)
\psdots[dotsize=2pt 0](5.5,0.13)
\psdots[dotsize=2pt 0](7,1)
\psdots[dotsize=2pt 0](7,1)
\psdots[dotsize=2pt 0](8.5,0.13)
\psdots[dotsize=2pt 0](8.5,1.87)
\psdots[dotsize=2pt 0](8.5,0.13)
\psdots[dotsize=2pt 0](7,1)
\psdots[dotsize=2pt 0](8.5,1.87)
\psdots[dotsize=2pt 0](8.5,1.87)
\psdots[dotsize=2pt 0](7,2.73)
\psdots[dotsize=2pt 0](5.5,1.87)
\psdots[dotsize=2pt 0](5.5,0.13)
\psdots[dotsize=2pt 0](5.5,1.87)
\psdots[dotsize=2pt 0](7,2.73)
\psdots[dotsize=2pt 0](7,2.73)
\psdots[dotsize=2pt 0](5.5,3.6)
\psdots[dotsize=2pt 0](5.5,3.6)
\psdots[dotsize=2pt 0](5.5,1.87)
\psdots[dotsize=2pt 0](7,2.73)
\psdots[dotsize=2pt 0](8.5,3.6)
\psdots[dotsize=2pt 0](8.5,3.6)
\psdots[dotsize=2pt 0](8.5,1.87)
\end{pspicture*}
\end{center}


\begin{definition}\label{rhombtiling} A dimer model on a torus is called \emph{geometrically consistent} if there exists a rhombic embedding of the quad graph associated to it, i.e. an embedding in the torus such that all edges are line segments with the same length.
\end{definition}
\noindent There exists a flat metric on the torus, so line segments are well defined, and each quad in a rhombic embedding is a rhombus. We note that the hexagonal tiling example above, is a geometrically consistent dimer model and the quad graph drawn is a rhombic embedding.

Suppose we have a geometrically consistent dimer model and consider a single rhombus in the rhombus embedding. 
The interior angles at opposite corners are the same, and the total of the internal angles is $2\pi$, so its shape is determined by one angle. To each rhombus, and so to each arrow $a$ of the quiver, we associate $\Rsym_a$, the interior angle of the rhombus at the dimer vertices divided by $\pi$.

\begin{center}
\psset{xunit=0.7cm,yunit=0.7cm}
\begin{pspicture*}(0,-4)(6,4)
\psset{xunit=0.7cm,yunit=0.7cm,runit=0.7cm,algebraic=true,dotstyle=*,dotsize=3pt 0,linewidth=0.8pt,arrowsize=3pt 2,arrowinset=0.25}
\psline(1,0)(3,-3)
\psline(3,-3)(5,0)
\psline(5,0)(3,3)
\psline(3,3)(1,0)
\pscustom{\parametricplot{-0.982793723247329}{0.982793723247329}{0.6*cos(t)+1|0.6*sin(t)+0}\lineto(1,0)\closepath}
\parametricplot{-0.982793723247329}{0.982793723247329}{0.6*cos(t)+1|0.6*sin(t)+0}
\parametricplot{-0.982793723247329}{0.982793723247329}{0.5*cos(t)+1|0.5*sin(t)+0}
\pscustom{\parametricplot{0.982793723247329}{2.1587989303424644}{0.6*cos(t)+3|0.6*sin(t)+-3}\lineto(3,-3)\closepath}
\pscustom{\parametricplot{-2.1587989303424644}{-0.9827937232473292}{0.6*cos(t)+3|0.6*sin(t)+3}\lineto(3,3)\closepath}
\pscustom{\parametricplot{2.1587989303424644}{4.124386376837123}{0.6*cos(t)+5|0.6*sin(t)+0}\lineto(5,0)\closepath}
\parametricplot{2.1587989303424644}{4.124386376837123}{0.6*cos(t)+5|0.6*sin(t)+0}
\parametricplot{2.1587989303424644}{4.124386376837123}{0.5*cos(t)+5|0.5*sin(t)+0}
\rput[tl](3.5,2.8){$\pi(1-R_a)$}
\rput[tl](1.74,0.4){$\pi R_a$}
\psdots[dotsize=7pt 0](1,0)
\psdots[dotsize=2pt 0](3,3)
\psdots[dotsize=2pt 0](3,-3)
\psdots[dotsize=7pt 0,dotstyle=o](5,0)
\end{pspicture*}
\end{center}

The conditions that the rhombi fit together around each dimer vertex and each quiver vertex, are exactly the conditions (\ref{normnondegen}) and (\ref{anomfree}) for an `anomaly free' real R-charge, normalised with $\deg(\Rsym) = 2$. Thus, recalling Remark~\ref{realrat}, every geometrically consistent dimer model is consistent.

The converse however is not true. Given some normalised anomaly-free R-symmetry, we certainly require the additional condition that $\Rsym_a < 1$ for all $a \in Q_1$ in order to be able to produce a genuine rhombus embedding with angles in $(0,\pi)$. This does not hold in all cases; for example, the following dimer model is consistent but not geometrically consistent, as we shall see shortly. 

\begin{example} \label{examplestp} \end{example}
\begin{center}
\newrgbcolor{zzzzzz}{0.8 0.8 0.8}
\psset{xunit=0.8cm,yunit=0.8cm,algebraic=true,dotstyle=*,dotsize=4pt 0,linewidth=1.0pt,arrowsize=3pt 2,arrowinset=0.25}
\begin{pspicture*}(-4,-3.5)(11.5,4.5)
\psline(1,0)(2,0)
\psline(2,0)(2,1)
\psline(2,1)(1,1)
\psline(1,1)(1,0)
\psline(1,1)(2,1)
\psline(2,1)(2.71,1.71)
\psline(2.71,1.71)(2.71,2.71)
\psline(2.71,2.71)(2,3.41)
\psline(2,3.41)(1,3.41)
\psline(1,3.41)(0.29,2.71)
\psline(0.29,2.71)(0.29,1.71)
\psline(0.29,1.71)(1,1)
\psline(2,0)(1,0)
\psline(1,0)(0.29,-0.71)
\psline(0.29,-0.71)(0.29,-1.71)
\psline(0.29,-1.71)(1,-2.41)
\psline(1,-2.41)(2,-2.41)
\psline(2,-2.41)(2.71,-1.71)
\psline(2.71,-1.71)(2.71,-0.71)
\psline(2.71,-0.71)(2,0)
\psline(1,0)(1,1)
\psline(1,1)(0.29,1.71)
\psline(0.29,1.71)(-0.71,1.71)
\psline(-0.71,1.71)(-1.41,1)
\psline(-1.41,1)(-1.41,0)
\psline(-1.41,0)(-0.71,-0.71)
\psline(-0.71,-0.71)(0.29,-0.71)
\psline(0.29,-0.71)(1,0)
\psline(-0.71,1.71)(0.29,1.71)
\psline(0.29,1.71)(0.29,2.71)
\psline(0.29,2.71)(-0.71,2.71)
\psline(-0.71,2.71)(-0.71,1.71)
\psline(0.29,-0.71)(-0.71,-0.71)
\psline(-0.71,-0.71)(-0.71,-1.71)
\psline(-0.71,-1.71)(0.29,-1.71)
\psline(0.29,-1.71)(0.29,-0.71)
\psline(-1.41,1)(-0.71,1.71)
\psline(-0.71,1.71)(-0.71,2.71)
\psline(-0.71,2.71)(-1.41,3.41)
\psline(-1.41,3.41)(-2.41,3.41)
\psline(-2.41,3.41)(-3.12,2.71)
\psline(-3.12,2.71)(-3.12,1.71)
\psline(-3.12,1.71)(-2.41,1)
\psline(-2.41,1)(-1.41,1)
\psline(-0.71,-0.71)(-1.41,0)
\psline(-1.41,0)(-2.41,0)
\psline(-2.41,0)(-3.12,-0.71)
\psline(-3.12,-0.71)(-3.12,-1.71)
\psline(-3.12,-1.71)(-2.41,-2.41)
\psline(-2.41,-2.41)(-1.41,-2.41)
\psline(-1.41,-2.41)(-0.71,-1.71)
\psline(-0.71,-1.71)(-0.71,-0.71)
\psline(-1.41,1)(-2.41,1)
\psline(-2.41,1)(-2.41,0)
\psline(-2.41,0)(-1.41,0)
\psline(-1.41,0)(-1.41,1)
\psline[ArrowInside=->,linecolor=zzzzzz](1.5,-1.21)(1.5,0.5)
\psline[ArrowInside=->,linecolor=zzzzzz](1.5,2.21)(1.5,0.5)
\psline[ArrowInside=->,linecolor=zzzzzz](1.5,0.5)(-0.21,0.5)
\psline[ArrowInside=->,linecolor=zzzzzz](-0.21,0.5)(1.5,2.21)
\psline[ArrowInside=->,linecolor=zzzzzz](-0.21,0.5)(1.5,-1.21)
\psline[ArrowInside=->,linecolor=zzzzzz](1.5,-1.21)(-0.21,-1.21)
\psline[ArrowInside=->,linecolor=zzzzzz](-0.21,-1.21)(-0.21,0.5)
\psline[ArrowInside=->,linecolor=zzzzzz](-0.21,0.5)(-1.91,-1.21)
\psline[ArrowInside=->,linecolor=zzzzzz](-1.91,-1.21)(-0.21,-1.21)
\psline[ArrowInside=->,linecolor=zzzzzz](-1.91,-1.21)(-1.91,0.5)
\psline[ArrowInside=->,linecolor=zzzzzz](-1.91,0.5)(-0.21,0.5)
\psline[ArrowInside=->,linecolor=zzzzzz](-0.21,0.5)(-1.91,2.21)
\psline[ArrowInside=->,linecolor=zzzzzz](-1.91,2.21)(-1.91,0.5)
\psline[ArrowInside=->,linecolor=zzzzzz](-1.91,2.21)(-0.21,2.21)
\psline[ArrowInside=->,linecolor=zzzzzz](-0.21,2.21)(-0.21,0.5)
\psline[ArrowInside=->,linecolor=zzzzzz](1.5,2.21)(-0.21,2.21)
\psline[ArrowInside=->,linecolor=zzzzzz](-0.21,2.21)(-0.21,3.91)
\psline[ArrowInside=->,linecolor=zzzzzz](-0.21,3.91)(-1.91,2.21)
\psline[ArrowInside=->,linecolor=zzzzzz](-0.21,3.91)(1.5,2.21)
\psline[ArrowInside=->,linecolor=zzzzzz](1.5,0.5)(3.21,0.5)
\psline[ArrowInside=->,linecolor=zzzzzz](3.21,0.5)(1.5,2.21)
\psline[ArrowInside=->,linecolor=zzzzzz](3.21,0.5)(1.5,-1.21)
\psline[ArrowInside=->,linecolor=zzzzzz](-0.21,-1.21)(-0.21,-2.91)
\psline[ArrowInside=->,linecolor=zzzzzz](-0.21,-2.91)(1.5,-1.21)
\psline[ArrowInside=->,linecolor=zzzzzz](-0.21,-2.91)(-1.91,-1.21)
\psline[ArrowInside=->,linecolor=zzzzzz](-1.91,0.5)(-3.62,0.5)
\psline[ArrowInside=->,linecolor=zzzzzz](-3.62,0.5)(-1.91,-1.21)
\psline[ArrowInside=->,linecolor=zzzzzz](-3.62,0.5)(-1.91,2.21)
\psline[linecolor=zzzzzz](8.75,0)(9.75,0)
\psline[linecolor=zzzzzz](9.75,0)(9.75,1)
\psline[linecolor=zzzzzz](9.75,1)(8.75,1)
\psline[linecolor=zzzzzz](8.75,1)(8.75,0)
\psline[linecolor=zzzzzz](8.75,1)(9.75,1)
\psline[linecolor=zzzzzz](9.75,1)(10.46,1.71)
\psline[linecolor=zzzzzz](10.46,1.71)(10.46,2.71)
\psline[linecolor=zzzzzz](10.46,2.71)(9.75,3.41)
\psline[linecolor=zzzzzz](9.75,3.41)(8.75,3.41)
\psline[linecolor=zzzzzz](8.75,3.41)(8.05,2.71)
\psline[linecolor=zzzzzz](8.05,2.71)(8.05,1.71)
\psline[linecolor=zzzzzz](8.05,1.71)(8.75,1)
\psline[linecolor=zzzzzz](9.75,0)(8.75,0)
\psline[linecolor=zzzzzz](8.75,0)(8.05,-0.71)
\psline[linecolor=zzzzzz](8.05,-0.71)(8.05,-1.71)
\psline[linecolor=zzzzzz](8.05,-1.71)(8.75,-2.41)
\psline[linecolor=zzzzzz](8.75,-2.41)(9.75,-2.41)
\psline[linecolor=zzzzzz](9.75,-2.41)(10.46,-1.71)
\psline[linecolor=zzzzzz](10.46,-1.71)(10.46,-0.71)
\psline[linecolor=zzzzzz](10.46,-0.71)(9.75,0)
\psline[linecolor=zzzzzz](8.75,0)(8.75,1)
\psline[linecolor=zzzzzz](8.75,1)(8.05,1.71)
\psline[linecolor=zzzzzz](8.05,1.71)(7.05,1.71)
\psline[linecolor=zzzzzz](7.05,1.71)(6.34,1)
\psline[linecolor=zzzzzz](6.34,1)(6.34,0)
\psline[linecolor=zzzzzz](6.34,0)(7.05,-0.71)
\psline[linecolor=zzzzzz](7.05,-0.71)(8.05,-0.71)
\psline[linecolor=zzzzzz](8.05,-0.71)(8.75,0)
\psline[linecolor=zzzzzz](7.05,1.71)(8.05,1.71)
\psline[linecolor=zzzzzz](8.05,1.71)(8.05,2.71)
\psline[linecolor=zzzzzz](8.05,2.71)(7.05,2.71)
\psline[linecolor=zzzzzz](7.05,2.71)(7.05,1.71)
\psline[linecolor=zzzzzz](8.05,-0.71)(7.05,-0.71)
\psline[linecolor=zzzzzz](7.05,-0.71)(7.05,-1.71)
\psline[linecolor=zzzzzz](7.05,-1.71)(8.05,-1.71)
\psline[linecolor=zzzzzz](8.05,-1.71)(8.05,-0.71)
\psline[linecolor=zzzzzz](6.34,1)(7.05,1.71)
\psline[linecolor=zzzzzz](7.05,1.71)(7.05,2.71)
\psline[linecolor=zzzzzz](7.05,2.71)(6.34,3.41)
\psline[linecolor=zzzzzz](6.34,3.41)(5.34,3.41)
\psline[linecolor=zzzzzz](5.34,3.41)(4.63,2.71)
\psline[linecolor=zzzzzz](4.63,2.71)(4.63,1.71)
\psline[linecolor=zzzzzz](4.63,1.71)(5.34,1)
\psline[linecolor=zzzzzz](5.34,1)(6.34,1)
\psline[linecolor=zzzzzz](7.05,-0.71)(6.34,0)
\psline[linecolor=zzzzzz](6.34,0)(5.34,0)
\psline[linecolor=zzzzzz](5.34,0)(4.63,-0.71)
\psline[linecolor=zzzzzz](4.63,-0.71)(4.63,-1.71)
\psline[linecolor=zzzzzz](4.63,-1.71)(5.34,-2.41)
\psline[linecolor=zzzzzz](5.34,-2.41)(6.34,-2.41)
\psline[linecolor=zzzzzz](6.34,-2.41)(7.05,-1.71)
\psline[linecolor=zzzzzz](7.05,-1.71)(7.05,-0.71)
\psline[linecolor=zzzzzz](6.34,1)(5.34,1)
\psline[linecolor=zzzzzz](5.34,1)(5.34,0)
\psline[linecolor=zzzzzz](5.34,0)(6.34,0)
\psline[linecolor=zzzzzz](6.34,0)(6.34,1)
\psline[ArrowInside=->](9.25,-1.21)(9.25,0.5)
\psline[ArrowInside=->](9.25,2.21)(9.25,0.5)
\psline[ArrowInside=->](9.25,0.5)(7.55,0.5)
\psline[ArrowInside=->](7.55,0.5)(9.25,2.21)
\psline[ArrowInside=->](7.55,0.5)(9.25,-1.21)
\psline[ArrowInside=->](9.25,-1.21)(7.55,-1.21)
\psline[ArrowInside=->](7.55,-1.21)(7.55,0.5)
\psline[ArrowInside=->](7.55,0.5)(5.84,-1.21)
\psline[ArrowInside=->](5.84,-1.21)(7.55,-1.21)
\psline[ArrowInside=->](5.84,-1.21)(5.84,0.5)
\psline[ArrowInside=->](5.84,0.5)(7.55,0.5)
\psline[ArrowInside=->](7.55,0.5)(5.84,2.21)
\psline[ArrowInside=->](5.84,2.21)(5.84,0.5)
\psline[ArrowInside=->](5.84,2.21)(7.55,2.21)
\psline[ArrowInside=->](7.55,2.21)(7.55,0.5)
\psline[ArrowInside=->](9.25,2.21)(7.55,2.21)
\psline[ArrowInside=->](7.55,2.21)(7.55,3.91)
\psline[ArrowInside=->](7.55,3.91)(5.84,2.21)
\psline[ArrowInside=->](7.55,3.91)(9.25,2.21)
\psline[ArrowInside=->](9.25,0.5)(10.96,0.5)
\psline[ArrowInside=->](10.96,0.5)(9.25,2.21)
\psline[ArrowInside=->](10.96,0.5)(9.25,-1.21)
\psline[ArrowInside=->](7.55,-1.21)(7.55,-2.91)
\psline[ArrowInside=->](7.55,-2.91)(9.25,-1.21)
\psline[ArrowInside=->](7.55,-2.91)(5.84,-1.21)
\psline[ArrowInside=->](5.84,0.5)(4.13,0.5)
\psline[ArrowInside=->](4.13,0.5)(5.84,-1.21)
\psline[ArrowInside=->](4.13,0.5)(5.84,2.21)
\psdots(1,0)
\psdots[dotstyle=o](2,0)
\psdots(2,1)
\psdots[dotstyle=o](1,1)
\psdots[dotstyle=o](2.71,1.71)
\psdots(2.71,2.71)
\psdots[dotstyle=o](2,3.41)
\psdots(1,3.41)
\psdots[dotstyle=o](0.29,2.71)
\psdots[dotsize=2pt 0](0.29,1.71)
\psdots[dotstyle=o](0.29,-0.71)
\psdots(0.29,-1.71)
\psdots[dotstyle=o](1,-2.41)
\psdots(2,-2.41)
\psdots[dotstyle=o](2.71,-1.71)
\psdots(2.71,-0.71)
\psdots(0.29,1.71)
\psdots[dotstyle=o](-0.71,1.71)
\psdots(-1.41,1)
\psdots[dotstyle=o](-1.41,0)
\psdots(-0.71,-0.71)
\psdots[dotstyle=o](0.29,-0.71)
\psdots[dotstyle=o](0.29,2.71)
\psdots(-0.71,2.71)
\psdots[dotstyle=o](-0.71,-1.71)
\psdots(0.29,-1.71)
\psdots(-0.71,2.71)
\psdots[dotstyle=o](-1.41,3.41)
\psdots(-2.41,3.41)
\psdots[dotstyle=o](-3.12,2.71)
\psdots(-3.12,1.71)
\psdots[dotstyle=o](-2.41,1)
\psdots(-2.41,0)
\psdots[dotstyle=o](-3.12,-0.71)
\psdots(-3.12,-1.71)
\psdots[dotstyle=o](-2.41,-2.41)
\psdots(-1.41,-2.41)
\psdots[dotstyle=o](-0.71,-1.71)
\psdots(-2.41,0)
\psdots[dotstyle=o](-1.41,0)
\psdots(5.34,0)
\psdots[dotsize=2pt 0,linecolor=zzzzzz](-1.91,2.21)
\psdots[dotsize=2pt 0,linecolor=zzzzzz](-0.21,2.21)
\psdots[dotsize=2pt 0,linecolor=zzzzzz](1.5,2.21)
\psdots[dotsize=2pt 0,linecolor=zzzzzz](1.5,0.5)
\psdots[dotsize=2pt 0,linecolor=zzzzzz](-0.21,-1.21)
\psdots[dotsize=2pt 0,linecolor=zzzzzz](1.5,-1.21)
\psdots[dotsize=2pt 0,linecolor=zzzzzz](-0.21,0.5)
\psdots[dotsize=2pt 0,linecolor=zzzzzz](-1.91,-1.21)
\psdots[dotsize=2pt 0,linecolor=zzzzzz](-1.91,0.5)
\psdots[dotsize=2pt 0,linecolor=zzzzzz](-0.21,3.91)
\psdots[dotsize=2pt 0,linecolor=zzzzzz](3.21,0.5)
\psdots[dotsize=2pt 0,linecolor=zzzzzz](-0.21,-2.91)
\psdots[dotsize=2pt 0,linecolor=zzzzzz](-3.62,0.5)
\psdots[linecolor=zzzzzz](8.75,0)
\psdots[dotstyle=o, linecolor=zzzzzz](9.75,0)
\psdots[linecolor=zzzzzz](9.75,1)
\psdots[dotstyle=o, linecolor=zzzzzz](8.75,1)
\psdots[dotstyle=o, linecolor=zzzzzz](8.75,1)
\psdots[linecolor=zzzzzz](9.75,1)
\psdots[dotstyle=o, linecolor=zzzzzz](10.46,1.71)
\psdots[linecolor=zzzzzz](10.46,2.71)
\psdots[dotstyle=o, linecolor=zzzzzz](9.75,3.41)
\psdots[linecolor=zzzzzz](8.75,3.41)
\psdots[dotstyle=o, linecolor=zzzzzz](8.05,2.71)
\psdots[linecolor=zzzzzz](8.05,1.71)
\psdots[dotstyle=o, linecolor=zzzzzz](9.75,0)
\psdots[linecolor=zzzzzz](8.75,0)
\psdots[dotstyle=o, linecolor=zzzzzz](8.05,-0.71)
\psdots[linecolor=zzzzzz](8.05,-1.71)
\psdots[dotstyle=o, linecolor=zzzzzz](8.75,-2.41)
\psdots[linecolor=zzzzzz](9.75,-2.41)
\psdots[dotstyle=o, linecolor=zzzzzz](10.46,-1.71)
\psdots[linecolor=zzzzzz](10.46,-0.71)
\psdots[linecolor=zzzzzz](8.75,0)
\psdots[dotstyle=o, linecolor=zzzzzz](8.75,1)
\psdots[linecolor=zzzzzz](8.05,1.71)
\psdots[dotstyle=o, linecolor=zzzzzz](7.05,1.71)
\psdots[linecolor=zzzzzz](6.34,1)
\psdots[dotstyle=o, linecolor=zzzzzz](6.34,0)
\psdots[linecolor=zzzzzz](7.05,-0.71)
\psdots[dotstyle=o, linecolor=zzzzzz](8.05,-0.71)
\psdots[dotstyle=o, linecolor=zzzzzz](7.05,1.71)
\psdots[linecolor=zzzzzz](8.05,1.71)
\psdots[dotstyle=o, linecolor=zzzzzz](8.05,2.71)
\psdots[linecolor=zzzzzz](7.05,2.71)
\psdots[dotstyle=o, linecolor=zzzzzz](8.05,-0.71)
\psdots[linecolor=zzzzzz](7.05,-0.71)
\psdots[dotstyle=o, linecolor=zzzzzz](7.05,-1.71)
\psdots[linecolor=zzzzzz](8.05,-1.71)
\psdots[linecolor=zzzzzz](6.34,1)
\psdots[dotstyle=o, linecolor=zzzzzz](7.05,1.71)
\psdots[linecolor=zzzzzz](7.05,2.71)
\psdots[dotstyle=o, linecolor=zzzzzz](6.34,3.41)
\psdots[linecolor=zzzzzz](5.34,3.41)
\psdots[dotstyle=o, linecolor=zzzzzz](4.63,2.71)
\psdots[linecolor=zzzzzz](4.63,1.71)
\psdots[dotstyle=o, linecolor=zzzzzz](5.34,1)
\psdots[linecolor=zzzzzz](7.05,-0.71)
\psdots[dotstyle=o, linecolor=zzzzzz](6.34,0)
\psdots[linecolor=zzzzzz](5.34,0)
\psdots[dotstyle=o, linecolor=zzzzzz](4.63,-0.71)
\psdots[linecolor=zzzzzz](4.63,-1.71)
\psdots[dotstyle=o, linecolor=zzzzzz](5.34,-2.41)
\psdots[linecolor=zzzzzz](6.34,-2.41)
\psdots[dotstyle=o, linecolor=zzzzzz](7.05,-1.71)
\psdots[linecolor=zzzzzz](6.34,1)
\psdots[dotstyle=o, linecolor=zzzzzz](5.34,1)
\psdots[linecolor=zzzzzz](5.34,0)
\psdots[dotstyle=o, linecolor=zzzzzz](6.34,0)
\psdots[dotsize=2pt 0](9.25,-1.21)
\psdots[dotsize=2pt 0](9.25,0.5)
\psdots[dotsize=2pt 0](9.25,2.21)
\psdots[dotsize=2pt 0](9.25,0.5)
\psdots[dotsize=2pt 0](9.25,0.5)
\psdots[dotsize=2pt 0](7.55,0.5)
\psdots[dotsize=2pt 0](7.55,0.5)
\psdots[dotsize=2pt 0](9.25,2.21)
\psdots[dotsize=2pt 0](7.55,0.5)
\psdots[dotsize=2pt 0](9.25,-1.21)
\psdots[dotsize=2pt 0](9.25,-1.21)
\psdots[dotsize=2pt 0](7.55,-1.21)
\psdots[dotsize=2pt 0](7.55,-1.21)
\psdots[dotsize=2pt 0](7.55,0.5)
\psdots[dotsize=2pt 0](7.55,0.5)
\psdots[dotsize=2pt 0](5.84,-1.21)
\psdots[dotsize=2pt 0](5.84,-1.21)
\psdots[dotsize=2pt 0](7.55,-1.21)
\psdots[dotsize=2pt 0](5.84,-1.21)
\psdots[dotsize=2pt 0](5.84,0.5)
\psdots[dotsize=2pt 0](5.84,0.5)
\psdots[dotsize=2pt 0](7.55,0.5)
\psdots[dotsize=2pt 0](7.55,0.5)
\psdots[dotsize=2pt 0](5.84,2.21)
\psdots[dotsize=2pt 0](5.84,2.21)
\psdots[dotsize=2pt 0](5.84,0.5)
\psdots[dotsize=2pt 0](5.84,2.21)
\psdots[dotsize=2pt 0](7.55,2.21)
\psdots[dotsize=2pt 0](7.55,2.21)
\psdots[dotsize=2pt 0](7.55,0.5)
\psdots[dotsize=2pt 0](9.25,2.21)
\psdots[dotsize=2pt 0](7.55,2.21)
\psdots[dotsize=2pt 0](7.55,2.21)
\psdots[dotsize=2pt 0](7.55,3.91)
\psdots[dotsize=2pt 0](7.55,3.91)
\psdots[dotsize=2pt 0](5.84,2.21)
\psdots[dotsize=2pt 0](7.55,3.91)
\psdots[dotsize=2pt 0](9.25,2.21)
\psdots[dotsize=2pt 0](9.25,0.5)
\psdots[dotsize=2pt 0](10.96,0.5)
\psdots[dotsize=2pt 0](10.96,0.5)
\psdots[dotsize=2pt 0](9.25,2.21)
\psdots[dotsize=2pt 0](10.96,0.5)
\psdots[dotsize=2pt 0](9.25,-1.21)
\psdots[dotsize=2pt 0](7.55,-1.21)
\psdots[dotsize=2pt 0](7.55,-2.91)
\psdots[dotsize=2pt 0](7.55,-2.91)
\psdots[dotsize=2pt 0](9.25,-1.21)
\psdots[dotsize=2pt 0](7.55,-2.91)
\psdots[dotsize=2pt 0](5.84,-1.21)
\psdots[dotsize=2pt 0](5.84,0.5)
\psdots[dotsize=2pt 0](4.13,0.5)
\psdots[dotsize=2pt 0](4.13,0.5)
\psdots[dotsize=2pt 0](5.84,-1.21)
\psdots[dotsize=2pt 0](4.13,0.5)
\psdots[dotsize=2pt 0](5.84,2.21)
\end{pspicture*}
\end{center}


In \cite{Kenyon}, Kenyon and Schlenker prove a necessary and sufficient condition for the existence of a rhombus embedding, and thus in the dimer model case, for geometric consistency. They define a `train track' to be a path of quads (each quad being adjacent along an edge to the previous quad) which does not turn, i.e. for each quad in the train track, its shared edges with the previous and subsequent quads are opposite each other. Train tracks are assumed to extend in both directions as far as possible. The shaded quads in the example below, form part of a train track.
\begin{center}
\psset{xunit=1.0cm,yunit=1.0cm,algebraic=true,dotstyle=*,dotsize=5pt 0,linewidth=0.8pt,arrowsize=3pt 2,arrowinset=0.25}
\newrgbcolor{zzzzzz}{0.8 0.8 0.8}
\begin{pspicture*}(1,-5)(6,-0.3)
\pspolygon[fillcolor=zzzzzz,fillstyle=solid](3.5,-3.58)(3,-2.71)(3.5,-1.85)(4,-2.71)
\pspolygon[fillcolor=zzzzzz,fillstyle=solid](4,-2.71)(5,-2.71)(4.5,-1.85)(3.5,-1.85)
\pspolygon[fillcolor=zzzzzz,fillstyle=solid](2,-2.71)(2.5,-3.58)(3.5,-3.58)(3,-2.71)
\pspolygon[fillcolor=zzzzzz,fillstyle=solid](2,-2.71)(1.5,-3.58)(2,-4.45)(2.5,-3.58)
\pspolygon[fillcolor=zzzzzz,fillstyle=solid](5,-0.98)(4.5,-1.85)(5,-2.71)(5.5,-1.85)
\psline(3.5,-3.58)(3,-4.45)
\psline(3,-4.45)(2,-4.45)
\psline(2,-4.45)(2.5,-3.58)
\psline(2.5,-3.58)(3.5,-3.58)
\psline(3.5,-3.58)(4,-4.45)
\psline(4,-4.45)(5,-4.45)
\psline(5,-4.45)(4.5,-3.58)
\psline(4.5,-3.58)(3.5,-3.58)
\psline(3.5,-3.58)(3,-2.71)
\psline(3,-2.71)(3.5,-1.85)
\psline(3.5,-1.85)(4,-2.71)
\psline(4,-2.71)(3.5,-3.58)
\psline(4,-2.71)(5,-2.71)
\psline(5,-2.71)(4.5,-1.85)
\psline(4.5,-1.85)(3.5,-1.85)
\psline(3.5,-1.85)(4,-2.71)
\psline(5,-2.71)(4.5,-3.58)
\psline(4.5,-3.58)(3.5,-3.58)
\psline(3.5,-3.58)(4,-2.71)
\psline(4,-2.71)(5,-2.71)
\psline(5,-2.71)(5.5,-3.58)
\psline(5.5,-3.58)(5,-4.45)
\psline(5,-4.45)(4.5,-3.58)
\psline(4.5,-3.58)(5,-2.71)
\psline(2,-2.71)(3,-2.71)
\psline(3,-2.71)(3.5,-1.85)
\psline(3.5,-1.85)(2.5,-1.85)
\psline(2.5,-1.85)(2,-2.71)
\psline(2,-2.71)(2.5,-3.58)
\psline(2.5,-3.58)(3.5,-3.58)
\psline(3.5,-3.58)(3,-2.71)
\psline(3,-2.71)(2,-2.71)
\psline(2,-2.71)(1.5,-1.85)
\psline(1.5,-1.85)(2,-0.98)
\psline(2,-0.98)(2.5,-1.85)
\psline(2.5,-1.85)(2,-2.71)
\psline(2,-2.71)(1.5,-3.58)
\psline(1.5,-3.58)(2,-4.45)
\psline(2,-4.45)(2.5,-3.58)
\psline(2.5,-3.58)(2,-2.71)
\psline(3.5,-1.85)(3,-0.98)
\psline(3,-0.98)(2,-0.98)
\psline(2,-0.98)(2.5,-1.85)
\psline(2.5,-1.85)(3.5,-1.85)
\psline(3.5,-1.85)(4,-0.98)
\psline(4,-0.98)(5,-0.98)
\psline(5,-0.98)(4.5,-1.85)
\psline(4.5,-1.85)(3.5,-1.85)
\psline(5,-2.71)(5.5,-1.85)
\psline(5.5,-1.85)(5,-0.98)
\psline(2,-2.71)(1.5,-3.58)
\psline(1.5,-3.58)(2,-4.45)
\psline(2,-4.45)(2.5,-3.58)
\psline(2.5,-3.58)(2,-2.71)
\psline(5,-0.98)(4.5,-1.85)
\psline(4.5,-1.85)(5,-2.71)
\psline(5,-2.71)(5.5,-1.85)
\psline(5.5,-1.85)(5,-0.98)
\psdots[dotstyle=o](1.5,-3.58)
\psdots[dotstyle=o](1.5,-3.58)
\psdots(2.5,-3.58)
\psdots[dotstyle=o](3,-2.71)
\psdots(2.5,-1.85)
\psdots[dotstyle=o](1.5,-1.85)
\psdots(2.5,-1.85)
\psdots[dotstyle=o](3,-2.71)
\psdots(4,-2.71)
\psdots[dotstyle=o](4.5,-1.85)
\psdots(4,-0.98)
\psdots[dotstyle=o](3,-0.98)
\psdots(4,-2.71)
\psdots[dotstyle=o](3,-2.71)
\psdots(2.5,-3.58)
\psdots[dotstyle=o](3,-4.45)
\psdots(4,-4.45)
\psdots[dotstyle=o](4.5,-3.58)
\psdots(4,-2.71)
\psdots[dotstyle=o](4.5,-3.58)
\psdots(5.5,-3.58)
\psdots(5.5,-1.85)
\psdots[dotstyle=o](4.5,-1.85)
\psdots[dotsize=2pt 0](2,-2.71)
\psdots[dotsize=2pt 0](3.5,-3.58)
\psdots[dotsize=2pt 0](3.5,-1.85)
\psdots[dotsize=2pt 0](5,-2.71)
\psdots[dotsize=2pt 0](2,-0.98)
\psdots[dotsize=2pt 0](2,-4.45)
\psdots[dotsize=2pt 0](5,-4.45)
\psdots[dotsize=2pt 0](5,-0.98)
\end{pspicture*}

\end{center}

\begin{theorem}\label{Kenyon} \emph{(Theorem 5.1, \cite{Kenyon})} Suppose $G$ is a quad graph on a torus. Then $G$ has a rhombic embedding on a torus if and only if the following two conditions are satisfied:
\begin{enumerate}
\item Each train track is a simple closed curve.
\item The lift of two train tracks to the universal cover intersect at most once.
\end{enumerate}
\end{theorem}
The knowledge of a quad, and a pair of opposite edges is enough to completely determine a train track. Thus there are at most two train tracks containing any given quad, and since there are a finite number of quads on the torus, there are a finite number of train tracks. Therefore Theorem~\ref{Kenyon} gives a practical way of checking if a dimer model is geometrically consistent.

It will sometimes be more convenient to consider (and draw) paths, rather than paths of quads. For this reason we define the \emph{spine} of a train track $t$ to be the path which, on each quad of $t$, connects the mid-points of the opposite edges which are in adjacent quads in $t$. The diagram below shows part of a train track, with its spine dawn in grey:
\begin{center}
\newrgbcolor{zzzzzz}{0.6 0.6 0.6}
\psset{xunit=0.8cm,yunit=0.8cm,algebraic=true,dotstyle=*,dotsize=5pt 0,linewidth=0.8pt,arrowsize=3pt 2,arrowinset=0.25}
\begin{pspicture*}(-2,0)(6.5,5.3)
\psline[linewidth=5.2pt,linecolor=zzzzzz](-1.03,3.1)(1.17,2.72)
\psline[linewidth=5.2pt,linecolor=zzzzzz](1.17,2.72)(3.59,3.1)
\psline[linewidth=5.2pt,linecolor=zzzzzz](3.59,3.1)(5.54,2.54)
\psline(-1.46,2.02)(-0.6,4.18)
\psline(-0.6,4.18)(1.66,3.54)
\psline(1.66,3.54)(0.68,1.9)
\psline(0.68,1.9)(-1.46,2.02)
\psline(1.66,3.54)(3.54,3.92)
\psline(3.54,3.92)(3.64,2.28)
\psline(3.64,2.28)(0.68,1.9)
\psline(0.68,1.9)(1.66,3.54)
\psline(3.64,2.28)(5.56,1.3)
\psline(5.56,1.3)(5.52,3.78)
\psline(5.52,3.78)(3.54,3.92)
\psline(3.54,3.92)(3.64,2.28)
\psdots[dotsize=2pt 0](-0.6,4.18)
\psdots(-1.46,2.02)
\psdots[dotsize=2pt 0](0.68,1.9)
\psdots[dotstyle=o](1.66,3.54)
\psdots[dotsize=2pt 0](3.54,3.92)
\psdots(3.64,2.28)
\psdots[dotstyle=o](5.52,3.78)
\psdots[dotsize=2pt 0](5.56,1.3)
\end{pspicture*}
\end{center}
The spine of a train track is a closed curve on the torus. We note that two train tracks intersect in a quad if and only if their spines intersect, and the intersection of two spines may be assumed to be transversal.

Returning to Example~\ref{examplestp} above, we draw part of the universal cover of the quad graph. The grey paths are lifts of the spines of two train tracks which can be seen to intersect more than once. Using Theorem~\ref{Kenyon}, this shows that the example is not geometrically consistent.
\begin{center}
\newrgbcolor{zzzzzz}{0.6 0.6 0.6}
\psset{xunit=1.0cm,yunit=1.0cm,algebraic=true,dotstyle=*,dotsize=5pt 0,linewidth=0.8pt,arrowsize=3pt 2,arrowinset=0.25}
\begin{pspicture*}(-4.5,-3.5)(4,4.5)
\psline[linewidth=5.2pt,linecolor=zzzzzz](-0.81,-2.66)(-1.31,-1.46)
\psline[linewidth=5.2pt,linecolor=zzzzzz](-1.31,-1.46)(-0.46,-0.96)
\psline[linewidth=5.2pt,linecolor=zzzzzz](-0.46,-0.96)(0.04,-0.1)
\psline[linewidth=5.2pt,linecolor=zzzzzz](0.04,-0.1)(1.25,-0.6)
\psline[linewidth=5.2pt,linecolor=zzzzzz](1.25,-0.6)(1.75,0.25)
\psline[linewidth=5.2pt,linecolor=zzzzzz](1.75,0.25)(2.6,0.75)
\psline[linewidth=5.2pt,linecolor=zzzzzz](2.6,0.75)(2.1,1.96)
\psline[linewidth=5.2pt,linecolor=zzzzzz](-1.66,-1.81)(-0.46,-2.31)
\psline[linewidth=5.2pt,linecolor=zzzzzz](-0.46,-2.31)(0.04,-1.46)
\psline[linewidth=5.2pt,linecolor=zzzzzz](0.04,-1.46)(0.9,-0.96)
\psline[linewidth=5.2pt,linecolor=zzzzzz](0.9,-0.96)(0.4,0.25)
\psline[linewidth=5.2pt,linecolor=zzzzzz](0.4,0.25)(1.25,0.75)
\psline[linewidth=5.2pt,linecolor=zzzzzz](1.25,0.75)(1.75,1.6)
\psline[linewidth=5.2pt,linecolor=zzzzzz](1.75,1.6)(2.96,1.1)
\psline(-0.21,0.5)(-0.71,1.71)
\psline(-0.71,1.71)(-0.21,2.21)
\psline(-0.21,2.21)(0.29,1.71)
\psline(0.29,1.71)(-0.21,0.5)
\psline(0.29,1.71)(1.5,2.21)
\psline(1.5,2.21)(1,1)
\psline(1,1)(-0.21,0.5)
\psline(-0.21,0.5)(0.29,1.71)
\psline(-0.21,0.5)(1,1)
\psline(1,1)(1.5,0.5)
\psline(1.5,0.5)(1,0)
\psline(1,0)(-0.21,0.5)
\psline(-0.21,0.5)(1,0)
\psline(1,0)(1.5,-1.21)
\psline(1.5,-1.21)(0.29,-0.71)
\psline(0.29,-0.71)(-0.21,0.5)
\psline(-0.21,0.5)(0.29,-0.71)
\psline(0.29,-0.71)(-0.21,-1.21)
\psline(-0.21,-1.21)(-0.71,-0.71)
\psline(-0.71,-0.71)(-0.21,0.5)
\psline(-0.71,-0.71)(-1.91,-1.21)
\psline(-1.91,-1.21)(-1.41,0)
\psline(-1.41,0)(-0.21,0.5)
\psline(-0.21,0.5)(-0.71,-0.71)
\psline(-1.41,0)(-1.91,0.5)
\psline(-1.91,0.5)(-1.41,1)
\psline(-1.41,1)(-0.21,0.5)
\psline(-0.21,0.5)(-1.41,0)
\psline(-1.41,1)(-1.91,2.21)
\psline(-1.91,2.21)(-0.71,1.71)
\psline(-0.71,1.71)(-0.21,0.5)
\psline(-0.21,0.5)(-1.41,1)
\psline(-0.21,2.21)(0.29,2.71)
\psline(0.29,2.71)(1.5,2.21)
\psline(1.5,2.21)(0.29,1.71)
\psline(0.29,1.71)(-0.21,2.21)
\psline(0.29,2.71)(-0.21,3.91)
\psline(-0.21,3.91)(1,3.41)
\psline(1,3.41)(1.5,2.21)
\psline(1.5,2.21)(0.29,2.71)
\psline(1.5,2.21)(2,1)
\psline(2,1)(1.5,0.5)
\psline(1.5,0.5)(1,1)
\psline(1,1)(1.5,2.21)
\psline(1.5,2.21)(2.71,1.71)
\psline(2.71,1.71)(3.21,0.5)
\psline(3.21,0.5)(2,1)
\psline(2,1)(1.5,2.21)
\psline(1.5,0.5)(2,0)
\psline(2,0)(3.21,0.5)
\psline(3.21,0.5)(2,1)
\psline(2,1)(1.5,0.5)
\psline(2,0)(1.5,-1.21)
\psline(1.5,-1.21)(1,0)
\psline(1,0)(1.5,0.5)
\psline(1.5,0.5)(2,0)
\psline(1.5,-1.21)(2.71,-0.71)
\psline(2.71,-0.71)(3.21,0.5)
\psline(3.21,0.5)(2,0)
\psline(2,0)(1.5,-1.21)
\psline(1.5,-1.21)(1,-2.41)
\psline(1,-2.41)(-0.21,-2.91)
\psline(-0.21,-2.91)(0.29,-1.71)
\psline(0.29,-1.71)(1.5,-1.21)
\psline(0.29,-1.71)(-0.21,-1.21)
\psline(-0.21,-1.21)(0.29,-0.71)
\psline(0.29,-0.71)(1.5,-1.21)
\psline(1.5,-1.21)(0.29,-1.71)
\psline(-0.21,-1.21)(-0.71,-1.71)
\psline(-0.71,-1.71)(-0.21,-2.91)
\psline(-0.21,-2.91)(0.29,-1.71)
\psline(0.29,-1.71)(-0.21,-1.21)
\psline(-0.71,-0.71)(-0.21,-1.21)
\psline(-0.21,-1.21)(-0.71,-1.71)
\psline(-0.71,-1.71)(-1.91,-1.21)
\psline(-1.91,-1.21)(-0.71,-0.71)
\psline(-2.41,0)(-1.91,0.5)
\psline(-1.91,0.5)(-1.41,0)
\psline(-1.41,0)(-1.91,-1.21)
\psline(-1.91,-1.21)(-2.41,0)
\psline(-1.91,0.5)(-2.41,1)
\psline(-2.41,1)(-3.62,0.5)
\psline(-3.62,0.5)(-2.41,0)
\psline(-2.41,0)(-1.91,0.5)
\psline(-1.91,-1.21)(-3.12,-0.71)
\psline(-3.12,-0.71)(-3.62,0.5)
\psline(-3.62,0.5)(-2.41,0)
\psline(-2.41,0)(-1.91,-1.21)
\psline(-1.91,-1.21)(-1.41,-2.41)
\psline(-1.41,-2.41)(-0.21,-2.91)
\psline(-0.21,-2.91)(-0.71,-1.71)
\psline(-0.71,-1.71)(-1.91,-1.21)
\psline(-2.41,1)(-1.91,2.21)
\psline(-1.91,2.21)(-1.41,1)
\psline(-1.41,1)(-1.91,0.5)
\psline(-1.91,0.5)(-2.41,1)
\psline(-3.62,0.5)(-3.12,1.71)
\psline(-3.12,1.71)(-1.91,2.21)
\psline(-1.91,2.21)(-2.41,1)
\psline(-2.41,1)(-3.62,0.5)
\psline(-1.91,2.21)(-0.71,2.71)
\psline(-0.71,2.71)(-0.21,2.21)
\psline(-0.21,2.21)(-0.71,1.71)
\psline(-0.71,1.71)(-1.91,2.21)
\psline(-0.71,2.71)(-0.21,3.91)
\psline(-0.21,3.91)(0.29,2.71)
\psline(0.29,2.71)(-0.21,2.21)
\psline(-0.21,2.21)(-0.71,2.71)
\psline(-1.41,3.41)(-1.91,2.21)
\psline(-1.91,2.21)(-0.71,2.71)
\psline(-0.71,2.71)(-0.21,3.91)
\psline(-0.21,3.91)(-1.41,3.41)
\psdots(1,0)
\psdots[dotstyle=o](2,0)
\psdots(2,1)
\psdots[dotstyle=o](1,1)
\psdots[dotstyle=o](2.71,1.71)
\psdots(1,3.41)
\psdots[dotstyle=o](0.29,2.71)
\psdots[dotsize=2pt 0](0.29,1.71)
\psdots[dotstyle=o](0.29,-0.71)
\psdots(0.29,-1.71)
\psdots[dotstyle=o](1,-2.41)
\psdots(2.71,-0.71)
\psdots(0.29,1.71)
\psdots[dotstyle=o](-0.71,1.71)
\psdots(-1.41,1)
\psdots[dotstyle=o](-1.41,0)
\psdots(-0.71,-0.71)
\psdots[dotstyle=o](0.29,-0.71)
\psdots[dotstyle=o](0.29,2.71)
\psdots(-0.71,2.71)
\psdots[dotstyle=o](-0.71,-1.71)
\psdots(0.29,-1.71)
\psdots(-0.71,2.71)
\psdots[dotstyle=o](-1.41,3.41)
\psdots(-3.12,1.71)
\psdots[dotstyle=o](-2.41,1)
\psdots(-2.41,0)
\psdots[dotstyle=o](-3.12,-0.71)
\psdots(-1.41,-2.41)
\psdots[dotstyle=o](-0.71,-1.71)
\psdots(-2.41,0)
\psdots[dotstyle=o](-1.41,0)
\psdots[dotsize=2pt 0](-1.91,2.21)
\psdots[dotsize=2pt 0](-0.21,2.21)
\psdots[dotsize=2pt 0](1.5,2.21)
\psdots[dotsize=2pt 0](1.5,0.5)
\psdots[dotsize=2pt 0](-0.21,-1.21)
\psdots[dotsize=2pt 0](1.5,-1.21)
\psdots[dotsize=2pt 0](-0.21,0.5)
\psdots[dotsize=2pt 0](-1.91,-1.21)
\psdots[dotsize=2pt 0](-1.91,0.5)
\psdots[dotsize=2pt 0](-0.21,3.91)
\psdots[dotsize=2pt 0](3.21,0.5)
\psdots[dotsize=2pt 0](-0.21,-2.91)
\psdots[dotsize=2pt 0](-3.62,0.5)
\end{pspicture*}
\end{center}

\section{Zig-zag flows} \label{ZZFsec}
Although we now have a way of checking if a given dimer model is geometrically consistent, this is done on the quad graph and we would prefer to return to the language of quivers which we used in previous chapters. We consider how the properties of train tracks transfer to the quiver. 

We saw in the previous section that there is a 1-1 correspondence between quads and arrows, for example the arrow corresponding to a quad is the one in the boundary of the quiver faces dual to the dimer vertices of the quad. 
Therefore, if two quads are adjacent along an edge $\{v,\face\}$, the corresponding arrows $a,b$ have $ha = v =tb$, and are both in the boundary of $\face$. Furthermore, since the dimer vertices on opposite edges of a quad, are different colours, the pairs of arrows corresponding to adjacent quads in a train track alternate between being in the boundary of black and white faces.

\begin{center}
\newrgbcolor{zzzzzz}{0.6 0.6 0.6}
\psset{xunit=1.0cm,yunit=1.0cm}
\begin{pspicture*}(-2,-1)(3,5)
\psset{xunit=1.0cm,yunit=1.0cm,algebraic=true,dotstyle=*,dotsize=3pt 0,linewidth=0.8pt,arrowsize=3pt 2,arrowinset=0.25}
\psline[ArrowInside=->, linecolor=zzzzzz](0.38,1.62)(0.38,3.08)
\psline[ArrowInside=->, linecolor=zzzzzz](0.38,3.08)(-1.01,3.53)
\psline[ArrowInside=->, linecolor=zzzzzz](-1.01,3.53)(-1.87,2.35)
\psline[ArrowInside=->, linecolor=zzzzzz](-1.87,2.35)(-1.01,1.17)
\psline[ArrowInside=->, linecolor=zzzzzz](-1.01,1.17)(0.38,1.62)
\psline[ArrowInside=->, linecolor=zzzzzz](0.38,1.62)(0.38,3.08)
\psline[ArrowInside=->, linecolor=zzzzzz](1.64,0.89)(0.38,1.62)
\psline[ArrowInside=->, linecolor=zzzzzz](2.91,1.62)(1.64,0.89)
\psline[ArrowInside=->, linecolor=zzzzzz](2.91,3.08)(2.91,1.62)
\psline[ArrowInside=->, linecolor=zzzzzz](1.64,3.81)(2.91,3.08)
\psline[ArrowInside=->, linecolor=zzzzzz](0.38,3.08)(1.64,3.81)
\psline[linewidth=1.5pt](-0.62,2.35)(0.38,1.62)
\psline[linewidth=1.5pt](0.38,1.62)(1.64,2.35)
\psline[linewidth=1.5pt](1.64,2.35)(0.38,3.08)
\psline[linewidth=1.5pt](0.38,3.08)(-0.62,2.35)
\psline[linewidth=1.5pt](1.64,2.35)(1.64,0.89)
\psline[linewidth=1.5pt](1.64,0.89)(0.66,-0.06)
\psline[linewidth=1.5pt](0.66,-0.06)(0.38,1.62)
\psline[linewidth=1.5pt](0.38,1.62)(1.64,2.35)
\psline[linewidth=1.5pt](-0.62,2.35)(-1.01,3.53)
\psline[linewidth=1.5pt](-1.01,3.53)(-0.02,4.32)
\psline[linewidth=1.5pt](-0.02,4.32)(0.38,3.08)
\psline[linewidth=1.5pt](0.38,3.08)(-0.62,2.35)
\psline[ArrowInside=->, linecolor=zzzzzz](1.64,0.89)(0.38,1.62)
\psline[ArrowInside=->, linecolor=zzzzzz](0.38,1.62)(0.38,3.08)
\psline[ArrowInside=->, linecolor=zzzzzz](0.38,3.08)(-1.01,3.53)
\psdots[dotsize=2pt 0](0.38,1.62)
\psdots[dotsize=2pt 0](0.38,3.08)
\psdots[dotsize=2pt 0](-1.01,3.53)
\psdots[dotsize=2pt 0,linecolor=zzzzzz](-1.87,2.35)
\psdots[dotsize=2pt 0,linecolor=zzzzzz](-1.01,1.17)
\psdots[dotsize=2pt 0](1.64,0.89)
\psdots[dotsize=2pt 0,linecolor=zzzzzz](2.91,1.62)
\psdots[dotsize=2pt 0,linecolor=zzzzzz](2.91,3.08)
\psdots[dotsize=2pt 0,linecolor=zzzzzz](1.64,3.81)
\psdots[dotsize=5pt 0](-0.62,2.35)
\psdots[dotsize=5pt 0,dotstyle=o](1.64,2.35)
\psdots[dotsize=5pt 0](0.66,-0.06)
\psdots[dotsize=5pt 0,dotstyle=o](-0.02,4.32)
\end{pspicture*}
\end{center}


\begin{definition}
A \emph{zig-zag path} $\eta$ is a map $\eta : \Z \lra{} Q_1$ such that,
\begin{compactenum}[i)]
\item $h\eta_n = t\eta_{n+1}$ for each $n \in \Z$.
\item $\eta_{2n}$ and $ \eta_{2n+1}$ are both in the boundary of the same black face and, $\eta_{2n-1}$ and $ \eta_{2n}$ are both in the boundary of the same white face.
\end{compactenum}
\end{definition}
We observe that shifting the indexing by an even integer, generates a different zig-zag path with the same image. We call this a reparametrisation of the path, and we will always consider paths up to reparametrisation.
\begin{definition}\label{zig} An arrow $a$ in a zig-zag path $\eta$ is called a zig (respectively a zag) of $\eta$ if it is the image of an even (respectively odd) integer.
\end{definition}
\noindent This is independent of the choice of parametrisation.

\begin{center}
\newrgbcolor{zzzzzz}{0.6 0.6 0.6}
\psset{xunit=1.0cm,yunit=1.0cm}
\begin{pspicture*}(-2,-1)(4,5.5)
\psset{xunit=1.0cm,yunit=1.0cm,algebraic=true,dotstyle=*,dotsize=5pt 0,linewidth=0.8pt,arrowsize=3pt 2,arrowinset=0.25}
\psline[ArrowInside=->,linecolor=zzzzzz](-1.26,3.48)(0.15,4.09)
\psline[ArrowInside=->,linecolor=zzzzzz](-1.26,3.48)(-1.12,1.95)
\psline[ArrowInside=->,linecolor=zzzzzz](-1.12,1.95)(0.38,1.62)
\psline(0.38,1.62)(1.13,0.28)
\psline[ArrowInside=->,linecolor=zzzzzz](2.67,0.27)(1.13,0.28)
\psline[ArrowInside=->,linecolor=zzzzzz](3.45,1.59)(2.67,0.27)
\psline[ArrowInside=->,linecolor=zzzzzz](2.69,2.92)(3.45,1.59)
\psline[ArrowInside=->,linecolor=zzzzzz](1.16,2.94)(2.69,2.92)
\psline[ArrowInside=->,linewidth=2pt](1.13,0.28)(0.38,1.62)
\psline[ArrowInside=->,linewidth=2pt](0.38,1.62)(1.16,2.94)
\psline[ArrowInside=->,linewidth=2pt](1.16,2.94)(0.15,4.09)
\psline[ArrowInside=->,linewidth=1.6pt,linestyle=dashed,dash=2pt 2pt](0.15,4.09)(1,5.2)
\psline[ArrowInside=->,linewidth=1.6pt,linestyle=dashed,dash=2pt 2pt](0.22,-0.74)(1.13,0.28)
\rput[tl](0.9,1.08){zag}
\rput[tl](0.88,2.24){zig}
\rput[tl](0.74,3.84){zag}
\psdots[dotsize=2pt 0](0.38,1.62)
\psdots[dotsize=2pt 0](1.16,2.94)
\psdots[dotsize=2pt 0](0.15,4.09)
\psdots[dotsize=2pt 0,linecolor=zzzzzz](-1.26,3.48)
\psdots[dotsize=2pt 0,linecolor=zzzzzz](-1.12,1.95)
\psdots[dotsize=2pt 0](1.13,0.28)
\psdots[dotsize=2pt 0,linecolor=zzzzzz](2.67,0.27)
\psdots[dotsize=2pt 0,linecolor=zzzzzz](3.45,1.59)
\psdots[dotsize=2pt 0,linecolor=zzzzzz](2.69,2.92)
\psdots[dotsize=5pt 0](-0.14,2.82)
\psdots[dotsize=5pt 0,dotstyle=o](1.91,1.6)
\psdots[dotsize=2pt 0](1,5.2)
\psdots[dotsize=2pt 0](0.22,-0.74)
\end{pspicture*}
\end{center}

We saw in the previous section that the knowledge of a quad, and a pair of opposite edges is enough information to completely determine a train track. Let $a$ be any arrow in $Q$ and suppose we decide that it is a zig (respectively zag). Then similarly,
we see that this is enough to uniquely determine a zig-zag path (up to reparametrisation). Thus every arrow is in at most two zig-zag paths. Furthermore since there are a finite number of arrows in $Q$, we see that all zig-zag paths are periodic.


We now turn our attention to the universal cover $\Qcov$ of $Q$, and maps $\zzf : \Z \lra{} \Qcov_1$ into this, which satisfy the same `zig-zag' property.

\begin{definition}\label{zzflo}
A \emph{zig-zag flow} $\zzf$ is a map $\zzf : \Z \lra{} \Qcov_1$ such that,
\begin{compactenum}[i)]
\item $h\zzf_n = t\zzf_{n+1}$ for each $n \in \Z$.
\item $\zzf_{2n}$ and $ \zzf_{2n+1}$ are both in the boundary of the same black face and, $\zzf_{2n-1}$ and $ \zzf_{2n}$ are both in the boundary of the same white face.
\end{compactenum}
\end{definition}
We define reparametrisation, zigs and zags in the same way as above, and observe that if we decide that an arrow $a \in \Qcov_1$ is a zig (respectively zag), then this is enough to uniquely determine a zig-zag flow (up to reparametrisation).
\begin{remark}
We use the terms zig-zag path and zig-zag flow in order to distinguish between objects on the quiver $Q$, and on its universal cover $\Qcov$. We note that composing a zig-zag flow with the projection from the universal cover $\Qcov$ to the quiver $Q$ produces a zig-zag path. The `zig-zag' property may also be characterised by saying that the path turns `maximally left' at a vertex, then `maximally right' at the following vertex, and then left again and so on. This is how it is defined in the physics literature, for example in \cite{Kenyonintro}. In this language, knowing that $a$ is a zig or a zag of a zig-zag path or flow, is equivalent to knowing whether the zig-zag path or flow turns left or right at the head of $a$.
\end{remark}


Since there exists a unique zig-zag path or flow containing any given arrow $a$ as a zig (respectively zag), we will refer to this as the zig-zag path or flow generated by the zig (respectively zag) $a$.
%
In particular, the lift of a single zig or zag of a zig-zag path $\eta$ to $\Qcov$, (remembering that it is a zig or zag), generates a zig-zag flow $\zzf$ which projects down to $\eta$.

Let $\zzf$ be a zig-zag flow and denote by $\eta$, the zig-zag path obtained by projecting this down to $Q$. Since $\eta$ is periodic, there is a well defined element $(\eta) \in \Z_{Q_1}$ which is the sum of the arrows in a single period. This is obviously closed and has a homology class $[\eta] \in H_1(Q)$ i.e. in the homology of the torus. Thus each zig-zag flow $\zzf$ (and zig-zag path $\eta$), has a corresponding homology class.

\begin{remark}
The 2-torus is the quotient of the plane by the fundamental group $\pi_1(T)$ which is isomorphic to $H_1(T)$ as it is abelian. The action is by deck transformations. Given a point $x$ on the plane and a homology class $\lambda$ we find a curve on the torus with this homology class which passes through the projection of $x$. We lift this curve to a path in the plane starting at $x$ and define $\lambda.x$ to be the end point. This depends only on the homology class, and not on the choice of curve. We note that in particular, the action of $[\eta] \in H_1(Q)$ on an arrow $\zzf_n$ in a representative zig-zag flow $\zzf$, is the arrow $\zzf_{n+ \varpi}$, where $\varpi$ is the length of one period of $\zzf$.
\end{remark}

We note that there is an obvious homotopy between the spine of a train track and the zig-zag path corresponding to that train track.
\begin{center}
\newrgbcolor{zzzzzz}{0.6 0.6 0.6}
\psset{xunit=1.0cm,yunit=1.0cm,algebraic=true,dotstyle=*,dotsize=6pt 0,linewidth=0.8pt,arrowsize=1pt 2,arrowinset=0.25}
\begin{pspicture*}(-3,-1)(5,4)
\psline[linewidth=5.2pt,linecolor=zzzzzz](-2,1)(1,2)
\psline[linewidth=5.2pt,linecolor=zzzzzz](1,2)(4,1.54)
\psline(-2,2)(-2,0)
\psline(-2,0)(1,1)
\psline(1,1)(1,3)
\psline(1,3)(-2,2)
\psline(1,1)(4,0.38)
\psline(4,0.38)(4,2.7)
\psline(4,2.7)(1,3)
\psline[ArrowInside=->,arrowsize=4pt 2](-2,0)(1,3)
\psline[ArrowInside=->,arrowsize=4pt 2](1,3)(4,0.38)
\psline[linestyle=dotted]{->}(-1.78,0.22)(-1.78,1.07)
\psline[linestyle=dotted]{->}(-1.41,0.59)(-1.41,1.2)
\psline[linestyle=dotted]{->}(-1,1)(-1,1.33)
\psline[linestyle=dotted]{->}(0,2)(-0,1.67)
\psline[linestyle=dotted]{->}(0.34,2.34)(0.34,1.78)
\psline[linestyle=dotted]{->}(0.68,2.68)(0.68,1.89)
\psline[linestyle=dotted]{->}(1.3,2.74)(1.3,1.96)
\psline[linestyle=dotted]{->}(1.68,2.4)(1.68,1.89)
\psline[linestyle=dotted]{->}(2.06,2.07)(2.05,1.84)
\psline[linestyle=dotted]{->}(2.97,1.28)(2.97,1.7)
\psline[linestyle=dotted]{->}(3.32,0.98)(3.33,1.65)
\psline[linestyle=dotted]{->}(3.69,0.65)(3.69,1.59)
\psdots(-2,2)
\psdots[dotsize=2pt 0](-2,0)
\psdots[dotsize=2pt 0](1,3)
\psdots[dotstyle=o](1,1)
\psdots[dotsize=2pt 0](4,0.38)
\psdots(4,2.7)
\end{pspicture*}
\end{center}
Therefore the spine of a train track and the corresponding zig-zag path have the same class in the homology of the torus.
\begin{remark} \label{intno}
We will often talk about the intersection between two zig-zag paths or flows. We have defined a zig-zag path or flow as a doubly infinite sequence of arrows, and as such, we will consider them to intersect if and only if they have an arrow in common. Thus there may exist zig-zag paths which share a vertex but not intersect. This is consistent with the train tracks definitions where two paths of quads intersect if and only if they have a quad in common which happens if and only if the spines of the train tracks intersect. Furthermore since the spines of zig-zag paths intersect each other transversally, each intersection of spines counts as $\pm 1$ in the intersection product of the corresponding homology classes. Thus each arrow in the intersection between two zig-zag paths contributes $\pm 1$ to the intersection number, and the intersection number is the sum of these contributions. In particular if $a$ is a zag of $[\eta]$ and a zig of $[\eta^\prime]$, then it contributes $+1$ to $[\eta] \wedge [\eta^\prime]$.
\end{remark}

We now give a proposition that describes necessary and sufficient conditions for a dimer model to be geometrically consistent in terms of zig-zag flows. We note that unlike Theorem~\ref{Kenyon}, all conditions are formulated on the universal cover.
\begin{proposition}\label{geomcons} A dimer model is geometrically consistent if and only if the following conditions hold.
\begin{compactenum} [(a)]
\item No zig-zag flow $\zzf$ in $\Qcov$ intersects itself, i.e. $\zzf$ is an injective map.
\item If $\zzf{{}}$ and $\zzf^{\prime}{{}}$ are zig-zag flows and $[\eta{{}}], [\eta^{\prime}{{}}] \in H_1{(Q)}$ are linearly independent, then they intersect in precisely one arrow.
\item If $\zzf{{}}$ and $\zzf^{\prime}{{}}$ are zig-zag flows and $[\eta{{}}], [\eta^{\prime}{{}}] \in H_1{(Q)}$ are linearly dependent, then they do not intersect.
\end{compactenum}
\end{proposition}
\begin{proof}
First we suppose conditions (a)-(c) hold, and prove that these imply conditions (1) and (2) of Theorem~\ref{Kenyon}. As there are a finite number of quads on the torus, all train tracks are closed. Therefore to prove condition (1), we need to show that they are simple. Suppose there is a train track which is not, and so intersects itself in a quad. This quad corresponds to an arrow in the quiver $Q$, which is contained in the corresponding zig-zag path as both a zig and a zag. Lifting this arrow to an arrow $a$ in the universal cover, we consider the flows generated by $a$ considered as a zig and as a zag. We either obtain one zig-zag flow which intersects itself, contradicting condition (a), or we obtain two zig-zag flows which intersect in the arrow $a$. By construction these project down to the same zig-zag path $\eta$, and therefore have the same corresponding homology class, but this contradicts condition (c). Therefore we have shown that condition (1) of Theorem~\ref{Kenyon} holds. Since the lift of a train track corresponds to a zig-zag flow, we observe that condition (2) of Theorem~\ref{Kenyon} follows directly from (b) and (c).

Now conversely, assuming that we have a geometrically consistent dimer model, so conditions (1) and (2) of Theorem~\ref{Kenyon} hold, we show that (a)-(c) hold.
First we consider the case where $\zzf{{}}$ and $\zzf^{\prime}{{}}$ are distinct zig-zag flows which project down to the same zig-zag path $\eta$, and show that they  don't intersect. If they did, then there is an arrow $a \in \Qcov$ which is (without loss of generality) a zig of $\zzf{{}}$ and a zag of $\zzf^{\prime}{{}}$. This projects down to an arrow which is both a zig and a zag of $\eta$. Thus $\eta$ intersects itself and the corresponding train track is not a simple closed curve. This contradicts (1), and so $\zzf{{}}$ and $\zzf^{\prime}{{}}$ do not intersect. This proves part of condition (c).

Now we prove that (a) holds. A zig-zag flow can intersect itself in two ways. If an arrow $a$ occurs as a zig and a zag in $\zzf$, then $\zzf$ projects down to a zig-zag path $\eta$ which contains the image of $a$ as a zig and a zag. Thus $\eta$ intersects itself and the corresponding train track is not a simple closed curve. This contradicts (1).

On the other hand, suppose an arrow $a$ occurs more than once as a zig (or zag) in $\zzf$. Then $\zzf$ is a periodic sequence of arrows, whose support is a finite closed path in $\Qcov$. In particular, the corresponding homology class $[\eta] = 0$.
Let $\zzf^{\prime}$ be the zig-zag flow generated by $a$ considered as a zag (or zig). We note that if $\zzf$ and $\zzf^{\prime}$ are the same then they contain $a$ as a zig and a zag which we showed could not happen in the case above. Therefore they must be distinct. Since $[\eta] = 0$, we see that $[\eta]$ and $[{\eta}^{\prime}]$ have zero intersection number. We know from Remark \ref{intno} that each arrow in the intersection of $\eta$ and ${\eta}^{\prime}$ contributes either $\pm 1$ to the intersection number, so they must intersect in an even number of arrows. They intersect in at least one, namely the image of $a$, and so they must intersect at least twice which contradicts (2).

Finally we complete the proof of (b) and (c). We may suppose $\zzf{{}}$ and $\zzf^{\prime}{{}}$ are zig-zag flows which project down to distinct zig-zag paths $\eta{{}}$ and $\eta^{\prime}{{}}$ in $Q$, as the other case of (c) was already done above. Then $\zzf{{}}$ and $\zzf^{\prime}{{}}$ correspond to the lifts of two distinct train tracks, and so by condition (2) they intersect at most once.

To prove (b) we suppose $\zzf{{}}$ and $\zzf^{\prime}{{}}$ are zig-zag flows with the property that $[\eta{{}}]$ and $[\eta^{\prime}{{}}]$ are linearly independent. If $v$ is any vertex of $\zzf{{}}$, then repeatedly applying the deck transformations corresponding to the homology classes $\pm[\eta{{}}]$ we obtain a sequence of vertices of $\zzf{{}}$ which lie on a line $\ell$ in the plane which has gradient given by $[\eta{{}}]$. In particular $\zzf$, and therefore the spine of the corresponding train track, lie within a bounded region of $\ell$. Similarly, the spine of the train track corresponding to $\zzf^{\prime}{{}}$ lies within a bounded region of a line $\ell^{\prime}$ which has gradient given by $[\eta^{\prime}{{}}]$. Since $[\eta{{}}]$ and $[\eta^{\prime}{{}}]$ are linearly independent, the lines $\ell$ and $\ell^{\prime}$ are not parallel and so intersect. Using the boundedness we see that the two spines must also intersect at least once. 
Thus $\zzf{{}}$ and $\zzf^{\prime}{{}}$ intersect at least once and therefore exactly once. This proves (b).

Finally suppose 
$[\eta{{}}]$ and $[\eta^{\prime}{{}}]$ are linearly dependent, so there exists a non-zero homology class $c=k[\eta{{}}]=k^\prime[\eta^{\prime}{{}}]$ which is a common multiple of $[\eta{{}}]$ and $[\eta^{\prime}{{}}]$. If $\zzf{{}}$ and $\zzf^{\prime}{{}}$ intersect in an arrow $a$, we consider applying the deck transformation corresponding to $c$ to this arrow. The image is an arrow which is both $k$ periods along $\zzf{{}}$ from $a$, and $k^\prime$ periods along $\zzf^\prime{{}}$ from $a$. Thus the image of $a$ is a second arrow which is contained in both $\zzf{{}}$ and $\zzf^{\prime}{{}}$. This is a contradiction, so $\zzf{{}}$ and $\zzf^{\prime}{{}}$ do not intersect.
This completes the proof of condition (c).

\end{proof}

\begin{remark} \label{primitiverem}
We note here that the property that a zig-zag flow $\zzf$ doesn't intersect itself (condition (a) of Proposition~\ref{geomcons}) implies that the homology class $[\eta]$ is non-zero.
Also, condition (1) of Theorem~\ref{Kenyon} implies that the homology class corresponding to any zig-zag flow is primitive (or zero). Thus the homology class corresponding to any zig-zag flow in a geometrically consistent dimer model is non-zero and primitive. This observation will be useful in the next chapter.
\end{remark}

\begin{remark} \label{leftrightzz}
Suppose we have a geometrically consistent dimer model and $\zzf$ and $\zzf^\prime$ are such that $[\eta]\wedge[\eta^\prime]>0$. Considering the lines $\ell$ and $\ell^{\prime}$ as in the proof of part (b) in the proposition above, we note that $\ell$ crosses $\ell^{\prime}$ from left to right. Since $\zzf$ and $\zzf^\prime$ lie within a bounded region of $\ell$ and $\ell^{\prime}$ respectively and $\zzf$ and $\zzf^\prime$ intersect exactly once, we see that $\zzf$ crosses $\zzf^\prime$ from left to right. Therefore this intersection is a zag of $\zzf$ and a zig of $\zzf^\prime$.
\end{remark}

\section{Constructing dimer models} \label{stiengul}
Given a dimer model, we saw in Section~\ref{Perfmatchsec} that we can associate to
it a lattice polygon, the ``perfect matching polygon", which is the
degree 1 slice of the cone $\Nm^+$ generated by perfect matchings. This
gives
the toric data for a Gorenstein affine toric threefold. It is natural to
ask whether it is possible to go in the opposite direction.
Given a Gorenstein affine toric threefold, or equivalently a lattice
polgon $V$, can we construct a dimer model for which $V$ is the perfect
matching polygon. Furthermore, can we construct it in such a way that
the dimer model satisfies some consistency conditions? In this section
we describe a method due to Gulotta which produces a geometrically
consistent dimer model for any given lattice polygon. 



\subsection{Gulotta's construction} \label{Gulottascon}
We give here an outline of the construction by Gulotta from \cite{Gulotta}, while referring the  
interested reader to either Gulotta's original paper or, for a slightly more detailed mathematical description, to  
Stienstra's paper \cite{Stienstra2}. In that paper Stienstra reformulates the construction as a (programmable) algorithm 
in terms of the adjacency matrices of the graphs. 

We have seen that the intersection properties of zig-zag flows on a dimer model play an important role and can be used to characterise geometric consistency. A key observation is that, given a configuration of oriented curves on the torus
satisfying certain properties, it is possible to write down a dimer model whose zig-zag paths (or the spines of the train tracks) intersect in the same way as the configuration of curves. In particular if the lifts of the curves to the universal cover also satisfy the `geometric consistency' intersection properties (cf. Proposition~\ref{geomcons}) then the constructed dimer model will be geometrically consistent.
For a full list of the desired properties of a `good' configurations of curves used in the algorithm, see the definition of ``a good pattern of zig-zags" in \cite{Stienstra2}. For the purpose of this overview we note that, as a consequence of these, a configuration must satisfy the following conditions together with the intersection properties needed for geometric consistency.

\begin{compactenum}
\item It induces a cell decomposition of the torus.
\item Each curve has a finite number of intersections with other curves and these intersections are transversal.
\item No three curves intersect in the same place.
\item Travelling along any given curve, its intersections with other curves occur with
alternating orientations, i.e. it is crossed from right to left and then
left to right etc..
\end{compactenum}

As a result we see that each point of intersection (0-cell) is in the boundary of exactly four
2-cells. Furthermore, it can be seen that the cell
decomposition has precisely three types of 2-cell. Those where the
boundary is consistently oriented
clockwise, those where the boundary is consistently oriented
anti-clockwise and those where the orientations around the boundary
alternate. Each 0-cell is in the boundary of two oriented
faces and two unoriented ones. In fact it can be checked that the dual cell
decomposition is a quad graph of the form discussed in Section~\ref{Rhombtiling}, where the clockwise, anti-clockwise and unoriented faces
correspond to white, black and quiver vertices respectively. From this we can easily write down the corresponding dimer model.
Therefore we have changed the problem to that of finding a `good' configuration of curves associated to each lattice polygon. 

\begin{remark} 
Given a lattice polygon in $H^1(T,\Z)$, Gulotta's algorithm actually gives a method of writing down a `good' configuration of curves with the property that the edges of the polygon are normal to the directions of the curves (as classes in $H_1(T,\Z)$)  and the number of curves in a given direction is given by the length of the corresponding edge. Given any dimer model we can write down this polygon whose edges are normal to the directions of the zig-zag paths. In general it is not true that it is the same as the perfect matching polygon. However we will see in Chapter~\ref{zzfpmchap} that this is the case for geometrically consistent dimer models. Since `good' configurations of curves lead to geometrically consistent dimer models, the distinction doesn't cause a problem here.
\end{remark}

The algorithm works as follows. First we observe that if the lattice polygon is a square of arbitary size, then it is not hard to write down a `good' configuration of curves. For example, if the polygon is the one shown on the left below, then there should be three curves with each of the classes $(1,0), (-1,0), (0,1), (0, -1)$ and these can be arranged as shown on the right.
\begin{center}
\psset{xunit=1.0cm,yunit=1.0cm,algebraic=true,dotstyle=*,dotsize=3pt  
0,linewidth=0.8pt,arrowsize=3pt 2,arrowinset=0.25, ArrowInsidePos=0.55}
\begin{pspicture*}(-4,-0.6)(6,3.6)
\psline(-3,3)(0,3)
\psline(0,3)(0,0)
\psline(0,0)(-3,0)
\psline(-3,0)(-3,3)
\psline[linestyle=dashed,dash=1pt 1pt](2,3)(5,3)
\psline[linestyle=dashed,dash=1pt 1pt](5,3)(5,0)
\psline[linestyle=dashed,dash=1pt 1pt](5,0)(2,0)
\psline[linestyle=dashed,dash=1pt 1pt](2,0)(2,3)
\psline[ArrowInside=->](2.43,0)(2.43,3)
\psline[ArrowInside=->](2.86,3)(2.86,0)
\psline[ArrowInside=->](3.29,0)(3.29,3)
\psline[ArrowInside=->](3.71,3)(3.71,0)
\psline[ArrowInside=->](4.14,0)(4.14,3)
\psline[ArrowInside=->](4.57,3)(4.57,0)
\psline[ArrowInside=->](2,2.57)(5,2.57)
\psline[ArrowInside=->](5,2.14)(2,2.14)
\psline[ArrowInside=->](2,1.71)(5,1.71)
\psline[ArrowInside=->](5,1.29)(2,1.29)
\psline[ArrowInside=->](2,0.86)(5,0.86)
\psline[ArrowInside=->](5,0.43)(2,0.43)
\psdots(-3,3)
\psdots(-2,3)
\psdots(-1,3)
\psdots(0,3)
\psdots(-3,0)
\psdots(-3,1)
\psdots(-3,2)
\psdots(-2,1)
\psdots(-2,2)
\psdots(-1,1)
\psdots(-1,2)
\psdots(0,2)
\psdots(-2,0)
\psdots(-1,0)
\psdots(0,1)
\psdots(0,0)
\end{pspicture*}
\end{center}

Gulotta's algorithm transforms a `good' configuration of curves defining a dimer model into another one, in an explicit way that corresponds to removing a triangle from the lattice polygon. This is done in two stages. Firstly one considers two curves corresponding to the edges of the triangle that you wish to remove. These curves intersect in a unique point at which one applies the ``merging move". 
\begin{center}
\psset{xunit=0.8cm,yunit=0.8cm,algebraic=true,dotstyle=o,dotsize=3pt  
0,linewidth=0.8pt,arrowsize=3pt 2,arrowinset=0.25}
\newrgbcolor{zzzzzz}{0.8 0.8 0.8}
\begin{pspicture*}(-2.5,-1.5)(9,4)
\psline{->}(2,1)(3.8,1.02)
\psline[ArrowInside=->](5.1,0.7)(6.1,0.7)
\psline[ArrowInside=->](6.1,0.7)(7.1,1.7)
\psline[ArrowInside=->](7.1,1.7)(7.1,2.7)
\psline[ArrowInside=->](6.7,-1)(6.7,0)
\psline[ArrowInside=->](7.7,1)(8.7,1)
\psline[ArrowInside=->](6.7,0)(7.7,1)
\psline[ArrowInside=->](0,-0.5)(0,1)
\psline[ArrowInside=->](0,1)(0,2.5)
\psline[ArrowInside=->](-1.5,1)(0,1)
\psline[ArrowInside=->](0,1)(1.5,1)
\end{pspicture*}
\end{center}
The normal polygon corresponding to the new configuration of curves is the one obtained by removing the triangle from the previous polygon as required. However the new configuration of curves may not be good. In the second stage one applies a sequence of ``repairing moves", for example the following move which passes one curve through another removing two intersections. 
\begin{center}
\psset{xunit=0.8cm,yunit=0.8cm}
\begin{pspicture*}(-2.5,-1.5)(10,3.5)
\psset{xunit=0.8cm,yunit=0.8cm,algebraic=true,dotstyle=o,dotsize=3pt  
0,linewidth=0.8pt,arrowsize=3pt 2,arrowinset=0.25}
\psline[ArrowInside=->](-1,1)(-1,-1)
\psline[ArrowInside=->](0,2)(-1,1)
\psline[ArrowInside=->](2,2)(0,2)
\psline[ArrowInside=->](1,1)(1,3)
\psline[ArrowInside=->](0,0)(1,1)
\psline[ArrowInside=->](-2,0)(0,0)
\psline{->}(3,1)(5,1)
\psline[ArrowInside=->](6,0.7)(7,0.7)
\psline[ArrowInside=->](7,0.7)(8,1.7)
\psline[ArrowInside=->](8,1.7)(8,2.7)
\psline[ArrowInside=->](7.7,0)(7.7,-1)
\psline[ArrowInside=->](9.7,1)(8.7,1)
\psline[ArrowInside=->](8.7,1)(7.7,0)
\end{pspicture*}
\end{center}
This is the simplest of three types of repairing move. These repairing moves are defined in such a way that they do not affect the normal polygon. Moreover it is shown that following a finite sequence of one can obtain a `good' configuration of curves. 
Any lattice polygon can be ``cut out" from a sufficiently large square by removing triangles in this way. Therefore the algorithm allows one to associate a geometrically consistent dimer model to any lattice polygon.

\begin{remark}
In \cite{Ishii}, Ishii and Ueda generalise Gulotta's method and describe the process of changing a dimer model as one alters the polygon by removing a vertex and taking the convex hull of the remaining vertices.
The method is more complicated than the one described above and makes use of the special McKay correspondence. The resulting dimer model is not necessarily geometrically consistent but can be shown to satisfy the following weakend conditions.
\begin{compactenum}[(a)]
\item No zig-zag flow $\zzf$ in $\Qcov$ intersects itself in an arrow.
\item Suppose ${\zzf}$ and ${\zzf}^{\prime}$ are zig-zag flows which intersect more than once, and let $\zzf_{k_1} = {\zzf}^{\prime}_{n_1}$ and $\zzf_{k_2} = {\zzf}^{\prime}_{n_2}$ be any two arrows in their intersection. If $k_1 < k_2$, then $n_1 > n_2$.
\end{compactenum}
A dimer models satisfying these conditions might be considered to be ``marginally geometrically consistent'' and such models have been considered recently in both \cite{Ishii} and \cite{Bocklandt2}. However, since there is a geometrically consistent dimer model associated to each Gorenstein affine toric threefold, for the purposes of this article we can (and will) continue to work with this stronger definition.
\end{remark}

\begin{remark}
In an earlier paper by Stienstra \cite{Stienstra}, he proposed a different algorithm to produce geometrically consistent dimer models. This uses methods developed in the study of hypergeometric systems and dessins d'enfants. The algorithm associates a rhombus tiling and thus a geometrically consistent dimer model to certain rank 2 subgroups of $\Z^N$. (These subgroups can in turn be associated to lattice polygons via the `secondary polytope' construction.) The rhombus tiling is obtained as the projection of what Stienstra calls a `perfect surface'. Unfortunately, there is currently no proof that perfect surfaces exist in all cases and the method used to find such a surface in his examples is a trial and error approach.  
\end{remark}

\section{Some consequences of geometric consistency} \label{conseqgcsec}
We now look at some of the consequences of the geometric consistency condition. In Section~\ref{Quivalgdim} we explained the construction of the algebra $A$ which from an algebraic point of view is the output of a dimer model.
This is the quotient of the path algebra $\C Q$ by the ideal generated by `F-term relations'.
If a dimer model is geometrically consistent, a theorem of Hanany, Herzog and Vegh gives a concrete criterion for two paths to be F-term equivalent. In this section we set up the notation required and then state this result. This will also act as motivation for the definition of `non-commutative toric algebras' which we shall introduce in Chapter~\ref{ch:ToricA}.

Recall that every arrow $a \in Q_1$ occurs in precisely two oppositely oriented faces $\face^+ ,\face^- \in Q_2$. 
Then each F-term relation can be written explicitly as an equality of two paths $p_a^+ = p_a^-$ for some $a \in Q_1$,
where $p_a^\pm$ is the path from $ha$ around the boundary of $\face^\pm$ to $ta$. We say that two paths differ by a single F-term relation if they are of the form $q_1p_a^+q_2$ and $q_1p_a^-q_2$ for some paths $q_1, q_2$ and some $a \in Q_1$. 
The F-term relations generate an equivalence relation on paths in the quiver. 
\begin{definition}\label{Feq} Two paths $p_1$ and $p_2$ in $Q$ are F-term equivalent denoted $p_1 \sim_F p_2$ if there is a finite sequence of paths $p_1 = \sigma_0, \sigma_1, \dots , \sigma_k = p_2$ such that $\sigma_i$ and $\sigma_{i+1}$ differ by a single F-term relation for $i=0,\dots,k-1$.
\end{definition}
\noindent The F-term equivalence classes of paths form a natural basis for $A$.

We recall that the quiver $Q$ gives a cellular decomposition of the Riemann surface $\RS$ of a dimer model which, in consistent examples, is a 2-torus. Therefore there exist corresponding chain and cochain complexes
\begin{equation}
\label{eq:chaincom}
\Z_{Q_2} \lra{\del} \Z_{Q_1} \lra{\del} \Z_{Q_0}
\end{equation}
\begin{equation}
\label{eq:cochaincom}
\Z^{Q_0} \lra{d} \Z^{Q_1} \lra{d} \Z^{Q_2}
\end{equation}
In Section \ref{symmetries} we defined a sublattice $\Ng:= d^{-1}(\Z \const{1}) \subset \Z^{Q_1}$. We now wish to consider the dual lattice to $\Ng$ which we denote by $\Mg$. This is the quotient of $\Z_{Q_1}$ by the sub-lattice $\del(\const{1}^{\perp})$, where $\const{1}^{\perp} := \{ u \in \Z_{Q_2} \mid \langle u , \const{1} \rangle = 0 \}$. Since  $\del(\const{1}^{\perp})$ is contained in the kernel of the boundary map, this boundary map descends to a well defined map $\del : \Mg \rightarrow \Z_{Q_0}$ on the quotient.

Every path $p$ in $Q$ defines an element of $(p) \in \Z_{Q_1}$, obtained by simply summing up the arrows in the path. This in turn defines a class $\Meq{p} \in \Mg$, for which $\del\Meq{p}= hp -tp$. Now consider any F-term relation $p_a^+ = p_a^-$ for some $a \in Q_1$. Since $p_a^\pm$ is the path from $ha$ around the boundary of $\face^\pm$ to $ta$, we note that $(p_a^\pm) + (a) = \del\face^\pm$. In particular $(p_a^+) - (p_a^-) = \del(\face^+ - \face^-)$ in $ \Z_{Q_1}$, so $\Meq{p_a^+} = \Meq{p_a^-}$. Thus we see that it is necessary for F-term equivalent paths to have the same class in $\Mg$. The result due to Hanany, Herzog and Vegh says that for geometrically consistent dimer models, this condition is also sufficient i.e. if two paths have the same class in $\Mg$ then they are F-term equivalent.

Let $\Mm := \del^{-1}(0)$ and note that this is a rank 3 lattice which is dual to $\Nm$. Therefore it sits in the short exact sequence dual to (\ref{eq:Nmes})
\begin{equation} \label{eq:Mmes}
0 \lra{} \Z \lra{} \Mm \lra{H} H_1(T;\Z) \lra{} 0
\end{equation}
The kernel of $H$ is spanned by the class $\square = \del \Meq{\face} $ for any face $\face \in Q_2$. If $\Rsym$ is any R-symmetry then by definition $\langle \Rsym , \square \rangle = \deg{\Rsym} \neq 0$. Therefore, from (\ref{eq:Mmes}), we observe that $p$ and $q$ have the same class in $\Mg$, i.e. $\Meq{p} - \Meq{q}=0$
, precisely when they are \emph{homologous}, i.e. $H(\Meq{p} - \Meq{q}) = 0$, and they have the same weight under $\Rsym$, i.e. $\langle \Rsym , \Meq{p} \rangle = \langle \Rsym , \Meq{q} \rangle$. 

\begin{theorem} \label{Uniqueness}(Hanany, Herzog, Vegh, Lemma~5.3.1 in \cite{HHV}) For a geometrically consistent dimer model, two paths in $Q$ are F-term equivalent (i.e. represent the same element of $A$) if and only if they are homologous and have the same weight under a fixed R-symmetry.
\end{theorem}

\begin{remark}
In the terminology of \cite{HHV} homologous paths are called `homotopic' and paths which evaluate to the same integer on a fixed R-symmetry are said to be of the same `length'. A more detailed proof of this theorem is now given by Ishii and Ueda in \cite{Ishii}.
\end{remark}

\chapter{Zig-zag flows and perfect matchings} \label{zzfpmchap}
In Chapter~\ref{consistchap} we saw how properties of the intersections of zig-zag flows could be used to characterise geometric consistency. In this chapter we assume geometric consistency, and use this to prove some properties about the intersections between zig-zag flows and paths. We will be particularly interested in zig-zag flows which intersect the boundary of a given face. We define `zig-zag fans' which encode information about intersections of these flows, and use some of their properties to explicitly write down all the external and extremal perfect matchings (as defined in Section~\ref{Perfmatchsec}) of a geometrically consistent dimer model. We prove that the extremal perfect matchings have multiplicity one 
and that the multiplicities of the external perfect matchings are binomial coefficients. 

We start by recalling (Definition~\ref{zzflo}) that a zig-zag flow $\zzf$ is a doubly infinite path $\zzf : \Z \lra{} \Qcov_1$ such that,
$\zzf_{2n}$ and $ \zzf_{2n+1}$ are both in the boundary of the same black face and, $\zzf_{2n-1}$ and $ \zzf_{2n}$ are both in the boundary of the same white face.
%
We called an arrow $a$ in a zig-zag flow $\zzf$, a zig (respectively a zag) of $\zzf$ if it is the image of an even (respectively odd) integer.

\begin{remark}
For a geometrically consistent dimer model, a zig-zag flow does not intersect itself in an arrow. Therefore each arrow in a zig-zag flow $\zzf$ is either a zig or a zag of $\zzf$ but can not be both. Furthermore we recall that knowledge that an arrow is a zig (or a zag) in a zig-zag flow, is enough to uniquely determine that zig-zag flow. Thus every arrow is in precisely two zig-zag flows, and is a zig in one and a zag in the other.
\end{remark}

\section{Boundary flows} \label{boundflowsec}
We start by defining the `boundary flows' of a zig-zag flow $\zzf$. 
By definition, for each $n \in \Z$ we see that the `zig-zag pair' $\zzf_{2n}$ and $ \zzf_{2n+1}$ both lie in the boundary of a black face $\face_n$.
The other arrows in the boundary of this face form an oriented path $p^{(n)}$ from $h\zzf_{2n+1}=t\zzf_{2(n+1)}$ to $t\zzf_{2n}= h\zzf_{2n-1}$, i.e. a finite sequence of consecutive arrows.

\begin{center}
\psset{xunit=1.2cm,yunit=1.2cm}
\begin{pspicture*}(3.5,-6)(12.5,-2)
\newrgbcolor{zzzzzz}{0.8 0.8 0.8}
\psset{xunit=1.2cm,yunit=1.2cm,algebraic=true,dotstyle=*,dotsize=3pt 0,linewidth=0.8pt,arrowsize=3pt 2,arrowinset=0.25}
\pspolygon[fillcolor=zzzzzz,fillstyle=solid](7,-4)(6.5,-3.2)(5.72,-3.14)(5.14,-3.62)(5,-4)(6,-5)
\pspolygon[fillcolor=zzzzzz,fillstyle=solid](9,-4)(8.42,-3.34)(7.56,-3.34)(7,-4)(8,-5)
\pspolygon[fillcolor=zzzzzz,fillstyle=solid](11,-4)(10.7,-3.26)(10.04,-2.86)(9.34,-3.18)(9,-4)(10,-5)
\psline[ArrowInside=->,linewidth=1.5pt](5,-4)(6,-5)
\psline[ArrowInside=->,linewidth=1.5pt](6,-5)(7,-4)
\psline[ArrowInside=->,linewidth=1.5pt](7,-4)(8,-5)
\psline[ArrowInside=->,linewidth=1.5pt](8,-5)(9,-4)
\psline[ArrowInside=->,linewidth=1.5pt](9,-4)(10,-5)
\psline[ArrowInside=->,linewidth=1.5pt](10,-5)(11,-4)
\psline[ArrowInside=->](7,-4)(6.5,-3.2)
\psline[ArrowInside=->](6.5,-3.2)(5.72,-3.14)
\psline[ArrowInside=->](5.72,-3.14)(5.14,-3.62)
\psline[ArrowInside=->](5.14,-3.62)(5,-4)
\psline[ArrowInside=->](5,-4)(6,-5)
\psline[ArrowInside=->](6,-5)(7,-4)
\psline[ArrowInside=->](9,-4)(8.42,-3.34)
\psline[ArrowInside=->](8.42,-3.34)(7.56,-3.34)
\psline[ArrowInside=->](7.56,-3.34)(7,-4)
\psline[ArrowInside=->](7,-4)(8,-5)
\psline[ArrowInside=->](8,-5)(9,-4)
\psline[ArrowInside=->](11,-4)(10.7,-3.26)
\psline[ArrowInside=->](10.7,-3.26)(10.04,-2.86)
\psline[ArrowInside=->](10.04,-2.86)(9.34,-3.18)
\psline[ArrowInside=->](9.34,-3.18)(9,-4)
\psline[ArrowInside=->](9,-4)(10,-5)
\psline[ArrowInside=->](10,-5)(11,-4)
\psline[ArrowInside=->,linewidth=1.5pt,linestyle=dotted](4,-5)(5,-4)
\psline[ArrowInside=->,linewidth=1.5pt,linestyle=dotted](11,-4)(12,-5)
\psline[linewidth=1.5pt](11,-4)(11.59,-4.59)
\psline[linewidth=1.5pt](4.28,-4.72)(5,-4)
\rput[tl](5.78,-3.72){$f_{n-1}$}
\rput[tl](7.94,-3.72){$f_n$}
\rput[tl](9.82,-3.72){$f_{n+1}$}
\rput[tl](4.98,-4.58){$\zzf_{2n-2}$}
\rput[tl](6.36,-4.7){$\zzf_{2n-1}$}
\rput[tl](7.5,-4.16){$\zzf_{2n}$}
\rput[tl](8.46,-4.7){$\zzf_{2n+1}$}
\rput[tl](9.48,-4.14){$\zzf_{2n+2}$}
\rput[tl](10.48,-4.7){$\zzf_{2n+3}$}
\rput[tl](7.88,-2.82){$p^{(n)}$}
\rput[tl](9.78,-2.36){$p^{(n+1)}$}
\rput[tl](5.9,-2.64){$p^{(n-1)}$}
\psdots[dotsize=2pt 0](5,-4)
\psdots[dotsize=2pt 0](6,-5)
\psdots[dotsize=2pt 0](7,-4)
\psdots[dotsize=2pt 0](8,-5)
\psdots[dotsize=2pt 0](9,-4)
\psdots[dotsize=2pt 0](10,-5)
\psdots[dotsize=2pt 0](11,-4)
\psdots[dotsize=2pt 0](6.5,-3.2)
\psdots[dotsize=2pt 0](5.72,-3.14)
\psdots[dotsize=2pt 0](5.14,-3.62)
\psdots[dotsize=2pt 0](8.42,-3.34)
\psdots[dotsize=2pt 0](7.56,-3.34)
\psdots[dotsize=2pt 0](10.7,-3.26)
\psdots[dotsize=2pt 0](10.04,-2.86)
\psdots[dotsize=2pt 0](9.34,-3.18)
\end{pspicture*} 
\end{center}
We piece these $p^{(n)}$ together in the obvious way, 
to construct a doubly infinite sequence of arrows which we call the black boundary flow of $\zzf$. By considering pairs of arrows $\zzf_{2n-1}$ and $ \zzf_{2n}$ which lie in the boundary of a white face for each $n \in \Z$, (i.e. the `zag-zig pairs') we can similarly construct the white boundary flow of $\zzf$. The two boundary flows of a zig-zag flow $\zzf$, project down to `boundary paths' of the zig-zag path $\eta$. The construction we have just described, is well defined on the quiver $Q$ and so the boundary paths can be constructed directly in $Q$. Since $\eta$ is periodic, it follows that each boundary path is also periodic.


We recall from Section~\ref{ZZFsec} that the image of a single period of $\eta$ is a closed cycle $(\eta) \in \Z_{Q_1}$ which defines a homology class $[\eta] \in H_{1}(Q)$. Similarly the image of a single period of a boundary path is a closed cycle in $\Z_{Q_1}$ and also defines a homology class.
Each `zig-zag pair' of $\eta$ is in the boundary of a unique black face. 
Summing these faces over a single period of $\eta$, defines an element in $\Z_{Q_2}$ whose boundary, by construction contains all the arrows in a period of $\eta$. If we subtract the cycle $(\eta) \in \Z_{Q_1}$ from this boundary, we obtain a cycle which is the sum of all arrows in the boundary of each of the faces except the zig-zag pair of $\eta$. Thus we see that by construction this cycle is the one corresponding to the black boundary flow of $\zzf$. The same argument follows through with white faces and zag-zig pairs.
Thus we have shown:
\begin{lemma} \label{boundhom}
For each zig-zag flow $\zzf$, the black and white boundary flows on $\Qcov$ project to boundary paths on $Q$ whose corresponding homology classes are both $-[\eta] \in H_{1}(Q)$.
\end{lemma}

\section{Some properties of zig-zag flows}
It will now be useful to consider the intersections of zig-zag flows and more general paths.
We consider a path in $\Qcov$ to be a (finite or infinite) sequence of arrows $(a_n)$ 
such that $ha_n = ta_{n+1}$. We will call a path \emph{simple} if it contains no repeated arrows. We will also consider `unoriented' paths, where we do not have to respect the orientation of the arrows. These can be considered as paths in the doubled quiver and where we denote the opposite arrow to $a$ by $a^{-1}$. We call an unoriented path \emph{simple} if it contains no arrow which is repeated with either orientation.
\begin{remark}
We note that the support of a simple path does not have to be a simple curve in the usual sense, and may intersect itself as long as the intersections occur at vertices.
\end{remark}

\begin{lemma} \label{zzclosedloop}
Let $p$ be a (possibly unoriented) finite simple closed path in $\Qcov$ and $\zzf$ be any zig-zag flow. If $p$ and $\zzf$ intersect in an arrow $a$, then they must intersect in at least two arrows.
\end{lemma}
\begin{proof}
Without loss of generality we can consider the case when the support of $p$ is a Jordan curve. Indeed, 
if the support of $p$ is not simple, there is a path $p^{\prime}$ containing $a$ whose support is a simple closed curve which is contained in the support of $p$. This is true because the curve only intersects itself at vertices of the quiver. The path $p^{\prime}$ can be constructed by considering the path starting at $a$, and each time a vertex is repeated, removing all the arrows between the repeated vertices. If $\zzf$ intersects $p^{\prime}$ in at least two arrows, it must intersect $p$ in at least these arrows.

Since the universal cover $\Qcov$ is planar we may apply the Jordan curve theorem which implies that $p$ is the boundary of the union of a finite number of faces. Let $F$ denote the set consisting of these faces. We observe that the arrows in the path $p$ are those which are contained in the boundary of precisely one face in $F$. We say that an arrow is `interior' if it is contained in the boundary of two distinct faces in $F$. In particular since $F$ contains a finite number of faces, there are a finite number of arrows in the interior.

Since $a=\zzf_k$ is an arrow in $p$, it is in the boundary of one face $\face \in F$. Using the zig-zag property, one of the arrows $\zzf_{k-1}$ or $\zzf_{k+1}$ in $\zzf$ must also be in the boundary of $\face$. Either this arrow in the path $p$, and we are done, or it lies in the interior.
Without loss of generality, assume that $\zzf_{k+1}$ lies in the interior. 
There are a finite number of arrows in the interior, and $\zzf$ is an infinite sequence of arrows which, by geometric consistency, never intersects itself. Therefore, 
%
%
there exists a first arrow $\zzf_{k^\prime}$ with $k^{\prime} > k+1$, which is not in the interior.

In particular the arrow $\zzf_{k^\prime-1}$ is in the interior, and therefore is in the boundary of two faces in $F$. Using the zig-zag property, we see that $\zzf_{k^\prime}$ is in the boundary of one of these faces and, because it is not in the interior, it must be an arrow in $p$. Finally, since $a=\zzf_k$ and $\zzf_{k^\prime}$ are both in $\zzf$, which does not intersect itself, they must be distinct.
\end{proof}

Now let $\face$ be a face in the universal cover $\Qcov$, and consider the intersections between zig-zag flows and the boundary of $\face$.
\begin{lemma}\label{singlein}
If $\zzf$ is a zig-zag flow which intersects the boundary of $\face$, then it intersects in exactly two arrows which form a zig-zag pair of $\zzf$ if $\face$ is black and a zag-zig pair if $\face$ is white.
\end{lemma}
\begin{proof}
Let $\zzf$ be a zig-zag flow that intersects the boundary of $\face$. We assume that $\face$ is black, the result for white faces follows by a symmetric argument. Using the zig-zag property we see that each intersection between $\zzf$ and the boundary of $\face$ must occur as a pair of arrows and by definition, this is a zig-zag pair of $\zzf$.

Suppose for a contradiction that some zig-zag flow intersects the boundary of face $\face$ in more than one pair. We choose such a flow $\zzf$ so that the number of arrows around the boundary of $\face$, from one zig-zag pair $(\zzf_0,\zzf_1)$ to another pair $(\zzf_k,\zzf_{k+1})$, where $k \geq 2$, is minimal (it could be zero). We note that minimality ensures that $\zzf$ does not intersect the part of the boundary of $\face$ between $\zzf_1$ and $\zzf_k$. The arrow $\zzf_1$ is a zig of a different zig-zag flow which we call $\zzf^{\prime}$, and without loss of generality $\zzf_1= \zzf^{\prime}_0$. By the zig-zag property, $\zzf^{\prime}_1$ is also in the boundary of $\face$.

If the number of arrows around the boundary of $\face$, between $h\zzf_1 =h\zzf^{\prime}_0$ and $t\zzf_k$ is zero then $\zzf^{\prime}_1 = \zzf_k$ and $\zzf$ and $\zzf^{\prime}$ intersect in at least two arrows, which contradicts geometric consistency.
Otherwise there are two finite oriented simple paths from $h\zzf_1 =h\zzf^{\prime}_0$ to $t\zzf_k$; the path $p$ along the boundary of $\face$ and the path $q$ which is part of $\zzf$. Then $pq^{-1}$ is a (finite) simple closed path which intersects $\zzf^{\prime}$ in the arrow $\zzf^{\prime}_0$.
Simplicity follows since $p$ and $q$ are both simple and $\zzf$ does not intersect $p$. Applying Lemma~\ref{zzclosedloop} we see that $\zzf^{\prime}$ must intersect $pq^{-1}$ in another arrow. It can not intersect $q$ otherwise $\zzf$ and $\zzf^{\prime}$ intersect in at least two arrows, so it must intersect $p$ in a second arrow. We show that this leads to a contradiction.

First note that it can not intersect $p$ in an arrow $\zzf^{\prime}_l$ for $l \geq 2$, as this would contradict the minimality condition above. Suppose it intersects in an arrow $\zzf^{\prime}_l$ for $l < 1$. Without loss of generality assume there are no other intersections of $\zzf^{\prime}$ with the boundary between $\zzf^{\prime}_1$ and $\zzf^{\prime}_l$. Then since $\zzf^{\prime}$ doesn't intersect itself, the path along the boundary from $h\zzf^{\prime}_1$ to $t\zzf^{\prime}_l$ followed by the path along $\zzf^{\prime}$ from $t\zzf^{\prime}_l$ to $h\zzf^{\prime}_1$ is a simple closed path. However this intersects $\zzf$ in the single arrow $\zzf_1= \zzf^{\prime}_0$, which contradicts Lemma~\ref{zzclosedloop}.

\end{proof}

\begin{corollary} \label{zzboundcor}
A zig-zag flow does not intersect its black (or white) boundary flow.
\end{corollary}

\section{Right and left hand sides}
Intuitively, one can see that since zig-zag flows do not intersect themselves in a geometrically consistent dimer model, any given zig-zag flow $\zzf$ splits the universal cover of the quiver into two pieces.
We formalise this idea by defining an equivalence relation on the vertices as follows.

\begin{lemma}\label{rlequiv}
There is an equivalence relation on $\Qcov_0$, where $i,j \in \Qcov_0$ are equivalent if and only if there exists a (possibly unoriented) finite path from $i$ to $j$ in $\Qcov$ which doesn't intersect $\zzf$ in any arrows. There are exactly two equivalence classes.
\end{lemma}
\begin{proof}
It is trivial to check that the equivalence relation is well defined. 
To prove that there are two equivalence classes we consider the following sets of vertices
\begin{align*} R(\zzf)&:= \{ v \in \Qcov_0 \mid v=ha \text{ where } a \text{ is a zig of } \zzf \} \\
L(\zzf)&:= \{ v \in \Qcov_0 \mid v=ta \text{ where } a \text{ is a zig of } \zzf \}
\end{align*}
The black and white boundary flows of $\zzf$ don't intersect $\zzf$ in an arrow by Corollary~\ref{zzboundcor}. We note that the black (white) flow is a path which passes through all the vertices of $L(\zzf)$ (respectively $R(\zzf)$). Thus all the vertices in $L(\zzf)$ (respectively $R(\zzf)$) lie in a single equivalence class. We show that these two classes are distinct. \\
If they were not, then for any zig $a$ of $\zzf$, there would exist a (possibly unoriented) finite path in $\Qcov$ from $ha \in R(\zzf)$ to $ta \in L(\zzf)$ which doesn't intersect $\zzf$ in an arrow. By removing the part of the path between any repeated arrows (including the arrows themselves if they occur with opposite orientations), we may assume the path is simple. We prove that such a path does not exist. If $p$ is any finite simple path from $ha$ to $ta$ which doesn't intersect $\zzf$, then in particular it doesn't contain $a$, so $ap$ is a finite simple closed path. Then as $\zzf$ intersects this in the arrow $a$, by Lemma~\ref{zzclosedloop} it must intersect in another distinct arrow. Thus $\zzf$ must intersect $p$ in an arrow which is a contradiction.\\
Finally we prove that the equivalence classes containing $L(\zzf)$ and $R(\zzf)$ are the only two equivalence classes. Let $v \in \Qcov_0$ be any vertex. The quiver $\Qcov$ is strongly connected so, if $a$ is some zig in $\zzf$, there exists a finite path from $v$ to $ha$. Either the path doesn't intersect $\zzf$ in an arrow, and we are done, or it does. Let $b$ be the first arrow in the intersection. Then by construction the path from $v$ to $tb$ doesn't intersect $\zzf$ and $tb$ is obviously in $L(\zzf)$ or $R(\zzf)$.
\end{proof}

\begin{definition}
We say a vertex $v \in \Qcov_0$ is on the left (right) of a zig-zag flow $\zzf$ if it is in the same equivalence class as an element of $L(\zzf)$ (respectively $R(\zzf)$) under the equivalence relation defined above.
\end{definition}
\begin{corollary}
Any path which passes from one side of a zig-zag flow to the other side, must intersect the flow in an arrow.
\end{corollary}

\begin{remark}
We note here that the above definition is consistent with the usual understanding of right and left, however it also gives a well defined notion of right or left to vertices which actually lie on the zig-zag flow itself. We recall that the lift of the spine of the corresponding train track (see the end of Section~\ref{Rhombtiling}) to the universal cover of the quad graph is a periodic path which doesn't pass through any quiver vertices. This splits $\Qcov_0$ into the same two classes.



\end{remark}

\section{Zig-zag fans}
We have seen that if a zig-zag flow intersects the boundary of a face, then it does so in a single zig-zag pair. We now consider the collection of zig-zag flows which intersect the boundary of a given face and construct fans in the integer homology lattice generated by the classes of these flows. We observe that these fans encode the information about the intersections of zig-zag flows which occur around the boundary of the face.

If $\face$ is some face of $\Qcov$, we define:
$$ \mathcal{X}(\face) = \{ \zzf \mid \zzf \text{ intersects the boundary of }\face \}.$$
For any zig-zag flow $\zzf$, recall (Remark~\ref{primitiverem}) that $[\eta]$ is a non-zero primitive homology class in the integer homology lattice $H_1(Q)$. We consider the collection of rays in $H_1(Q)$ generated by the classes $[\eta]$ where $\zzf \in \mathcal{X}(\face)$.

\begin{lemma}\label{spansconvex}
Given a ray $\gamma$ generated by $[\eta]$ where $\zzf \in \mathcal{X}(\face)$, there exists $\zzf^{\prime} \in \mathcal{X}(\face)$ such that the ray generated by $[\eta^{\prime}]$ is at an angle less than $\pi$ in an anti-clockwise direction from $\gamma$.
\end{lemma}
\begin{proof}
Since $\zzf \in \mathcal{X}(\face)$, by Lemma~\ref{singlein} it intersects the boundary of $\face$ in a zig and a zag. The zag of $\zzf$ is a zig of a uniquely defined zig-zag flow $\zzf^{\prime} \in \mathcal{X}(\face)$ which crosses $\zzf$ from right to left.
This intersection contributes $+1$ to the intersection number $ [\eta] \wedge [\eta^{\prime}]$, and therefore recalling Remark~\ref{leftrightzz} this implies that intersection number is strictly positive. Therefore the ray generated by $[\eta^{\prime}]$ is at an angle less than $\pi$ in an anti-clockwise direction from $\gamma$.
\end{proof}

\begin{definition}
The local zig-zag fan $\xi(\face)$ at the face $\face$ of $\Qcov$, is the complete fan of strongly convex rational polyhedral cones in $H_1(Q)$ whose rays are generated by the homology classes corresponding to zig-zag flows in $\mathcal{X}(\face)$.
\end{definition}

Since they are primitive, we note that if the homology classes corresponding to $\zzf$ and $\zzf^{\prime}$ are linearly dependent, then $[\eta]= \pm [\eta^{\prime}]$. We say $\zzf$ and $\zzf^{\prime}$ are \emph{parallel} if $[\eta]=  [\eta^{\prime}]$, and \emph{anti-parallel} if $[\eta]=  -[\eta^{\prime}]$.

\begin{lemma} \label{uniquerep}
If $\zzf$ and $\zzf^{\prime}$ are distinct parallel zig-zag flows, then at most one of them intersects the boundary of any given face.
\end{lemma}
\begin{proof}

Without loss of generality let $\face $ be a black face. Suppose $\zzf , \zzf^{\prime} \in \mathcal{X}(\face)$ are parallel. Since they don't intersect by Proposition~\ref{geomcons}, all vertices of $\zzf$ lie on the same side of $\zzf^{\prime} $ and vice-versa. Both $\zzf$ and $\zzf^{\prime}$ intersect the boundary of $\face$ in a zig-zag pair, and, by geometric consistency, these pairs of arrows are distinct. By considering paths around the boundary of $\face$ we observe that all vertices of $\zzf$ lie on the left of $\zzf^{\prime} $ and all vertices of $\zzf^{\prime}$ lie on the left of $\zzf$.
\begin{center}
\psset{xunit=0.7cm,yunit=0.7cm,runit=0.7cm,algebraic=true,dotstyle=*,dotsize=2pt 0,linewidth=0.8pt,arrowsize=3pt 2,arrowinset=0.25}
\begin{pspicture*}(-1,-2)(6,6.5)
\psarc[linestyle=dashed,dash=1pt 1pt](2.5,2){2.5}{53.13}{126.87}
\psarc[linestyle=dashed,dash=1pt 1pt](2.5,2){2.5}{233.13}{306.87}
\psline[ArrowInside=->](1,4)(0,2)
\psline[ArrowInside=->](0,2)(1,0)
\psline[ArrowInside=->](5,2)(4,4)
\psline[ArrowInside=->](4,0)(5,2)
\psline[ArrowInside=->, linestyle=dotted](4,4)(4.6,5.4)
\psline[ArrowInside=->,linestyle=dotted](4.66,-1.56)(4,0)
\psline[ArrowInside=->,linestyle=dotted](1,0)(0.36,-1.58)
\psline[ArrowInside=->,linestyle=dotted](0.32,5.48)(1,4)
\psline(1,4)(0.54,5.01)
\psline(4,4)(4.43,5)
\psline(1,0)(0.58,-1.03)
\psline(4,0)(4.45,-1.07)
\rput[tl](4.66,6.12){$\widetilde{\eta}$}
\rput[tl](-0.06,6.16){$\widetilde{\eta}^{\prime}$}
\rput[tl](4.7,0.92){zig}
\rput[lt](4.76,3.24){\parbox{1.15 cm}{zag}}
\rput[tl](-0.35,3.32){zig}
\rput[tl](-0.35,0.96){zag}
\psdots[dotsize=2pt 0](1,4)
\psdots[dotsize=2pt 0](0,2)
\psdots[dotsize=2pt 0](1,0)
\psdots[dotsize=2pt 0](4,4)
\psdots[dotsize=2pt 0](5,2)
\psdots[dotsize=2pt 0](4,0)
\psdots[dotsize=6pt 0](2.5,2)
\end{pspicture*} 
\end{center}
However, choosing any other zig-zag flow which is not parallel to $\zzf$, this intersects each of $\zzf$ and $\zzf^{\prime}$ in precisely one arrow. Recalling Remark~\ref{leftrightzz}, we see that these arrows are either both zigs of their respective flows, or both zags. Therefore, by considering the path along the zig-zag flow between these two arrows, we see that if all vertices of $\zzf$ lie on the left of $\zzf^{\prime}$ then all vertices of $\zzf^{\prime}$ lie on the right of $\zzf$. This is a contradiction.
\end{proof}

\begin{remark}
We have just shown that for every ray $\gamma$ of $\xi(\face)$, there is a unique representative zig-zag flow which intersects the boundary of $\face$, i.e. a unique flow which intersects the boundary of $\face$ and whose corresponding homology class is the generator of $\gamma$. Together with Lemma~\ref{singlein} this also implies that if a zig-zag \emph{path} intersects a face then it does so in a zig and a zag.
\end{remark}

\begin{proposition}\label{2cones}
Two rays $\gamma^+$ and $\gamma^-$ span a two dimensional cone in $\xi(\face)$ if and only if they have representative zig-zag flows $\zzf^+$ and $\zzf^-$ which intersect each other in the boundary of $\face$.
\end{proposition}
\begin{proof}

Without loss of generality we assume that $\face$ is black, as the general result follows by symmetry. Suppose zig-zag flows $\zzf^+$ and $\zzf^-$ intersect in arrow $a$ contained in the boundary of $\face$. Without loss of generality we assume that $\zzf^+$ crosses $\zzf^-$ from right to left. As in the proof of Lemma~\ref{spansconvex} we see that corresponding ray $\gamma^-$ is at an angle less than $\pi$ in a clockwise direction from $\gamma^+$.
We prove that $\gamma^+$ and $\gamma^-$ span a two dimensional cone in $\xi(\face)$.
If they do not, then there is a zig-zag flow $\zzf$ which intersects the boundary of $\face$ in zig-zag pair $(\zzf_0,\zzf_1)$ and whose ray in the local zig-zag fan lies strictly between $\gamma_1$ and $ \gamma_2$ (so $\zzf$ is not parallel or anti-parallel to $\zzf^+$ or $\zzf^-$.) Therefore $\zzf$ crosses $\zzf^-$ from right to left and crosses $\zzf^+$ from left to right. Let $b$ be the intersection of $\zzf$ and $\zzf^-$ which we note is a zig of $\zzf$.

By considering paths around the boundary of $\face$ we see that vertex $v:=h\zzf_0$ is on the left of both $\zzf^+$ and $\zzf^-$. Therefore it must occur in $\zzf$ before $\zzf$ intersects $\zzf_2$ but after $\zzf$ intersects $\zzf_1$.
\begin{center}
\psset{xunit=0.9cm,yunit=0.9cm,runit=0.9cm,algebraic=true,dotstyle=*,dotsize=3pt 0,linewidth=0.8pt,arrowsize=3pt 2,arrowinset=0.25}
\begin{pspicture*}(-2,-4)(7,6)
\psline[ArrowInside=->](3,1)(4,2)
\psline[ArrowInside=->](4,2)(4,3)
\psline[ArrowInside=->](4,3)(3,4)
\psline[ArrowInside=->](4,3)(5,4)
\psline[ArrowInside=->](5,1)(4,2)
\psline[ArrowInside=->](1,3.5)(0.34,2.5)
\psline[ArrowInside=->](0.34,2.5)(1,1.5)
\psline[ArrowInside=->](2.78,-0.08)(3,1)
\psline[ArrowInside=->](3,4)(2.76,5.02)
\psarc[linestyle=dashed,dash=2pt 2pt](2.5,1.75){2.3}{77.47}{130.6}
\psarc[linestyle=dashed,dash=2pt 2pt](2.5,3.25){2.3}{229.4}{282.53}
\psline[ArrowInside=->](0,-2)(0.22,-1)
\psline[ArrowInside=->](0.68,-2.76)(0,-2)
\psline[ArrowInside=->](-0.94,-2.64)(0,-2)
\psline[linestyle=dashed,dash=2pt 2pt](0.22,-1)(2.78,-0.08)
\psline[ArrowInside=->,linestyle=dotted](0.22,-1)(-0.54,0)
\psline(0.22,-1)(-0.26,-0.37)
\psline[ArrowInside=->,linestyle=dotted](0.88,-3.72)(0.68,-2.76)
\psline(0.68,-2.76)(0.8,-3.34)
\psline[ArrowInside=->,linestyle=dotted](-1.24,-3.54)(-0.94,-2.64)
\psline(-1.11,-3.14)(-0.94,-2.64)
\psline[ArrowInside=->,linestyle=dotted](5,4)(5.2,4.76)
\psline(5,4)(5.11,4.43)
\psline[ArrowInside=->,linestyle=dotted](5.56,-0.04)(5,1)
\psline(5.36,0.33)(5,1)
\psline[ArrowInside=->,linestyle=dotted](2.76,5.02)(2.08,5.68)
\psline(2.76,5.02)(2.33,5.44)
\rput[tl](5.28,5.){$\widetilde{\eta}^-$}
\rput[tl](-1.76,-3.2){$\widetilde{\eta}^-$}
\rput[tl](4.1,2.7){${a}$}
\rput[tl](0.24,-1.36){${b}$}
\rput[tl](1.86,5.48){$\widetilde{\eta}^+$}
\rput[tl](5.7,0.42){$\widetilde{\eta}^+$}
\rput[tl](1.02,-3.24){$\widetilde{\eta}$}
\rput[tl](-0.82,0.24){$\widetilde{\eta}$}
\rput[tl](0.24,2.18){$\widetilde{\eta}_1$}
\rput[tl](0.22,3.58){$\widetilde{\eta}_0$}
\rput[tl](0.06,2.72){${v}$}
\psdots[dotsize=2pt 0](4,2)
\psdots[dotsize=2pt 0](4,3)
\psdots[dotsize=2pt 0](3,4)
\psdots[dotsize=2pt 0](3,1)
\psdots[dotsize=2pt 0](5,4)
\psdots[dotsize=2pt 0](5,1)
\psdots[dotsize=2pt 0](1,3.5)
\psdots[dotsize=2pt 0](1,1.5)
\psdots[dotsize=2pt 0](0.34,2.5)
\psdots[dotsize=7pt 0](2.28,2.5)
\psdots[dotsize=2pt 0](2.78,-0.08)
\psdots[dotsize=2pt 0](2.76,5.02)
\psdots[dotsize=2pt 0](0,-2)
\psdots[dotsize=2pt 0](0.22,-1)
\psdots[dotsize=2pt 0](0.68,-2.76)
\psdots[dotsize=2pt 0](-0.94,-2.64)
\end{pspicture*} 
\end{center}

In particular $\zzf$ crosses $\zzf^-$ before it crosses $\zzf^+$ and the vertex $hb$ must be on the left of $\zzf^+$. Since $\zzf^-$ crosses $\zzf^+$ from left to right, we see that $b$ occurs before $a$ in $\zzf^-$. The path from $hb$ to $ta$ along $\zzf_1$ doesn't intersect $\zzf$ (since geometric consistency implies that $b$ is the unique arrow where they intersect). Therefore $ta$ is on the right of $\zzf$. However there is a path around the boundary of $\face $ from $hb_1$ to $ta$ which doesn't intersect $\zzf$. This implies that $ta$ is on the left of $\zzf$. Therefore we have a contradiction and so $\gamma_1$ and $\gamma_2$ generate a two-dimensional cone in $\xi(\face)$, where $\gamma_1$ is the negative ray.

\end{proof}
\begin{remark} \label{wellorder} The previous two lemmas together imply that the zig-zag flows which intersect the boundary of a face $\face$, intersect it in pairs of arrows in such a way that the cyclic order of these pairs around the face is (up to orientation) the same as the cyclic order of the corresponding rays around the local zig-zag fan.
In \cite{Gulotta} Gulotta also observes that this is an important property. He calls a dimer model where this is satisfied at each face `properly ordered' and proves that this happens if and only if the number of quiver vertices is equal to twice the area of the polygon whose edges are normal to the directions of the zig-zag paths. He proposes this as an alternative `consistency condition'. 
\end{remark}
Given a two-dimensional cone $\sigma$ in the local zig-zag fan of some face $\face$, we call the unique arrow in the boundary of $\face$, which is the intersection of representative zig-zag flows of the rays of $\sigma$, the arrow corresponding to $\sigma$.


\begin{definition}\label{zzfan} The global zig-zag fan $\Xi$ in $H_1(Q) \otimes_\Z \R$, is the fan whose rays are generated by the homology classes corresponding to all the zig-zag flows on $\Qcov$.
\end{definition}
This is a refinement of each of the local zig-zag fans and is therefore a well defined fan. 
\begin{example} \label{memeg}
We illustrate with an example which we shall return to later in the chapter. Consider the following dimer model and corresponding quiver drawn together as before. The dotted line again marks a fundamental domain. 
\begin{center}
\newrgbcolor{zzttqq}{0.8 0.8 0.8}
\newrgbcolor{wwwwff}{0 0 0}
\psset{xunit=0.85cm,yunit=0.85cm,algebraic=true,dotstyle=*,dotsize=5pt  0,linewidth=1pt,arrowsize=3pt 2,arrowinset=0.25}
\begin{pspicture*}(-1.5,1)(11,6)
\psline[ArrowInside=->, linecolor=zzttqq](0.43,3.32)(-0.7,3.97)
\psline[ArrowInside=->, linecolor=zzttqq](-0.7,2.23)(0.43,3.32)
\psline[ArrowInside=->, linecolor=zzttqq](-0.7,3.97)(0.43,5.05)
\psline[ArrowInside=->, linecolor=zzttqq](1.18,4.62)(0.43,3.32)
\psline[ArrowInside=->, linecolor=zzttqq](0.43,5.05)(1.18,4.62)
\psline[ArrowInside=->, linecolor=zzttqq](0.43,3.32)(1.18,2.88)
\psline[ArrowInside=->, linecolor=zzttqq](1.18,2.88)(2.3,3.97)
\psline[ArrowInside=->, linecolor=zzttqq](2.3,3.97)(1.18,4.62)
\psline[ArrowInside=->, linecolor=zzttqq](2.3,3.97)(2.3,2.23)
\psline[ArrowInside=->, linecolor=zzttqq](2.3,2.23)(1.18,2.88)
\psline[ArrowInside=->, linecolor=zzttqq](0.43,1.58)(-0.7,2.23)
\psline[ArrowInside=->, linecolor=zzttqq](1.18,2.88)(0.43,1.58)
\psline[ArrowInside=->, linecolor=zzttqq](2.3,5.7)(2.3,3.97)
\psline[ArrowInside=->, linecolor=zzttqq](1.18,4.62)(2.3,5.7)
\psline[ArrowInside=->, linecolor=zzttqq](2.3,3.97)(3.43,5.05)
\psline[ArrowInside=->, linecolor=zzttqq](3.43,5.05)(2.3,5.7)
\psline[ArrowInside=->, linecolor=zzttqq](3.43,3.32)(2.3,3.97)
\psline[ArrowInside=->, linecolor=zzttqq](2.3,2.23)(3.43,3.32)
\psline[linecolor=black](-1.2,3.1)(-0.2,3.1)
\psline[linecolor=black](-0.2,3.1)(0.3,3.97)
\psline[linecolor=black](0.3,3.97)(-0.2,4.83)
\psline[linecolor=black](-0.2,4.83)(-1.2,4.83)
\psline[linecolor=black](0.3,3.97)(-0.2,3.1)
\psline[linecolor=black](-0.2,3.1)(0.3,2.23)
\psline[linecolor=black](0.3,2.23)(1.3,2.23)
\psline[linecolor=black](1.3,2.23)(1.8,3.1)
\psline[linecolor=black](1.8,3.1)(1.3,3.97)
\psline[linecolor=black](1.3,3.97)(0.3,3.97)
\psline[linecolor=black](0.3,3.97)(1.3,3.97)
\psline[linecolor=black](1.3,3.97)(1.8,4.83)
\psline[linecolor=black](1.8,4.83)(1.3,5.7)
\psline[linecolor=black](1.3,5.7)(0.3,5.7)
\psline[linecolor=black](0.3,5.7)(-0.2,4.83)
\psline[linecolor=black](-0.2,4.83)(0.3,3.97)
\psline[linecolor=black](1.8,4.83)(1.3,3.97)
\psline[linecolor=black](1.3,3.97)(1.8,3.1)
\psline[linecolor=black](1.8,3.1)(2.8,3.1)
\psline[linecolor=black](2.8,3.1)(3.3,3.97)
\psline[linecolor=black](3.3,3.97)(2.8,4.83)
\psline[linecolor=black](2.8,4.83)(1.8,4.83)
\psline[linecolor=black](-0.2,3.1)(-1.2,3.1)
\psline[linecolor=black](-0.2,1.37)(0.3,2.23)
\psline[linecolor=black](0.3,2.23)(-0.2,3.1)
\psline[linecolor=black](2.8,3.1)(1.8,3.1)
\psline[linecolor=black](1.8,3.1)(1.3,2.23)
\psline[linecolor=black](1.3,2.23)(1.8,1.37)
\psline[linecolor=black](3.3,2.23)(2.8,3.1)
\psline[linecolor=black](0.3,3.97)(1.3,5.7)
\psline[linecolor=black](0.3,2.23)(1.3,3.97)
\psline[linestyle=dashed,dash=3pt 3pt](-0.48,4.45)(2.52,4.45)
\psline[linestyle=dashed,dash=3pt 3pt](2.52,4.45)(2.52,2.72)
\psline[linestyle=dashed,dash=3pt 3pt](2.52,2.72)(-0.48,2.72)
\psline[linestyle=dashed,dash=3pt 3pt](-0.48,2.72)(-0.48,4.45)
\psline[linecolor=black](1.8,1.37)(1.3,2.23)
\psline[linecolor=black](1.3,2.23)(0.3,2.23)
\psline[linecolor=black](0.3,2.23)(-0.2,1.37)
\psline[linecolor=black](1.3,2.23)(0.8,1.37)
\psline[linecolor=black](2.8,4.83)(3.3,3.97)
\psline[linecolor=black](3.3,3.97)(4.3,3.97)
\psline[linecolor=black](4.3,5.7)(3.3,5.7)
\psline[linecolor=black](3.3,5.7)(2.8,4.83)
\psline[linecolor=black](4.3,3.97)(3.3,3.97)
\psline[linecolor=black](3.3,3.97)(2.8,3.1)
\psline[linecolor=black](2.8,3.1)(3.3,2.23)
\psline[linecolor=black](4.3,5.7)(3.3,3.97)
\psline[linecolor=black](4.3,3.97)(3.3,2.23)
\psline[linecolor=zzttqq](5.24,3.11)(6.24,3.11)
\psline[linecolor=zzttqq](6.24,3.11)(6.74,3.97)
\psline[linecolor=zzttqq](6.74,3.97)(6.24,4.84)
\psline[linecolor=zzttqq](6.24,4.84)(5.24,4.84)
\psline[linecolor=zzttqq](6.74,3.97)(6.24,3.11)
\psline[linecolor=zzttqq](6.24,3.11)(6.74,2.24)
\psline[linecolor=zzttqq](6.74,2.24)(7.74,2.24)
\psline[linecolor=zzttqq](7.74,2.24)(8.24,3.11)
\psline[linecolor=zzttqq](8.24,3.11)(7.74,3.97)
\psline[linecolor=zzttqq](7.74,3.97)(6.74,3.97)
\psline[linecolor=zzttqq](6.74,3.97)(7.74,3.97)
\psline[linecolor=zzttqq](7.74,3.97)(8.24,4.84)
\psline[linecolor=zzttqq](8.24,4.84)(7.74,5.7)
\psline[linecolor=zzttqq](7.74,5.7)(6.74,5.7)
\psline[linecolor=zzttqq](6.74,5.7)(6.24,4.84)
\psline[linecolor=zzttqq](6.24,4.84)(6.74,3.97)
\psline[linecolor=zzttqq](8.24,4.84)(7.74,3.97)
\psline[linecolor=zzttqq](7.74,3.97)(8.24,3.11)
\psline[linecolor=zzttqq](8.24,3.11)(9.24,3.11)
\psline[linecolor=zzttqq](9.24,3.11)(9.74,3.97)
\psline[linecolor=zzttqq](9.74,3.97)(9.24,4.84)
\psline[linecolor=zzttqq](9.24,4.84)(8.24,4.84)
\psline[linecolor=zzttqq](6.24,3.11)(5.24,3.11)
\psline[linecolor=zzttqq](6.24,1.37)(6.74,2.24)
\psline[linecolor=zzttqq](6.74,2.24)(6.24,3.11)
\psline[linecolor=zzttqq](9.24,3.11)(8.24,3.11)
\psline[linecolor=zzttqq](8.24,3.11)(7.74,2.24)
\psline[linecolor=zzttqq](7.74,2.24)(8.24,1.37)
\psline[linecolor=zzttqq](9.74,2.24)(9.24,3.11)
\psline[linestyle=dashed,dash=3pt 3pt](5.96,4.46)(8.96,4.46)
\psline[linestyle=dashed,dash=3pt 3pt](8.96,4.46)(8.96,2.73)
\psline[linestyle=dashed,dash=3pt 3pt](8.96,2.73)(5.96,2.73)
\psline[linestyle=dashed,dash=3pt 3pt](5.96,2.73)(5.96,4.46)
\psline[linecolor=zzttqq](8.24,1.37)(7.74,2.24)
\psline[linecolor=zzttqq](7.74,2.24)(6.74,2.24)
\psline[linecolor=zzttqq](6.74,2.24)(6.24,1.37)
\psline[linecolor=zzttqq](9.24,4.84)(9.74,3.97)
\psline[linecolor=zzttqq](9.74,3.97)(10.74,3.97)
\psline[linecolor=zzttqq](10.74,5.7)(9.74,5.7)
\psline[linecolor=zzttqq](9.74,5.7)(9.24,4.84)
\psline[linecolor=zzttqq](10.74,3.97)(9.74,3.97)
\psline[linecolor=zzttqq](9.74,3.97)(9.24,3.11)
\psline[linecolor=zzttqq](9.24,3.11)(9.74,2.24)
\psline[linecolor=zzttqq](6.74,3.97)(7.74,5.7)
\psline[linecolor=zzttqq](6.74,2.24)(7.74,3.97)
\psline[ArrowInside=->, linecolor=wwwwff](6.87,3.32)(5.74,3.97)
\psline[ArrowInside=->, linecolor=wwwwff](5.74,2.24)(6.87,3.32)
\psline[ArrowInside=->, linecolor=wwwwff](5.74,3.97)(6.87,5.05)
\psline[ArrowInside=->, linecolor=wwwwff](7.62,4.62)(6.87,3.32)
\psline[ArrowInside=->, linecolor=wwwwff](6.87,5.05)(7.62,4.62)
\psline[ArrowInside=->, linecolor=wwwwff](6.87,3.32)(7.62,2.89)
\psline[ArrowInside=->, linecolor=wwwwff](7.62,2.89)(8.74,3.97)
\psline[ArrowInside=->, linecolor=wwwwff](8.74,3.97)(7.62,4.62)
\psline[ArrowInside=->, linecolor=wwwwff](8.74,3.97)(8.74,2.24)
\psline[ArrowInside=->, linecolor=wwwwff](8.74,2.24)(7.62,2.89)
\psline[linecolor=zzttqq](7.74,2.24)(7.24,1.37)
\psline[ArrowInside=->, linecolor=wwwwff](6.87,1.59)(5.74,2.24)
\psline[ArrowInside=->, linecolor=wwwwff](7.62,2.89)(6.87,1.59)
\psline[ArrowInside=->, linecolor=wwwwff](8.74,5.7)(8.74,3.97)
\psline[ArrowInside=->, linecolor=wwwwff](7.62,4.62)(8.74,5.7)
\psline[linecolor=zzttqq](10.74,5.7)(9.74,3.97)
\psline[linecolor=zzttqq](10.74,3.97)(9.74,2.24)
\psline[ArrowInside=->, linecolor=wwwwff](8.74,3.97)(9.87,5.05)
\psline[ArrowInside=->, linecolor=wwwwff](9.87,5.05)(8.74,5.7)
\psline[ArrowInside=->, linecolor=wwwwff](9.87,3.32)(8.74,3.97)
\psline[ArrowInside=->, linecolor=wwwwff](8.74,2.24)(9.87,3.32)
\psdots[dotstyle=o](-1.2,3.1)
\psdots[linecolor=black](-0.2,3.1)
\psdots[dotstyle=o](0.3,3.97)
\psdots[linecolor=black](-0.2,4.83)
\psdots[dotstyle=o](-1.2,4.83)
\psdots[dotstyle=o](0.3,2.23)
\psdots[linecolor=black](1.3,2.23)
\psdots[dotstyle=o](1.8,3.1)
\psdots[linecolor=black](1.3,3.97)
\psdots[dotstyle=o](1.8,4.83)
\psdots[linecolor=black](1.3,5.7)
\psdots[dotstyle=o](0.3,5.7)
\psdots[linecolor=black](-0.2,4.83)
\psdots[dotstyle=o](1.8,3.1)
\psdots[linecolor=black](2.8,3.1)
\psdots[dotstyle=o](3.3,3.97)
\psdots[linecolor=black](2.8,4.83)
\psdots[linecolor=black](-0.2,1.37)
\psdots[dotstyle=o](0.3,2.23)
\psdots[linecolor=black](1.3,2.23)
\psdots[dotstyle=o](1.8,1.37)
\psdots[dotstyle=o](3.3,2.23)
\psdots[dotsize=2pt 0, linecolor=zzttqq](-0.7,3.97)
\psdots[dotsize=2pt 0, linecolor=zzttqq](-0.7,2.23)
\psdots[dotsize=2pt 0, linecolor=zzttqq](2.3,2.23)
\psdots[dotsize=2pt 0, linecolor=zzttqq](2.3,3.97)
\psdots[dotsize=2pt 0, linecolor=zzttqq](0.43,3.32)
\psdots[dotsize=2pt 0, linecolor=zzttqq](1.18,2.88)
\psdots[dotsize=2pt 0, linecolor=zzttqq](0.43,5.05)
\psdots[dotsize=2pt 0, linecolor=zzttqq](1.18,4.62)
\psdots[dotsize=2pt 0, linecolor=zzttqq](0.43,1.58)
\psdots[dotsize=2pt 0, linecolor=zzttqq](2.3,5.7)
\psdots[linecolor=black](4.3,3.97)
\psdots[linecolor=black](4.3,5.7)
\psdots[dotstyle=o](3.3,5.7)
\psdots[linecolor=black](2.8,3.1)
\psdots[dotstyle=o](3.3,2.23)
\psdots[dotsize=2pt 0, linecolor=zzttqq](3.43,5.05)
\psdots[dotsize=2pt 0, linecolor=zzttqq](3.43,3.32)
\psdots[dotstyle=o, linecolor=zzttqq](9.74,2.24)
\psdots[dotsize=2pt 0, linecolor=zzttqq](5.24,3.11)
\psdots[linecolor=zzttqq](6.24,3.11)
\psdots[dotstyle=o, linecolor=zzttqq](6.74,3.97)
\psdots[dotsize=2pt 0, linecolor=zzttqq](6.24,4.84)
\psdots[dotstyle=o, linecolor=zzttqq](5.24,4.84)
\psdots[dotsize=2pt 0, linecolor=zzttqq](6.74,3.97)
\psdots[dotsize=2pt 0, linecolor=zzttqq](6.24,3.11)
\psdots[dotstyle=o, linecolor=zzttqq](6.74,2.24)
\psdots[linecolor=zzttqq](7.74,2.24)
\psdots[dotsize=2pt 0, linecolor=zzttqq](8.24,3.11)
\psdots[linecolor=zzttqq](7.74,3.97)
\psdots[dotsize=2pt 0, linecolor=zzttqq](6.74,3.97)
\psdots[dotsize=2pt 0, linecolor=zzttqq](7.74,3.97)
\psdots[dotsize=2pt 0, linecolor=zzttqq](8.24,4.84)
\psdots[linecolor=zzttqq](7.74,5.7)
\psdots[dotstyle=o, linecolor=zzttqq](6.74,5.7)
\psdots[linecolor=zzttqq](6.24,4.84)
\psdots[dotstyle=o, linecolor=zzttqq](8.24,4.84)
\psdots[dotsize=2pt 0, linecolor=zzttqq](7.74,3.97)
\psdots[dotsize=2pt 0, linecolor=zzttqq](8.24,3.11)
\psdots[linecolor=zzttqq](9.24,3.11)
\psdots[dotsize=2pt 0, linecolor=zzttqq](9.74,3.97)
\psdots[linecolor=zzttqq](9.24,4.84)
\psdots[linecolor=zzttqq](6.24,3.11)
\psdots[dotstyle=o, linecolor=zzttqq](5.24,3.11)
\psdots[linecolor=zzttqq](6.24,1.37)
\psdots[dotsize=2pt 0, linecolor=zzttqq](6.74,2.24)
\psdots[dotsize=2pt 0, linecolor=zzttqq](9.24,3.11)
\psdots[dotstyle=o, linecolor=zzttqq](8.24,3.11)
\psdots[dotsize=2pt 0, linecolor=zzttqq](7.74,2.24)
\psdots[dotsize=2pt 0, linecolor=zzttqq](8.24,1.37)
\psdots[dotstyle=o, linecolor=zzttqq](8.24,1.37)
\psdots[dotsize=2pt 0, linecolor=zzttqq](7.74,2.24)
\psdots[linecolor=zzttqq](9.24,4.84)
\psdots[dotsize=2pt 0, linecolor=zzttqq](9.74,3.97)
\psdots[linecolor=zzttqq](10.74,3.97)
\psdots[linecolor=zzttqq](10.74,5.7)
\psdots[dotstyle=o, linecolor=zzttqq](9.74,5.7)
\psdots[dotsize=2pt 0, linecolor=zzttqq](10.74,3.97)
\psdots[dotsize=2pt 0, linecolor=zzttqq](9.74,3.97)
\psdots[linecolor=zzttqq](9.24,3.11)
\psdots[dotstyle=o, linecolor=zzttqq](6.74,3.97)
\psdots[linecolor=zzttqq](7.74,5.7)
\psdots[dotstyle=o, linecolor=zzttqq](6.74,2.24)
\psdots[linecolor=zzttqq](7.74,3.97)
\psdots[dotsize=2pt 0, linecolor=wwwwff](5.74,3.97)
\psdots[dotsize=2pt 0, linecolor=wwwwff](6.87,3.32)
\psdots[dotsize=2pt 0, linecolor=wwwwff](5.74,2.24)
\psdots[dotsize=2pt 0, linecolor=zzttqq](6.87,5.05)
\psdots[dotsize=2pt 0, linecolor=wwwwff](5.74,3.97)
\psdots[dotsize=2pt 0, linecolor=zzttqq](7.62,4.62)
\psdots[dotsize=2pt 0, linecolor=zzttqq](7.62,4.62)
\psdots[dotsize=2pt 0, linecolor=wwwwff](6.87,5.05)
\psdots[dotsize=2pt 0, linecolor=zzttqq](7.62,2.89)
\psdots[dotsize=2pt 0, linecolor=zzttqq](8.74,3.97)
\psdots[dotsize=2pt 0, linecolor=zzttqq](7.62,2.89)
\psdots[dotsize=2pt 0, linecolor=zzttqq](7.62,4.62)
\psdots[dotsize=2pt 0, linecolor=zzttqq](8.74,3.97)
\psdots[dotsize=2pt 0, linecolor=zzttqq](8.74,2.24)
\psdots[dotsize=2pt 0, linecolor=zzttqq](8.74,3.97)
\psdots[dotsize=2pt 0, linecolor=zzttqq](7.62,2.89)
\psdots[dotsize=2pt 0, linecolor=zzttqq](8.74,2.24)
\psdots[linecolor=zzttqq](7.74,2.24)
\psdots[dotsize=2pt 0, linecolor=zzttqq](6.87,1.59)
\psdots[dotsize=2pt 0, linecolor=wwwwff](6.87,1.59)
\psdots[dotsize=2pt 0, linecolor=wwwwff](7.62,2.89)
\psdots[dotsize=2pt 0, linecolor=zzttqq](8.74,3.97)
\psdots[dotsize=2pt 0, linecolor=zzttqq](8.74,5.7)
\psdots[dotsize=2pt 0, linecolor=zzttqq](8.74,5.7)
\psdots[dotsize=2pt 0, linecolor=wwwwff](7.62,4.62)
\psdots[linecolor=zzttqq](10.74,5.7)
\psdots[dotstyle=o, linecolor=zzttqq](9.74,3.97)
\psdots[linecolor=zzttqq](10.74,3.97)
\psdots[dotsize=2pt 0, linecolor=zzttqq](8.74,3.97)
\psdots[dotsize=2pt 0, linecolor=zzttqq](9.87,5.05)
\psdots[dotsize=2pt 0, linecolor=wwwwff](9.87,5.05)
\psdots[dotsize=2pt 0, linecolor=wwwwff](8.74,5.7)
\psdots[dotsize=2pt 0, linecolor=wwwwff](8.74,3.97)
\psdots[dotsize=2pt 0, linecolor=zzttqq](9.87,3.32)
\psdots[dotsize=2pt 0, linecolor=wwwwff](9.87,3.32)
\psdots[dotsize=2pt 0, linecolor=wwwwff](8.74,2.24)
\end{pspicture*}
\end{center}
In this example there are five zig-zag paths, which we label $\eta_1, \dots, \eta_5$. The following diagrams show zig-zag flows $\widetilde{\eta_1}, \dots, \widetilde{\eta_5}$ which are possible lifts of $\eta_1, \dots, \eta_5$ respectively.
\begin{center}
\newrgbcolor{zzttqq}{0.8 0.8 0.8}
\psset{xunit=0.7cm,yunit=0.7cm,algebraic=true,dotstyle=*,dotsize=3pt  0,linewidth=1pt,arrowsize=3pt 2,arrowinset=0.25}
\begin{pspicture*}(-1,-4)(14.3,6.5)
\psline[linestyle=dashed,dash=3pt 3pt](2.52,4.45)(-0.48,4.45)
\psline[linestyle=dashed,dash=3pt 3pt](2.52,2.72)(2.52,4.45)
\psline[linestyle=dashed,dash=3pt 3pt](2.52,2.72)(-0.48,2.72)
\psline[linestyle=dashed,dash=3pt 3pt](-0.48,2.72)(-0.48,4.45)
\psline[ArrowInside=->, linecolor=black](0.43,3.32)(-0.7,3.97)
\psline[ArrowInside=->, linecolor=black](-0.7,2.23)(0.43,3.32)
\psline[ArrowInside=->, linecolor=black](-0.7,3.97)(0.43,5.05)
\psline[ArrowInside=->, linecolor=zzttqq](1.18,4.62)(0.43,3.32)
\psline[ArrowInside=->, linecolor=zzttqq](0.43,5.05)(1.18,4.62)
\psline[ArrowInside=->, linecolor=zzttqq](0.43,3.32)(1.18,2.88)
\psline[ArrowInside=->, linecolor=zzttqq](1.18,2.88)(2.3,3.97)
\psline[ArrowInside=->, linecolor=zzttqq](2.3,3.97)(1.18,4.62)
\psline[ArrowInside=->, linecolor=zzttqq](2.3,3.97)(2.3,2.23)
\psline[ArrowInside=->, linecolor=zzttqq](2.3,2.23)(1.18,2.88)
\psline[ArrowInside=->, linecolor=black](0.43,1.58)(-0.7,2.23)
\psline[ArrowInside=->, linecolor=zzttqq](1.18,2.88)(0.43,1.58)
\psline[ArrowInside=->, linecolor=zzttqq](2.3,5.7)(2.3,3.97)
\psline[ArrowInside=->, linecolor=zzttqq](1.18,4.62)(2.3,5.7)
\psline[ArrowInside=->, linecolor=zzttqq](2.3,3.97)(3.43,5.05)
\psline[ArrowInside=->, linecolor=zzttqq](3.43,5.05)(2.3,5.7)
\psline[ArrowInside=->, linecolor=zzttqq](3.43,3.32)(2.3,3.97)
\psline[ArrowInside=->, linecolor=zzttqq](2.3,2.23)(3.43,3.32)
\psline[linestyle=dashed,dash=3pt 3pt](4.76,4.49)(7.76,4.49)
\psline[linestyle=dashed,dash=3pt 3pt](7.76,4.49)(7.76,2.76)
\psline[linestyle=dashed,dash=3pt 3pt](7.76,2.76)(4.76,2.76)
\psline[linestyle=dashed,dash=3pt 3pt](4.76,2.76)(4.76,4.49)
\psline[ArrowInside=->, linecolor=zzttqq](5.66,3.35)(4.54,4)
\psline[ArrowInside=->, linecolor=zzttqq](4.54,2.27)(5.66,3.35)
\psline[ArrowInside=->, linecolor=zzttqq](4.54,4)(5.66,5.08)
\psline[ArrowInside=->, linecolor=zzttqq](6.41,4.65)(5.66,3.35)
\psline[ArrowInside=->, linecolor=zzttqq](5.66,5.08)(6.41,4.65)
\psline[ArrowInside=->, linecolor=zzttqq](5.66,3.35)(6.41,2.92)
\psline[ArrowInside=->, linecolor=black](6.41,2.92)(7.54,4)
\psline[ArrowInside=->, linecolor=black](7.54,4)(6.41,4.65)
\psline[ArrowInside=->, linecolor=zzttqq](7.54,4)(7.54,2.27)
\psline[ArrowInside=->, linecolor=black](7.54,2.27)(6.41,2.92)
\psline[ArrowInside=->, linecolor=zzttqq](5.66,1.62)(4.54,2.27)
\psline[ArrowInside=->, linecolor=zzttqq](6.41,2.92)(5.66,1.62)
\psline[ArrowInside=->, linecolor=zzttqq](7.54,5.73)(7.54,4)
\psline[ArrowInside=->, linecolor=black](6.41,4.65)(7.54,5.73)
\psline[ArrowInside=->, linecolor=zzttqq](7.54,4)(8.66,5.08)
\psline[ArrowInside=->, linecolor=zzttqq](8.66,5.08)(7.54,5.73)
\psline[ArrowInside=->, linecolor=zzttqq](8.66,3.35)(7.54,4)
\psline[ArrowInside=->, linecolor=zzttqq](7.54,2.27)(8.66,3.35)
\psline[linestyle=dashed,dash=3pt 3pt](9.99,4.52)(12.99,4.52)
\psline[linestyle=dashed,dash=3pt 3pt](12.99,4.52)(12.99,2.79)
\psline[linestyle=dashed,dash=3pt 3pt](12.99,2.79)(9.99,2.79)
\psline[linestyle=dashed,dash=3pt 3pt](9.99,2.79)(9.99,4.52)
\psline[ArrowInside=->, linecolor=zzttqq](10.9,3.39)(9.77,4.04)
\psline[ArrowInside=->, linecolor=black](9.77,2.31)(10.9,3.39)
\psline[ArrowInside=->, linecolor=zzttqq](9.77,4.04)(10.9,5.12)
\psline[ArrowInside=->, linecolor=zzttqq](11.65,4.69)(10.9,3.39)
\psline[ArrowInside=->, linecolor=zzttqq](10.9,5.12)(11.65,4.69)
\psline[ArrowInside=->, linecolor=black](10.9,3.39)(11.65,2.95)
\psline[ArrowInside=->, linecolor=black](11.65,2.95)(12.77,4.04)
\psline[ArrowInside=->, linecolor=zzttqq](12.77,4.04)(11.65,4.69)
\psline[ArrowInside=->, linecolor=black](12.77,4.04)(12.77,2.31)
\psline[ArrowInside=->, linecolor=zzttqq](12.77,2.31)(11.65,2.95)
\psline[ArrowInside=->, linecolor=zzttqq](10.9,1.66)(9.77,2.31)
\psline[ArrowInside=->, linecolor=zzttqq](11.65,2.95)(10.9,1.66)
\psline[ArrowInside=->, linecolor=zzttqq](12.77,5.77)(12.77,4.04)
\psline[ArrowInside=->, linecolor=zzttqq](11.65,4.69)(12.77,5.77)
\psline[ArrowInside=->, linecolor=zzttqq](12.77,4.04)(13.9,5.12)
\psline[ArrowInside=->, linecolor=zzttqq](13.9,5.12)(12.77,5.77)
\psline[ArrowInside=->, linecolor=zzttqq](13.9,3.39)(12.77,4.04)
\psline[ArrowInside=->, linecolor=black](12.77,2.31)(13.9,3.39)
\psline[linestyle=dashed,dash=3pt 3pt](1.9,-0.27)(4.9,-0.27)
\psline[linestyle=dashed,dash=3pt 3pt](4.9,-0.27)(4.9,-2)
\psline[linestyle=dashed,dash=3pt 3pt](4.9,-2)(1.9,-2)
\psline[linestyle=dashed,dash=3pt 3pt](1.9,-2)(1.9,-0.27)
\psline[linestyle=dashed,dash=3pt 3pt](7.13,-0.24)(10.13,-0.24)
\psline[linestyle=dashed,dash=3pt 3pt](10.13,-0.24)(10.13,-1.97)
\psline[linestyle=dashed,dash=3pt 3pt](10.13,-1.97)(7.13,-1.97)
\psline[linestyle=dashed,dash=3pt 3pt](7.13,-1.97)(7.13,-0.24)
\psline[ArrowInside=->, linecolor=zzttqq](2.8,-1.41)(1.68,-0.76)
\psline[ArrowInside=->, linecolor=zzttqq](1.68,-2.49)(2.8,-1.41)
\psline[ArrowInside=->, linecolor=zzttqq](1.68,-0.76)(2.8,0.32)
\psline[ArrowInside=->, linecolor=black](3.55,-0.11)(2.8,-1.41)
\psline[ArrowInside=->, linecolor=black](2.8,0.32)(3.55,-0.11)
\psline[ArrowInside=->, linecolor=black](2.8,-1.41)(3.55,-1.84)
\psline[ArrowInside=->, linecolor=zzttqq](3.55,-1.84)(4.68,-0.76)
\psline[ArrowInside=->, linecolor=zzttqq](4.68,-0.76)(3.55,-0.11)
\psline[ArrowInside=->, linecolor=zzttqq](4.68,-0.76)(4.68,-2.49)
\psline[ArrowInside=->, linecolor=zzttqq](4.68,-2.49)(3.55,-1.84)
\psline[ArrowInside=->, linecolor=zzttqq](2.8,-3.14)(1.68,-2.49)
\psline[ArrowInside=->, linecolor=black](3.55,-1.84)(2.8,-3.14)
\psline[ArrowInside=->, linecolor=zzttqq](4.68,0.97)(4.68,-0.76)
\psline[ArrowInside=->, linecolor=zzttqq](3.55,-0.11)(4.68,0.97)
\psline[ArrowInside=->, linecolor=zzttqq](4.68,-0.76)(5.8,0.32)
\psline[ArrowInside=->, linecolor=zzttqq](5.8,0.32)(4.68,0.97)
\psline[ArrowInside=->, linecolor=zzttqq](5.8,-1.41)(4.68,-0.76)
\psline[ArrowInside=->, linecolor=zzttqq](4.68,-2.49)(5.8,-1.41)
\psline[ArrowInside=->, linecolor=black](8.04,-1.37)(6.91,-0.72)
\psline[ArrowInside=->, linecolor=zzttqq](6.91,-2.45)(8.04,-1.37)
\psline[ArrowInside=->, linecolor=zzttqq](6.91,-0.72)(8.04,0.36)
\psline[ArrowInside=->, linecolor=black](8.79,-0.07)(8.04,-1.37)
\psline[ArrowInside=->, linecolor=zzttqq](8.04,0.36)(8.79,-0.07)
\psline[ArrowInside=->, linecolor=zzttqq](8.04,-1.37)(8.79,-1.81)
\psline[ArrowInside=->, linecolor=zzttqq](8.79,-1.81)(9.91,-0.72)
\psline[ArrowInside=->, linecolor=black](9.91,-0.72)(8.79,-0.07)
\psline[ArrowInside=->, linecolor=zzttqq](9.91,-0.72)(9.91,-2.45)
\psline[ArrowInside=->, linecolor=zzttqq](9.91,-2.45)(8.79,-1.81)
\psline[ArrowInside=->, linecolor=zzttqq](8.04,-3.1)(6.91,-2.45)
\psline[ArrowInside=->, linecolor=zzttqq](8.79,-1.81)(8.04,-3.1)
\psline[ArrowInside=->, linecolor=black](9.91,1.01)(9.91,-0.72)
\psline[ArrowInside=->, linecolor=zzttqq](8.79,-0.07)(9.91,1.01)
\psline[ArrowInside=->, linecolor=zzttqq](9.91,-0.72)(11.04,0.36)
\psline[ArrowInside=->, linecolor=black](11.04,0.36)(9.91,1.01)
\psline[ArrowInside=->, linecolor=zzttqq](11.04,-1.37)(9.91,-0.72)
\psline[ArrowInside=->, linecolor=zzttqq](9.91,-2.45)(11.04,-1.37)

\psline[ArrowInside=->, linecolor=zzttqq](-0.7,3.97)(-0.7,2.23)
\psline[ArrowInside=->, linecolor=zzttqq](4.54,4)(4.54,2.27)
\psline[ArrowInside=->, linecolor=black](9.77,4.04)(9.77,2.31)
\psline[ArrowInside=->, linecolor=zzttqq](1.68,-0.76)(1.68,-2.49)
\psline[ArrowInside=->, linecolor=black](6.91,-0.72)(6.91,-2.45)
\psdots[dotsize=2pt 0, linecolor=black](-0.7,3.97)
\psdots[dotsize=2pt 0, linecolor=black](-0.7,2.23)
\psdots[dotsize=2pt 0, linecolor=zzttqq](2.3,2.23)
\psdots[dotsize=2pt 0, linecolor=zzttqq](2.3,3.97)
\psdots[dotsize=2pt 0, linecolor=black](0.43,3.32)
\psdots[dotsize=2pt 0, linecolor=zzttqq](1.18,2.88)
\psdots[dotsize=2pt 0, linecolor=black](0.43,5.05)
\psdots[dotsize=2pt 0, linecolor=zzttqq](1.18,4.62)
\psdots[dotsize=2pt 0, linecolor=black](0.43,1.58)
\psdots[dotsize=2pt 0, linecolor=zzttqq](2.3,5.7)
\psdots[dotsize=2pt 0, linecolor=zzttqq](3.43,5.05)
\psdots[dotsize=2pt 0, linecolor=zzttqq](3.43,3.32)
\psdots[dotsize=2pt 0, linecolor=zzttqq](4.54,4)
\psdots[dotsize=2pt 0, linecolor=zzttqq](5.66,3.35)
\psdots[dotsize=2pt 0, linecolor=zzttqq](5.66,3.35)
\psdots[dotsize=2pt 0, linecolor=zzttqq](4.54,2.27)
\psdots[dotsize=2pt 0, linecolor=zzttqq](5.66,5.08)
\psdots[dotsize=2pt 0, linecolor=zzttqq](4.54,4)
\psdots[dotsize=2pt 0, linecolor=zzttqq](5.66,3.35)
\psdots[dotsize=2pt 0, linecolor=black](6.41,4.65)
\psdots[dotsize=2pt 0, linecolor=zzttqq](6.41,4.65)
\psdots[dotsize=2pt 0, linecolor=zzttqq](5.66,5.08)
\psdots[dotsize=2pt 0, linecolor=black](6.41,2.92)
\psdots[dotsize=2pt 0, linecolor=zzttqq](5.66,3.35)
\psdots[dotsize=2pt 0, linecolor=black](7.54,4)
\psdots[dotsize=2pt 0, linecolor=zzttqq](6.41,2.92)
\psdots[dotsize=2pt 0, linecolor=zzttqq](6.41,4.65)
\psdots[dotsize=2pt 0, linecolor=zzttqq](7.54,4)
\psdots[dotsize=2pt 0, linecolor=black](7.54,2.27)
\psdots[dotsize=2pt 0, linecolor=zzttqq](7.54,4)
\psdots[dotsize=2pt 0, linecolor=zzttqq](6.41,2.92)
\psdots[dotsize=2pt 0, linecolor=zzttqq](7.54,2.27)
\psdots[dotsize=2pt 0, linecolor=zzttqq](4.54,2.27)
\psdots[dotsize=2pt 0, linecolor=zzttqq](5.66,1.62)
\psdots[dotsize=2pt 0, linecolor=zzttqq](5.66,1.62)
\psdots[dotsize=2pt 0, linecolor=black](6.41,2.92)
\psdots[dotsize=2pt 0, linecolor=zzttqq](7.54,4)
\psdots[dotsize=2pt 0, linecolor=black](7.54,5.73)
\psdots[dotsize=2pt 0, linecolor=zzttqq](7.54,5.73)
\psdots[dotsize=2pt 0, linecolor=black](6.41,4.65)
\psdots[dotsize=2pt 0, linecolor=zzttqq](7.54,4)
\psdots[dotsize=2pt 0, linecolor=zzttqq](8.66,5.08)
\psdots[dotsize=2pt 0, linecolor=zzttqq](8.66,5.08)
\psdots[dotsize=2pt 0, linecolor=black](7.54,5.73)
\psdots[dotsize=2pt 0, linecolor=black](7.54,4)
\psdots[dotsize=2pt 0, linecolor=zzttqq](8.66,3.35)
\psdots[dotsize=2pt 0, linecolor=zzttqq](8.66,3.35)
\psdots[dotsize=2pt 0, linecolor=black](7.54,2.27)
\psdots[dotsize=2pt 0, linecolor=zzttqq](10.9,1.66)
\psdots[dotsize=2pt 0, linecolor=zzttqq](9.77,4.04)
\psdots[dotsize=2pt 0, linecolor=zzttqq](10.9,3.39)
\psdots[dotsize=2pt 0, linecolor=zzttqq](10.9,3.39)
\psdots[dotsize=2pt 0, linecolor=zzttqq](9.77,2.31)
\psdots[dotsize=2pt 0, linecolor=zzttqq](10.9,5.12)
\psdots[dotsize=2pt 0, linecolor=black](9.77,4.04)
\psdots[dotsize=2pt 0, linecolor=zzttqq](10.9,3.39)
\psdots[dotsize=2pt 0, linecolor=zzttqq](11.65,4.69)
\psdots[dotsize=2pt 0, linecolor=zzttqq](11.65,4.69)
\psdots[dotsize=2pt 0, linecolor=zzttqq](10.9,5.12)
\psdots[dotsize=2pt 0, linecolor=zzttqq](11.65,2.95)
\psdots[dotsize=2pt 0, linecolor=black](10.9,3.39)
\psdots[dotsize=2pt 0, linecolor=zzttqq](12.77,4.04)
\psdots[dotsize=2pt 0, linecolor=zzttqq](11.65,2.95)
\psdots[dotsize=2pt 0, linecolor=zzttqq](11.65,4.69)
\psdots[dotsize=2pt 0, linecolor=zzttqq](12.77,4.04)
\psdots[dotsize=2pt 0, linecolor=zzttqq](12.77,2.31)
\psdots[dotsize=2pt 0, linecolor=zzttqq](12.77,4.04)
\psdots[dotsize=2pt 0, linecolor=zzttqq](11.65,2.95)
\psdots[dotsize=2pt 0, linecolor=zzttqq](12.77,2.31)
\psdots[dotsize=2pt 0, linecolor=black](9.77,2.31)
\psdots[dotsize=2pt 0, linecolor=zzttqq](10.9,1.66)
\psdots[dotsize=2pt 0, linecolor=zzttqq](10.9,1.66)
\psdots[dotsize=2pt 0, linecolor=black](11.65,2.95)
\psdots[dotsize=2pt 0, linecolor=zzttqq](12.77,4.04)
\psdots[dotsize=2pt 0, linecolor=zzttqq](12.77,5.77)
\psdots[dotsize=2pt 0, linecolor=zzttqq](12.77,5.77)
\psdots[dotsize=2pt 0, linecolor=zzttqq](11.65,4.69)
\psdots[dotsize=2pt 0, linecolor=zzttqq](12.77,4.04)
\psdots[dotsize=2pt 0, linecolor=zzttqq](13.9,5.12)
\psdots[dotsize=2pt 0, linecolor=zzttqq](13.9,5.12)
\psdots[dotsize=2pt 0, linecolor=zzttqq](12.77,5.77)
\psdots[dotsize=2pt 0, linecolor=black](12.77,4.04)
\psdots[dotsize=2pt 0, linecolor=zzttqq](13.9,3.39)
\psdots[dotsize=2pt 0, linecolor=black](13.9,3.39)
\psdots[dotsize=2pt 0, linecolor=black](12.77,2.31)
\psdots[dotsize=2pt 0, linecolor=zzttqq](10.9,1.66)
\psdots[dotsize=2pt 0, linecolor=zzttqq](1.68,-0.76)
\psdots[dotsize=2pt 0, linecolor=zzttqq](2.8,-1.41)
\psdots[dotsize=2pt 0, linecolor=zzttqq](2.8,-1.41)
\psdots[dotsize=2pt 0, linecolor=zzttqq](1.68,-2.49)
\psdots[dotsize=2pt 0, linecolor=zzttqq](2.8,0.32)
\psdots[dotsize=2pt 0, linecolor=zzttqq](1.68,-0.76)
\psdots[dotsize=2pt 0, linecolor=zzttqq](2.8,-1.41)
\psdots[dotsize=2pt 0, linecolor=zzttqq](3.55,-0.11)
\psdots[dotsize=2pt 0, linecolor=zzttqq](3.55,-0.11)
\psdots[dotsize=2pt 0, linecolor=black](2.8,0.32)
\psdots[dotsize=2pt 0, linecolor=zzttqq](3.55,-1.84)
\psdots[dotsize=2pt 0, linecolor=black](2.8,-1.41)
\psdots[dotsize=2pt 0, linecolor=zzttqq](4.68,-0.76)
\psdots[dotsize=2pt 0, linecolor=zzttqq](3.55,-1.84)
\psdots[dotsize=2pt 0, linecolor=zzttqq](3.55,-0.11)
\psdots[dotsize=2pt 0, linecolor=zzttqq](4.68,-0.76)
\psdots[dotsize=2pt 0, linecolor=zzttqq](4.68,-2.49)
\psdots[dotsize=2pt 0, linecolor=zzttqq](4.68,-0.76)
\psdots[dotsize=2pt 0, linecolor=zzttqq](3.55,-1.84)
\psdots[dotsize=2pt 0, linecolor=zzttqq](4.68,-2.49)
\psdots[dotsize=2pt 0, linecolor=zzttqq](1.68,-2.49)
\psdots[dotsize=2pt 0, linecolor=zzttqq](2.8,-3.14)
\psdots[dotsize=2pt 0, linecolor=black](2.8,-3.14)
\psdots[dotsize=2pt 0, linecolor=black](3.55,-1.84)
\psdots[dotsize=2pt 0, linecolor=zzttqq](4.68,-0.76)
\psdots[dotsize=2pt 0, linecolor=zzttqq](4.68,0.97)
\psdots[dotsize=2pt 0, linecolor=zzttqq](4.68,0.97)
\psdots[dotsize=2pt 0, linecolor=black](3.55,-0.11)
\psdots[dotsize=2pt 0, linecolor=zzttqq](4.68,-0.76)
\psdots[dotsize=2pt 0, linecolor=zzttqq](5.8,0.32)
\psdots[dotsize=2pt 0, linecolor=zzttqq](5.8,0.32)
\psdots[dotsize=2pt 0, linecolor=zzttqq](4.68,0.97)
\psdots[dotsize=2pt 0, linecolor=zzttqq](4.68,-0.76)
\psdots[dotsize=2pt 0, linecolor=zzttqq](5.8,-1.41)
\psdots[dotsize=2pt 0, linecolor=zzttqq](5.8,-1.41)
\psdots[dotsize=2pt 0, linecolor=zzttqq](4.68,-2.49)
\psdots[dotsize=2pt 0, linecolor=zzttqq](6.91,-0.72)
\psdots[dotsize=2pt 0, linecolor=zzttqq](8.04,-1.37)
\psdots[dotsize=2pt 0, linecolor=zzttqq](8.04,-1.37)
\psdots[dotsize=2pt 0, linecolor=zzttqq](6.91,-2.45)
\psdots[dotsize=2pt 0, linecolor=zzttqq](8.04,0.36)
\psdots[dotsize=2pt 0, linecolor=black](6.91,-0.72)
\psdots[dotsize=2pt 0, linecolor=zzttqq](8.04,-1.37)
\psdots[dotsize=2pt 0, linecolor=zzttqq](8.79,-0.07)
\psdots[dotsize=2pt 0, linecolor=zzttqq](8.79,-0.07)
\psdots[dotsize=2pt 0, linecolor=zzttqq](8.04,0.36)
\psdots[dotsize=2pt 0, linecolor=zzttqq](8.79,-1.81)
\psdots[dotsize=2pt 0, linecolor=black](8.04,-1.37)
\psdots[dotsize=2pt 0, linecolor=zzttqq](9.91,-0.72)
\psdots[dotsize=2pt 0, linecolor=zzttqq](8.79,-1.81)
\psdots[dotsize=2pt 0, linecolor=zzttqq](8.79,-0.07)
\psdots[dotsize=2pt 0, linecolor=zzttqq](9.91,-0.72)
\psdots[dotsize=2pt 0, linecolor=zzttqq](9.91,-2.45)
\psdots[dotsize=2pt 0, linecolor=zzttqq](9.91,-0.72)
\psdots[dotsize=2pt 0, linecolor=zzttqq](8.79,-1.81)
\psdots[dotsize=2pt 0, linecolor=zzttqq](9.91,-2.45)
\psdots[dotsize=2pt 0, linecolor=black](6.91,-2.45)
\psdots[dotsize=2pt 0, linecolor=zzttqq](8.04,-3.1)
\psdots[dotsize=2pt 0, linecolor=zzttqq](8.04,-3.1)
\psdots[dotsize=2pt 0, linecolor=zzttqq](8.79,-1.81)
\psdots[dotsize=2pt 0, linecolor=zzttqq](9.91,-0.72)
\psdots[dotsize=2pt 0, linecolor=zzttqq](9.91,1.01)
\psdots[dotsize=2pt 0, linecolor=zzttqq](9.91,1.01)
\psdots[dotsize=2pt 0, linecolor=black](8.79,-0.07)
\psdots[dotsize=2pt 0, linecolor=zzttqq](9.91,-0.72)
\psdots[dotsize=2pt 0, linecolor=zzttqq](11.04,0.36)
\psdots[dotsize=2pt 0, linecolor=black](11.04,0.36)
\psdots[dotsize=2pt 0, linecolor=black](9.91,1.01)
\psdots[dotsize=2pt 0, linecolor=black](9.91,-0.72)
\psdots[dotsize=2pt 0, linecolor=zzttqq](11.04,-1.37)
\psdots[dotsize=2pt 0, linecolor=zzttqq](11.04,-1.37)
\psdots[dotsize=2pt 0, linecolor=zzttqq](9.91,-2.45)
\psdots[dotsize=2pt 0, linecolor=zzttqq](8.04,-3.1)
\rput[bl](-0.5,4.6){$\widetilde{\eta_1}$}
\rput[bl](6.5,5.2){$\widetilde{\eta_2}$}
\rput[bl](13.5,2.5){$\widetilde{\eta_3}$}
\rput[bl](3.15,-3){$\widetilde{\eta_4}$}
\rput[bl](6.4,-1.8){$\widetilde{\eta_5}$}
\end{pspicture*}
\end{center}
Using the basis for $H_1(Q)$ suggested by the indicated fundamental domain, we see that the zig-zag paths $\eta_1, \dots, \eta_5$ have homology classes $(0,1), (0,1), (1,0), (0, -1)$ and $(-1,-1)$ respectively. Therefore the global zig-zag fan $\Xi$ is:
\begin{center}
\psset{xunit=0.5cm,yunit=0.5cm,algebraic=true,dotstyle=*,dotsize=3pt  
0,linewidth=0.8pt,arrowsize=3pt 2,arrowinset=0.25}
\begin{pspicture*}(0,-1.5)(4.5,3.5)
\psline(2,1)(2,3)
\psline(2,1)(4,1)
\psline(2,1)(2,-1)
\psline(2,1)(0.59,-0.41)
\end{pspicture*}
\end{center}
Note that any zig-zag flows $\widetilde{\eta_1}, \widetilde{\eta_2}$ which project down to zig-zag paths $\eta_1, \eta_2$ give an example of parallel zig-zag flows. Similarly, any lifts of $\eta_1$ and $\eta_3$ give an example of anti-parallel zig-zag flows.

The quiver has two faces which are quadrilaterals and two which are triangles. We see that the boundaries of any lift $f$ of either quadrilateral, intersects four zig-zag flows and has local zig-zag fan $\xi(f)$ of type 1 in the diagram below. The boundary of any lift of either triangle intersects three zig-zag flows, and has local zig-zag fan of type 2 in the diagram below.
\begin{center}
\psset{xunit=0.5cm,yunit=0.5cm,algebraic=true,dotstyle=*,dotsize=3pt 0,linewidth=0.8pt,arrowsize=3pt 2,arrowinset=0.25}
\begin{pspicture*}(-1,-1.5)(15,3.5)
\psline(2,1)(2,3)
\psline(2,1)(4,1)
\psline(2,1)(2,-1)
\psline(2,1)(0.59,-0.41)
\psline(12,1)(12,3)
\psline(12,1)(14,1)
\psline(12,1)(10.59,-0.41)
\rput[tl](-0.3,2.4){$1)$}
\rput[tl](9.64,2.4){$2)$}
\end{pspicture*}
\end{center}
\end{example}


\section{Constructing some perfect matchings} \label{CSPM}
In this section we use the zig-zag fans we have just introduced to construct a particular collection of perfect matchings that will play a key role in the rest of this article. The construction is a local one in the sense that the restriction of the perfect matching to the arrows in the boundary of a quiver face is determined by the local zig-zag fan at that face.

We start by noting that the identity map on $H_1(Q)$ induces a map of fans $\mof{\face}:\Xi \rightarrow \xi(\face)$ for each $\face \in Q_2$.
If $\sigma $ is a two dimensional cone in $\Xi$, its image in $\xi(\face)$ is contained in a unique two dimensional cone which we shall denote $\sigma_{\face}$.

The cone $\sigma_{\face}$ corresponds to a unique arrow in the boundary of $\face$. We define $P_{\face}(\sigma) \in \Z^{Q_1}$ to be the function which evaluates to 1 on this arrow, and zero on all other arrows in $Q$. Finally we define the following function:
$$P(\sigma) := \frac{1}{2} \sum_{\face \in Q_2} P_{\face}(\sigma)$$

\begin{lemma}\label{pmatch} The function $P(\sigma)$ is a perfect matching.
\end{lemma}
\begin{proof}
First we show that $P(\sigma)\in \N^{Q_1}$. Any arrow $a$ is in the boundary of precisely two faces, one black and one white, which we denote $\face_b$ and $\face_w$ respectively. Since $P_{\face}(\sigma)$ is zero on all arrows which don't lie in the boundary of $\face$, we observe that $P(\sigma)(a) = \frac{1}{2} \left ( P_{\face_b}(\sigma)(a)+ P_{\face_w}(\sigma)(a)\right )$.
If $\zzf^+$ and $\zzf^-$ are the zig-zag flows containing $a$ then, by Lemma~\ref{2cones}, the cones $\sigma_b$ in $\xi(\face_b)$ and $\sigma_w$ in $\xi(\face_w)$ dual to $a$, are both spanned by the rays generated by $[\eta^+]$ and $[\eta^-]$. Therefore the inverse image of $\sigma_b$ and $\sigma_w$ under the respective maps of fans is the same collection of cones in the global zig-zag fan $\Xi$. Thus for any cone $\sigma$ in $\Xi$, 
the functions $P_{\face_b}(\sigma)$ and $ P_{\face_w}(\sigma)$ evaluate to the same value on $a$.
We conclude that
$$P(\sigma) = \frac{1}{2} \sum_{\face \in Q_2} P_{\face}(\sigma)= \sum_{\genfrac{}{}{0pt}{}{\face \in Q_2}{\face \text{ is black}}} P_{\face}(\sigma)$$
and evaluates to zero or one on every arrow in $Q$. By construction $P_{\face}(\sigma)$ is non-zero on a single arrow in the boundary of $\face$, and we have just seen that $P(\sigma)$ is non-zero on an arrow in the boundary of $\face$ if and only if  $P_{\face}(\sigma)$ is non-zero. Therefore $P(\sigma)$ evaluates to one on a single arrow in the boundary of each face. Recalling that the coboundary map $d$ simply sums any function of the edges around each face we see that $d(P(\sigma)) = \const{1}$.
\end{proof}

We have identified a perfect matching $P(\sigma) \in \N^{Q_1}$ for each two dimensional cone $\sigma$ in the global zig-zag fan $\Xi$. We now consider some properties of perfect matchings of this form.

\begin{definition} \label{zigsfunction}
If $\eta$ is a zig-zag path, we define $Zig(\eta) \in \Z^{Q_1}$ (respectively $Zag(\eta) \in \Z^{Q_1}$) to be the function which evaluates to one on all zigs (zags) of $\eta$ and zero on all other arrows. Similarly if $\gamma$ is a ray in the global zig-zag fan $\Xi$, then $Zig(\gamma) \in \Z^{Q_1}$ (respectively $Zag(\gamma) \in \Z^{Q_1}$) is the function which evaluates to one on all zigs (zags) of every representative zig-zag path of $\gamma$, and is zero on all other arrows.
\end{definition}
\begin{remark} \label{raypath}
We see that
$$Zig(\gamma)= \sum_{\langle [\eta] \rangle = \gamma} Zig(\eta).$$
\end{remark}
\noindent The functions defined in Definition~\ref{zigsfunction}, like perfect matchings, are functions which evaluate to one on each arrow in their support. Therefore they can be thought of as sets of arrows. We now show that $P(\sigma)$ is non-zero on the zigs or zags of certain zig-zag paths, in other words that these zigs or zags are contained in the set of arrows on which $P(\sigma)$ is supported. In the language of functions, this is equivalent to the following lemma.
\begin{lemma} \label{resonateall}
Suppose $\sigma$ is a cone in the global zig-zag fan $\Xi$ spanned by rays $\gamma^+$ and $\gamma^-$ (see diagram below). Then the functions $P(\sigma) - Zig(\gamma^+)$ and $P(\sigma) - Zag(\gamma^-)$ are elements of $\N^{Q_1}$.
\end{lemma}
\begin{center}
\begin{pspicture*}(1,-1)(5,2)
\psline(3,-0.8)(2,1.2)
\psline(3,-0.8)(4,1.2)
\rput[tl](2.92,0.96){$\sigma$}
\rput[tl](1.7,1.78){$\gamma^+$}
\rput[tl](4.1,1.8){$\gamma^-$}
\psdots[dotsize=2pt 0](3,-0.8)
\end{pspicture*} 
\end{center}
\begin{proof}
Let $\face \in Q_2$ be any face. If $\gamma^+$ is not a ray in the local zig-zag fan $\xi(\face)$, then by definition, no representative zig-zag flow intersects the boundary of $\face$. Thus $Zig(\gamma^+)$ is zero on all the arrows in the boundary, and so $P(\sigma) - Zig(\gamma^+)$ is non-negative on these arrows.

If $\gamma^+$ is a ray in the local zig-zag fan $\xi(\face)$, we consider the cone $\tau$ in $\xi(\face)$ which is generated by $\gamma^+$ and the next ray around the fan in a clockwise direction (this is $\gamma^-$ if and only if $\gamma^-$ is a ray in $\xi(\face)$). Since $\sigma$ in the global zig-zag fan is generated by $\gamma^+$ and $\gamma^-$ which is the next ray around in the clockwise direction, the image of $\sigma$ under the map of fans lies in $\tau$. Then by definition $P_{\face}(\sigma)$, and therefore $P(\sigma)$, evaluates to 1 on the arrow in the boundary of $\face$ corresponding to the cone $\tau$. 
This arrow is the intersection of the two zig-zag flows in $\mathcal{X}(\face)$ corresponding the rays of $\tau$. Since the anti-clockwise ray is $\gamma^+$, this arrow is the unique (by Lemmas~\ref{singlein} and \ref{uniquerep}) zig of the representative of $\gamma^+$ in the boundary of $\face$. Therefore $P(\sigma) - Zig(\gamma^+)$ is non-negative on all the arrows in the boundary of $\face$. The statement holds on the arrows in the boundary of all faces, and therefore in general. The result follows similarly for $P(\sigma) - Zag(\zzf_-)$.
\end{proof}

\begin{corollary} \label{contzgs}
Suppose $\sigma$ is a cone in $\Xi$ spanned by rays $\gamma^+$ and $\gamma^-$. For any representative zig-zag paths $\eta^+$ of $\gamma^+$ and $\eta^-$ of $\gamma^-$, the functions $P(\sigma) - Zig(\eta^+)$ and $P(\sigma) - Zag(\eta^-)$ are elements of $\N^{Q_1}$.
\end{corollary}
\begin{proof} This follows from the Lemma~\ref{resonateall} together with Remark~\ref{raypath}. \end{proof}
\begin{example}
We return to Example~\ref{memeg} and construct a perfect matching $P(\sigma)$, where $\sigma$ is the cone in the global zig-zag fan shown in the diagram below. 
\begin{center}
\newrgbcolor{ttfftt}{0.8 0.8 0.8}
\psset{xunit=1.0cm,yunit=1.0cm,algebraic=true,dotstyle=*,dotsize=5pt 0,linewidth=1pt,arrowsize=3pt 2,arrowinset=0.25}
\begin{pspicture*}(-2,0.5)(8.5,6.5)
\psline[linewidth=4.4pt,linecolor=ttfftt](-1.2,3.1)(-0.2,3.1)
\psline[linecolor=ttfftt](-0.2,3.1)(0.3,3.97)
\psline[linecolor=ttfftt](0.3,3.97)(-0.2,4.83)
\psline[linewidth=4.4pt,linecolor=ttfftt](-0.2,4.83)(-1.2,4.83)
\psline[linecolor=ttfftt](0.3,3.97)(-0.2,3.1)
\psline[linecolor=ttfftt](-0.2,3.1)(0.3,2.23)
\psline[linecolor=ttfftt](0.3,2.23)(1.3,2.23)
\psline[linecolor=ttfftt](1.3,2.23)(1.8,3.1)
\psline[linecolor=ttfftt](1.8,3.1)(1.3,3.97)
\psline[linecolor=ttfftt](1.3,3.97)(0.3,3.97)
\psline[linecolor=ttfftt](0.3,3.97)(1.3,3.97)
\psline[linecolor=ttfftt](1.3,3.97)(1.8,4.83)
\psline[linecolor=ttfftt](1.8,4.83)(1.3,5.7)
\psline[linecolor=ttfftt](1.3,5.7)(0.3,5.7)
\psline[linecolor=ttfftt](0.3,5.7)(-0.2,4.83)
\psline[linecolor=ttfftt](-0.2,4.83)(0.3,3.97)
\psline[linecolor=ttfftt](1.8,4.83)(1.3,3.97)
\psline[linecolor=ttfftt](1.3,3.97)(1.8,3.1)
\psline[linewidth=4.4pt,linecolor=ttfftt](1.8,3.1)(2.8,3.1)
\psline[linecolor=ttfftt](2.8,3.1)(3.3,3.97)
\psline[linecolor=ttfftt](3.3,3.97)(2.8,4.83)
\psline[linewidth=4.4pt,linecolor=ttfftt](2.8,4.83)(1.8,4.83)
\psline[linecolor=ttfftt](-0.2,3.1)(-1.2,3.1)
\psline[linecolor=ttfftt](-0.2,1.37)(0.3,2.23)
\psline[linecolor=ttfftt](0.3,2.23)(-0.2,3.1)
\psline[linecolor=ttfftt](2.8,3.1)(1.8,3.1)
\psline[linecolor=ttfftt](1.8,3.1)(1.3,2.23)
\psline[linecolor=ttfftt](1.3,2.23)(1.8,1.37)
\psline[linecolor=ttfftt](3.3,2.23)(2.8,3.1)
\psline[linewidth=4.4pt,linecolor=ttfftt](0.3,3.97)(1.3,5.7)
\psline[linewidth=4.4pt,linecolor=ttfftt](0.3,2.23)(1.3,3.97)
\psline[linestyle=dashed,dash=1pt 1pt](-0.48,4.45)(2.52,4.45)
\psline[linestyle=dashed,dash=1pt 1pt](2.52,4.45)(2.52,2.72)
\psline[linestyle=dashed,dash=1pt 1pt](2.52,2.72)(-0.48,2.72)
\psline[linestyle=dashed,dash=1pt 1pt](-0.48,2.72)(-0.48,4.45)
\psline[linecolor=ttfftt](1.8,1.37)(1.3,2.23)
\psline[linecolor=ttfftt](1.3,2.23)(0.3,2.23)
\psline[linecolor=ttfftt](0.3,2.23)(-0.2,1.37)
\psline[linewidth=4.4pt,linecolor=ttfftt](1.3,2.23)(0.8,1.37)
\psline[linecolor=ttfftt](2.8,4.83)(3.3,3.97)
\psline[linecolor=ttfftt](3.3,3.97)(4.3,3.97)
\psline[linecolor=ttfftt](4.3,5.7)(3.3,5.7)
\psline[linecolor=ttfftt](3.3,5.7)(2.8,4.83)
\psline[linecolor=ttfftt](4.3,3.97)(3.3,3.97)
\psline[linecolor=ttfftt](3.3,3.97)(2.8,3.1)
\psline[linecolor=ttfftt](2.8,3.1)(3.3,2.23)
\psline[linewidth=4.4pt,linecolor=ttfftt](4.3,5.7)(3.3,3.97)
\psline[linewidth=4.4pt,linecolor=ttfftt](4.3,3.97)(3.3,2.23)
\psline[ArrowInside=->](0.43,5.05)(1.18,4.62)
\psline[ArrowInside=->](1.18,4.62)(0.43,3.32)
\psline[ArrowInside=->](0.43,3.32)(1.18,2.88)
\psline[ArrowInside=->](1.18,2.88)(0.43,1.58)
\psline[ArrowInside=->](0.43,1.58)(-0.7,2.23)
\psline[ArrowInside=->](-0.7,2.23)(0.43,3.32)
\psline[ArrowInside=->](1.18,2.88)(2.3,3.97)
\psline[ArrowInside=->](2.3,3.97)(1.18,4.62)
\psline[ArrowInside=->](0.43,3.32)(-0.7,3.97)
\psline[ArrowInside=->](-0.7,3.97)(0.43,5.05)
\psline[ArrowInside=->](1.18,4.62)(2.3,5.7)
\psline[ArrowInside=->](2.3,5.7)(2.3,3.97)
\psline[ArrowInside=->](2.3,3.97)(3.43,5.05)
\psline[ArrowInside=->](3.43,5.05)(2.3,5.7)
\psline[ArrowInside=->](3.43,3.32)(2.3,3.97)
\psline[ArrowInside=->](2.3,3.97)(2.3,2.23)
\psline[ArrowInside=->](2.3,2.23)(3.43,3.32)
\psline[ArrowInside=->](2.3,2.23)(1.18,2.88)
\rput[tl](3.26,1.4){$P(\sigma)$}
\rput[b](7,5){$[\eta_1] = [\eta_2]$}
\rput[l](8,4){$[\eta_3]$}
\rput[t](7,3){$[\eta_4]$}
\rput[tr](6,3){$[\eta_5]$}
\psline(7,4)(7,5)
\psline(7,4)(8,4)
\psline(7,4)(7,3)
\psline(7,4)(6,3)
\pscustom{\parametricplot{-1.5707963267948966}{0.0}{0.4*cos(t)+7|0.4*sin(t)+4}\lineto(7,4)\closepath}
\rput[tl](7.3,3.6){$\sigma$}
\psdots[dotstyle=o, linecolor=ttfftt](-1.2,3.1)
\psdots[linecolor=ttfftt](-0.2,3.1)
\psdots[dotstyle=o, linecolor=ttfftt](0.3,3.97)
\psdots[linecolor=ttfftt](-0.2,4.83)
\psdots[dotstyle=o, linecolor=ttfftt](-1.2,4.83)
\psdots[dotstyle=o, linecolor=ttfftt](0.3,2.23)
\psdots[linecolor=ttfftt](1.3,2.23)
\psdots[dotstyle=o, linecolor=ttfftt](1.8,3.1)
\psdots[linecolor=ttfftt](1.3,3.97)
\psdots[dotstyle=o, linecolor=ttfftt](1.8,4.83)
\psdots[linecolor=ttfftt](1.3,5.7)
\psdots[dotstyle=o, linecolor=ttfftt](0.3,5.7)
\psdots[linecolor=ttfftt](-0.2,4.83)
\psdots[dotstyle=o, linecolor=ttfftt](1.8,3.1)
\psdots[linecolor=ttfftt](2.8,3.1)
\psdots[dotstyle=o, linecolor=ttfftt](3.3,3.97)
\psdots[linecolor=ttfftt](2.8,4.83)
\psdots[linecolor=ttfftt](-0.2,1.37)
\psdots[dotstyle=o, linecolor=ttfftt](0.3,2.23)
\psdots[linecolor=ttfftt](1.3,2.23)
\psdots[dotstyle=o, linecolor=ttfftt](1.8,1.37)
\psdots[dotstyle=o, linecolor=ttfftt](3.3,2.23)
\psdots[dotsize=2pt  0](-0.7,3.97)
\psdots[dotsize=2pt  0](-0.7,2.23)
\psdots[dotsize=2pt  0](2.3,2.23)
\psdots[dotsize=2pt  0](2.3,3.97)
\psdots[dotsize=2pt  0](0.43,3.32)
\psdots[dotsize=2pt  0](1.18,2.88)
\psdots[dotsize=2pt  0](0.43,5.05)
\psdots[dotsize=2pt  0](1.18,4.62)
\psdots[dotsize=2pt  0](0.43,1.58)
\psdots[dotsize=2pt  0](2.3,5.7)
\psdots[linecolor=ttfftt](4.3,3.97)
\psdots[linecolor=ttfftt](4.3,5.7)
\psdots[dotstyle=o, linecolor=ttfftt](3.3,5.7)
\psdots[linecolor=ttfftt](2.8,3.1)
\psdots[dotstyle=o, linecolor=ttfftt](3.3,2.23)
\psdots[dotsize=2pt  0](3.43,5.05)
\psdots[dotsize=2pt  0](3.43,3.32)
\end{pspicture*}
\end{center}
For each quadrilateral quiver face, the image of $\sigma$ in the local zig-zag fan is the cone generated by the rays corresponding to zig-zag paths $\eta_3$ and $\eta_4$. Therefore on the boundary of these faces, $P(\sigma)$ evaluates to one on the unique arrow contained in $\eta_3$ and $\eta_4$. In the case of the triangular faces, $\eta_4$ doesn't intersect the boundary and there is no corresponding ray in the local zig-zag fan. The image of $\sigma$ lies in the (larger) cone generated by the rays corresponding to $\eta_3$ and $\eta_5$, and 
$P(\sigma)$ is non-zero on the arrow in the intersection of these paths. In the diagram above, the thickened edges of the dual dimer model correspond to those arrows on which $P(\sigma)$ evaluates to one, and it evaluates to zero on all other arrows. We note that $P(\sigma)$ evaluates to one on the zigs of $\eta_3$ and the zags of $\eta_4$.

\end{example}

\section{The extremal perfect matchings}

We show that the perfect matchings we have constructed are precisely  
the extremal perfect matchings of the dimer model. Recall  
(Definition~\ref{externaldef}) that a perfect matching is called  
external if its image in $\Nm$, lies in a facet of the polygon given  
by the convex  hull of degree one elements in $\Nm^+$. It is called  
extremal if its image is a vertex of this polygon. Equivalently, this  
means a perfect matching is external if and only if its image lies in  
a two dimensional facet of of $\Nm^+$, and is extremal if and only if  
its image lies in a one dimensional facet of $\Nm^+$.

We start by identifying an element of $\Mm^{ef} \subset \Mm^+$  
associated to every ray $\gamma$ of $\Xi$. We will prove that for each  
cone $\sigma$, the perfect matching $P(\sigma)$ evaluates to zero on  
two such elements which are linearly independent. Therefore the image  
of $P(\sigma)$ in $\Nm$ lies in a one dimensional facet of $\Nm^+$. In  
other words it is an extremal perfect matching.

Given a zig-zag flow, we defined its black and white boundary flows in  
Section~\ref{boundflowsec}. Recall that the image of a single period  
of a boundary path is a cycle in $\N_{Q_1}$.
\begin{definition}
The system of boundary paths $\sbf(\gamma) \in \N_{Q_1}$ of a ray  
$\gamma$ of $\Xi$, is the sum of all the cycles corresponding to the  
boundary paths (black and white) of every representative zig-zag path  
of $\gamma$.
\end{definition}
Since $\sbf(\gamma)$ is the sum of cycles it is closed and, as it is  
non-negative, it defines an element $\Meq{\sbf(\gamma)} \in \Mm^{ef}  
\subset \Mm^+$. If $\sigma$ is a two dimensional cone in $\Xi$ spanned  
by rays $\gamma^+$ and $\gamma^-$, then the classes  
$\Meq{\sbf(\gamma)}$ and $\Meq{\sbf(\gamma)}$ are linearly  
independent: if they were not, then they would have linearly dependent  
homology classes (see the exact sequence (\ref{eq:Mmes})), however, by  
Lemma~\ref{boundhom} we know that these classes are $-k^+ [\eta^+]$  
and $-k^- [\eta^-]$ which are linearly independent, where $k^\pm >0$  
is the number of boundary paths of $\gamma^\pm$, and $\eta^\pm$ is a  
representative zig-zag path of $\gamma^\pm$.

\begin{proposition} \label{uniqueext}
Suppose $\sigma$ is a cone in $\Xi$ spanned by rays $\gamma^+$ and  
$\gamma^-$. The perfect matching $P(\sigma) \in \N^{Q_1}$ is the  
unique perfect matching which evaluates to zero on the two systems of  
boundary paths $\sbf(\gamma^+)$ and $\sbf(\gamma^-)$. 
\end{proposition}
\begin{proof}
First we prove that $P(\sigma)$ evaluates to zero on the two systems  
of boundary paths. Let $\eta$ be a representative zig-zag path of  
$\gamma^+$ or $\gamma^-$.
Since $P(\sigma)$ is a perfect matching we know that it is non-zero on  
precisely one arrow in the boundary of every face. By construction,  
the boundary path of a zig-zag path was pieced together from paths  
back around faces which have a zig-zag or zag-zig pair in the  
boundary. By Corollary~\ref{contzgs}, $P(\sigma)$ is non-zero on  
either the zig or zag of $\eta$ in the boundary of each of these faces  
and therefore it is zero on all the arrows in the boundary path. This  
holds for all boundary paths of representative zig-zag paths of  
$\gamma^+$ or $\gamma^-$.

Now we prove that $P(\sigma)$ is the unique perfect matching which  
evaluates to zero on the two systems of boundary paths. Let $\zzf^+$  
and $\zzf^-$ be representative zig-zag flows of $\gamma^+$ and  
$\gamma^-$ respectively. Since they are not parallel or anti-parallel, 
the black boundary flows of $\zzf^+$ and $\zzf^-$ have at least one  
vertex in common. Let $v_{00}\in \Qcov_0$ be such a vertex and let $v$  
be its image in $Q_0$.

Let the finite path $\pth^+_0$ be the lift of a single period of the  
black boundary flow of $\eta^+$ to the universal cover, starting at  
$v_{00}$ and ending at vertex $v_{10}$. Then let $\pth^-_1$ be the  
lift of a single period of the black boundary flow of $\eta^-$  
starting at $v_{10}$. Similarly let $\pth^-_0$ be the lift of a period  
of the black boundary flow of $\eta^-$ starting at $v_{00}$ and ending  
at $v_{01}$, and $\pth^+_1$ be the lift of a period of the black  
boundary flow of $\eta^+$ starting at $v_{01}$. 

The paths $\pth_1:= \pth^+_0 \pth^-_1$ and $\pth_2:= \pth^-_0  
\pth^+_1$ have the same class $\Meq{\pth_1} = \Meq{\pth_2}$ in $\Mg$.  
This follows trivially as $\pth^{\pm}_0$ and $\pth^{\pm}_1$ project  
down to the same paths in $Q$. This implies that both paths end at the  
same vertex; they have the same homology class, and start at the same  
vertex. Also recalling Section~\ref{conseqgcsec} we see that they are  
F-term equivalent.

Furthermore, since $\pth^{\pm}_0$ and $\pth^{\pm}_1$ are shifts of  
each other by deck transformations, i.e. by elements in the period  
lattice, we see that $p_1p_2^{-1}$ forms the boundary of a region  
which certainly contains at least one fundamental domain of $\Qcov$.  
Therefore, for every face in $Q$, there is a lift $\face$ in $\Qcov$  
such that the winding number, $\windf(\pth_1\pth_2^{-1}) \neq 0$.

Let $q_0 = \pth_1, q_1, \dots , q_k = \pth_2$ be a sequence of paths  
such that $q_i$ and $q_{i+1}$ differ by a single $F$-term relation for  
$i=0,\dots,k-1$. Fix a face in $Q$, and let $\face$ be the lift to  
$\Qcov$ satisfying the winding number condition above. Then since
$$\windf(\pth_1\pth_2^{-1}) \neq 0 = \windf(\pth_2\pth_2^{-1})$$
there exists $i \in \{1,\dots,k-1\}$ such that $  
\windf(q_i\pth_2^{-1}) \neq \windf(q_{i+1}\pth_2^{-1})$.
As they differ by a single $F$-term relation we can write $q_i =  
\alpha_1 \epsilon_1 \alpha_2$ and $q_{i+1} = \alpha_1 \epsilon_2  
\alpha_2$, where $\epsilon_1a$ and $\epsilon_2a$ are the boundaries of  
two faces $\face_b, \face_w \in \Qcov_2$ (one black, one white) which  
meet along the arrow $a$. 
Using properties of winding numbers,
$$\windf(\epsilon_1a) - \windf(\epsilon_2a) = \windf(q_i\pth_2^{-1}) -  
\windf(q_{i+1}\pth_2^{-1}) \neq 0$$
The winding number around $\face$ of the boundary of a face, is  
non-zero if and only if the face is $\face$. Therefore either $\face =  
\face_b$ or $\face = \face_w$ so all the arrows in the boundary of  
$\face$ except $a$, lie in either $q_i$ or $q_{i+1}$.
Let $n \in \Ng^+$ and suppose $n$ evaluates to zero on the element of  
$\Mg^+$ corresponding to the path $\pth_1$. Since they are F-term  
equivalent, $\Meq{\pth_1}=\Meq{q_i}=\Meq{q_{i+1}}$, so $n$ evaluates  
to zero on $\Meq{q_i}$ and $\Meq{q_{i+1}}$.
Using the positivity, this implies that $n$ evaluates to zero on every  
arrow in the image in $Q$ of both $q_i$ and $q_{i+1}$.
%
In particular it evaluates to zero on all the arrows in the boundary of the image of $\face$ except $a$. This argument holds for any face, and thus $n$ is non-zero on a unique arrow in the boundary of each face in $Q$.

Suppose there exist two perfect matchings $\pf_1$ and $\pf_2$ which  
evaluate to zero on the both systems of boundary flows. By  
construction $\pf_1+\pf_2 \in \Ng^+$ is zero on the path $\pth_1$ and  
therefore by the above argument, it is non-zero on a unique arrow in  
the boundary of each face. Since perfect matchings are degree 1  
elements of $\Ng^+$ this forces $\pf_1$ and $\pf_2$ to evaluate to the  
same value on all the arrows in the boundary of each face. We conclude  
that $\pf_1=\pf_2$ and we have proved uniqueness.
\end{proof}

\begin{corollary} \label{extremultone}
For each $\sigma$ in the global zig-zag fan, the perfect matching  
$P(\sigma)$ is extremal. Furthermore, the corresponding vertex of the  
degree one polygon in $\Nm^+$ has multiplicity one.
\end{corollary}
\begin{proof} By Proposition~\ref{uniqueext}, the image of $P(\sigma)$  
in $\Nm^+$ evaluates to zero on two linearly independent classes in  
$\Mm^+$. Therefore the image lies in a one dimensional facet of  
$\Nm^+$ and so is extremal. The uniqueness part of the proposition  
shows that no other perfect matching maps to the same one dimensional  
facet, and thus the corresponding vertex has multiplicity one.
\end{proof}

We now prove that every extremal vertex is the image of $P(\sigma)$  
for some $\sigma$.
Recall from Section~\ref{Perfmatchsec} that the cone $\Nm^+$ is the  
intersection of $\Nm$ with the real cone $(\Nm^+)_{\R}$ generated by  
the image of $\Ng^+$ in $\Nm \otimes_{\Z} \R$. 

\begin{lemma} \label{genallcone}
The real cone $(\Nm^+)_{\R}$ is generated over $\R^+$ by the images in  
$\Nm^+$ of $P(\sigma)$ for all two dimensional cones $\sigma$ of $\Xi$.
\end{lemma}
\begin{proof}
Let $\mathcal{C}_N$ be the real cone in $\Nm \otimes_\Z \R$ generated  
by the images of $P(\sigma)$ for all two dimensional cones $\sigma$ of  
$\Xi$, and let $\mathcal{C}_M$ be the real cone in $\Mm \otimes_\Z \R$  
generated by $\Meq{\sbf(\gamma)}$ for all rays $\gamma$ of $\Xi$.
We claim that $ \mathcal{C}_M^\vee = \mathcal{C}_N$. Since  
$[P(\sigma)] \in \Nm^+$, every element of $\Mm^+$ evaluates to a  
non-negative number on this class. In particular $[P(\sigma)] \in  
\mathcal{C}_M^\vee$ for every two dimensional cone $\sigma$ in $\Xi$,  
so $ \mathcal{C}_N \subseteq \mathcal{C}_M^\vee$. For the converse, we  
restrict to the degree 1 plane in $\Nm$. The functions  
$\Meq{\sbf(\gamma)}$ restrict to affine linear functions on this plane  
which evaluate to zero on neighbouring points $[P(\sigma^+)]$ and  
$[P(\sigma^-)]$, where $\sigma^\pm$ are the two, 2-dimensional cones  
in $\Xi$ containing ray $\gamma$, and are non-negative on all other  
points $[P(\sigma)]$. These functions define a collection of half  
planes whose intersection is the convex hull of the points
\footnote{This `obvious' fact is the justification for the `gift  
wrapping algorithm' in two dimensions \cite{Jarvis}.}.
Thus the cone over this intersection, namely $ \mathcal{C}_M^\vee$  
equals the cone over the convex hull, namely $\mathcal{C}_N$.

Finally, using the fact that $\mathcal{C}_M \subseteq (\Mm^+)_{\R}$  
where $(\Mm^+)_{\R}$ is the real cone generated by $\Mm^+$,
$$(\Nm^+)_{\R}= (\Mm^+)_{\R}^\vee \subseteq \mathcal{C}_M^\vee =  
\mathcal{C}_N $$
since we also know that $\mathcal{C}_N \subseteq (\Nm^+)_{\R}$, the  
result follows.
\end{proof}
\begin{remark}
Thus, the real cone $(\Mm^+)_{\R} = (\Nm^+)_{\R}^\vee$ is dual to the  
real cone in $\Nm \otimes_\Z \R$ generated by the images of perfect  
matchings of the form $P(\sigma)$ for $\sigma$ of $\Xi$.
We have also shown that the real cone $(\Nm^+)_{\R}$ is dual to the  
real cone in $\Mm \otimes_\Z \R$ generated by $\Meq{\sbf(\gamma)}$ for  
all rays $\gamma$ of $\Xi$, so $(\Mm^+)_{\R}$ is generated by the  
classes $\Meq{\sbf(\gamma)}$.
\end{remark}
\begin{corollary}
The extremal perfect matchings are precisely the perfect matchings of  
the form $P(\sigma)$ for all two dimensional cones $\sigma$ of $\Xi$.
\end{corollary}
\begin{proof}
By Lemma~\ref{genallcone} the images of $P(\sigma)$ in $\Nm^+$, for  
all two dimensional cones $\sigma$ of $\Xi$, generate $(\Nm^+)_{\R}$.  
Therefore perfect matchings of the form $P(\sigma)$ map to generators  
of all the extremal rays of $\Nm^+$, i.e. to all the extremal vertices  
of the degree 1 polygon in $\Nm^+$. Finally, by  
Corollary~\ref{extremultone}, each of these vertices has multiplicity  
one.
\end{proof}

\section{The external perfect matchings}
We next see that given an extremal perfect matching constructed in the previous section, we can alter it along a zig-zag path in such a way that we still have a perfect matching. Furthermore the resulting perfect matchings still evaluate to zero on one of the two systems of boundary flows and are thus external.
\begin{lemma}
If $\sigma$ is a cone in the global zig-zag fan spanned by the rays $\gamma^+$ and $\gamma^-$, and $\eta^+$ and $\eta^-$ are representative zig-zag paths, then $P(\sigma) - Zig(\eta ^+) + Zag(\eta^+)$ and $P(\sigma) - Zag(\eta ^-) + Zig(\eta^-)$ are perfect matchings.
\end{lemma}
\begin{proof}
By Lemma~\ref{contzgs} we know $P(\sigma) - Zig(\eta^+)$ and $P(\sigma) - Zag(\eta^-)$ are elements of $\N^{Q_1}$. Given any face $\face \in Q_2$, its boundary either does not intersect $\eta^+$ or it intersects it in a zig and a zag (see Lemma~\ref{singlein} and Lemma~\ref{uniquerep}). Recalling that the coboundary map adds up the function on the edges in the boundary of each face, we see that in either case
$d(Zig(\eta^+) - Zag(\eta^+)) = 0$. 
Consequently we see that:
$$d(P(\sigma) - Zig(\eta^+) + Zag(\eta^+)) = d(P(\sigma)) = \const{1}$$
so $P(\sigma) - Zig(\zzf ^+) + Zag(\zzf ^+)$ is a perfect matching. Similarly we can see that $P(\sigma) - Zag(\zzf ^-) + Zig(\zzf ^-)$ is a perfect matching.
\end{proof}
We will refer to such perfect matchings as those obtained by `resonating' $P(\sigma)$ along $\eta ^+$ and $\eta^-$ respectively. By resonating along all representative zig-zag paths of a ray, we can go from one extremal perfect matching to another.
\begin{lemma} \label{vertexlink}
Let $\gamma$ be any ray in the global zig-zag fan $\Xi$, and let $\sigma^+$ and $\sigma^-$ be the two 2-dimensional cones containing $\gamma$ (see diagram below). Then $P(\sigma^-) - Zig(\gamma) = P(\sigma^+) - Zag(\gamma)$.
\end{lemma}
\begin{center}
\psset{xunit=1.0cm,yunit=1.0cm,algebraic=true,dotstyle=*,dotsize=3pt 0,linewidth=0.8pt,arrowsize=3pt 2,arrowinset=0.25}
\begin{pspicture*}(0,-1.85)(6,2.5)
\psline(2.88,-1.14)(1,1)
\psline(2.88,-1.14)(5,1)
\pscustom[linestyle=dotted]{\parametricplot{2.2916078297676727}{7.073278271764529}{0.4*cos(t)+2.88|0.4*sin(t)+-1.14}\lineto(2.88,-1.14)\closepath}
\rput[tl](0.78,1.78){$\gamma^+$}
\rput[tl](5,1.84){$\gamma^-$}
\psline(2.88,-1.14)(3.04,1.52)
\rput[tl](2.08,1.2){$\sigma^+$}
\rput[tl](3.58,1.2){$\sigma^-$}
\rput[tl](3,2.24){$\gamma$}
\psdots[dotsize=2pt 0](2.88,-1.14)
\end{pspicture*} 
\end{center}
\begin{proof}
Let $\face \in Q_2$ be any face. There are two cases which we consider separately. In the first case $\gamma$ is a ray in the local zig-zag fan $\xi(\face)$, i.e. there exists a representative zig-zag flow of $\gamma$ intersecting the boundary of $\face$. This intersection consists of a zig and a zag and therefore $d(Zig(\gamma))_\face = d(Zag(\gamma))_\face = 1 $. Since $P(\sigma^-)$ and $P(\sigma^+)$ are perfect matchings, so are of degree 1, we note that
$$d(P(\sigma^-) - Zig(\gamma))_\face = d(P(\sigma^+) - Zag(\gamma))_\face = 0$$
and together with the fact that $P(\sigma^-) - Zig(\gamma), P(\sigma^+) - Zag(\gamma) \in \N^{Q_1}$, this implies that $P(\sigma^-) - Zig(\gamma)$ and $P(\sigma^+)- Zag(\gamma)$ are both zero on all arrows in the boundary of $\face$.

In the second case, $\gamma$ is not a ray in the local zig-zag fan $\xi(\face)$. Then no representative zig-zag flow of $\gamma$ intersects the boundary of $\face$ and so $Zig(\gamma)$ and $Zag(\gamma)$ both evaluate to zero on all arrows in the boundary. Furthermore, in this case we observe that the images of $\sigma^+$ and $\sigma^-$ lie in the same cone in the local zig-zag fan $\xi(\face)$, so $P_f(\sigma^-)= P_f(\sigma^+)$. Thus $P(\sigma^-) - Zig(\gamma)$ and $P(\sigma^+)- Zag(\gamma)$ are equal on all arrows in the boundary of $\face$. The functions are equal on the boundary of all faces, and therefore are equal.
\end{proof}
\begin{corollary}
The perfect matching $P(\sigma^-) = P(\sigma^+) - Zag(\gamma) + Zig(\gamma)$, i.e. $P(\sigma^-)$ can be obtained from $P(\sigma^+)$ by `resonating' along all representative zig-zag flows of $\gamma$.
\end{corollary}

We have seen that given an extremal perfect matching we can resonate along certain zig-zag paths. We now show that it is always possible to resonate along zig-zag paths in an external perfect matching.

\begin{lemma} \label{canresonate}
Let $\pf$ be any perfect matching which satisfies $ \langle \pf ,\sbf(\gamma) \rangle = 0$ and let $\eta$ be any representative zig-zag flow of $\gamma$. Then either $\pf - Zig(\eta) \in \N^{Q_1}$ or $\pf - Zag(\eta) \in \N^{Q_1}$.
\end{lemma}
\begin{proof}
Using the fact that $\pf \in \N^{Q_1}$ and $\sbf(\gamma) \in \N_{Q_1}$, we observe that $ \langle \pf ,\sbf(\gamma) \rangle = 0$ if and only if $ \langle \pf ,a \rangle = 0$ for every arrow $a$ in the black or white boundary path of any representative of $\gamma$.
Let $\face_k$ be the black or white face with $\eta_k$ and $\eta_{k+1}$ in its boundary. By construction, ever arrow in the boundary of this face except $\eta_k$ and $\eta_{k+1}$, is in the boundary path of $\eta$. Therefore, the perfect matching $\pf$ (which is of degree 1) must evaluate to 1 on either $\eta_k$ or $\eta_{k+1}$, and to zero on all the other arrows in the boundary of $\face_k$. Because this holds for each $k \in \Z$, we note that this forces $\pf$ to evaluate to 1 either on all the zigs, or all the zags of $\zzf$.
\end{proof}

We now show that every external perfect matching, i.e. every perfect matching which evaluates to zero on some boundary flow can be obtained by resonating the extremal perfect matchings. 
\begin{proposition}
Every perfect matching $\pf$ which satisfies $ \langle \pf ,\sbf(\gamma) \rangle = 0$ is of the form:
$$\pf = P(\sigma^+) - \sum_{\eta \in Z} (Zag(\eta) - Zig(\eta))$$
where $Z$ is some set of zig-zag paths which are representatives of $\gamma$.
\end{proposition}
\begin{proof}
Let $\pf$ be such a perfect matching. Let $Z$ be the set of representative zig-zag paths $\eta$ of $\gamma$ for which $\pf - Zig(\eta) \in \N^{Q_1}$.
We note that $\pf^{\prime} := \pf + \sum_{\eta \in Z} (Zag(\eta) - Zig(\eta))$ is a perfect matching which satisfies $ \langle \pf^{\prime} ,\sbf(\gamma) \rangle = 0$ and by Lemma~\ref{canresonate} we see that $\pf^{\prime} - Zag(\gamma) \in \N^{Q_1}$. Since
$$\pf = \pf^{\prime} - \sum_{\eta \in Z} (Zag(\eta) - Zig(\eta))$$
it is sufficient to prove that any perfect matching which satisfies, $ \langle \pf^{\prime} ,\sbf(\gamma) \rangle = 0$ and $\pf^{\prime} - Zag(\gamma) \in \N^{Q_1}$ is equal to $P(\sigma^+)$.
We will be show that if $\pf^{\prime}$ is such a perfect matching, then $ \langle \pf^{\prime} ,\sbf(\gamma^+) \rangle = 0$. The result follows using the uniqueness part of Proposition~\ref{uniqueext}.

Consider the elements $[\pf^{\prime} - P(\sigma^+)]$ and $[P(\sigma^-) - P(\sigma^+)] \neq 0$ in $H^{1}(Q)$, embedded as the rank 2 sublattice of $\Nm$ consisting of degree zero elements (see equation~\ref{eq:Nmes}). We note that the class $\Meq{\sbf(\gamma)} \in \Mm$ restricts to a linear map on $H^{1}(Q)$ which has $[\pf^{\prime} - P(\sigma^+)]$ and $[P(\sigma^-) - P(\sigma^+)]$ in its kernel. The kernel is rank 1, so there exists $k \in \Q$ such that  
$$ [\pf^{\prime} - P(\sigma^+)] = k[P(\sigma^-) - P(\sigma^+)]  $$
We observe that $k \geq 0$ since 
$ \langle P(\sigma^-) ,\sbf(\gamma^+) \rangle > 0$ and
\begin{align*}
 0 \leq  \langle \pf^{\prime} ,\sbf(\gamma^+) \rangle &=  \langle [\pf^{\prime} - P(\sigma^+)] ,\Meq{\sbf(\gamma^+)} \rangle \\ &= k \langle [P(\sigma^-) - P(\sigma^+)] ,\Meq{\sbf(\gamma^+)} \rangle \\ &= k \langle P(\sigma^-) ,\sbf(\gamma^+) \rangle
\end{align*}
Then using Lemma~\ref{vertexlink} and the fact that $\pf^{\prime} - Zag(\gamma) \in \N^{Q_1}$ we see that
\begin{align*}
 0 \geq  -k \langle P(\sigma^+) ,\sbf(\gamma^-) \rangle &= k \langle [P(\sigma^-) - P(\sigma^+)] ,\Meq{\sbf(\gamma^-)} \rangle \\ &=  \langle [\pf^{\prime} - P(\sigma^+)] ,\Meq{\sbf(\gamma^-)} \rangle \\ &=  \langle [\pf^{\prime} - P(\sigma^-) - Zag(\gamma) +Zig(\gamma)] ,\Meq{\sbf(\gamma^-)} \rangle \\ &=  \langle \pf^{\prime} - Zag(\gamma) +Zig(\gamma) ,\sbf(\gamma^-) \rangle \geq 0
\end{align*}
so $k=0$ and thus $ \langle \pf^{\prime} ,\sbf(\gamma^+) \rangle = 0$.


\end{proof}

Finally we see that resonating a perfect matching along different parallel zig-zag paths changes the class in $\Nm$ in the same way.
\begin{lemma}
Let $\eta, \eta^{\prime}$ be representative zig-zag paths of $\gamma$. Then $Zag(\eta) - Zig(\eta) \in \Ng$ and $Zag(\eta^{\prime}) - Zig(\eta^{\prime}) \in \Ng$ project to the same class in $\Nm$.
\end{lemma}
\begin{proof}
First note that if $\eta$ is any zig-zag path, then $Zag(\eta) - Zig(\eta) \in \Ng$ and has degree zero: given any face, $\zzf$ either does not intersect the boundary in which case the function $Zag(\eta) - Zig(\eta)$ is non-zero on all arrows in the boundary, or it intersects in a pair of arrows one of which is a zig, and one a zag. In either case, the sum of $Zag(\eta) - Zig(\eta)$ on the arrows in the boundary is zero. Thus $Zag(\eta) - Zig(\eta)$ defines a class in $H^1(Q)$ considered as the degree zero sublattice of $\Nm$. 

Let $\eta, \eta^{\prime}$ be distinct representative zig-zag paths of $\gamma$. If $\eta^{\prime \prime}$ is any zig-zag path, then we observe that evaluating $Zag(\eta) - Zig(\eta)$ on $\eta^{\prime \prime}$ counts the number of intersections of $\eta$ and $\eta^{\prime \prime}$, with signs depending on whether the intersecting arrow is a zig or zag of $\eta^{\prime \prime}$. Recalling Remark~\ref{intno} we see that this is precisely the intersection number $[\eta^{\prime \prime}] \wedge [\eta]$. Therefore, since $[\eta^\prime]=[\eta]$, the functions $Zag(\eta) - Zig(\eta)$ and $Zag(\eta^{\prime}) - Zig(\eta^{\prime})$ evaluate to the same value on each zig-zag path. The classes of zig-zag paths span a full sublattice of $H_1(Q)$, and therefore $[Zag(\eta) - Zig(\eta)]= [Zag(\eta^{\prime}) - Zig(\eta^{\prime})]$ in $H^1(Q)$ considered as the degree zero sublattice of $\Nm$.
\end{proof}

Therefore the multiplicities of the external perfect matchings along the edge dual to $\Meq{S(\gamma)}$ are binomial coefficients $\binom{r}{n}$ given by choosing $n$ zig-zag paths to resonate out of $r$, the total number of representative zig-zag paths of $\gamma$.

Although he doesn't use the technology of zig-zag fans introduced here, Gulotta in \cite{Gulotta} describes the extremal and external perfect matchings for a `properly ordered' dimer model in essentially the same way that we do. He also gives some justification for their multiplicities. The arguments rely on the assumption that his dimer models are properly ordered. Recall (Remark~\ref{wellorder}) that we proved that a geometrically consistent dimer model is properly ordered. In \cite{Stienstra} Stienstra also produces a correspondence between cones in a fan and perfect matchings. At first sight this looks different to the construction in Section~\ref{CSPM} but it is presumably closely related.

\chapter{Toric algebras and algebraic consistency} \label{ch:ToricA}
In this chapter we introduce the concept of (non-commutative, affine, normal) toric algebras. We prove some general properties, and relate the definition back to dimer models by showing that there is a toric algebra naturally associated to every dimer model. This leads to the definition of another consistency condition which we call `algebraic consistency'. This will play a key role in the rest of this article. Finally we give some examples of algebraically consistent, and non algebraically consistent dimer models.
\section{Toric algebras} \label{toralgsec}
We start the story with two objects, a finite set $Q_0$, and a lattice $N$. Suppose this pair is equipped with two further pieces of information: Firstly a map of lattices
\[ \begin{CD}
\Z^{Q_0}  @>{d}>>  N  \\
\end{CD} \]
with the property that its kernel is $\Z \const{1}$, and secondly a subset $N^+ \subset N$ which is the intersection of a strongly convex rational polyhedral cone in $N \otimes_\Z \R$ with the lattice $N$. 

We consider the corresponding dual map of dual lattices
\[ \begin{CD}
\Z_{Q_0}  @<{\del}<<  M  \\
\end{CD} \]
where $M= N^{\vee}:= \Hom(N,\Z)$ 
contains the saturated monoid $M^+$ corresponding to the dual cone of $N^+$,
$$M^+= (N^+)^{\vee}:= \{ u \in M \mid \langle u , v \rangle \geq 0 \quad \forall v \in N^+ \}$$
Let $M_{ij}:= \del^{-1}(j-i)$ for $i,j \in Q_0$, and denote the intersection with saturated monoid $M^+$ by $M_{ij}^+:=M_{ij} \cap M^+$. We define $\Moth$ to be the submonoid of $M^+$ generated by the elements of $M_{ij}^+$ for all $i,j \in Q_0$.
\begin{definition} \label{nctd}
We call $(N,Q_0, d, N^+)$ \emph{non-commutative toric data} if it satisfies the following two conditions:
\begin{compactenum}
\item $M_{ij}^+$ is non-empty for all $i,j \in Q_0$
\item $N^+= (\Moth)^{\vee} := \{ v \in N \mid \langle u , v \rangle \geq 0 \quad \forall u \in \Moth \}$
\end{compactenum}
\noindent Given some non-commutative toric data, we can associate a \emph{non-commutative toric algebra}
\begin{equation*}
B = \C[\underline{M}^+] = \bigoplus_{i,j\in Q_0} \C[M^+_{ij}]
\end{equation*}
where $\C[M^+_{ij}]$ is the vector space with a basis of monomials of the form $x^m$ for $m\in M^+_{ij}$. The algebra may be considered as a formal matrix algebra
\begin{equation*}
B = \begin{pmatrix}\C[M_{11}^+] & \dots & \C[M_{1n}^+] \\ \vdots &   & \vdots \\ \C[M_{n1}^+] & \dots & \C[M_{nn}^+] \end{pmatrix}
\end{equation*}
with product
\[
\C[M^+_{ij}]\otimes\C[M^+_{jk}]\to \C[M^+_{ik}]
\colon x^{m_1}\otimes x^{m_2} \mapsto x^{m_1+m_2}
\]
\end{definition}
\begin{remark}
We note that $M_{ii}= \del^{-1}(0)$ is obviously independent of the vertex $i \in Q_0$. We denote it by $M_{o}:= \del^{-1}(0)$, and let $M_{o}^+:=M_{o} \cap M^+$. Then $\C[M_o^+]$ is the coordinate ring of a commutative (affine normal) toric variety where $\C[M_o]$ is the coordinate ring of its torus.
\end{remark}

\section{Some examples}
\begin{example}
If $Q_0=\{\bullet\}$ is the set with one element and $N$ is any lattice, then the zero map
\[ \begin{CD}
\Z^{Q_0}  @>{0}>>  N  \\
\end{CD} \]
has kernel $\Z\const{1}$. If $N^+$ is the intersection of a cone with $N$ then $(N,Q_0, d, N^+)$ satisfies all the conditions for non-commutative toric data. In particular $M_{\bullet \bullet}=M $ so $M_{\bullet \bullet}^+ = M^+ = (N^+)^{\vee}$, and
$$B = \C[M^+] $$
is a commutative ring.
Thus we get the usual toric construction of the coordinate ring of the (normal) affine toric variety, defined by the cone $N^+$ in $N$.
\end{example}

\begin{example}
Let $Q_0=\{i,j\}$ and let $N \cong \Z^{2}$ be a rank 2 lattice with some chosen basis. The map
\[ \begin{CD}
\Z^{Q_0}  @>{\left( \begin{smallmatrix}
1 & -1 \\
-1 & 1 \end{smallmatrix} \right)}>>  N  \\
\end{CD} \]
has kernel $\Z\const{1}$. We consider the family of saturated monoids $N_k^+$, for $k \in \N$, corresponding to cones with rays which have primitive generators $(1,0)$ and $(-k,k+1)$.

\begin{center}
\psset{xunit=1.0cm,yunit=1.0cm,algebraic=true,dotstyle=*,dotsize=3pt 0,linewidth=0.8pt,arrowsize=3pt 2,arrowinset=0.25}
\newrgbcolor{cccccc}{0.8 0.8 0.8} 
\begin{pspicture*}(-3,-2)(4,4)
\pspolygon[linecolor=cccccc,fillcolor=cccccc,fillstyle=solid](-2,3)(0,0)(3,0)(3,3)
\psline(0,0)(3,0)
\psline(-2,3)(0,0)
\psline[linewidth=1.2pt](-2,3)(0,0)
\psline[linewidth=1.2pt](0,0)(3,0)
\psline(8,0)(14,4)
\psline(8,0)(8,5)
\psline(14,4)(8,0)
\psline(8,0)(8,5)
\psline[linestyle=dotted](6,-2)(14,6)
\psline[linestyle=dotted](15,6)(7,-2)
\psline[linestyle=dotted](5,-2)(13,6)
\rput[tl](0.68,-0.1){$(1,0)$}
\rput[tl](-2.68,3.46){$(-k, k+1)$}
\rput[tl](12,6.3){$(M_k)_{ij}$}
\rput[tl](14.76,6.06){$(M_k)_{ji}$}
\rput[tl](13.8,6.7){$(M_k)_o$}
\psdots[dotsize=1pt 0](-2,3)
\psdots[dotsize=1pt 0](-1,3)
\psdots[dotsize=1pt 0](0,3)
\psdots[dotsize=1pt 0](1,3)
\psdots[dotsize=1pt 0](2,3)
\psdots[dotsize=1pt 0](3,3)
\psdots[dotsize=1pt 0](-2,2)
\psdots[dotsize=1pt 0](-1,2)
\psdots[dotsize=1pt 0](0,2)
\psdots[dotsize=1pt 0](1,2)
\psdots[dotsize=1pt 0](2,2)
\psdots[dotsize=1pt 0](3,2)
\psdots[dotsize=1pt 0](-2,1)
\psdots[dotsize=1pt 0](-1,1)
\psdots[dotsize=1pt 0](0,1)
\psdots[dotsize=1pt 0](1,1)
\psdots[dotsize=1pt 0](2,1)
\psdots[dotsize=1pt 0](3,1)
\psdots[dotsize=1pt 0](-2,0)
\psdots[dotsize=1pt 0](-1,0)
\psdots[dotsize=1pt 0](0,0)
\psdots[dotsize=1pt 0](1,0)
\psdots[dotsize=1pt 0](2,0)
\psdots[dotsize=1pt 0](3,0)
\psdots[dotsize=1pt 0](-2,-1)
\psdots[dotsize=1pt 0](-1,-1)
\psdots[dotsize=1pt 0](0,-1)
\psdots[dotsize=1pt 0](1,-1)
\psdots[dotsize=1pt 0](2,-1)
\psdots[dotsize=1pt 0](3,-1)
\psdots[dotsize=1pt 0](6,3)
\psdots[dotsize=1pt 0](6,2)
\psdots[dotsize=1pt 0](6,1)
\psdots[dotsize=1pt 0](6,0)
\psdots[dotsize=1pt 0](6,-1)
\psdots[dotsize=1pt 0](7,-1)
\psdots[dotsize=1pt 0](7,0)
\psdots[dotsize=1pt 0](7,1)
\psdots[dotsize=1pt 0](7,2)
\psdots[dotsize=1pt 0](7,3)
\psdots[dotsize=1pt 0](8,3)
\psdots[dotsize=1pt 0](8,2)
\psdots[dotsize=1pt 0](8,1)
\psdots[dotsize=1pt 0](8,0)
\psdots[dotsize=1pt 0](8,-1)
\psdots[dotsize=1pt 0](9,3)
\psdots[dotsize=1pt 0](9,2)
\psdots[dotsize=1pt 0](9,1)
\psdots[dotsize=1pt 0](9,0)
\psdots[dotsize=1pt 0](9,-1)
\psdots[dotsize=1pt 0](10,3)
\psdots[dotsize=1pt 0](11,3)
\psdots[dotsize=1pt 0](10,2)
\psdots[dotsize=1pt 0](11,2)
\psdots[dotsize=1pt 0](10,1)
\psdots[dotsize=1pt 0](11,1)
\psdots[dotsize=1pt 0](10,0)
\psdots[dotsize=1pt 0](11,0)
\psdots[dotsize=1pt 0](10,-1)
\psdots[dotsize=1pt 0](11,-1)
\psdots[dotsize=1pt 0](6,4)
\psdots[dotsize=1pt 0](7,4)
\psdots[dotsize=1pt 0](8,4)
\psdots[dotsize=1pt 0](9,4)
\psdots[dotsize=1pt 0](10,4)
\psdots[dotsize=1pt 0](11,4)
\psdots[dotsize=1pt 0](12,4)
\psdots[dotsize=1pt 0](12,3)
\psdots[dotsize=1pt 0](12,2)
\psdots[dotsize=1pt 0](12,1)
\psdots[dotsize=1pt 0](12,0)
\psdots[dotsize=1pt 0](12,-1)
\psdots[dotsize=1pt 0](6,5)
\psdots[dotsize=1pt 0](7,5)
\psdots[dotsize=1pt 0](8,5)
\psdots[dotsize=1pt 0](9,5)
\psdots[dotsize=1pt 0](10,5)
\psdots[dotsize=1pt 0](11,5)
\psdots[dotsize=1pt 0](12,5)
\psdots[dotsize=1pt 0](13,5)
\psdots[dotsize=1pt 0](13,4)
\psdots[dotsize=1pt 0](13,3)
\psdots[dotsize=1pt 0](13,2)
\psdots[dotsize=1pt 0](13,1)
\psdots[dotsize=1pt 0](13,0)
\psdots[dotsize=1pt 0](13,-1)
\psdots[dotsize=1pt 0](14,5)
\psdots[dotsize=1pt 0](14,4)
\psdots[dotsize=1pt 0](14,3)
\psdots[dotsize=1pt 0](14,2)
\psdots[dotsize=1pt 0](14,1)
\psdots[dotsize=1pt 0](14,0)
\psdots[dotsize=1pt 0](14,-1)
\end{pspicture*} 
\end{center}
Then the dual monoid $(M_k)^+$ in the dual lattice $M_k$ is shown below. We can see that the slices $(M_k)_{o}^+$, $(M_k)_{ij}^+$ and $(M_k)_{ji}^+$ are all non-empty, and also that $(\Moth)^\vee = N^+$.
\begin{center}
\psset{xunit=0.75cm,yunit=0.75cm,algebraic=true,dotstyle=*,dotsize=3pt 0,linewidth=0.8pt,arrowsize=3pt 2,arrowinset=0.25}
\newrgbcolor{cccccc}{0.8 0.8 0.8} 
\begin{pspicture*}(4.5,-2.5)(16,7)
\pspolygon[linecolor=cccccc,fillcolor=cccccc,fillstyle=solid](-2,3)(0,0)(3,0)(3,3)
\pspolygon[linecolor=cccccc,fillcolor=cccccc,fillstyle=solid](8,5)(14,5)(14,4)(8,0)
\psline(0,0)(3,0)
\psline(-2,3)(0,0)
\psline[linewidth=1.2pt](-2,3)(0,0)
\psline[linewidth=1.2pt](0,0)(3,0)
\psline(8,0)(14,4)
\psline(8,0)(8,5)
\psline(14,4)(8,0)
\psline(8,0)(8,5)
\psline[linestyle=dotted](6,-2)(14,6)
\psline[linestyle=dotted](15,6)(7,-2)
\psline[linestyle=dotted](5,-2)(13,6)
\rput[tl](0.68,0.1){$(1,0)$}
\rput[tl](-2.68,3.66){$(-k, k+1)$}
\rput[tl](10.92,1.9){$(k+1,k)$}
\rput[tl](12,6.3){$(M_k)_{ij}$}
\rput[tl](14.76,6.06){$(M_k)_{ji}$}
\rput[tl](13.8,6.7){$(M_k)_o$}
\psdots[dotsize=1pt 0](-2,3)
\psdots[dotsize=1pt 0](-1,3)
\psdots[dotsize=1pt 0](0,3)
\psdots[dotsize=1pt 0](1,3)
\psdots[dotsize=1pt 0](2,3)
\psdots[dotsize=1pt 0](3,3)
\psdots[dotsize=1pt 0](-2,2)
\psdots[dotsize=1pt 0](-1,2)
\psdots[dotsize=1pt 0](0,2)
\psdots[dotsize=1pt 0](1,2)
\psdots[dotsize=1pt 0](2,2)
\psdots[dotsize=1pt 0](3,2)
\psdots[dotsize=1pt 0](-2,1)
\psdots[dotsize=1pt 0](-1,1)
\psdots[dotsize=1pt 0](0,1)
\psdots[dotsize=1pt 0](1,1)
\psdots[dotsize=1pt 0](2,1)
\psdots[dotsize=1pt 0](3,1)
\psdots[dotsize=1pt 0](-2,0)
\psdots[dotsize=1pt 0](-1,0)
\psdots[dotsize=1pt 0](0,0)
\psdots[dotsize=1pt 0](1,0)
\psdots[dotsize=1pt 0](2,0)
\psdots[dotsize=1pt 0](3,0)
\psdots[dotsize=1pt 0](-2,-1)
\psdots[dotsize=1pt 0](-1,-1)
\psdots[dotsize=1pt 0](0,-1)
\psdots[dotsize=1pt 0](1,-1)
\psdots[dotsize=1pt 0](2,-1)
\psdots[dotsize=1pt 0](3,-1)
\psdots[dotsize=1pt 0](6,3)
\psdots[dotsize=1pt 0](6,2)
\psdots[dotsize=1pt 0](6,1)
\psdots[dotsize=1pt 0](6,0)
\psdots[dotsize=1pt 0](6,-1)
\psdots[dotsize=1pt 0](7,-1)
\psdots[dotsize=1pt 0](7,0)
\psdots[dotsize=1pt 0](7,1)
\psdots[dotsize=1pt 0](7,2)
\psdots[dotsize=1pt 0](7,3)
\psdots[dotsize=1pt 0](8,3)
\psdots[dotsize=1pt 0](8,2)
\psdots[dotsize=1pt 0](8,1)
\psdots[dotsize=1pt 0](8,0)
\psdots[dotsize=1pt 0](8,-1)
\psdots[dotsize=1pt 0](9,3)
\psdots[dotsize=1pt 0](9,2)
\psdots[dotsize=1pt 0](9,1)
\psdots[dotsize=1pt 0](9,0)
\psdots[dotsize=1pt 0](9,-1)
\psdots[dotsize=1pt 0](10,3)
\psdots[dotsize=1pt 0](11,3)
\psdots[dotsize=1pt 0](10,2)
\psdots[dotsize=1pt 0](11,2)
\psdots[dotsize=1pt 0](10,1)
\psdots[dotsize=1pt 0](11,1)
\psdots[dotsize=1pt 0](10,0)
\psdots[dotsize=1pt 0](11,0)
\psdots[dotsize=1pt 0](10,-1)
\psdots[dotsize=1pt 0](11,-1)
\psdots[dotsize=1pt 0](6,4)
\psdots[dotsize=1pt 0](7,4)
\psdots[dotsize=1pt 0](8,4)
\psdots[dotsize=1pt 0](9,4)
\psdots[dotsize=1pt 0](10,4)
\psdots[dotsize=1pt 0](11,4)
\psdots[dotsize=1pt 0](12,4)
\psdots[dotsize=1pt 0](12,3)
\psdots[dotsize=1pt 0](12,2)
\psdots[dotsize=1pt 0](12,1)
\psdots[dotsize=1pt 0](12,0)
\psdots[dotsize=1pt 0](12,-1)
\psdots[dotsize=1pt 0](6,5)
\psdots[dotsize=1pt 0](7,5)
\psdots[dotsize=1pt 0](8,5)
\psdots[dotsize=1pt 0](9,5)
\psdots[dotsize=1pt 0](10,5)
\psdots[dotsize=1pt 0](11,5)
\psdots[dotsize=1pt 0](12,5)
\psdots[dotsize=1pt 0](13,5)
\psdots[dotsize=1pt 0](13,4)
\psdots[dotsize=1pt 0](13,3)
\psdots[dotsize=1pt 0](13,2)
\psdots[dotsize=1pt 0](13,1)
\psdots[dotsize=1pt 0](13,0)
\psdots[dotsize=1pt 0](13,-1)
\psdots[dotsize=1pt 0](14,5)
\psdots[dotsize=1pt 0](14,4)
\psdots[dotsize=1pt 0](14,3)
\psdots[dotsize=1pt 0](14,2)
\psdots[dotsize=1pt 0](14,1)
\psdots[dotsize=1pt 0](14,0)
\psdots[dotsize=1pt 0](14,-1)
\end{pspicture*} 
\end{center}
Therefore we have well defined non-commutative data, and the corresponding non-commutative toric algebra is  
\begin{equation*}
B = \C[\underline{M}_k^+] = \begin{pmatrix}\C[XY] & Y\C[XY] \\ X^{k+1}Y^k \C[XY] & \C[XY] \end{pmatrix}
\end{equation*}

\end{example}

\begin{example} \label{strongconquiv}
Let $Q=(Q_0,Q_1)$ be a strongly connected quiver and consider any sublattice $L \subset \Z_{Q_1}$ with a torsion free quotient, which is contained in the kernel of the boundary map $\del : \Z_{Q_1} \rightarrow \Z_{Q_0}$. Using this we can construct some non-commutative toric data and the corresponding algebra. We define $M:= \Z_{Q_1}/L$ to be the quotient lattice and let $N:=M^{\vee}$. Since $L$ is in the kernel, the boundary map descends to a map $\del : M \rightarrow \Z_{Q_0}$ which has a dual map $d : \Z^{Q_0} \rightarrow N$. Furthermore, as the quiver is connected, we see that the kernel of $d$ is $\Z \const{1}$ as we required.

Let $M^{ef}$ be the image of $\N_{Q_1}$ in $M$ under the quotient map, and define $N^+ := (M^{ef})^{\vee}$. This is the saturated monoid given by a strongly convex rational polyhedral cone in $N$. 

We now see that $(N,Q_0, d, N^+)$ is non-commutative toric data. First note that $M^{ef}$ is generated by the images of the arrows in $M^+$. The image of an arrow is in $M_{ta,ha}^+$, so $M^{ef} \subseteq \Moth \subseteq M^+$. Dualising this we see that
$$N^+ = (M^+)^\vee \subseteq (\Moth)^\vee \subseteq (M^{ef})^\vee = N^+$$
so condition (2) of Definition~\ref{nctd} is satisfied. Condition (1) is a consequence of the fact that $Q$ is strongly connected. Given any two vertices $i,j \in Q_0$, there exists an (oriented) path from $i$ to $j$. 
This defines an element of $\N_{Q_1}$ and therefore a class in $M^{ef} \subset M^+$. Furthermore since the path is from $i$ to $j$, applying the boundary map to the class we get the element $(j-i) \in \Z_{Q_0}$, and we have therefore constructed an element in $M_{ij}^+$.
\end{example}
\section{Some properties of toric algebras}
We first identify the centre of a toric algebra.
\begin{lemma}\label{cent} The centre $Z(B)$ of a non-commutative toric algebra $B = \C[\underline{M}^+]$ is isomorphic to $R:=\C [M_o^+]$.
\end{lemma}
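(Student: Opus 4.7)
The plan is to exploit two features of the formal matrix presentation of $B$: the element $0 \in M_o^+$ yields mutually orthogonal idempotents $e_i \in \C[M_{ii}^+]$ with $\sum_i e_i = 1_B$, and condition~(1) of Definition~\ref{nctd} guarantees a nonzero monomial $x^m$ in every off-diagonal block $\C[M_{ij}^+]$. The goal is to show that any central element is the ``scalar'' diagonal matrix with a single element $z_0 \in R = \C[M_o^+]$ repeated in each $(i,i)$-block.

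I would fix $z = \sum_{i,j} z_{ij} \in Z(B)$ with $z_{ij} \in \C[M_{ij}^+]$ and first commute $z$ with each $e_k$. Since $e_k z$ has support only in row $k$ of the formal matrix and $z e_k$ has support only in column $k$, the equation $e_k z = z e_k$ for all $k$ forces $z_{ij} = 0$ whenever $i \neq j$; hence $z$ is diagonal with $z_{ii} \in \C[M_o^+]$. Next, for any pair $i \neq j$ I pick $m \in M_{ij}^+$ (possible by condition~(1)) and expand $z \cdot x^m = x^m \cdot z$. Only the blocks $z_{ii}$ and $z_{jj}$ contribute, giving $z_{ii} \cdot x^m = x^m \cdot z_{jj}$ in $\C[M_{ij}^+]$; writing $z_{ii} = \sum_\alpha c_\alpha x^{m_\alpha}$ and $z_{jj} = \sum_\beta c'_\beta x^{m'_\beta}$, this reads
\[
\sum_\alpha c_\alpha x^{m_\alpha + m} \;=\; \sum_\beta c'_\beta x^{m + m'_\beta}.
\]
As $M$ is abelian and translation by $m$ is a bijection $M_o \to M_{ij}$ while distinct monomials are linearly independent, this forces $z_{ii} = z_{jj}$. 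Hence all diagonal entries coincide with a single $z_0 \in \C[M_o^+]$.

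Finally, the assignment $\varphi : R \to B$ sending $z_0$ to the diagonal element with $z_0$ in each $(i,i)$-block is readily checked to be an injective $\C$-algebra homomorphism; its image lies in $Z(B)$ because any such diagonal element commutes with every $x^m \in \C[M_{ij}^+]$ (both sides equal $\sum_\alpha c_\alpha x^{m_\alpha + m}$, since $M$ is abelian), and by the previous paragraph its image exhausts $Z(B)$. The only point requiring care is the essential use of condition~(1) in the second step: without an element in $M_{ij}^+$ to test against, the equations tying $z_{ii}$ to $z_{jj}$ would disappear and the centre could be strictly larger than $R$.
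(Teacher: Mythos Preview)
Your proof is correct and follows essentially the same strategy as the paper: show $R\cdot 1_B\subseteq Z(B)$, then for an arbitrary central $z=(z_{ij})$ commute with diagonal elements to kill the off-diagonal entries, and finally commute with an off-diagonal monomial $x^{m}\in\C[M_{ij}^+]$ (supplied by condition~(1)) to force $z_{ii}=z_{jj}$. The only difference is that you test against the idempotents $e_k=x^0$ in the first step, which gives the off-diagonal vanishing immediately, whereas the paper tests against a general $x^{m_{kk}}$ and then appeals to $\C[M_{ik}^+]$ being a torsion-free $\C[M_{kk}^+]$-module to cancel; your choice is a harmless simplification of the same idea.
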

\begin{proof} First we note that we can consider $B$ as an $R$-algebra, using the map $R \rightarrow R \operatorname{Id} \subset B$, where $\operatorname{Id}$ is the identity element of $B$ (which exists because $M_{o}^+ $ is a cone). This is true because $x^{m}x^{m_{k l}} = x^{m+m_{k l}} = x^{m_{k l}}x^{m}$ for any $m \in M_o^+$ and $m_{k l} \in M_{k l}^+$. In particular $R \operatorname{Id} \subset Z(B)$.

Now suppose $\underline{z}= (z_{ij}) \in Z(B)$. For every $k,l \in Q_0$, by definition $M_{k l}^+$ is non-empty, so we may fix some $m_{k l}\in M_{k l}^+$. Let $x^{m_{k l}} \in B$ be the corresponding element in $B$.
Then
$$ (\underline{z}x^{m_{k k}})_{ij} = 
                                  \begin{cases}
                                  z_{ik}x^{m_{k k}} & j=k \\
                                  0 & \text{otherwise}
                                  \end{cases}
$$
and
$$
(x^{m_{k k}}\underline{z})_{ij} = 
                                  \begin{cases}
                                  x^{m_{k k}}z_{kj} & i=k \\
                                  0 & \text{otherwise}
                                  \end{cases}
$$
Since $\underline{z}$ is central these must be equal and thus for all $i \neq k $ we have $z_{ik}x^{m_{k k}} = 0$. We note that $\C [M_{i k}^+]$ is a torsion free right $\C [M_{k k}^+]$- module and therefore $z_{ik}=0$ for all $i \neq k$. Thus every off-diagonal entry in $\underline{z}$ is zero.

Similarly by considering $(\underline{z}x^{m_{k l}})_{kl}$ and $(x^{m_{k l}}\underline{z})_{kl}$ we observe that $z_{kk}x^{m_{k l}} = x^{m_{k l}}z_{ll}$. Therefore $z_{kk}x^{m_{k l}} = z_{ll}x^{m_{k l}}$ and so $z_{kk}= z_{ll}$ in $R$. Thus $Z(B) \subseteq R \operatorname{Id}$ and we are done.


\end{proof}


We now show that any non-commutative toric algebra is a toric $R$-order. This is defined as follows:
\begin{definition}
Let $R \subset T$ be the coordinate ring of an affine toric variety where $T = \C [M_o]
$ is the coordinate ring of the dense open orbit of the torus action and $M_o$ is the character lattice.
A \emph{toric $R$-order} is an $M_o$-graded $R$-algebra $B$, with a (graded) embedding 
$$ B \hookrightarrow \operatorname{Mat}_{n}(T) $$
as a finitely generated $R$-submodule, such that 
\begin{equation*}  B \otimes_R T \lra{} \operatorname{Mat}_{n}(T) \end{equation*}
is an isomorphism.
\end{definition}
\begin{remark} We note that Bocklandt gives a definition of toric order in \cite{Bocklandt2}. He includes an additional condition (T02) which insists that the standard idempotents are contained in the image of $B$. We do not include this condition in our definition, although we note that it is clearly satisfied by any toric algebra. 
\end{remark}
We first prove the following lemma.
\begin{lemma} \label{posshift} Given any $m \in M$ there exists an element $\zeta \in M_o^+$ such that $ m+ \zeta \in M^+$.\end{lemma}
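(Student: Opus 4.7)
My plan is to reduce the statement to producing a single element $\zeta_0 \in M_o^+$ that lies in the topological interior of the cone $M^+ \otimes_\Z \R$, and then to finish by scaling. Given such a $\zeta_0$, let $v_1, \ldots, v_s$ be the ray generators of $N^+$; for any $m \in M$ I can choose $n \in \N$ large enough that
\[
\langle m + n\zeta_0, v_r\rangle \;=\; \langle m, v_r\rangle + n\langle \zeta_0, v_r\rangle \;\geq\; 0
\]
for every $r$, using $\langle \zeta_0, v_r\rangle > 0$. Since $M^+ = (N^+)^\vee$ is the intersection of a cone with $M$, this gives $m + n\zeta_0 \in M^+$, while $n\zeta_0 \in M_o^+$, as required.

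To construct $\zeta_0$ I would combine the two parts of Definition~\ref{nctd}. For each ray generator $v_r$ of $N^+$, condition (2) says $N^+ = (\Moth)^\vee$, so $\langle u, v_r\rangle \geq 0$ for all $u \in \Moth$; if equality held for every such $u$ then $-v_r$ would also lie in $(\Moth)^\vee = N^+$, contradicting the strong convexity of $N^+$. Hence there is some $u_r \in \Moth$ with $\langle u_r, v_r\rangle > 0$, and the sum $u := \sum_r u_r \in \Moth \subseteq M^+$ pairs strictly positively with every ray, i.e.\ lies in the interior of $M^+$. However $\partial(u)$ need not vanish; writing $\partial(u) = \sum_k c_k e_k$, the fact that $\ker d = \Z \const{1}$ forces $\sum_k c_k = 0$. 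I would now invoke condition (1) to cancel this defect: fix any base vertex $* \in Q_0$ and, for every $k$, pick $m_{k*} \in M_{k*}^+$ and $m_{*k} \in M_{*k}^+$, then set
\[
\zeta_0 \;:=\; u \;+\; \sum_{k:\, c_k > 0} c_k\, m_{k*} \;+\; \sum_{k:\, c_k < 0} (-c_k)\, m_{*k}.
\]
A short computation using $\partial(m_{k*}) = e_* - e_k$, $\partial(m_{*k}) = e_k - e_*$ and $\sum_k c_k = 0$ shows $\partial(\zeta_0) = 0$, so $\zeta_0 \in M_o^+$; and each added summand lies in $M^+$ and hence pairs non-negatively with every $v_r$, so $\langle \zeta_0, v_r\rangle \geq \langle u, v_r\rangle > 0$ is preserved.

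The main obstacle is precisely this last adjustment, where both hypotheses must work together: condition (2) on its own only produces interior elements of $M^+$ inside $\Moth$, and condition (1) on its own only supplies elements with prescribed $\partial$; neither alone yields an interior element of $M_o^+$. Once the construction above is verified, the elementary scaling argument in the first paragraph finishes the proof.
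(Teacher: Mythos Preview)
Your argument is correct. Both your proof and the paper's reduce to the same subclaim---that $M_o^+$ contains a point in the interior of $M^+$---and then finish by the same scaling argument. The difference lies in how this interior point is obtained. The paper argues by contradiction: if $M_o^+$ were contained in a boundary hyperplane $\mathcal{H} = \{m : \langle m, n\rangle = 0\}$, then for any $m \in M_{ij}^+ \setminus \mathcal{H}$ and any $m' \in M_{ji}^+$ (which exists by condition~(1)) one has $m + m' \in M_o^+ \subseteq \mathcal{H}$, forcing $\langle m', n\rangle < 0$; hence no such $m$ exists, so $\Moth \subseteq \mathcal{H}$, and then condition~(2) yields $\pm n \in N^+$, contradicting strong convexity. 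Your route is instead constructive: you first use condition~(2) to build an interior point $u$ of $M^+$ lying in $\Moth$, and then use condition~(1) (together with $\operatorname{im}\partial \subseteq \const{1}^\perp$, which follows from $\ker d = \Z\const{1}$) to correct $\partial(u)$ to zero without leaving the interior. The paper's argument is a bit slicker and avoids the bookkeeping of your cancellation step; yours has the virtue of producing an explicit $\zeta_0$ and of making transparent exactly how each of the two defining conditions enters.
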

\begin{proof}
Suppose there exists an element $\zeta^{\prime} \in M_o$ which is contained in the interior of the cone $M^+$. Then $ \langle \zeta^{\prime} , n \rangle$ is strictly positive for all $n \in N^+$ and so by adding sufficiently large positive integer multiples of $\zeta^{\prime}$, it is possible to shift any element of $M$ into $M^+$. 
We prove that such a $\zeta^{\prime} \in M_o$ exists. 

We know that $M_o^+$ is non-empty, so it suffices to prove that $M_o^+$ is not contained in any boundary hyperplane $\mathcal{H} := \{ m \in M \mid \langle m , n \rangle =0 \}$ of $M^+$, for some $n \in N^+$. Suppose for a contradiction that this is the case. First we note that if there exists $m \in M_{ij}^+$ (for any $i,j \in Q_0$) which is not in $\mathcal{H}$, then for any $m^{\prime} \in M_{ji}^+$ (which is non-empty by definition) we have 
$$ \langle m^{\prime} , n \rangle = \langle m^{\prime} + m , n \rangle - \langle m , n \rangle = -\langle m , n \rangle < 0$$
This contradicts the fact that $m^{\prime} \in M^+$ and $n \in N^+$. Therefore we see that $\Moth \subseteq \mathcal{H}$, however this implies that 
$$ \langle \wt{m} , n \rangle = 0 = \langle \wt{m} , -n \rangle  \qquad \forall \; \wt{m} \in \Moth$$
and thus, since $(\Moth)^\vee = N^+$, that $\pm n \in N^+$. This contradicts the fact that $N^+$ comes from a strongly convex cone.
\end{proof}
We are now in a position to prove the toric $R$-order condition.
\begin{lemma} \label{toricorder}
A non-commutative toric algebra $B = \C[\underline{M}^+]$ is a toric $R$-order, where $R:=\C [M_o^+]$.
\end{lemma}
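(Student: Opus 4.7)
The plan is to produce an explicit embedding $\Phi : B \hookrightarrow \operatorname{Mat}_n(T)$ (with $n := |Q_0|$ and $T = \C[M_o]$) that realises all the requirements of the definition. By Lemma~\ref{cent} we already have $R \cong Z(B)$, giving $B$ its $R$-algebra structure, so the remaining pieces are the grading, the graded embedding, finite generation, and the localisation isomorphism.

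The preliminary step is the construction of a cocycle. The image $L := \del(M) \subset \Z_{Q_0}$ is a subgroup of a free abelian group, hence itself free, so the short exact sequence $0 \to M_o \to M \to L \to 0$ (with surjection $\del$) splits. Fix a group-theoretic section $\sigma : L \to M$ and set $m_{ij} := \sigma(e_j - e_i) \in M_{ij}$; additivity of $\sigma$ yields the cocycle identity $m_{ij} + m_{jk} = m_{ik}$. Define
\[
\Phi : B \lra{} \operatorname{Mat}_n(T), \qquad x^m \in \C[M^+_{ij}] \longmapsto E_{ij}\, x^{m - m_{ij}},
\]
where $m - m_{ij} \in M_o$ places the image in $T$. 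The cocycle makes $\Phi$ a ring homomorphism (on mismatched indices both products vanish), and $\Phi$ is injective since distinct monomials go to distinct monomials. Pulling the natural $M_o$-grading on $\operatorname{Mat}_n(T)$ (for which $E_{ij}\,x^d$ has degree $d$) back along $\Phi$ equips $B$ with an $M_o$-grading that makes $\Phi$ a graded embedding of $R$-algebras.

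For the isomorphism $B \otimes_R T \cong \operatorname{Mat}_n(T)$, note that $T$ is the localisation of $R$ at the multiplicative set $\{x^\zeta : \zeta \in M_o^+\}$, so $\C[M^+_{ij}] \otimes_R T$ is the corresponding localisation of $\C[M^+_{ij}]$. For any $m \in M_{ij}$, Lemma~\ref{posshift} supplies some $\zeta \in M_o^+$ with $m + \zeta \in M^+$, and automatically $m + \zeta \in M^+_{ij}$; hence $x^{m - m_{ij}} = x^{(m + \zeta) - m_{ij}}\cdot x^{-\zeta}$ lies in the localised image, yielding surjectivity on each $(i,j)$ entry. Injectivity is automatic since $\C[M^+_{ij}]$ embeds into the integral domain $T$ and is therefore torsion-free over $R$.

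The remaining hurdle -- and in my view the one most likely to require care -- is finite generation of $B$ as an $R$-module, which reduces to showing each $\C[M^+_{ij}]$ is a finitely generated $R$-module, or equivalently that $M^+_{ij}$ is covered by finitely many translates of $M_o^+$. I would appeal to the standard polyhedral fact that the lattice points of a rational polyhedron form a finitely generated module over the lattice points of its recession cone. Here the relevant polyhedron is $P := M^+_\R \cap \del_\R^{-1}(e_j - e_i)$, whose recession cone is $M^+_\R \cap (M_o)_\R = (M_o^+)_\R$; its lattice points are precisely $M^+_{ij}$ and $M_o^+$, so the required finite generation of $\C[M^+_{ij}]$ over $R$ follows upon taking $\C$-spans.
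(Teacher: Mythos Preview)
Your argument is correct and follows essentially the same route as the paper: both split the short exact sequence $0 \to M_o \to M \to \del(M) \to 0$ to produce the graded embedding into $\operatorname{Mat}_n(T)$ (you phrase this via a section $\sigma$ and the cocycle $m_{ij}=\sigma(e_j-e_i)$, the paper via the induced projection $\prii:M\to M_o$, but $\prii(m)=m-m_{ij}$ for $m\in M_{ij}$, so these are the same construction). The only noteworthy deviations are that you and the paper swap the roles of Lemma~\ref{posshift} and the ``easy'' direction in verifying that $\Psi_{ij}$ is bijective, and that you supply the finite-generation argument via the recession-cone description of $M_{ij}^+$, a point the paper's proof passes over in silence.
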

\begin{proof}
We show that there exists an embedding
$$ \C[\underline{M}^+] \hookrightarrow \operatorname{Mat}_{Q_0}(\C [M_o]) $$
such that 
\begin{equation*} \label{localised} \Psi : \C[\underline{M}^+] \otimes_R \C [M_o] \lra{} \operatorname{Mat}_{Q_0}(\C [M_o]) \end{equation*}
is an isomorphism, where $\operatorname{Mat}_{Q_0}(\C [M_o])$ denotes the space of $|Q_0| \times |Q_0|$ matrices over $\C [M_o]$.

First recall that $M_o$ is the kernel of the lattice map $\del : M \lra{} \Z_{Q_0}$. Let $\del(M)$ denote the image of this map, so we have a short exact sequence,
\[ \begin{CD}
0 @>>> M_o @>>> M @>>> \del(M) @>>> 0 \\
\end{CD} \]
Since $\del(M)$ is a sublattice of free lattice $\Z_{Q_0}$, it is also free. Therefore $M \twoheadrightarrow \del(M) $ has a section, and the short exact sequence splits, yielding a projection $\prii: M \lra{} M_o$. Thus $\C[\underline{M}^+]$ becomes an $M_o$-graded $R$-algebra and there is a (graded) $R$-algebra map given by
$$ \C[\underline{M}^+] \hookrightarrow \operatorname{Mat}_{Q_0}(\C [M_o]), \qquad z^m \mapsto (z^{\prii(m)})_{ij} \quad \forall m \in M_{ij}^+ $$
as required. 
We now show that the corresponding map $\Psi$ is an isomorphism.
Using the $\Z_{Q_0}$ grading, which is respected by $\Psi$, it is sufficient to prove that for each $i,j \in Q_0$, the restriction
\begin{equation*} 
\Psi_{ij} : \C[M_{ij}^+] \otimes_R \C [M_o] \lra{} \C [M_o] , \qquad z^m \otimes z^{m^{\prime}} \mapsto z^{\prii(m)+m^{\prime}} 
\end{equation*}
is an isomorphism. First we note that $\Psi_{ij}$ is surjective:
Let $m_0 \in M_o$ be any element. By definition $M_{ij}^+$ is non-empty so let $\wh{m} \in M_{ij}^+$ be an element. Then we see that $z^{\wh{m}} \otimes z^{m_0 - \prii(\wh{m})}$ maps to $z^{m_0}$. 
To prove injectivity, suppose $z^{\prii(m)+m_0} = z^{\prii(\wt{m})+\wt{m_0}}$. We note that this implies that $\wt{m}-m = m_0 - \wt{m_0}$ in $M_o$. Using Lemma~\ref{posshift}, let $ \zeta \in M_o^+$ be an element such that $\wt{m_0} - m_0 + \zeta \in M_o^+$.
Then:
$$z^m \otimes z^{m_0} = z^{\wt{m}} z^{\wt{m_0} - m_0 + \zeta} \otimes z^{m_0-\zeta} = z^{\wt{m}} \otimes z^{\wt{m_0} - m_0 + \zeta} z^{m_0-\zeta} = z^{\wt{m}} \otimes z^{\wt{m_0}} $$
\end{proof}

\section{Algebraic consistency for dimer models} \label{aconssec}
We observe that there is a non-commutative toric algebra naturally associated to every non-degenerate dimer model. 
Given any dimer model, we saw in Section~\ref{quivalgsec} that there is a corresponding quiver. This quiver is strongly connected. This follows from the fact that the quiver is a tiling and given any unoriented path we can construct an oriented one by replacing any arrow $a$ which occurs with the wrong orientation by the path from $ha$ to $ta$ around the boundary of one of the faces containing $a$. Let $L := \del(\const{1}^\perp)$ be the sublattice of $\Z_{Q_1}$ where $\const{1}^{\perp} := \{ u \in \Z_{Q_2} \mid \langle u , \const{1} \rangle = 0 \}$, as defined in Section~\ref{conseqgcsec}. We recall that this is contained in the kernel of the boundary map, and that the quotient $M=\Z_{Q_1}/L$ is torsion free. Then using Section \ref{strongconquiv} we can associate to this a non-commutative toric algebra $B := \C[\underline{M}^+]$. 
\begin{remark}
We observe that the notation has been set up in a consistent way, i.e. the objects $N,Q_0, d, N^+$ and also $M, M^+, M_o$ corresponding to this toric algebra are the objects of the same name, seen in Sections \ref{symmetries} and \ref{conseqgcsec} respectively.
\end{remark}

We are now in a position to introduce a final consistency condition which we shall call `algebraic consistency'. We saw in Section~\ref{conseqgcsec} that every path $p$ from $i$ to $j$ in $Q$ defines a class $\Meq{p} \in \Mg$ which has boundary $j-i$. Since this was obtained by summing the arrows in the path it is clear that this class actually lies in $\Mg^{ef}_{ij} \subset \Mg^+_{ij}$. Furthermore we saw that if two paths are F-term equivalent, then they define the same class.

The F-term equivalence classes of paths in the quiver form a natural basis for the superpotential algebra $A$. By definition the classes in $\Mg^+_{ij}$ for all $i,j \in Q_0$ form a basis for the algebra $B = \C[\underline{M}^+]$. Therefore, since it is defined on a basis, there is a well defined $\C$-linear map from $A$ to $B$. It follows from the definition that if paths $p$ and $q$ are composable then $\Meq{pq}= \Meq{p} + \Meq{q}$ and thus the map respects the multiplication, i.e. we have defined an algebra morphism:
\begin{equation}\label{algmap}
\mathpzc{h}: A \longrightarrow \C [ \underline{M}^+]; \qquad p \mapsto x^{\Meq{p}}
\end{equation}

\begin{definition}\label{acons} A dimer model is called algebraically consistent if the morphism (\ref{algmap}) is an
isomorphism.
\end{definition}
We note that the map is an isomorphism if and only if restricts to an isomorphism on each $i,j$th piece, i.e. $e_i A e_j \lra{\cong} \C[M_{ij}^+]$ for each $i,j \in Q_0$.
\begin{remark}\label{hinject}
Theorem~\ref{Uniqueness} says that for a geometrically consistent dimer model two paths are F-term equivalent if and only if they have the same class in $M$. This is equivalent to saying that for geometrically consistent dimer models the map $\mathpzc{h}$ is injective. 
\end{remark}
\section{Example}
We will construct a large class of examples of dimer models which are algebraically consistent in Chapter 6. For now we look at an example which is not algebraically consistent to see how things can go wrong.
\begin{center}
\newrgbcolor{cccccc}{0.8 0.8 0.8}
\newrgbcolor{qqfftt}{0 0 0}
\psset{xunit=0.8cm,yunit=0.8cm}
\begin{pspicture*}(-1,-2)(14,5)
\psset{xunit=0.8cm,yunit=0.8cm,algebraic=true,dotstyle=*,dotsize=3pt 0,linewidth=1.0pt,arrowsize=3pt 2,arrowinset=0.25}
\psline[ArrowInside=->,linecolor=cccccc](2,1)(0,-1)
\psline[ArrowInside=->,linecolor=cccccc](0,-1)(0,4)
\psline[ArrowInside=->,linecolor=cccccc](0,4)(2,1)
\psline[ArrowInside=->,linecolor=cccccc](2,1)(3,2)
\psline[ArrowInside=->,linecolor=cccccc](3,2)(0,4)
\psline[ArrowInside=->,linecolor=cccccc](0,4)(5,4)
\psline[ArrowInside=->,linecolor=cccccc](5,4)(3,2)
\psline[ArrowInside=->,linecolor=cccccc](3,2)(5,-1)
\psline[ArrowInside=->,linecolor=cccccc](5,-1)(5,4)
\psline[ArrowInside=->,linecolor=cccccc](5,-1)(2,1)
\psline[ArrowInside=->,linecolor=cccccc](0,-1)(5,-1)
\psline[linecolor=cccccc](9,1)(10,0)
\psline[linecolor=cccccc](10,0)(11,1)
\psline[linecolor=cccccc](11,1)(10,2)
\psline[linecolor=cccccc](10,2)(9,1)
\psline[linecolor=cccccc](10,2)(11,3)
\psline[linecolor=cccccc](11,3)(12,2)
\psline[linecolor=cccccc](12,2)(11,1)
\psline[linecolor=cccccc](10,0)(10.5,-1)
\psline[linecolor=cccccc](10.5,4)(11,3)
\psline[linecolor=cccccc](9,1)(8,1.5)
\psline[linecolor=cccccc](12,2)(13,1.5)
\psline[ArrowInside=->,linecolor=qqfftt](10,1)(8,-1)
\psline[ArrowInside=->,linecolor=qqfftt](8,-1)(8,4)
\psline[ArrowInside=->,linecolor=qqfftt](8,4)(10,1)
\psline(10,1)(11,2)
\psline[ArrowInside=->,linecolor=qqfftt](11,2)(8,4)
\psline[ArrowInside=->,linecolor=qqfftt](8,4)(13,4)
\psline[ArrowInside=->,linecolor=qqfftt](13,4)(11,2)
\psline[ArrowInside=->,linecolor=qqfftt](11,2)(13,-1)
\psline[ArrowInside=->,linecolor=qqfftt](13,-1)(13,4)
\psline[ArrowInside=->,linecolor=qqfftt](13,-1)(10,1)
\psline[ArrowInside=->,linecolor=qqfftt](8,-1)(13,-1)
\psline(1,1)(0,1.5)
\psline(1,1)(2,0)
\psline(2,0)(2.5,-1)
\psline(2,0)(3,1)
\psline(3,1)(2,2)
\psline(2,2)(1,1)
\psline(2,2)(3,3)
\psline(3,3)(4,2)
\psline(4,2)(3,1)
\psline(4,2)(5,1.5)
\psline(3,3)(2.5,4)
\rput[tl](13.14,1.68){$y$}
\rput[tl](7.72,1.68){$y$}
\rput[tl](10.44,4.44){$x$}
\rput[tl](10.46,-1.18){$x$}
\psdots[dotsize=5pt 0,dotstyle=o](1,1)
\psdots[dotsize=5pt 0,linecolor=black](2,0)
\psdots[dotsize=5pt 0,linecolor=black](2,2)
\psdots[dotsize=5pt 0,dotstyle=o](3,1)
\psdots[dotsize=5pt 0,dotstyle=o](3,3)
\psdots[dotsize=5pt 0,linecolor=black](4,2)
\psdots[linecolor=cccccc](0,4)
\psdots[linecolor=cccccc](0,-1)
\psdots[linecolor=cccccc](5,-1)
\psdots[linecolor=cccccc](5,4)
\psdots(8,4)
\psdots(8,-1)
\psdots[dotsize=7pt 0,dotstyle=o,linecolor=cccccc](9,1)
\psdots[dotsize=7pt 0,linecolor=cccccc](10,0)
\psdots[dotsize=7pt 0,linecolor=cccccc](10,2)
\psdots[dotsize=7pt 0,dotstyle=o,linecolor=cccccc](11,1)
\psdots[dotsize=7pt 0,dotstyle=o,linecolor=cccccc](11,3)
\psdots[dotsize=7pt 0,linecolor=cccccc](12,2)
\psdots(13,4)
\psdots(13,-1)
\psdots[linecolor=cccccc](2,1)
\psdots[linecolor=cccccc](3,2)
\psdots[linecolor=black](10,1)
\psdots[linecolor=black](11,2)
\end{pspicture*} 
\end{center}
In this example the map $\mathpzc{h}$ is neither injective nor surjective. To see the lack of injectivity we consider the two paths $xy$ and $yx$ around the boundary of the fundamental domain in the diagram above. Neither path can be altered using F-term relations and so they are distinct elements in $A$. However, considered as sum of arrows in $\Z_{Q_1}$, they are the same element, so they have the same class in $M$. 

To see the lack of surjectivity we consider the path $x$ which is a loop at a vertex. Therefore this path maps to an element $m \in M_o^+$. If the map $\mathpzc{h}$ is surjective, then there must also be a path in $e_1Ae_1$ and $e_2Ae_2$ which maps to $m$, where $e_1, e_2$ are the idempotents corresponding to the other two vertices. However, no such paths exist.


\chapter{Geometric consistency implies algebraic consistency}
In this chapter we prove the following main result.
\begin{theorem} \label{GCACTHM}
If a dimer model on a torus is geometrically consistent, then it is algebraically consistent.
\end{theorem}
Our proof will rely on the explicit description of extremal perfect matchings we gave in Chapter~\ref{zzfpmchap}, together with the knowledge of their values on the boundary flows of certain zig-zag paths. The most intricate section of the argument will be the proof of the following proposition which we give in Section~\ref{Proofoflem}. 
\begin{proposition}\label{EXTREM}
If a dimer model on a torus is geometrically consistent, then for all vertices $i,j \in \Qcov_0$ in the universal cover there exists a path from $i$ to $j$ on which some extremal perfect matching evaluates to zero.
\end{proposition}
Thus we see that the extremal perfect matchings play a key role in the theory. We then see in Section~\ref{prop2thm} that this implies the theorem. We start with a few technical lemmas.

\section{Flows which pass between two vertices}
We fix two vertices $i, j$ of the universal cover of the quiver $\Qcov$. In this section we study the zig-zag flows which pass between $i$ and $j$, i.e. they have $i$ and $j$ on opposite sides. These are important as they are the flows that intersect every path from $i$ to $j$. We will see that the classes of these flows generate two convex cones in $H_1(Q) \otimes_\Z \R$. These will be used in the proof of Proposition~\ref{EXTREM} in the next section.

We define the two sets of zig-zag flows which pass between $i$ and $j$ as follows:
$$ \ZC_+:= \{ \zzf \mid \zzf \text{ has } i\text{ on the left and }j \text{ on the right}\}  $$
$$ \ZC_-:= \{ \zzf \mid \zzf \text{ has } i\text{ on the right and }j \text{ on the left}\}  $$
First we show that there always exist flows which pass between distinct vertices:
\begin{lemma} \label{nonemptyC}
If $i,j \in \Qcov_0$ are distinct vertices then $\ZC_+$ and  $\ZC_-$ are non-empty.
\end{lemma}
\begin{proof}
We consider the collection of black faces which have $i$ as a vertex. We are interested in zig-zag flows which intersect the boundary of one or more of these faces and have $i$ on the left. Any zig-zag pair in the boundary of one of these faces defines such a flow if and only if $i$ is not the head of the zig. Let $p$ be any path from $j$ to a vertex $v_0$ contained in one of these flows, say $\zzf$. We may assume that $v_0$ is the first such vertex in $p$, so $p$ doesn't intersect any of the flows. Since $\zzf$ intersects the boundary of one of the black faces, there is a path $p^\prime$ along $\zzf$ from $v_0$ to $v_1$, a vertex of one of the black faces. We may also assume that this is the first such vertex along $p^\prime$, so $p^\prime$ does not intersect the boundary of any of the black faces in an arrow; this implies that $v_1 \neq i$. We note that $v_1$ is the head of a zig $\zzf_0^\prime$ 
of a zig-zag pair in the boundary of one of the black faces. Since $v_1 \neq i$ this defines a zig-zag flow $\zzf^\prime$ which has $i$ on the left.

\begin{center}
\psset{xunit=0.8cm,yunit=0.8cm,algebraic=true,dotstyle=*,dotsize=3pt 0,linewidth=0.8pt,arrowsize=3pt 2,arrowinset=0.25}
\begin{pspicture*}(-4,-3)(7,6)
\psline[ArrowInside=->](-0.44,2.08)(0.26,2.78)
\psline[ArrowInside=->](0.26,2.78)(-0.2,3.68)
\psline[ArrowInside=->](-0.2,3.68)(-0.96,3.66)
\psline[ArrowInside=->](-0.96,3.66)(-1.3,2.92)
\psline[ArrowInside=->](-1.3,2.92)(-0.44,2.08)
\psline[ArrowInside=->](-0.44,2.08)(-1.12,1.48)
\psline[ArrowInside=->](-1.12,1.48)(-1.2,0.58)
\psline[ArrowInside=->](-1.2,0.58)(-0.4,0.04)
\psline[ArrowInside=->](-0.4,0.04)(0.28,0.58)
\psline[ArrowInside=->](0.28,0.58)(0.3,1.5)
\psline[ArrowInside=->](0.3,1.5)(-0.44,2.08)
\psline(0.56,-2.66)(0.4,-2.3)
\psline(0.4,-2.3)(0.48,-1.84)
\psline(0.48,-1.84)(0.4,-1.5)
\psline[ArrowInside=->](0.4,-1.5)(0.38,-1.18)
\psline(0.38,-1.18)(0.34,-0.78)
\psline(0.34,-0.78)(0.42,-0.48)
\psline(0.42,-0.48)(0.32,-0.16)
\psline(0.32,-0.16)(0.28,0.58)
\psline(-0.2,3.68)(0,4)
\psline(0,4)(-0.08,4.54)
\psline[ArrowInside=->](-0.08,4.54)(0,5)
\psline(0,5)(-0.14,5.48)
\psline(-2.32,-2.06)(-1.98,-1.6)
\psline(-1.98,-1.6)(-1.58,-1.36)
\psline(-1.58,-1.36)(-1.32,-1)
\psline[ArrowInside=->](-1.32,-1)(-1.06,-0.78)
\psline(-1.06,-0.78)(-0.6,-0.42)
\psline(-0.6,-0.42)(-0.4,0.04)
\psline(0.3,1.5)(1,2)
\psline(1,2)(1.32,2.52)
\psline(1.32,2.52)(2.14,2.68)
\psline[ArrowInside=->](2.14,2.68)(2.7,3.08)
\psline(2.7,3.08)(3.42,3.14)
\psline(3.42,3.14)(4.02,3.72)
\psline(4.02,3.72)(4.86,3.92)
\psline(4.86,3.92)(5.92,4.34)
\psline(-3.72,0.46)(-3.2,0.48)
\psline[ArrowInside=->](-3.2,0.48)(-2.58,0.4)
\psline(-2.58,0.4)(-1.92,0.54)
\psline(-1.92,0.54)(-1.2,0.58)
\psline(0.28,0.58)(1.06,0.32)
\psline(1.06,0.32)(2.08,0.36)
\psline(2.08,0.36)(2.78,-0.04)
\psline[ArrowInside=->](2.78,-0.04)(3.64,-0.2)
\psline(3.64,-0.2)(4.64,-0.52)
\psline(4.64,-0.52)(5.64,-0.34)
\psline(5.64,-0.34)(6.5,-0.74)
\psline[linestyle=dashed,dash=2pt 2pt](5.22,1.7)(3.42,3.14)
\rput[tl](5.96,4.36){$\zzf$}
\rput[tl](-2.58,-2.1){$\zzf$}
\rput[tl](3.12,3.56){$v_0$}
\rput[tl](-0.82,2.22){$i$}
\rput[tl](0.48,1.46){$v_1$}
\rput[tl](5.32,1.64){$j$}
\rput[tl](0.56,-1.78){$\zzf^\prime$}
\rput[tl](-0.48,5.5){$\zzf^\prime$}
\rput[tl](3.8,2.42){$p$}
\psdots[dotsize=4pt 0](-0.44,2.08)
\psdots[dotsize=2pt 0](0.26,2.78)
\psdots[dotsize=2pt 0](-0.2,3.68)
\psdots[dotsize=2pt 0](-0.96,3.66)
\psdots[dotsize=2pt 0](-1.3,2.92)
\psdots[dotsize=2pt 0](-1.12,1.48)
\psdots[dotsize=2pt 0](-1.2,0.58)
\psdots[dotsize=2pt 0](-0.4,0.04)
\psdots[dotsize=2pt 0](0.28,0.58)
\psdots(0.3,1.5)
\psdots(3.42,3.14)
\psdots(5.22,1.7)
\end{pspicture*}

\end{center}

By construction, the flows $\zzf^\prime$ and $\zzf$ intersect in either $\zzf_0^\prime$ or $\zzf_1^\prime$. Since this intersection is unique, $\zzf^\prime$ doesn't intersect $p^\prime$ in an arrow. Since $\zzf^\prime$ doesn't intersect $p$, the path $pp^\prime$ from $j$ to the head of a zig of $\zzf^\prime$ doesn't intersect $\zzf^\prime$. Therefore $j$ is on the right of $\zzf^\prime$.
\end{proof}

\begin{lemma}\label{pmcoincide}
If $\zzf$ and $\zzf^\prime$ are parallel zig-zag flows which pass between $i$ and $j$ then they are both in $\ZC_+$ or both in $\ZC_-$.
If $\zzf$ and $\zzf^\prime$ are anti-parallel zig-zag flows which pass between $i$ and $j$ then one is in $\ZC_+$ and the other is in $\ZC_-$.
\end{lemma}
\begin{proof}
We prove the parallel case, the anti-parallel case follows using a similar argument. Without loss of generality, suppose that $\zzf \in \ZC_+$. Then there is a path from $i$ to a vertex $v$ of $\zzf$ which does not intersect $\zzf$. In particular all vertices of this path are also on the left of $\zzf$. If $\zzf^\prime$ is on the right of $\zzf$ then this path does not intersect $\zzf^\prime$, and so $i$ is on the left of $\zzf^\prime$. 
\begin{center}
\psset{xunit=0.7cm,yunit=0.7cm,algebraic=true,dotstyle=*,dotsize=3pt 0,linewidth=0.8pt,arrowsize=3pt 2,arrowinset=0.25}
\begin{pspicture*}(-3.8,0.3)(2,6)
\psline[linestyle=dotted](-1,5.5)(-1,5)
\psline[linestyle=dotted](1,5.5)(1,5)
\psline[linestyle=dotted](1,1)(1,0.5)
\psline[linestyle=dotted](-1,1)(-1,0.5)
\psline[ArrowInside=->](-1,1)(-1,5)
\psline[ArrowInside=->](1,1)(1,5)
\psline[linestyle=dashed,dash=1pt 1pt](-3.08,2.98)(-1,3.68)
\rput[tl](-1.4,5.86){$\zzf$}
\rput[tl](-0.8,4.1){$v$}
\rput[tl](-3.32,3.38){$i$}
\rput[tl](1.16,5.94){$\zzf^\prime$}
\psdots[dotsize=2pt 0](-3.08,2.98)
\psdots[dotsize=2pt 0](-1,3.68)
\end{pspicture*}
\end{center}
Since we assume that $\zzf^\prime$ passes between $i$ and $j$, we conclude that $\zzf^\prime \in \ZC_+$. If $\zzf^\prime$ is on the left of $\zzf$ then a symmetric argument works considering $j$ instead of $i$.
\end{proof}
Consider two zig-zag flows $\zzf, \zzf^\prime$ which are in $\ZC_+$. 
We see by Lemma~\ref{pmcoincide} that they are not anti-parallel, and therefore their homology classes $[\zzf], [\zzf^\prime]$ generate a strongly convex cone in $H_1(Q) \otimes_\Z \R$.  We now prove that no zig-zag flow in $\ZC_-$ has a homology class which lies in this cone.

\begin{lemma}\label{pmintersect}
Let $\zzf, \zzf^\prime \in \ZC_+$  and denote the rays in $H_1(Q) \otimes_\Z \R$ generated by their respective homology classes by $\gamma$ and $\gamma^\prime$. Suppose that $\zzf^{\prime\prime}$ is a zig-zag path which passes between $i$ and $j$. If $[\zzf^{\prime\prime}] \in H_{1}(Q)$ is in the cone spanned by $\gamma$ and $\gamma^\prime$, then $\zzf^{\prime\prime} \in \ZC_+$.
\end{lemma}
\begin{proof}

We first note that if
$[\zzf^{\prime\prime}]$ equals $[\zzf]$ or $[\zzf^\prime]$ then
the result follows from Lemma~\ref{pmcoincide}.

Otherwise suppose that $\gamma$, $\gamma^\prime$ and $\gamma^{\prime\prime}$ are all distinct. Without loss of generality we assume that $\zzf^\prime$ crosses $\zzf$ from right to left. Then since $[\zzf^{\prime\prime}]$ is in the cone spanned by $\gamma$ and $\gamma^\prime$, we note that $\zzf^{\prime\prime}$ crosses $\zzf$ from right to left, and $\zzf^\prime$ from left to right. Therefore $\zzf^{\prime\prime}$ intersects $\zzf$ in a zig $\zzf^{\prime\prime}_k$ and intersects $\zzf^\prime$ in a zag $\zzf^{\prime\prime}_n$. Suppose $k<n$; the result for $k>n$ follows by a symmetric argument. 

\begin{center}
\newrgbcolor{ccqqqq}{0.8 0 0}
\psset{xunit=0.8cm,yunit=0.8cm,algebraic=true,dotstyle=*,dotsize=3pt 0,linewidth=0.8pt,arrowsize=3pt 2,arrowinset=0.25}
\begin{pspicture*}(-3,-3.5)(7,5.5)
\psline[ArrowInside=->](-0.76,-1.04)(-0.28,-0.56)
\psline[ArrowInside=->](-0.34,2.8)(-0.74,3.22)
\psline(-0.28,-0.56)(-0.5,-0.04)
\psline(-0.5,-0.04)(-0.34,0.52)
\psline(-0.34,0.52)(-0.48,0.96)
\psline(-0.48,0.96)(-0.28,1.46)
\psline(-0.28,1.46)(-0.4,1.9)
\psline(-0.4,1.9)(-0.34,2.8)
\psline(-0.58,-3.34)(-0.38,-2.8)
\psline(-0.38,-2.8)(-0.58,-2.34)
\psline(-0.58,-2.34)(-0.54,-1.68)
\psline(-0.54,-1.68)(-0.76,-1.04)
\psline(-2.88,-1.84)(-2.38,-1.84)
\psline(-2.38,-1.84)(-2.04,-1.54)
\psline(-2.04,-1.54)(-1.54,-1.38)
\psline(-1.54,-1.38)(-1.28,-1.04)
\psline(-1.28,-1.04)(-0.76,-1.04)
\psline(-0.28,-0.56)(0.38,-0.64)
\psline(0.38,-0.64)(0.82,-0.14)
\psline(0.82,-0.14)(1.4,-0.12)
\psline(1.4,-0.12)(1.92,0.28)
\psline(1.92,0.28)(2.6,0.4)
\psline[ArrowInside=->](2.6,0.4)(2.62,1.2)
\psline(2.62,1.2)(3.38,1.24)
\psline(3.38,1.24)(3.9,1.62)
\psline(3.9,1.62)(4.6,1.9)
\psline[ArrowInside=->](4.6,1.9)(5.34,1.9)
\psline(5.34,1.9)(6.02,2.06)
\psline(6.02,2.06)(6.48,2.62)
\psline(4.68,-2)(4.22,-1.68)
\psline(4.22,-1.68)(4.08,-1.22)
\psline(4.08,-1.22)(3.66,-0.96)
\psline(3.66,-0.96)(3.52,-0.44)
\psline(3.52,-0.44)(3.1,-0.16)
\psline(3.1,-0.16)(2.96,0.24)
\psline(2.96,0.24)(2.6,0.4)
\psline(2.62,1.2)(2.04,1.46)
\psline(2.04,1.46)(1.7,2.06)
\psline(1.7,2.06)(1.06,2.26)
\psline(1.06,2.26)(0.64,2.66)
\psline(0.64,2.66)(-0.34,2.8)
\psline(-0.74,3.22)(-0.52,3.82)
\psline(-0.52,3.82)(-0.82,4.36)
\psline(-0.82,4.36)(-0.58,4.92)
\psline(-0.74,3.22)(-1.24,3.32)
\psline(-1.24,3.32)(-1.6,3.76)
\psline(-1.6,3.76)(-2.14,3.96)
\psline[ArrowInside=->,linestyle=dashed,dash=2pt 2pt](6.16,0.06)(5.34,1.9)
\rput[tl](-0.52,5.34){$\zzf^{\prime \prime}$}
\rput[tl](-0.98,3.04){$\zzf^{\prime \prime}_n$}
\rput[tl](-0.44,-0.76){$\zzf^{\prime \prime}_k$}
\rput[tl](4.76,1.76){$b$}
\rput[tl](6.0,1.06){$p_1$}
\rput[tl](6.24,0.06){$j$}
\rput[tl](6.6,2.98){$\zzf$}
\rput[tl](-2.88,-1.34){$\zzf$}
\rput[tl](4.86,-1.72){$\zzf^\prime$}
\rput[tl](-2.34,3.8){$\zzf^\prime$}
\psdots[dotsize=2pt 0](-0.76,-1.04)
\psdots[dotsize=2pt 0](-0.28,-0.56)
\psdots[dotsize=2pt 0](-0.34,2.8)
\psdots[dotsize=2pt 0](-0.74,3.22)
\psdots[dotsize=2pt 0](2.6,0.4)
\psdots[dotsize=2pt 0](2.62,1.2)
\psdots[dotsize=2pt 0](4.6,1.9)
\psdots[dotsize=2pt 0](5.34,1.9)
\psdots[dotsize=2pt 0](6.16,0.06)
\end{pspicture*}
\end{center}

Take any (possibly unoriented) path $p_1$ from $j$ to the head of some zig $b$ of $\zzf$ which intersects neither $\zzf$ nor $\zzf^\prime$. Such a path exists: since $j$ is on the right of $\zzf$ there is a path from $j$ to the head of a zig which does not intersect $\zzf$. If this path intersects $\zzf^\prime$ we consider instead the path up to but not including the first arrow in the intersection, followed by part of the boundary flow of $\zzf^\prime$.

We observe that $p_1$ does not intersect $\zzf^{\prime\prime}$ since all the vertices along it are on the right of both $\zzf$ and $\zzf^{\prime}$, while all the vertices of $\zzf^{\prime\prime}$ are on the left of either $\zzf_1$ or $\zzf_2$ (using the fact that $k<n$, so $\zzf$ crosses $\zzf_1$ from right to left before it crosses $\zzf_2$ from left to right). The arrow $\zzf^{\prime\prime}_k$ occurs before $b$ in $\zzf$, since $h\zzf^{\prime\prime}_k$ 
is on the left and $hb$ is on the right of $\zzf^\prime$.

Therefore the path $p_2$ along $\zzf$ from $h\zzf^{\prime\prime}_k$ to $hb$ does not contain $\zzf^{\prime\prime}_k$ and so does not intersect $\zzf^{\prime\prime}$. Finally we see that the path $p_2p_1^{-1}$ is a path from the head of a zig of $\zzf^{\prime\prime}$ to $j$ which doesn't intersect $\zzf^{\prime\prime}$, so $j$ is on the right of $\zzf$. If $\zzf^{\prime\prime}$ passes between $i$ and $j$ then it also has $i$ on the left, and we are done.
\end{proof}

We now consider the cones generated by homology classes of flows in $\ZC_+$ and $\ZC_-$.
\begin{definition}
Let $C_+$ (resp. $C_-$) be the cone in $H_1(Q) \otimes_\Z \R$ generated by the homology classes $[\zzf]$ of zig-zag flows $\zzf \in \ZC_+$ (respectively $\zzf \in \ZC_-$).
\end{definition}

We note that $C_+$ and $C_-$ are non-empty by Lemma~\ref{nonemptyC}.
If any set of rays corresponding to zig-zag flows $\zzf \in \ZC_+$ is not contained in some half-plane, then every  point in $H_1(Q) \otimes_\Z \R$ is in the cone generated by a pair of these rays. However using Lemma~\ref{pmintersect}, this would then imply that $C_-$ is empty. Therefore $C_+$ is contained in a some  half-plane. Furthermore, by Lemma~\ref{pmcoincide} we see that $C_+$ can not contain both rays in the boundary of this half-plane, as they are anti-parallel. Therefore $C_+$ is a strongly convex cone. Similarly we see that $C_-$ is a strongly convex cone. Furthermore we note that the intersection $C_+ \cap C_- = \{0\}$. There exists a line $\ell$ separating $C_+$ and $C_-$ which must pass through the origin. It can be chosen such that the origin is its only point of intersection with $C_+$ or $C_-$, or in fact with any ray in the global zig-zag fan. 
We choose such an $\ell$, and let $\hp_+ $ and $\hp_- $ be the corresponding closed half spaces containing $C_+$ and $C_-$ respectively.

We consider the 2-dimensional cone $\sigma$ in the global zig-zag fan which intersects the line $\ell$, and lies between the clockwise boundary ray of $C_+$ and the anti-clockwise boundary ray of $C_-$. This cone $\sigma$ may intersect $C_+$ or $C_-$ in a ray, but is not contained in either. The diagram below shows the cones $C_+$ and $C_-$, and a possible choice of $\ell$ and $\sigma$.

\begin{equation*} \label{Cpicture}
\psset{xunit=1.0cm,yunit=1.0cm}
\begin{pspicture*}(3.5,-6.5)(13,-0.5)
\newrgbcolor{wwwwww}{0.75 0.75 0.75}
\psset{xunit=1.0cm,yunit=1.0cm,algebraic=true,dotstyle=*,dotsize=3pt 0,linewidth=0.8pt,arrowsize=3pt 2,arrowinset=0.25}
\pspolygon[linecolor=wwwwww,fillcolor=wwwwww,fillstyle=solid](5,-1)(8,-3)(11,-1)
\pspolygon[linecolor=wwwwww,fillcolor=wwwwww,fillstyle=solid](8,-3)(5,-5)(9,-6)
\pspolygon[linecolor=wwwwww,fillcolor=wwwwww,fillstyle=solid](12,-2)(8,-3)(12,-5)
\psline[linestyle=dashed,dash=1pt 1pt](4,-3)(12,-3)
\psline(5,-1)(8,-3)
\psline(8,-3)(11,-1)
\psline(8,-3)(5,-5)
\psline(9,-6)(8,-3)
\psline(12,-2)(8,-3)
\psline(8,-3)(12,-5)
\rput[tl](12.06,-2.9){$\ell$}
\rput[tl](10.52,-3.32){$\sigma$}
\rput[tl](7.88,-1.42){$C_+$}
\rput[tl](7.38,-4.16){$C_-$}
\psdots[dotsize=2pt 0](8,-3)
\end{pspicture*}  \end{equation*}
\\
We denote the anti-clockwise and clockwise rays of $\sigma$ by $\sigma^+$ and $\sigma^-$ respectively.
Using polar coordinates, any ray may be described by an angle in the range $(-\pi,\pi]$, where $\ell \cap \sigma$ has angle zero. 
Let $\alpha^+ \in (0,\pi)$ and $\alpha^- \in (-\pi,0)$ be the angles of $\sigma^+$ and $\sigma^-$ respectively.


\section{Proof of Proposition~\ref{EXTREM}} \label{Proofoflem}
\subsection{Overview of proof}
Fix two vertices $i, j$ of the universal cover of the quiver $\Qcov$. We will construct a path from $j$ to $i$ which is zero on the extremal perfect matching $P(\sigma)$ where $\sigma$ is the choice of cone in the global zig-zag fan  made at the end of the previous section. 
First we use an iterative procedure to construct a different path $p$, ending at $i$, 
from pieces of the boundary flows of a sequence zig-zag flows. These zig-zag flows all have vertices $i$ and $j$ on the left, and satisfy a minimality condition which will allow us to prove, using results from Chapter~\ref{zzfpmchap}, that $P(\sigma)$ evaluates to zero on $p$. 

Similarly we construct a path $q$ ending at $i$, from boundary flows of zig-zag flows which have $i$ and $j$ on the right. We will see that $P(\sigma)$ also evaluates to zero on $q$. The paths $p$ and $q$ intersect at some vertex $v \neq i$, and we are interested in the parts $\finitep$ and $\finiteq$ of $p$ and $q$ respectively which go from $v$ to $i$.
We prove that they are F-term equivalent and observe that either $\finitep$ or $\finiteq$ pass through $j$, or the closed path $\finitep(\finiteq)^{-1}$ has a non-zero winding number around $j$. In this case, we show that there exists an F-term equivalent path which passes through $j$. Since it is F-term equivalent, $P(\sigma)$ also evaluates to zero on this path. By taking the piece of this path from $j$ to $i$ we have the required path.
\vspace{0.5cm}
\begin{center}
\psset{xunit=0.8cm,yunit=0.8cm}
\begin{pspicture*}(3,-6)(12,1.5)
\psset{xunit=0.8cm,yunit=0.8cm,algebraic=true,dotstyle=*,dotsize=3pt 0,linewidth=0.8pt,arrowsize=3pt 2,arrowinset=0.25}
\psplot{3}{12}{(-5--1*x)/0.5}
\psplot{3}{12}{(-2.85--0.48*x)/0.98}
\psline(4,-6)(4,1.5)
\psplot{3}{12}{(-0.96-0.98*x)/1.22}
\psplot{3}{12}{(-9.81--0.64*x)/1.3}
\psplot{3}{12}{(--18.18-2.7*x)/3.52}
\psline[linestyle=dashed,dash=2pt 2pt](4,-2)(4.36,-1.72)
\psline[linestyle=dashed,dash=2pt 2pt](4.36,-1.72)(4.6,-1.44)
\psline[linestyle=dashed,dash=2pt 2pt](4.6,-1.44)(4.74,-1.22)
\psline[linestyle=dashed,dash=2pt 2pt](4.74,-1.22)(4.96,-0.86)
\psline[linestyle=dashed,dash=2pt 2pt](4.96,-0.86)(5.22,-0.74)
\psline[linestyle=dashed,dash=2pt 2pt](5.22,-0.74)(5.5,-0.7)
\psline[linestyle=dashed,dash=2pt 2pt](5.5,-0.7)(5.8,-0.52)
\psline[linestyle=dashed,dash=2pt 2pt](5.8,-0.52)(6.1,-0.42)
\psline[linestyle=dashed,dash=2pt 2pt](6.1,-0.42)(6.36,-0.2)
\psline[linestyle=dashed,dash=2pt 2pt](6.36,-0.2)(6.72,-0.52)
\psline[linestyle=dashed,dash=2pt 2pt](6.72,-0.52)(6.92,-0.78)
\psline[linestyle=dashed,dash=2pt 2pt](6.92,-0.78)(7.18,-0.94)
\psline[linestyle=dashed,dash=2pt 2pt](7.18,-0.94)(7.44,-1.1)
\psline[linestyle=dashed,dash=2pt 2pt](7.44,-1.1)(7.64,-1.34)
\psline[linestyle=dashed,dash=2pt 2pt](7.64,-1.34)(7.88,-1.52)
\psline[linestyle=dashed,dash=2pt 2pt](7.88,-1.52)(8.3,-1.74)
\psline[linestyle=dashed,dash=2pt 2pt](8.3,-1.74)(8.5,-1.92)
\psline[linestyle=dashed,dash=2pt 2pt](8.5,-1.92)(8.78,-2.12)
\psline[linestyle=dashed,dash=2pt 2pt](8.78,-2.12)(8.98,-2.22)
\psline[linestyle=dashed,dash=2pt 2pt](8.98,-2.22)(9.28,-2.46)
\psline[linestyle=dashed,dash=2pt 2pt](9.28,-2.46)(9.02,-2.66)
\psline[linestyle=dashed,dash=2pt 2pt](9.02,-2.66)(8.62,-2.86)
\psline[linestyle=dashed,dash=2pt 2pt](8.62,-2.86)(8.22,-3.02)
\psline[linestyle=dashed,dash=2pt 2pt](8.22,-3.02)(7.92,-3.16)
\psline[linestyle=dashed,dash=2pt 2pt](7.92,-3.16)(7.44,-3.4)
\psline[linestyle=dashed,dash=2pt 2pt](7.44,-3.4)(7.06,-3.6)
\psline[ArrowInside=->,linestyle=dashed,dash=2pt 2pt](7.06,-3.6)(6.68,-3.82)
\psline[linestyle=dashed,dash=2pt 2pt](6.68,-3.82)(6.4,-3.94)
\psline[linestyle=dashed,dash=2pt 2pt](6.4,-3.94)(6.16,-4.1)
\psline[linestyle=dashed,dash=2pt 2pt](6.16,-4.1)(5.68,-4.34)
\psline[linestyle=dashed,dash=2pt 2pt](5.68,-4.34)(5.36,-4.4)
\psline[linestyle=dashed,dash=2pt 2pt](5.36,-4.4)(5.02,-4.32)
\psline[linestyle=dashed,dash=2pt 2pt](5.02,-4.32)(4.6,-4)
\psline[linestyle=dashed,dash=2pt 2pt](4.6,-4)(4.54,-3.56)
\psline[linestyle=dashed,dash=2pt 2pt](4.54,-3.56)(4.42,-3.2)
\psline[linestyle=dashed,dash=2pt 2pt](4.42,-3.2)(4.3,-2.8)
\psline[linestyle=dashed,dash=2pt 2pt](4.3,-2.8)(4.12,-2.38)
\psline[linestyle=dashed,dash=2pt 2pt](4.12,-2.38)(4,-2)
\rput[tl](4.2,-1.95){$i$}
\rput[tl](7.32,-2.08){$j$}
\psline[ArrowInside=->](4,-2)(4.7,-0.6)
\psline[ArrowInside=->](4.7,-0.6)(6.42,0.24)
\psline[ArrowInside=->](6.42,0.24)(10.1,-2.58)
\psline[ArrowInside=->](4,-2)(4,-4)
\psline[ArrowInside=->](4,-4)(5.22,-4.98)
\psline[ArrowInside=->](5.22,-4.98)(10.1,-2.58)
\psline[linestyle=dashed,dash=2pt 2pt](9.28,-2.46)(9.52,-2.68)
\psline[linestyle=dashed,dash=2pt 2pt](9.52,-2.68)(9.72,-2.96)
\psline[linestyle=dashed,dash=2pt 2pt](9.72,-2.96)(10.02,-3.16)
\psline[linestyle=dashed,dash=2pt 2pt](10.02,-3.16)(10.4,-3.48)
\psline[linestyle=dashed,dash=2pt 2pt](10.4,-3.48)(10.84,-3.72)
\psline[linestyle=dashed,dash=2pt 2pt](10.84,-3.72)(11.2,-4.04)
\psline[linestyle=dashed,dash=2pt 2pt](11.2,-4.04)(11.62,-4.32)
\psline[linestyle=dashed,dash=2pt 2pt](11.62,-4.32)(12.06,-4.66)
\psline[linestyle=dashed,dash=2pt 2pt](12.06,-4.66)(12.28,-4.9)
\psline[linestyle=dashed,dash=2pt 2pt](12.28,-4.9)(12.74,-5.16)
\psline[linestyle=dashed,dash=2pt 2pt](9.28,-2.46)(9.52,-2.34)
\psline[linestyle=dashed,dash=2pt 2pt](9.52,-2.34)(10,-2)
\psline[linestyle=dashed,dash=2pt 2pt](10,-2)(10.44,-1.9)
\psline[linestyle=dashed,dash=2pt 2pt](10.44,-1.9)(10.98,-1.64)
\psline[linestyle=dashed,dash=2pt 2pt](10.98,-1.64)(11.4,-1.44)
\psline[linestyle=dashed,dash=2pt 2pt](11.4,-1.44)(11.78,-1.1)
\psline[linestyle=dashed,dash=2pt 2pt](11.78,-1.1)(12.36,-1.02)
\psline[linestyle=dashed,dash=2pt 2pt](12.36,-1.02)(12.9,-0.72)
\psline[ArrowInside=->,linestyle=dashed,dash=2pt 2pt](7.18,-0.94)(6.92,-0.78)
\rput[tl](6.4,-3.24){$p$}
\rput[tl](6.58,-0.92){$q$}
\psdots(4,-2)
\rput[bl](6.44,2.6){$a$}
\rput[bl](0.8,-2.26){$b$}
\rput[bl](4.16,2.6){$c$}
\rput[bl](0.8,-1.9){$d$}
\rput[bl](0.8,-6.9){$e$}
\rput[bl](3.5,2.6){$f$}
\psdots(7.86,-2.3)
\end{pspicture*}

\end{center}
\vspace{0.5cm}

\subsection{The starting point} \label{startpt}
In this section we find the first zig-zag flow in our sequence. This must have the property that its black boundary flow passes through $i$. Therefore we start by considering the black faces which have $i$ as a vertex, and look at all the zig-zag flows which intersect the boundary of these faces.
\begin{lemma}\label{existface}
There exists a zig-zag flow which has the following properties:
\begin{compactenum}
\item It has vertices $i$ and $j$ on the left,
\item It intersects the boundary of a black face which has $i$ as a vertex,
\item The corresponding ray in $\Xi$ has angle, measured with respect to $\ell$, in the interval $(-\pi,0]$.
\end{compactenum}
\end{lemma}
\begin{proof}
Consider the set of black faces which have $i$ as a vertex, and let $\mathcal{Y}(i)$ be the set of zig-zag flows which intersect the boundary of one or more of these faces and have $i$ on the left. If $\zzf \in \mathcal{Y}(i)$ then it intersects the boundary of the black face $\face$ in a zig-zag pair $(\zzf_0,\zzf_1)$.
Without loss of generality we may assume that $t\zzf_0 \neq i$; otherwise we can consider instead the zig-zag pair $(\zzf_{-2},\zzf_{-1})$ which is in the boundary of a black face which patently has $i$ as a vertex, and $t\zzf_{-2} \neq i$ since in a geometrically consistent dimer model there are no quiver faces with just two arrows.

Arrow $\zzf_0= \zzf^\prime_1$ is the zag of another zig-zag flow $\zzf^\prime$ which has $i$ on its left. By Lemma~\ref{2cones} we see that the rays $\gamma$ and $\gamma^\prime$ corresponding to $\zzf$ and $\zzf^\prime$ respectively, span a convex cone in the local zig-zag fan $\xi(\face)$ with $\gamma^\prime$ the clockwise ray.
Thus for any ray in $\Xi$ corresponding to a flow in $\mathcal{Y}(i)$, there is a ray of such a flow at an angle less than $\pi$ in a clockwise direction.
In particular, 
there is a ray $\gamma$ corresponding to some zig-zag flow $\zzf \in \mathcal{Y}(i)$, whose angle is in the interval $(-\pi,0)$, so $\gamma$ lies in $\hp_-$. 
We note that $\zzf$ satisfies properties $2$ and $3$, and claim that it satisfies $1$ as well. If $\zzf$ had $j$ on the right then by definition, $\gamma$ would be in $C_+$, however the intersection of $C_+$ with $\hp_-$ is just the origin, which gives a contradiction.
\end{proof}

Finally in this section we define for any face $\face \in Q_2$
$$ \ZF{\face}:= \{ \zzf \mid \zzf \text{ intersects the boundary of } \face \text{ and has } i,j\text{ on left}\}  $$
By Lemma~\ref{existface} there exists a face $\face$, which has $i$ as a vertex, and where $ \ZF{\face}$ is non-empty.
We fix such a face, which we label $f^{(1)}$ and let $v^{(0)}:=i$.

\subsection{A sequence of faces}
We construct a sequence of black faces $\{f^{(n)}\}_{n=1}^\infty$ and vertices $\{v^{(n)}\}_{n=0}^\infty$ with the property that for each $n \in \N^+$, the vertex $v^{(n)}$ is contained in the boundary of $f^{(n)}$ and $f^{(n+1)}$
. We do this inductively as follows:\\
Suppose we have a black face $f^{(n)}$ with a known vertex $v^{(n-1)}$, for which $ \ZF{f^{(n)}}$ is non-empty. Looking at the zig-zag flows in $ \ZF{f^{(n)}}$, let $\zzf^{(n)}$ be the one with the maximum angle $\theta^{(n)}$ in the interval $(-\pi,\alpha^+]$. This zig-zag flow intersects the boundary of $f^{(n)}$ in a zig-zag pair $(\zzf^{(n)}_{2n},\zzf^{(n)}_{2n+1})$. We look at the next zig-zag pair $(\zzf^{(n)}_{2n+2},\zzf^{(n)}_{2n+3})$ which lies in the boundary of a black face which we denote by $f^{(n+1)}$. Note that the vertex $v^{(n)} := h\zzf^{(n)}_{2n+1}$ is contained in the boundaries of both $f^{(n)}$ and $f^{(n+1)}$. Furthermore, $\zzf^{(n)}$ has $i$ and $j$ on the left and intersects the boundary of $f^{(n+1)}$, so $ \ZF{f^{(n+1)}}$ is non-empty.

\begin{center}
\psset{xunit=0.9cm,yunit=0.9cm,algebraic=true,dotstyle=*,dotsize=3pt 0,linewidth=0.8pt,arrowsize=3pt 2,arrowinset=0.25}
\begin{pspicture*}(-3,-5)(8,2.5)
\psline[ArrowInside=->,linestyle=dashed,dash=1pt 1pt](1,1)(0,1)
\psline[ArrowInside=->,linestyle=dashed,dash=1pt 1pt](0,1)(-0.89,0.54)
\psline[ArrowInside=->,linestyle=dashed,dash=1pt 1pt](-0.89,0.54)(-1.45,-0.29)
\psline[ArrowInside=->](-1.45,-0.29)(-1.57,-1.28)
\psline[ArrowInside=->](-1.57,-1.28)(-1.22,-2.22)
\psline[ArrowInside=->](-1.22,-2.22)(-0.47,-2.88)
\psline[ArrowInside=->](-0.47,-2.88)(0.5,-3.12)
\psline[ArrowInside=->](0.5,-3.12)(1.47,-2.88)
\psline[ArrowInside=->](1.47,-2.88)(2.22,-2.22)
\psline[ArrowInside=->,linestyle=dashed,dash=1pt 1pt](2.22,-2.22)(2.57,-1.28)
\psline[ArrowInside=->,linestyle=dashed,dash=1pt 1pt](2.57,-1.28)(2.45,-0.29)
\psline[ArrowInside=->,linestyle=dashed,dash=1pt 1pt](2.45,-0.29)(1.89,0.54)
\psline[ArrowInside=->,linestyle=dashed,dash=1pt 1pt](1.89,0.54)(1,1)
\psline[linestyle=dotted](-2.06,1.54)(-2.04,0.52)
\psline[ArrowInside=->](-2.04,0.52)(-1.45,-0.29)
\psline[ArrowInside=->](-1.22,-2.22)(-1.46,-3.22)
\psline[linestyle=dotted](-1.46,-3.22)(-1.28,-4.08)
\psline[linestyle=dotted](-0.14,-3.88)(0.5,-3.12)
\psline[ArrowInside=->](2.22,-2.22)(3.2,-2.78)
\psline[ArrowInside=->](3.2,-2.78)(4.06,-2.8)
\psline[ArrowInside=->,linestyle=dashed,dash=1pt 1pt](4.06,-2.8)(4.54,-2.14)
\psline[ArrowInside=->,linestyle=dashed,dash=1pt 1pt](4.54,-2.14)(4,-1.38)
\psline[ArrowInside=->,linestyle=dashed,dash=1pt 1pt](4,-1.38)(3.28,-1.38)
\psline[ArrowInside=->,linestyle=dashed,dash=1pt 1pt](3.28,-1.38)(2.22,-2.22)
\rput[tl](-1.92,2.0){$\zzf^{(n-1)}$}
\rput[tl](-0.18,-3.9){$\zzf^{(n)}$}
\psline[ArrowInside=->](4.06,-2.8)(4.8,-3.3)
\psline[linestyle=dotted](4.8,-3.3)(5.68,-3.24)
\psline[ArrowInside=->](4.8,-3.3)(5.33,-3.26)
\psline[ArrowInside=->](0.12,-3.58)(0.5,-3.12)
\psline[ArrowInside=->](-2.05,1.16)(-2.04,0.52)
\psline[ArrowInside=->](-1.46,-3.22)(-1.34,-3.78)
\rput[tl](0.36,-0.68){$f^{(n)}$}
\rput[tl](3.24,-1.82){$f^{(n+1)}$}
\rput[tl](1.6,-1.84){$v^{(n)}$}
\rput[tl](-1.34,-0.16){$v^{(n-1)}$}
\rput[tl](5.52,-3.24){$\zzf^{(n)}$}
\rput[tl](-2.52,-0.4){$\zzf^{(n-1)}_{2n}$}
\rput[tl](-2.58,-1.46){$\zzf^{(n-1)}_{2n+1}$}
\rput[tl](0.82,-2.98){$\zzf^{(n)}_{2n}$}
\rput[tl](1.7,-2.54){$\zzf^{(n-1)}_{2n+1}$}
\psdots[dotsize=2pt 0](1,1)
\psdots[dotsize=2pt 0](0,1)
\psdots[dotsize=2pt 0](-0.89,0.54)
\psdots[dotsize=2pt 0](-1.45,-0.29)
\psdots[dotsize=2pt 0](-1.57,-1.28)
\psdots[dotsize=2pt 0](-1.22,-2.22)
\psdots[dotsize=2pt 0](-0.47,-2.88)
\psdots[dotsize=2pt 0](0.5,-3.12)
\psdots[dotsize=2pt 0](1.47,-2.88)
\psdots[dotsize=2pt 0](2.22,-2.22)
\psdots[dotsize=2pt 0](2.57,-1.28)
\psdots[dotsize=2pt 0](2.45,-0.29)
\psdots[dotsize=2pt 0](1.89,0.54)
\psdots[dotsize=2pt 0](-2.04,0.52)
\psdots[dotsize=2pt 0](-1.46,-3.22)
\psdots[dotsize=2pt 0](3.2,-2.78)
\psdots[dotsize=2pt 0](4.06,-2.8)
\psdots[dotsize=2pt 0](4.54,-2.14)
\psdots[dotsize=2pt 0](4,-1.38)
\psdots[dotsize=2pt 0](3.28,-1.38)
\psdots[dotsize=2pt 0](4.8,-3.3)
\end{pspicture*}

\end{center}

The maximality condition and Lemma~\ref{existface} imply that
\begin{equation}\label{angles}
-\pi < \theta^{(n)} \leq \theta^{(n+1)} \leq \alpha^+
\end{equation}
for all $n \in \N^+$. Since by Lemma~\ref{uniquerep} there is at most a unique representative zig-zag flow of any ray which intersects the boundary of a face, we note that zig-zag flows $\zzf^{(n)}=\zzf^{(n+1)}$ if and only if $\theta^{(n)}=\theta^{(n+1)}$.
In particular if $\theta^{(n_0)} = \alpha^+$ then $\zzf^{(n)}=\zzf^{(n_0)}$ for all $n \geq n_0$.
\begin{remark}
We observe that if $n \neq k$ then $f^{(n)} \neq f^{(k)}$, otherwise the local zig-zag fans at $f^{(n)}$ and $ f^{(k)}$ are the same so $\theta^{(n)}=\theta^{(k)}$. This implies that $\zzf^{(n)}=\zzf^{(k)}$, and so $\zzf^{(n)}_{2n}$ and $\zzf^{(n)}_{2k}$ are both arrows in the boundary of $f^{(n)}$. However this would contradict geometric consistency (in particular either Proposition~\ref{geomcons} or Lemma~\ref{singlein}).
\end{remark}

\subsection{The paths $p$ and $\wh{p}$}
We now construct two paths by piecing together paths around the boundaries of these black faces.
For each $n \in \N^+$ define $p^{(n)}$ to be the shortest oriented path around the boundary of $f^{(n)}$ from $v^{(n)}$ to $v^{(n-1)}$ and let $\widehat{p}^{(n)}$ to be the shortest oriented path around the boundary of $f^{(n)}$ from $v^{(n-1)}$ to $v^{(n)}$.

\begin{center}
\newrgbcolor{cccccc}{0.85 0.85 0.85}
\psset{xunit=1.6cm,yunit=1.6cm}
\begin{pspicture*}(-3,-1)(5,3)
\psset{xunit=1.6cm,yunit=1.6cm,algebraic=true,dotstyle=*,dotsize=3pt 0,linewidth=0.8pt,arrowsize=3pt 2,arrowinset=0.25}
\pspolygon[linecolor=cccccc,fillcolor=cccccc,fillstyle=solid](-2,2)(-2.4,1.72)(-2.14,1.2)(-1.6,0.96)(-1.28,1.16)(-0.94,1.56)(-1.14,1.98)
\pspolygon[linecolor=cccccc,fillcolor=cccccc,fillstyle=solid](-0.94,1.56)(-0.76,0.88)(-0.16,0.7)(0.18,0.8)(0.5,1.06)(0.56,1.66)(0.44,2.1)(0.1,2.3)(-0.44,2.14)
\pspolygon[linecolor=cccccc,fillcolor=cccccc,fillstyle=solid](0.5,1.06)(0.84,0.46)(1.72,0.38)(1.96,1.18)(1.26,1.56)
\pspolygon[linecolor=cccccc,fillcolor=cccccc,fillstyle=solid](1.96,1.18)(2.52,0.5)(3.46,0.36)(4.02,0.74)(4.12,1.38)(3.74,1.74)(3.14,1.92)(2.54,1.84)
\psline[ArrowInside=->](-2,2)(-2.4,1.72)
\psline[ArrowInside=->](-2.4,1.72)(-2.14,1.2)
\psline[ArrowInside=->](-2.14,1.2)(-1.6,0.96)
\psline[ArrowInside=->](-1.6,0.96)(-1.28,1.16)
\psline[ArrowInside=->](-1.28,1.16)(-0.94,1.56)
\psline[ArrowInside=->,linestyle=dashed,dash=1pt 1pt](-0.94,1.56)(-1.14,1.98)
\psline[ArrowInside=->,linestyle=dashed,dash=1pt 1pt](-1.14,1.98)(-2,2)
\psline[ArrowInside=->](-0.94,1.56)(-0.76,0.88)
\psline[ArrowInside=->](-0.76,0.88)(-0.16,0.7)
\psline[ArrowInside=->](-0.16,0.7)(0.18,0.8)
\psline[ArrowInside=->](0.18,0.8)(0.5,1.06)
\psline[ArrowInside=->,linestyle=dashed,dash=1pt 1pt](0.5,1.06)(0.56,1.66)
\psline[ArrowInside=->,linestyle=dashed,dash=1pt 1pt](0.56,1.66)(0.44,2.1)
\psline[ArrowInside=->,linestyle=dashed,dash=1pt 1pt](0.44,2.1)(0.1,2.3)
\psline[ArrowInside=->,linestyle=dashed,dash=1pt 1pt](0.1,2.3)(-0.44,2.14)
\psline[ArrowInside=->,linestyle=dashed,dash=1pt 1pt](-0.44,2.14)(-0.94,1.56)
\psline[ArrowInside=->](0.5,1.06)(0.84,0.46)
\psline[ArrowInside=->](0.84,0.46)(1.72,0.38)
\psline[ArrowInside=->](1.72,0.38)(1.96,1.18)
\psline[ArrowInside=->,linestyle=dashed,dash=1pt 1pt](1.96,1.18)(1.26,1.56)
\psline[ArrowInside=->,linestyle=dashed,dash=1pt 1pt](1.26,1.56)(0.5,1.06)
\psline[ArrowInside=->](1.96,1.18)(2.52,0.5)
\psline[ArrowInside=->](2.52,0.5)(3.46,0.36)
\psline[ArrowInside=->](3.46,0.36)(4.02,0.74)
\psline[ArrowInside=->,linestyle=dashed,dash=1pt 1pt](4.02,0.74)(4.12,1.38)
\psline[ArrowInside=->,linestyle=dashed,dash=1pt 1pt](4.12,1.38)(3.74,1.74)
\psline[ArrowInside=->,linestyle=dashed,dash=1pt 1pt](3.74,1.74)(3.14,1.92)
\psline[ArrowInside=->,linestyle=dashed,dash=1pt 1pt](3.14,1.92)(2.54,1.84)
\psline[ArrowInside=->,linestyle=dashed,dash=1pt 1pt](2.54,1.84)(1.96,1.18)
\rput[tl](-1.82,1.7){$f^{(1)}$}
\rput[tl](-0.32,1.64){$f^{(2)}$}
\rput[tl](1.1,1.1){$f^{(3)}$}
\rput[tl](2.98,1.3){$f^{(4)}$}
\rput[tl](-2.36,2.4){$i=v^{(0)}$}
\rput[tl](0.58,1.18){$v^{(2)}$}
\rput[tl](2.04,1.36){$v^{(3)}$}
\rput[tl](4.12,0.82){$v^{(4)}$}
\rput[tl](-0.86,1.66){$v^{(1)}$}
\rput[tl](-1.48,2.5){$p^{(1)}$}
\rput[tl](0.22,2.74){$p^{(2)}$}
\rput[tl](1.14,1.98){$p^{(3)}$}
\rput[tl](3.4,2.28){$p^{(4)}$}
\rput[tl](-2.22,0.96){$\widehat{p}^{(1)}$}
\rput[tl](-0.7,0.62){$\widehat{p}^{(2)}$}
\rput[tl](1.08,0.3){$\widehat{p}^{(3)}$}
\rput[tl](2.8,0.24){$\widehat{p}^{(4)}$}
\psdots(-2,2)
\psdots[dotsize=2pt 0](-2.4,1.72)
\psdots[dotsize=2pt 0](-2.14,1.2)
\psdots[dotsize=2pt 0](-1.6,0.96)
\psdots[dotsize=2pt 0](-1.28,1.16)
\psdots(-0.94,1.56)
\psdots[dotsize=2pt 0](-1.14,1.98)
\psdots[dotsize=2pt 0](-0.76,0.88)
\psdots[dotsize=2pt 0](-0.16,0.7)
\psdots[dotsize=2pt 0](0.18,0.8)
\psdots(0.5,1.06)
\psdots[dotsize=2pt 0](0.56,1.66)
\psdots[dotsize=2pt 0](0.44,2.1)
\psdots[dotsize=2pt 0](0.1,2.3)
\psdots[dotsize=2pt 0](-0.44,2.14)
\psdots[dotsize=2pt 0](0.84,0.46)
\psdots[dotsize=2pt 0](1.72,0.38)
\psdots(1.96,1.18)
\psdots[dotsize=2pt 0](1.26,1.56)
\psdots[dotsize=2pt 0](2.52,0.5)
\psdots[dotsize=2pt 0](3.46,0.36)
\psdots(4.02,0.74)
\psdots[dotsize=2pt 0](4.12,1.38)
\psdots[dotsize=2pt 0](3.74,1.74)
\psdots[dotsize=2pt 0](3.14,1.92)
\psdots[dotsize=2pt 0](2.54,1.84)
\end{pspicture*}

\end{center}

Since $hp^{(n)} = v^{(n-1)} = tp^{(n-1)}$ we can piece the paths $p^{(n)}$ together to form an infinite oriented path $p$ which ends at $v_0 = i$. Similarly we can piece the paths $\widehat{p}^{(n)}$ together to form an infinite oriented path $\widehat{p}$ which starts at $v_0 = i$.
\begin{remark}
Using the property (see Remark~\ref{wellorder}) that the intersections of zig-zag flows with the boundary of face $\face$ occur in the same cyclic order as the rays of the local zig-zag fan $\xi{(\face)}$, we note that the arrows in $\widehat{p}^{(n)}$ are the arrows in the boundary of $f^{(n)}$ which are contained in zig-zag flows with angle in the closed interval $[\theta^{(n-1)},\theta^{(n)}]$.
If $\theta^{(n-1)}=\theta^{(n)}$, then $\widehat{p}^{(n)}$ just contains the zig-zag pair $\zzf^{(n)}_{2n}, \zzf^{(n)}_{2n+1}$ and $p^{(n)}$ is the intersection of the black boundary flow of $\zzf^{(n)}$ with the boundary of $\face^{(n)}$. 
\end{remark}

Now that we have constructed the paths $p$ and $\wh{p}$ we check that they satisfy some properties. Most importantly we need to show that the perfect matching $P(\sigma)$ evaluates to zero on $p$. Recalling the definition of $P(\sigma)$, we are interested in the image of the cone $\sigma$ in the local zig-zag fan at each face. We prove the following lemma:
\begin{lemma}\label{noray}
There are no rays in the local zig-zag fan $\xi(\face^{(n)})$ with angle in the interval $(\theta^{(n)},0]$ for any $n \in \N^+$.
\end{lemma}
\begin{proof}
Suppose to the contrary that this doesn't hold and let $n^{\prime}$ be the least value where such a ray exists. We label this ray by $\gamma$ and suppose the intersection of its representative zig-zag flow with the boundary of $f^{(n^\prime)}$ is the zig-zag pair $(\zzf_0, \zzf_1)$. Denote the angle of ray $\gamma$ by $\theta \in (\theta^{(n^\prime)},0]$. We observe that:
\begin{itemize}
\item The maximality condition in the construction ensures that $\zzf$ can not have both $i$ and $j$ on its left.
\item If $\zzf$ had $i$ on its left and $j$ on its right then $\gamma$ would be in $C_+$. However since $\theta \in (\theta^{(n^\prime)},0]$ we see that $\gamma$ lies in $\hp_-$ which would be a contradiction.
\end{itemize}
Therefore $\zzf$ must have $i$ on its right.
Consider the path $p^{({n^\prime}-1)}p^{({n^\prime}-2)} \dots p^{(1)}$ from $v^{({n^\prime}-1)}$ to $i$. Since ${n^\prime}$ is minimal, $\zzf$ does not intersect the boundary of any of the faces $f^{(1)}, \dots , f^{({n^\prime}-1)}$. In particular it does not intersect this path. Therefore $v_{{n^\prime}-1}$ is also on the right of $\zzf$. Since $f^{(n^\prime)}$ is a black face, all its vertices are on the left of $\zzf$ except for $h\zzf_0=t\zzf_1$, so $v_{{n^\prime}-1}=t\zzf_1$.

Recalling the construction of the sequence of faces $\face^{(n)}$, we see that arrow $\zzf_1$ is a zig of $\zzf^{({n^\prime}-1)}$. Therefore $\zzf$ crosses $\zzf_{{n^\prime}-1}$ from left to right and, by Lemma~\ref{2cones}, the corresponding rays in the local zig-zag fan span a cone where $\gamma$ is the clockwise ray. Then 
$$ -2 \pi < \theta^{({n^\prime}-1)}-\pi < \theta < \theta^{({n^\prime}-1)} \leq \theta^{({n^\prime})}$$
%
However, this contradicts the assumption that $\theta \in (\theta^{(n^\prime)},0]$.
\end{proof}

We now use this lemma to prove the first property we required of the path $p$.
\begin{lemma}
The perfect matching $P(\sigma)$ evaluates to zero on the path $p$.
\end{lemma}
\begin{proof}
The path $p$ was constructed locally of paths $p^{(n)}$ in the boundary of the faces $f^{(n)}$. The perfect matching $P(\sigma)$ was also defined locally at each face. Therefore it is sufficient to prove the statement locally; we must show that $P_{f^{(n)}}(\sigma)$ evaluates to zero on $p^{(n)}$ for each $n \geq 1$.

We recall that $P_{f^{(n)}}(\sigma)$ is non-zero on a single arrow, corresponding to the cone in $\xi(f^{(n)})$ which contains the image of $\sigma$. We split the proof into two cases:

If $\theta^{(n)} \neq \alpha^+$, it follows from Lemma~\ref{noray} that $\gamma ^{(n)} $ is the clockwise ray of the cone containing the image of $\sigma$. Then using Lemma~\ref{2cones}, the unique arrow on which $P_{f^{(n)}}(\sigma)$ is non-zero is the zag $\zzf^{(n)}_{2n+1}$ in the boundary of $f^{(n)} $. By construction $h\zzf^{(n)}_{2n+1} = v^{(n)}$ and thus any oriented path around the boundary of $f^{(n)}$ from $v^{(n)}$, which contains this arrow, must contain a complete cycle. Each $p^{(n)}$ was defined so that this is not the case, so $P_{f_n}(\sigma)$ evaluates to zero on $p_n$.

If $\theta^{(n)} = \alpha^+$ then $\gamma^{(n)}$ is the image of $\sigma^+$ and $p^{(n)}$ is part of the boundary flow of $\zzf^{(n)}$, a representative of $\sigma^+$. By Proposition~\ref{uniqueext} we see that $P(\sigma)$ evaluates to zero on this.

\end{proof}

We now show that $\wh{p}$ is made up of sections of zig-zag flows that have the vertices $i$ and $j$ on their left.
\begin{lemma}
Each arrow in $\wh{p}^{(n)}$ is contained in some zig-zag flow which has $i$ and $j$ on the left.
\end{lemma}
\begin{proof}
If an arrow of $\wh{p}^{(n)}$ is in $\zzf^{(n-1)}$ or $\zzf^{(n)}$, then we are done. Otherwise the arrow is contained in a zig-zag flow $\zzf$ with angle in the open interval $( \theta^{(n-1)},\theta^{(n)})$. We note that $\zzf$ has $v^{(n-1)}$ on its left, otherwise one obtains a contradiction in the same way as the end of the proof of Lemma~\ref{noray}. Using Lemma~\ref{noray} directly, we see that $\zzf$ does not intersect the boundary of $\face^{(r)}$ for any $r \leq n-1$. Therefore considering a path along the boundary of these faces, we see that $\zzf$ has $i$ on its left. Because of its angle, $\zzf$ can not be in $\ZC_+$, and therefore it must have both $i$ and $j$ on the left.
\end{proof}

Finally in this section we see that sufficiently far away from its starting point, the path $\wh{p}$ looks like a representative zig-zag flow of $\sigma^+$.

\begin{lemma}
There exists $n^{\prime} \in \N^+$ such that for all $n \geq n^\prime$ we have $\zzf^{(n)} = \zzf^+$ which is a representative zig-zag flow of $\sigma^+$.
\end{lemma}
\begin{proof}
Since the increasing sequence $\{ \theta^{(n)} \}$ is bounded above, and there are a finite number of rays in $\Xi$, there exists some $n^{\prime} \in \N^+$ such that $\theta^{(n)} = \theta^{(n^\prime)}$, and therefore $\zzf^{(n)}=\zzf^{(n^\prime)}$, for all $n \geq n^\prime$. 
Thus it is sufficient to prove that $\zzf^{(n^\prime)}$ is a representative zig-zag flow of $\sigma^+$.

Suppose it is not. Then $\zzf^{(n^\prime)}$ intersects every representative zig-zag flows of $\sigma^+$ from left to right, and an infinite number of these intersections must occur after $v^{(n^\prime)}$ in  $\zzf^{(n^\prime)}$ (so $v^{(n^\prime)}$ is on their left). The path $p^{(n^\prime)}p^{(n^\prime-1)} \dots p^{(1)}$ from $v^{(n^\prime)}$ to $i$ has a finite number of arrows and therefore intersects at most a finite number of these. Therefore there are an infinite number of representative zig-zag flows of $\sigma^+$ which intersect $\zzf^{(n^\prime)}$ after $v^{(n^\prime)}$, so intersect the boundary of $f^{(n)}$ for some $n \geq n^\prime$, and have $i$ on their left. Only a finite number of zig-zag flows have $i$ on their left and $j$ on their right; such a flow must intersect every path from $i$ to $j$. Thus there exists a representative zig-zag flows of $\sigma^+$ which intersects the boundary of $f^{(n)}$ for some $n \geq n^\prime$, and has $i$ and $j$ on its left. This contradicts the maximality assumption.
\end{proof}


In an analogous way to the construction of $p$ and $\wh{p}$, we construct a paths $q$ and $\wh{q}$ by piecing together sequences of paths $q^{(k)}$ and $\wh{q}^{(k)}$ respectively, around the boundaries of a sequence of white faces $\{g^{(k)}\}$ with a distinguished set of vertices $ \{w^{(k)}\}$. This is constructed by considering zig-zag flows $\zzff^{(k)}$ which have both $i$ and $j$ on the right and whose ray in the global zig-zag fan has minimal angle $\varpi^{(k)}$ which lies in the interval $[-\alpha^-,\pi)$. Certain corresponding properties hold, which can be proved by symmetric arguments:
\begin{compactenum}
\item The path $q$ ends at the vertex $i$.
\item The perfect matching $P(\sigma)$ evaluates to zero on $q$.
\item Each arrow in $\wh{q}^{(n)}$ is contained in some zig-zag flow which has $i$ and $j$ on the right.
\item There exists $k^{\prime} \in \N^+$ such that for all $k \geq k^\prime$ we have $\zzff^{(k)} = \zzff^-$ which is a representative zig-zag flow of $\sigma^-$.
\end{compactenum}

\subsection{$p$ and $q$ intersect}
We now find a vertex (other than $i$) which is in both $p$ and $q$. This will be a vertex of the zig-zag paths $\zzf^+$ and $\zzff^-$. 
\begin{lemma}
Let $n^\prime$ and $k^\prime$ be be the least integers such that $\zzf^{(n^\prime)} = \zzf^+$ and $\zzff^{(k^\prime)} = \zzff^-$. Then $ \zzff^-$ does not intersect the boundary of $f^{(n)}$ for any $n \leq n^\prime$ and $ \zzf^+$ does not intersect the boundary of $g^{(k)}$ for any $k \leq k^\prime$.
\end{lemma}
\begin{proof}
First consider the case when $n < n^\prime$ so $\theta^{(n)} \leq \alpha^-$. If $\theta_n = \alpha^-$ then $\zzf^{(n)}$ and $ \zzff^-$ are parallel zig-zag flows which are distinct; $i$ is on the left of $\zzf^{(n)}$ but on the right of $ \zzff^-$. By definition $\zzf^{(n)}$ intersects the boundary of $f^{(n)}$, so by Lemma~\ref{uniquerep}, $\zzff^-$ does not. If $\theta_n < \alpha^-$ then by Lemma~\ref{noray} there are no rays of $\xi(f^{(n)})$ with angle in the interval $(\theta_n,0)$ and therefore $\zzff^-$ does not intersect the boundary of $f^{(n)}$.

Now consider the case when $n = n^\prime$. We have just shown that $\zzff_-$ does not intersect the path $p^{({n^\prime}-1)}p^{({n^\prime}-2)} \dots p^{(1)}$ from $v_{{n^\prime}-1}$ to $i$. Therefore $v_{{n^\prime}-1}$ is on the right of $ \zzff_-$. As we have noted before, since it is a black face, there is only one zig-zag flow which intersects the boundary of $f^{(n^\prime)}$ and has $v^{({n^\prime}-1)}$ on the right. However, this crosses $\zzf^{({n^\prime}-1)}$ from left to right and so can not be $\zzff_-$ since $-\pi < \theta_{{n^\prime}-1} \leq \alpha^- <0$. 
Therefore $\zzff_-$ does not intersect the boundary of $f_{n^\prime}$.

The proof of the other statement follows similarly.
\end{proof}
\begin{corollary} \label{aftervn}
Let $a = \zzf^+_{2n} = \zzff^-_{2k+1}$ be the unique arrow where $\zzff^-$ and $\zzf^+$ intersect. Then $ta$ occurs after $v^{({n^\prime})}$ in $\zzf^+$ and after $w^{({k^\prime})}$ in $\zzff^-$.
\end{corollary}
\begin{proof}
Using the lemma we can see that the path $\zzff^-$ does not intersect the path $p^{({n^\prime})}p^{({n^\prime}-1)} \dots p^{(1)}$ from $v^{({n^\prime}-1)}$ to $i$. Therefore $v^{({n^\prime})}$ is on the right of $ \zzff_-$. We know that $\zzff_-$ crosses $\zzf_+$ from right to left, so $v_{{n^\prime}}$ occurs in $\zzf_+$ before the intersection.
\end{proof}
Using this result we see that  the paths $p$ and $q$ both contain the vertex $ta$:
the arrow $a = \zzf^+_{2n}$ is a zig of $\zzf_+$ and 
by Corollary~\ref{aftervn} we know that $ta$ occurs after $v_{{n^\prime}}$ in $\zzf_+$. Therefore $ta = v^{({n})}$ for some $n>{n^\prime}$, and by construction $v^{({n})}$ is a vertex of $p$. Similarly we note that $ta = w^{(k)}$ for some $k>{k^\prime}$ and so $ta$ is a vertex of $q$.

We define paths $\finitep := p^{({n})}p^{({n}-1)} \dots p^{(1)}$ and $\finiteq := q^{({k})}q^{({k}-1)} \dots q^{(1)}$ from $ta$ to $i$. These are finite pieces of the paths $p$ and $q$ respectively.  We note that since $P(\sigma)$ evaluates to zero on $p$ and $q$, it must also evaluate to zero on $\finitep$ and $\finiteq$.

Similarly we are able define paths $\finiteph := \wh{p}^{({n})}\wh{p}^{({n}-1)} \dots \wh{p}^{(1)}$ and $\finiteqh := \wh{q}^{({k})}\wh{q}^{({k}-1)} \dots \wh{q}^{(1)}$ from $i$ to $ta$.

\subsection{The path from $j$ to $i$}
We want to be able to use F-term relations on the path $\finitep$ to change it into a path which passes through the vertex $j$. We start by showing that the paths $\finitep$ and $\finiteq$ are F-term equivalent.
\begin{lemma}\label{pqhomol}
The elements $\Meq{\finitep}$ and $\Meq{\finiteq}$ are equal, i.e. the paths $\finitep$ and $\finiteq$ are homologous.
\end{lemma}
\begin{proof}
Recall the short exact sequence (\ref{eq:Nmes}):
$$0 \lra{} \Z \lra{} \Mm \lra{H} H_1(T;\Z) \lra{} 0 $$
where the kernel of $H$ is spanned by the class $\square = \del \Meq{\face} $ for any face $\face \in Q_2$. Since $\finitep$ and $\finiteq$ start and finish at the same vertices in the universal cover $\Qcov$, the element $\Meq{\finitep}-\Meq{\finiteq} \in \Mm$ is in the kernel of $H$. Therefore it is some multiple of class $\square$. The perfect matching $P(\sigma)$, by definition, evaluates to 1 on the boundary of every face, and therefore it evaluates to 1 on $\square$. Since it evaluates to zero on $\Meq{\finitep}-\Meq{\finiteq}$, we conclude that this multiple of $\square$ is zero, and so $\Meq{\finitep}=\Meq{\finiteq}$.
\end{proof}
Consider the paths $\finiteph$ and $\finiteqh$. 
These are constructed out of sections of zig-zag flows which have the vertex $j$ on the left and right respectively.
Then the path $\finiteph(\finiteqh)^{-1}$ is constructed of sections of zig-zag flows that have $j$ consistently on the left. Thus this path either passes through $j$ or has a non-zero winding number around $j$. We note that the `boundary path' $\finitep(\finiteq)^{-1}$ has the same property. If $j$ is a vertex of $\finiteph(\finiteqh)^{-1}$ then it is a vertex of one of the faces $f^{(r)}$ or $g^{(s)}$. Neighbouring zig-zag flows in the boundary of this face which pass through $j$, have $j$ on different sides. This forces $j$ to be one of the distinguished vertices on the face, and so it is a vertex of $\finitep(\finiteq)^{-1}$ as well.
Finally, since the boundary of a face either contains a vertex, or has zero winding number about that vertex, we note that either $\finitep(\finiteq)^{-1}$ passes through $j$ or
the winding number of $\finitep(\finiteq)^{-1}$ around $j$ is non-zero.


To complete the proof of Proposition~\ref{EXTREM}, it is sufficient to show that there exists a path which is F-term equivalent to $\finitep$ and $\finiteq$, and which passes through vertex $j$. F-term equivalence implies that $P(\sigma)$ evaluates to zero on this path as well, and we obtain the required path from $j$ to $i$ by looking at the appropriate piece.


\begin{lemma} \label{expath}
Suppose ${p},{q}$ are $F$-term equivalent oriented paths in $\Qcov$ from vertex $v_1$ to $v_2$, which do not pass through vertex $v$.  If $\wind({p}{q}^{-1})$ is non-zero, then there exists an oriented path
${p^\prime}$ which is $F$-term equivalent to ${p}$ and ${q}$ and passes through v.
\end{lemma}
\begin{proof}
Since ${p}$ and ${q}$ are F-term equivalent, there exists a sequence of paths $p_0 = {p}, p_1, \dots , p_{s^\prime} = {q}$ such that $p_s$ and $p_{s+1}$ differ by a single $F$-term relation for $s=0,\dots,{s^\prime}-1$.

Suppose that neither ${p_s}q^{-1}$ nor ${p_{s+1}}q^{-1}$ pass through $v$ for some $s\in \{ 0,\dots,{s^\prime}-1 \}$, i.e. $\wind({p_s}q^{-1})$ and $\wind({p_{s+1}}q^{-1})$ are well defined. 
Since $p_s$ and $p_{s+1}$ differ by a single F-term relation we can write $p_s = \alpha_1 r_s \alpha_2$ and $p_{s+1} = \alpha_1 r_{s+1} \alpha_2$, where $r_s{r_{s+1}}^{-1}$ is the boundary of the union $D$ of the two faces which meet along the arrow dual to the relation. 
Since $\wind({p_s}q^{-1})$ and $\wind({p_{s+1}}q^{-1})$ are well defined, then
$$ \wind({p_s}q^{-1}) - \wind({p_{s+1}}q^{-1}) = \wind({p_s}{p_{s+1}}^{-1}) =  \wind(r_s{r_{s+1}}^{-1}) =0  $$
as there are no vertices 
in the interior of $D$.
However
$$ \wind({p}{q}^{-1}) \neq 0 = \wind({q}{q}^{-1}) $$
and so there must exist $s\in \{ 0,\dots,{s^\prime}-1 \}$ such that ${p_s}q^{-1}$ passes through $v$.  Then  $p^{\prime}:={p_s}$ is F-term equivalent to $p$ and $q$ and passes through $v$.

\end{proof}

\section{Proof of Theorem~\ref{GCACTHM}} \label{prop2thm}
Recall that a dimer model is algebraically consistent if the algebra map $\mathpzc{h}: A \lra{} \C [ \underline{M}^+]$ (\ref{algmap}) from the path algebra of the quiver modulo F-term relations to the toric algebra,
is an isomorphism. We noted in Remark~\ref{hinject} that for a geometrically consistent dimer model injectivity is equivalent to the statement of Theorem~\ref{Uniqueness}. It therefore remains for us to prove that the map $\mathpzc{h}$ is surjective. We now show that this follows from Proposition~\ref{EXTREM}.

To prove surjectivity, we need to show that for $i,j \in Q_0$ and any element $m \in M_{ij}^+ $ there exists a representative path, that is, a path $p$ from $i$ to $j$ in $Q$ such that $\Meq{p} =m$.

First we prove that it is sufficient to show that there exists a representative path for elements of $M_{ij}^+ $ which lie on the boundary of the cone $M^+$. Recall that the cone $M^+$ is the dual cone of the cone $N^+$ which is integrally generated by the perfect matchings. Therefore the elements in the boundary of $M^+$, are precisely those which evaluate to zero on some perfect matching.

Suppose that there exists a representative path for all elements $m \in M_{ij}^+ $ which lie on the boundary of $M^+$. Recall that the coboundary map $d \colon \Z^{Q_1} \lra{}\Z^{Q_2}$ sums the function on the edges around each face and $d(\pf) = \const{1}$ for any perfect matching $\pf$. Thus if $p$ is a path going once around the boundary of any quiver face $\face \in Q_2$ (starting at any vertex of $\face$) and $\pf$ is a perfect matching, then $\langle \pf , p \rangle = 1$. 
Define $\Box := \Meq{p}$ to be the image of $p$ in $\Mg$ which we note is independent of the choice of $p$ since the 2-torus is connected.
Now let $i,j \in Q_0$ and consider any element $m \in M_{ij}^+ $. If we evaluate any perfect matching $\pf$ on $m$ by definition we get an non-negative integer. Let
$$ n := \operatorname{min}\{ \langle \pf , m \rangle \mid \pf \text{ is a perfect matching }\} $$
Then for each perfect matching $\pf$, we observe that $\langle \pf ,(m-n\Box)\rangle = \langle \pf ,m\rangle -n \geq 0$ and by construction there exists at least one perfect matching where the equality holds. In other words $(m-n\Box)$ lies in the boundary of $\Mg^+$. Then by assumption, there is a representative path $q$ of $m-n\Box$, from $i$ to $j$ in $Q$.

Finally we construct a representative path for $m$. Let $\face$ be any face which has $j$ as a vertex, and let $p^{n}$ be the path which starts at $j$ and goes $n$ times around the boundary of $\face$. Then the path $qp^{n}$ from $i$ to $j$ is a well defined path, and
$$\Meq{qp^{n}} = \Meq{q}+n\Meq{p} = (m-n\Box) + n\Box = m$$

Now we prove that 
there exists a representative path for all elements $m \in M_{ij}^+ $ which lie on the boundary of $M^+$.

We start by fixing $i,j \in Q_0$, and let $m \in M_{ij}^+ $ be an element in the boundary of $M^+$. Then some perfect matching $\pf_m$ evaluates to zero on $m$.
Recall the short exact sequence (\ref{eq:Nmes})
$$0 \lra{} \Z \lra{} \Mm \lra{H} H_1(T;\Z) \lra{} 0 $$
where the kernel of $H$ is spanned by the class $\square = \del \Meq{\face} $ for any face $\face \in Q_2$.
Fix some path $p$ from $\wt{j}\in \Qcov_0$ to $\wt{i}\in \Qcov_0$ where $\wt{i}$ and $\wt{j}$ project down to $i$ and $j$ respectively. By Proposition~\ref{EXTREM} there is a path from $\wt{i}$ to every lift of $j$ in the universal cover such that this path is zero on some perfect matching. Therefore there exists a path $q$ which is zero on some perfect matching and such that $pq$ projects down to a closed path with any given homology class. In particular, there exists a path $q$ which is zero on some perfect matching $\pf_q$ and has
$$H(\Meq{p} + \Meq{q}) = H(\Meq{pq})= H(\Meq{p} + m)$$
Therefore $\Meq{q}-m \in \Mm$ is in the kernel of $H$, and so $\Meq{q}-m = k\square$ in $\Mm$, for some $k \in \Z$. Since every perfect matching evaluates to 1 on $\square$, applying $\pf_m$ and $\pf_q$ to $\Meq{q}-m$, we see that
$$ - \langle \pf_q , m \rangle = k = \langle \pf_m ,\Meq{q} \rangle $$
Finally, since $m$ and $\Meq{q}$ are both in $\Mg^+$, we see that $k=0$ and so $\Meq{q}=m $ in $\Mm$. Therefore the map $\mathpzc{h}$ is surjective, and we are done.




\chapter{Calabi-Yau algebras from algebraically consistent dimers}
In this chapter we prove one of the main theorems of this article:
\begin{theorem}\label{AlgCY3}
If a dimer model on a torus is algebraically consistent then the algebra $A$ obtained from it is CY3.
\end{theorem}
This gives a class of superpotential algebras which are Calabi-Yau and which can be written down explicitly.
We start by recalling Ginzburg's definition of a Calabi-Yau algebra \cite{Ginz}. A theorem due to Ginzburg shows that superpotential algebras are CY3 if a particular sequence of maps gives a bimodule resolution of the algebra. We formulate this for algebras coming from dimer models and show that, because they are graded, it is sufficient to prove that a one sided complex of right $A$ modules is exact. We then prove that this is the case for algebras obtained from algebraically consistent dimer models.
\section{Calabi-Yau algebras}
The notion of a Calabi-Yau algebra we use here was introduced by Ginzburg in \cite{Ginz}. 
Consider the contravariant functor $M \mapsto M^! := \RHom_{A-Bimod}(M, A \otimes A)$ on the `perfect' derived category of bounded complexes of finitely generated projective $A$-bimodules.
We use the outer bimodule structure on $A \otimes A$ when taking $\RHom$ and the result $M^!$ is an $A$-bimodule using the inner structure. 
\begin{definition}
An algebra $A$ is said to be a Calabi-Yau algebra of dimension $d\geq1$ if it is homologically smooth, and there exists an $A$-module quasi-isomorphism
$$ f:A \lra{\cong} A^![d] \quad \text{such that} \quad f=f^![d]  $$
\end{definition}

In \cite{Ginz} Ginzburg gives a way of proving that superpotential algebras are CY3 by checking that a particular sequence of maps is a resolution of the algebra. 
He gives an explicit description of this sequence of maps which we follow here.

Let $Q$ be a finite quiver with path algebra $\C Q$. As in Section~\ref{quivalgsec}, let $[ \C Q , \C Q ]$ be the complex vector space in $\C Q$ spanned by commutators and denote by $\C Q_\text{cyc}:= \C Q /[ \C Q , \C Q ] $ the quotient space. This space has a basis of elements corresponding to cyclic paths in the quiver.  For each arrow $a \in Q_1$ there is a linear map
$$\frac{\del}{\del x_a}: \C Q_\text{cyc} \rightarrow \C Q $$
which is a (formal) cyclic derivative. The image of a cyclic path is obtained by taking all the representatives of the path in $\C Q$ which starts with $x_a$, removing this and then summing.
We can write this map in a different way using a formal left derivative $\frac{\del_l}{\del_l x_a}: \C Q \rightarrow \C Q $ defined as follows:\\
Every monomial is of the form $x_b x$ for some $b \in Q_1$. Then we define:
\begin{equation*}
\frac{\del_l}{\del_l x_{a}}x_b x:=
\begin{cases}
x & \text{if }a=b,\\
0 & \text{otherwise}.  \end{cases}
\end{equation*}
We extend this to the whole of $\C Q$ by linearity. The formal right derivative $\frac{\del_r}{\del_r x_{a}}$ is defined similarly. Then we have
\begin{equation}
\frac{\del}{\del x_a}(x)= \sum_{\genfrac{}{}{0pt}{}{x^\prime \in \C Q}{(x^\prime)=(x)}}\frac{\del_l}{\del_l x_{a}}x^\prime = \sum_{\genfrac{}{}{0pt}{}{x^\prime \in \C Q}{(x^\prime)=(x)}}\frac{\del_r}{\del_r x_{a}}x^\prime
\end{equation}
where as before $(x)$ denotes the cyclic element in $\C Q_\text{cyc}$ corresponding to $x \in \C Q$.

For each arrow $a \in Q_1$ there is another linear map which we denote by:
$$\frac{\del}{\del x_a}: \C Q \rightarrow \C Q \otimes \C Q \qquad x \mapsto \left ( \frac{\del x}{\del x_a} \right )^{\prime} \otimes \left ( \frac{\del x}{\del x_a} \right )^{\prime\prime} $$
This is defined on monomials as follows: for each occurrence of $x_a$ in a monomial, the monomial can be written in the form $x x_a y$. This defines an element $x \otimes y \in \C Q \otimes \C Q$ and the sum of these elements over each occurrence of $x_a$ in the monomial is the image of the monomial. We extend this linearly for general elements of $\C Q$.
Following Ginzburg we use the same notation for both the cyclic derivative and this map.

The algebra $S := \bigoplus_{i \in Q_0} \C e_i $ is semi-simple and is the sub-algebra of $\C Q$ generated by paths of length zero. We define $T_1 := \bigoplus_{b \in Q_1} \C x_b $ with the natural structure of an $S,S$-bimodule. For each arrow $b \in Q_1$, there is a relation $ R_b := \frac{\partial}{\partial x_b}W$ which is the cyclic derivative of the superpotential $W$. Let $T_2 := \bigoplus_{b \in Q_1} \C R_b $ be the $S,S$-bimodule generated by these. 
For each vertex $v \in Q_0$, we obtain from the super-potential $W$ a syzygy
$$ W_v:= \sum_{b \in  T_v}  x_b R_b = \sum_{b \in  H_v} R_b x_b$$
Finally we define $T_3 := \bigoplus_{v \in Q_0} \C W_v $ which is isomorphic to $S$ and has an $S,S$-bimodule structure.
We consider the following maps:
$$\mew_0 :  A  \otimes_S A \lra{} A \qquad x \otimes y \mapsto xy$$ is given by the multiplication in $A$.
\begin{align*}
\mew_1 : & A \otimes_S T_1 \otimes_S A \lra{} A  \otimes_S A \\
&x \otimes x_a \otimes y \; \mapsto  xx_a\otimes y - x \otimes x_ay
\end{align*}

\begin{align*}
\mew_2 : & A \otimes_S T_2 \otimes_S A \lra{} A \otimes_S T_1 \otimes_S A \\
&x \otimes R_a \otimes y \; \mapsto \sum_{b \in Q_1} x \left( \frac{\del R_a}{\del x_b} \right )^{\prime}\otimes x_b \otimes \left ( \frac{\del R_a}{\del x_b} \right )^{\prime\prime}y
\end{align*}

\begin{align*}
\mew_3 : & A \otimes_S T_3 \otimes_S A \lra{} A \otimes_S T_2 \otimes_S A \\
&x \otimes W_v \otimes y \; \mapsto \sum_{b \in T_v}x x_b \otimes R_b \otimes y - \sum_{b \in H_v}x \otimes R_b \otimes x_b y
\end{align*}

Piecing these maps together we can write down the following sequence of maps:
\begin{equation}\label{fullcomplex} 0 \lla{}  A \lla{\mew_0}  A  \otimes_S A \lla{\mew_1} A \otimes_S T_1 \otimes_S A \lla{\mew_2} A \otimes_S T_2 \otimes_S A \lla{\mew_3} A \otimes_S T_3 \otimes_S A \lla{} 0
\end{equation}
which is the complex in Proposition~5.1.9 of \cite{Ginz}.
Then Corollary~5.3.3 of \cite{Ginz} includes the following result:
\begin{theorem}
$A$ is a Calabi-Yau algebra of dimension 3 if and only if the complex (\ref{fullcomplex}) is a resolution of $A$.
\end{theorem}
\begin{remark}
It is not actually necessary to check that the complex is exact everywhere as the first part of it is always exact. By Theorem~5.3.1 of \cite{Ginz} we see that it is sufficient to check exactness at  $A \otimes_S T_2 \otimes_S A$ and $A \otimes_S T_3 \otimes_S A$.
\end{remark}
\section{The one sided complex}
Recall from Section~\ref{Perfmatchsec} that any element in the interior of the perfect matching cone $\Ng^+$ defines a (positive) $\Z$-grading of $A$. 
We define the graded radical of $A$ by $\operatorname{Rad} A := \bigoplus_{n \geq 1} A^{(n)} $ and note that $S = A^{(0)}$ where $A^{(n)} $ denotes the $n$th graded piece.
Algebra $S$ is also the quotient of $A$ by the graded radical, and we can use the quotient map $A \rightarrow A/\operatorname{Rad}A \cong S$ to consider $S$ as an $A,A$-bimodule. Using this bimodule structure we consider the functor $\F = S \otimes_A -$ from the category of $A,A$-bimodules to itself. We apply this to the complex (\ref{fullcomplex}) and get the following complex: 
\begin{equation}\label{onesidecomplex} 0 \lla{}  S \lla{\F(\mew_0)}  A \lla{\F(\mew_1)}  T_1 \otimes_S A \lla{\F(\mew_2)} T_2 \otimes_S A \lla{\F(\mew_3)} T_3 \otimes_S A \lla{} 0
\end{equation}
We usually forget the left $A$-module structure, and treat this as a complex of right $A$-modules. We call this the one
sided complex. The maps are:
\begin{align*}
\F(\mew_1) : &  T_1 \otimes_S A \lra{}  A \\
&x_a \otimes y \; \mapsto  -  x_ay
\end{align*}

\begin{align*}
\F(\mew_2) : &  T_2 \otimes_S A \lra{}  T_1 \otimes_S A \\
&R_a \otimes y \; \mapsto \sum_{b \in Q_1}  x_b \otimes \left ( \frac{\del_l R_a}{\del_l x_b} \right )y
\end{align*}

\begin{align*}
\F(\mew_3) : & T_3 \otimes_S A \lra{}  T_2 \otimes_S A \\
& W_v \otimes y \; \mapsto  - \sum_{b \in H_v} R_b \otimes x_b y
\end{align*}


Something stronger is actually true. There is a natural transformation of functors from the identity functor on the category of $A,A$-bimodules to $\F$. In particular we have the following commutative diagram, where $\nat : 
t \mapsto 1_S \otimes_A t$.
\begin{equation*}
\resizebox{12cm}{!}{$
\begin{CD}
A @<{\mew_0}<< A \otimes_S A @<{\mew_1}<< A \otimes_S T_1 \otimes_S A  @<{\mew_2}<< A \otimes_S T_2 \otimes_S A      @<{\mew_3}<< A \otimes_S T_3 \otimes_S A \\
@V{\nat}VV       @V{\nat}VV               @V{\nat}VV          @V{\nat}VV    @V{\nat}VV    \\
S @<{\F(\mew_0)}<< A  @<{\F(\mew_1)}<< T_1 \otimes_S A @<{\F(\mew_2)}<< T_2 \otimes_S A  @<{\F(\mew_3)}<< T_3 \otimes_S A\\ 
\end{CD}$ }
\end{equation*}

We define a grading on all the objects in this diagram. 
Since the super-potential $W$ is a homogeneous element of $\C Q$, it can be seen that the syzygies $\{W_v \mid v \in Q_0\}$ are homogeneous elements
, and furthermore the relations $\{R_a \mid a\in Q_1\}$ are also homogeneous. Therefore we can extend the grading to a grading of $A \otimes_S T_\bullet \otimes_S A$ where the grade of a product of homogeneous elements is given by the sum of the grades in each of the three positions. We call this the total grading of $A \otimes_S T_\bullet \otimes_S A$.
Similarly we can define a total grading of $T_\bullet \otimes_S A$.
We note that the maps $\mew$, $\F(\mew)$ and $\nat$ in the commutative diagram above all respect the total grading.

Finally, using the diagram, we show that $\F(\mew)$ is the `leading term' of $\mew$. 
For any $u \in T_\bullet \otimes_S A$, we note that
$$\mew_\bullet : 1 \otimes u \mapsto 1 \otimes v + \sum x \otimes w $$
for some $v, w \in T_{\bullet-1} \otimes_S A$ and $x \in \operatorname{Rad}A$. Then
$$ v = \nat(\mew_\bullet(1 \otimes u)) = \F(\mew_\bullet)(\nat(1 \otimes u))  =  \F(\mew_\bullet)(u) $$
Using the fact that $\mew_\bullet$ is an $A,A$-bimodule map we see that for $y \in A$:
\begin{equation} \label{differentialmaps}
\mew_\bullet : y \otimes u \mapsto y \otimes \F(\mew_\bullet)(u) + \sum y x \otimes w
\end{equation}
for some $w \in T_{\bullet-1} \otimes_S A$ and $x \in \operatorname{Rad}A$.

\begin{proposition}
The full complex \ref{fullcomplex} is exact if and only if the one-sided complex \ref{onesidecomplex} is exact.
\end{proposition}
\begin{proof}
First we suppose the one-sided complex is exact and prove that the full complex is exact. Since the total grading is respected by all the maps we need only look at the $d$th graded pieces of each space. Let $\phi_0 \in (A \otimes_S T_n \otimes_S A)^{(\tg)}$ be closed with respect to $\mew$, where $n \in \{ 0,1,2,3\}$ and $T_0=S$. We can write $\phi_0$ in the form
$$\sum_{\mob \in Y} \mob \otimes \moa_{\mob} + \{\text{terms with higher grade in the first position}\}$$ where $Y$ is a linearly independent set of monomials in the graded piece $A^{(\tg_0)}$ with least possible grade, and $\moa_{\mob} \in (T_n \otimes_S A)^{(\tg-\tg_0)}$.

Applying the differential $\mew$ and using (\ref{differentialmaps}), we see that closedness translates into the condition:
\begin{align*}
0 &= \sum_\mob \mob \otimes \F(\mew)(u_\mob)  + \{\text{terms with higher grade in the first position}\}
\end{align*}
Since the monomials $\mob \in Y$ are linearly independent this implies that for all $\mob \in Y$
$$ \F(\mew)(\moa_\mob) = 0 $$
Using the exactness of the one-sided complex, we conclude that there exist elements $v_\mob \in (T_{n+1} \otimes_S A)^{(\tg-\tg_0)}$ (where $T_4 := 0$) such that $ \F(\mew)(v_\mob) = \moa_\mob $ for each $\mob \in Y$.
We construct an element:
$$\psi_1 := \sum_{\mob \in Y} \mob \otimes v_{\mob} \in (A \otimes_S T_{n+1} \otimes_S A)^{(\tg)}$$
and apply the differential $\mew$ to get
$$\mew \psi_1 = \sum_{\mob \in Y} \mob \otimes \moa_{\mob} + \{\text{terms with higher grade in the first position}\}$$
We observe that $\phi_1 := \phi_0 -\mew \psi_1$ is in the kernel of $\mew$ and has been constructed such that its terms have strictly higher grade in the first position than $\phi_0$. We iterate the procedure, noting that the grade in the first position is strictly increasing but is bounded above by the total grade $\tg$. Therefore after a finite number of iterations we must get $\phi_r = \phi_0 - \sum_{i=1}^r \mew \psi_i = 0$. We conclude that $\phi_0 = \mew(\sum_{i=1}^r \psi_i)$ and the complex is exact at $ A \otimes_S T_n \otimes_S A$.

Conversely, we note that full complex \ref{fullcomplex} is a projective $A,A$-bimodule resolution of $A$, and that $A$ itself has the structure of a projective left $A$ module. Therefore it is an exact sequence of projective left modules and so is split exact. As a consequence it remains exact when we tensor with $S$ on the left, i.e. when we apply the functor $\F$.


\end{proof}

\section{Key lemma}
The following lemma is going to play an important part in the proof of the main theorem. We recall from Section~\ref{conseqgcsec} that the lattice $\Mg$ is a quotient of $\Z_{Q_1}$ and that the boundary map we get by considering the quiver as a cellular decomposition of the torus, descends to a well defined map $\del : \Mg \rightarrow \Z_{Q_0}$. Furthermore, by summing the arrows, every path $p$ in $Q$ determines a class $\Meq{p} \in \Mg$, which lies in $\Mg_{ij} = \del^{-1}(j-i)$ where $tp=i$ and $hp=j$.
\begin{lemma}\label{posit}
Let $v, j \in Q_0$ be quiver vertices, and consider an element $m \in M_{vj}$. Suppose that for all arrows $b \in Q_1$ with $hb=v$, we have $m+ \Meq{b} \in  M_{ij}^+ $ where $i=tb$. Then $m \in M_{vj}^+$.
\end{lemma}
\begin{proof}
We consider the vertex $v$, and label the outgoing arrows $a_1, \dots , a_{k}$ and the incoming arrows $b_1, \dots , b_{k}$ around $v$ as below. We also label the paths completing the boundary of each face $\bp_{i}$ and $\bq_{i}$ as below.
\begin{equation} \label{vpicture} \begin{xy} <20mm,0mm>:
,0*@{*}*^+!RU{v};(1.5,0)*@{*}**@{-}?(0.5)*@{>}*^+!U{\scriptstyle{a_{k}}}
,0*@{*};(1,1)*@{*}**@{-}?(0.5)*@{<}*^+!DR{\scriptstyle{b_1}}
,0*@{*};(0,1.5)*@{*}**@{-}?(0.5)*@{>}*^+!DR{\scriptstyle{a_1}}
,0*@{*};(-1,1)*@{*}**@{-}?(0.5)*@{<}*^+!R{\scriptstyle{b_2}}
,0*@{*};(-1.5,0)*@{*}**@{-}?(0.5)*@{>}*^+!DR{\scriptstyle{a_2}}
,0*@{*};(1,-1)*@{*}**@{-}?(0.5)*@{<}*^+!L{\scriptstyle{b_{k}}}
,(1.5,0);(1,1) **\crv{(1.8,0.5)&(0.9,0.9)}?(0.5)*@{>}*^+!DL{\scriptstyle{\bq_{1}}}
,(1,1);(0,1.5) **\crv{(1.0,1.3)&(0.5,1)&(0.1,1.7)}?(0.5)*@{<}*^+!U{\scriptstyle{\bp_{1}}}
,(0,1.5);(-1,1) **\crv{(-0.1,1.7)&(-0.5,1)&(-1.0,1.3)}?(0.5)*@{>}*^+!U{\scriptstyle{\bq_{2}}}
,(-1,1);(-1.5,0) **\crv{(-1.3,1)&(-1.3,0.5)&(-1.5,0.1)}?(0.5)*@{<}*^+!DR{\scriptstyle{\bp_{2}}}
,(1,-1);(1.5,0) **\crv{(1.4,-1.0)&(1,-0.5)&(1.7,-0.5)&(1.6,-0.2)}?(0.5)*@{<}*^+!UL{\scriptstyle{\bp_{k}}}
\end{xy} \end{equation}
\\
Let $m_i :=m+ \Meq{b_i} \in  M_{tb_ij}^+$. We start by noting that
$
\bp_{i}b_i = \bq_{i+1}b_{i+1}$ is the F-term relation dual to $a_i$, so
$\Meq{\bp_{i}} + \Meq{b_i} = \Meq{\bq_{i+1}} + \Meq{b_{i+1}}$ in $\Mg$.
Adding $m$ to both sides we see that:
\begin{equation}\label{pathseq}
m_i + \Meq{\bp_{i}} = m_{i+1} + \Meq{\bq_{{i+1}}}
\end{equation}
Since $m_i\in  M_{tb_ij}^+$, using algebraic consistency, there exist paths $y_i$ 
such that $\Meq{y_i} = m_i$ for all $i$.
We lift each path $y_i$ to a path in the universal cover $\Qcov$. From equation~(\ref{pathseq}), again using algebraic consistency, we observe that the two paths $\bp_{i}y_{i}$ and $\bq_{i+1}y_{i+1}$ are F-term equivalent.

We seek a path $\sigma$ which is F-term equivalent to $\bp_{\alpha}y_{\alpha}$ for some $\alpha \in \{1, \dots , k \}$, and which passes through the vertex $v$. Either there exists some $\alpha$ such that $y_{\alpha}$ passes through $v$, in which case we define $\sigma := \bp_{\alpha}y_{\alpha}$, or we can consider the closed curves $\gamma_i := y_i^{-1} \bp_{i}^{-1} \bq_{i+1}y_{i+1} $ which have well defined winding numbers $\wind(\gamma_i)$ around $v$ for all $i$. In this case
$$\sum_{i=1}^{k}|\wind(\gamma_i)| \geq |\wind(\gamma_1 \dots \gamma_{k})| = |\wind(\bp_{1}^{-1} \bq_{2} \dots\bp_{{k}}^{-1} \bq_{{1}})| = 1 $$
since the path $\bp_{1}^{-1} \bq_{2} \dots\bp_{{k}}^{-1} \bq_{{1}}$ is the boundary of the union of all faces containing $v$, which is homeomorphic to a disc in the plane with $v$ an interior point. Therefore there exists $\alpha \in \{ 1, \dots ,k \}$ such that $\wind(\gamma_{\alpha})$ is well defined and nonzero. Thus we may apply Corollary~{\ref{expath}} to the paths $\bp_{\alpha}y_{\alpha}$ and $\bq_{\alpha+1}y_{\alpha+1}$ and we obtain an F-term equivalent oriented path $\sigma$ which passes through $v$.

To prove the result we need to show that $  \langle \pf , m \rangle \geq 0$ for every perfect matching $\pf$. Let $\pf$ be a perfect matching and suppose that $\langle \pf , \Meq{b_i} \rangle =0 $ for some $i$. Then
$$  \langle \pf , m \rangle = \langle \pf , m_i - \Meq{b_i} \rangle = \langle \pf , m_i \rangle \geq 0  $$
since $m_i \in \Mg^+$, and we are done. Otherwise $ \langle \pf , \Meq{b_i} \rangle = 1$ for all $i=1, \dots, k$. Since the path $\sigma$ passes through $v$ then by construction it must contain an arrow $b_i$ for some $i$, and
\begin{equation}\label{contq}
\langle \pf , \Meq{\sigma} \rangle 
\geq \langle \pf , \Meq{b_i} \rangle =1
\end{equation}
Since $\sigma$ and $\bp_{\alpha}y_{\alpha}$ are $F$-term equivalent,
\begin{equation}\label{feqres}
\Meq{\sigma} = \Meq{\bp_{\alpha}y_{\alpha}} = 
\Meq{\bp_{\alpha}} + \Meq{b_{\alpha}} + m
\end{equation}
Now $\bp_{\alpha}b_{\alpha}a_{\alpha}$ is the boundary of a face in the quiver, so perfect matching $\pf$ is non-zero on a single arrow of $\bp_{\alpha}b_{\alpha}a_{\alpha}$. We know that $\langle \pf, \Meq{b_{\alpha}}\rangle=1$, so $\langle \pf, \Meq{\bp_{\alpha}}\rangle=0$.
We apply our perfect matching $\pf$ to (\ref{feqres}) and use (\ref{contq}) to see that
$$ 1 \leq \langle \pf , \Meq{\sigma} \rangle = 1 +\langle \pf , m \rangle  $$
Thus we have shown that $\langle \pf , m \rangle \geq 0$ as required. 
\end{proof}

\section{The main result}

We are now in the position to prove the main theorem of this chapter.
\begin{theorem}\label{exact}
If we have an algebraically consistent dimer model on a torus then the sequence of maps
\begin{equation} \label{cmplx} T_1 \otimes_S A \lla{\F(\mew_2)} T_2 \otimes_S A \lla{\F(\mew_3)} T_3 \otimes_S A \lla{} 0
\end{equation}
is exact, and hence $A$ is a CY3 algebra.
\end{theorem}
\begin{proof}
First we prove exactness at $T_2 \otimes_S A$. Consider any element $ \phi \in T_2 \otimes_S A$
which is closed. We can write $\phi$ in the form
$$\phi = \sum_{b \in Q_1}R_b \otimes \moa_{b}$$
We need to show that $\phi$ is in the image of the differential.
We can write any $\psi \in T_3 \otimes_S A$ in the form $\psi :=\sum_{v \in Q_0} W_v \otimes c_{v} \in T_3 \otimes_S A $ and we note that
\begin{equation*}
\mew (\psi) =\sum_{v \in Q_0} \sum_{b \in H_v} R_b \otimes  x_{b} c_{v}
\end{equation*}
Therefore $\phi$ is in the image of the differential if and only if the following statement holds.
{\it For each vertex $v \in Q_0$ there is an element $c_v \in A$ such that $\moa_b = x_b c_v$ for all arrows $b \in H_v$.} \\
We now prove this statement. Applying the differential to $\phi$ we observe that
\begin{equation*}
0 = \sum_{a,b \in Q_1}x_a \otimes \frac{\del_lR_b}{\del_l x_{a}} \moa_{b} = \sum_{a,b \in Q_1}x_a \otimes \frac{\del_rR_a}{\del_r x_{b}} \moa_{b}
\end{equation*}
By construction the set $\{ x_a \mid a \in Q_1 \}$ is linearly independent in $T_1$, so
\begin{equation}\label{inker} \sum_{b \in Q_1}\frac{\del_r R_a}{\del_r x_{b}} \moa_{b} = 0 \qquad \forall a \in Q_1
\end{equation}
The algebra $A = \bigoplus_{j \in Q_0} Ae_j$ naturally splits into pieces using the idempotents. 
Without loss of generality we assume $\moa_b \in Ae_j$ for some $j \in Q_0$, 
so for each $b \in Q_1$ we have $\moa_b \in e_{tb}Ae_j$.

Using algebraic consistency we work 
on the toric algebra side. For $b \in Q_1$, the element corresponding to ${\moa_{b}}$ is of the form
$$ \wt{\moa_{b}} = \sum_{m \in M_{tb,j}^+} \alpha_{b}^{m} z^{m} \in \C[\underline{M}^+]$$
where $\alpha_{b}^{m} \in \C$. 
Because $M_{tb,j}^+ \subset M$ where subtraction is well defined, we can write each element in the form $m = (m-\Meq{b}) + \Meq{b}$. 
Therefore we can re-write $\wt{\moa_{b}}$ as
$$ \wt{\moa_{b}} = z^{\Meq{b}}\sum_{m \in M_{vj}} \alpha_{b}^{m+\Meq{b}} z^{m}$$
where $\alpha_{b}^{m+\Meq{b}}$ is take to be zero where it was not previously defined, i.e. when $m+\Meq{b} \notin M_{tb,j}^+$. We need to show that $\sum_{m \in M_{vj}} \alpha_{b}^{m+\Meq{b}} z^{m}$ is the same for all $b \in Q_1$ which have the same head, and that it is a well defined element of $\C[\underline{M}^+]$.


Let $v \in Q_0$ be any vertex and consider the following diagram, for any arrow $a$ with $ta =v$.
\begin{equation} \label{vandupicture} \begin{xy} <20mm,0mm>:
,0*@{*};(1,1)*@{*}**@{-}?(0.5)*@{<}*^+!DR{\scriptstyle{b_-}}
,0*@{*};(0,1.5)*@{*}**@{-}?(0.5)*@{>}*^+!DR{\scriptstyle{a}}
,0*@{*};(-1,1)*@{*}**@{-}?(0.5)*@{<}*^+!R{\scriptstyle{b_+}}
,(1,1);(0,1.5) **\crv{(1.0,1.3)&(0.5,1)&(0.1,1.7)}?(0.5)*@{<}*^+!U{\scriptstyle{\bp}}
,(0,1.5);(-1,1) **\crv{(-0.1,1.7)&(-0.5,1)&(-1.0,1.3)}?(0.5)*@{>}*^+!U{\scriptstyle{\bq}}
,(-1,1);(0.5,-1)*@{*} **\crv{(-1.3,1)&(-1.3,0.5)&(-1.5,0.1)}?(0.5)*@{>}*^+!DR{\scriptstyle{\moa_{+}}}
,(1,1);(0.5,-1) **\crv{(1.4,1.0)&(1,0.5)&(1.7,0.5)&(1.6,0.2)}?(0.5)*@{>}*^+!UR{\scriptstyle{\moa_{-}}}
\end{xy} \end{equation}
\\
In the diagram we have drawn $\moa_{\pm}:= \moa_{b_\pm} $ as if they were paths, however it should be remembered that they are elements in the algebra and don't necessarily correspond to actual paths in the quiver. The relation dual to $a$ is $R_{a} = x_{\bp_{}}x_{b_-}-x_{\bq_{}}x_{b_{+}}$, and on substitution into (\ref{inker}), we get
\begin{equation}\label{closed}
x_{\bp_{}}\moa_{-}-x_{\bq_{}}\moa_{+} =0
\end{equation} in $A$.
Under the isomorphism to the toric algebra, and using the relation $\Meq{\bp_{}} + \Meq{b_-} = \Meq{\bq_{}} + \Meq{b_{+}}$ we obtain:
$$ 0=z^{\Meq{\bp}}\wt{\moa_{-}}- z^{\Meq{\bq}}\wt{\moa_{+}}= z^{\Meq{\bp}+\Meq{b_-}}\sum_{m \in M_{vj}} (\alpha_{b_-}^{m+\Meq{b_-}}-\alpha_{b_+}^{m+\Meq{b_+}}) z^{m}$$
The set $\{z^{m+\Meq{\bp_{}} + \Meq{b_-}}\mid m \in M_{vj} \}$ is independent, and so we can see that
$$\alpha_{b_-}^{m+\Meq{b_-}}=\alpha_{b_+}^{m+\Meq{b_+}}=: \alpha^m \text{  for all  } m \in M_{vj}$$
Then
$$ \wt{\moa_{b_\pm}} = z^{\Meq{b_\pm}}\sum_{m \in M_{vj}} \alpha^{m} z^{m}$$
We have shown that this formula holds for
a pair of arrows $b_\pm$ with head at $v$. However, recalling that the arrows around $v$ fit together as in \ref{vpicture}, and considering the arrows pairwise, we observe that for any arrow $b \in H_v$.
$$ \wt{\moa_{b}} = z^{\Meq{b}}\sum_{m \in M_{vj}} \alpha^{m} z^{m}$$
Furthermore $\alpha^{m}= 0 $ unless $m+\Meq{b} \in M_{tb,j}^+$ for all $b \in H_v$. It follows from Lemma~{\ref{posit}} that $\alpha^{m}$ can only be non-zero when $m \in M_{vj}^+$. Thus we see that for each $b \in H_v$,
$$ \wt{\moa_{b}} = z^{\Meq{b}} \wt{c_{v}} \quad \text{where} \quad  \wt{c_{v}} := \sum_{m \in M_{vj}^+} \alpha^{m} z^{m} $$ which is a well defined element of $\C [ \underline{M}^+]$. 
Using algebraic consistency again, there exists element $c_{v} \in A$ such that $\moa_{b} = x_{b_i} c_{v}$, and we are done.

To show that the complex is exact at $T_3 \otimes_S A $, suppose that $\phi \in T_3 \otimes_S A$ is closed. We may write $\phi$ in the form $ \phi = \sum_{v \in Q_0} W_v \otimes \moa_{v} $.
Applying the differential we observe that $\sum_{v \in Q_0} \sum_{b \in H_v}R_b \otimes  x_{b} \moa_{v} =0$ and since the relations $R_b$ are linearly independent over $\C$ in $T_2$, this implies that $ x_{b} \moa_{v} =0$ for all $v \in Q_0$ and $b \in H_v$. Because of algebraic consistency we can use the given isomorphism and work in the toric algebra $\C [ \underline{M}^+]$. Let $\sum_{m \in M_{vj}^+} \beta^m z^m$ be the image of $\moa_{v}e_j $, where $\beta^m \in \C$.
Then since $ x_{b} \moa_{v}e_j =0$, for each $j \in Q_0$ we see that
$$z^{\Meq{b}} \sum_{m \in M_{vj}^+} \beta^m z^m = 0  $$
The set $\{z^{m+\Meq{b}}\mid m \in M_{vj} \}$ is independent, therefore
$ \beta^m = 0$  for all  $ m \in M_{vj}$. Mapping back to the superpotential algebra, this implies $ \moa_{v}e_j =0$, for each $j \in Q_0$ and $v \in Q_0$, so $ \moa_{v}=0$ for any $v \in Q_0$.
We conclude that $\phi =0$.
\end{proof}

\chapter{Non-commutative crepant resolutions} \label{NCCRChap}

In this final chapter we use results obtained in the previous chapters to prove that the algebra $A$, obtained from an algebraically consistent dimer model on a torus, is a non-commutative crepant resolution (NCCR) in the sense of Van den Bergh \cite{VdB}, of the commutative ring $R$ associated to the dimer model. 
\section{Reflexivity}
The key step towards proving that $A$ is an NCCR, is to prove the following property.
\begin{proposition} \label{reflexthm}
Let $A \cong \C[\underline{M}^+]$ be the algebra obtained from an algebraically consistent dimer model on a torus, and fix any $i\in Q_0$. Then there is a natural isomorphism of $R:= \C[\Mm^+]$-modules
$$ \Psi :\C[\Mg_{jk}^+] \lra{\cong} \Hom_{R}(\C[\Mg_{ij}^+], \C[\Mg_{ik}^+]) $$
for each $j,k\in Q_0$.
\end{proposition}
This allows us to write $A$ as the endomorphism algebra of a reflexive $R$-module in a explicit way. Together with the Calabi-Yau property proved in Chapter~7 this is sufficient to prove the theorem.


%
We start by proving a localised version of Proposition~\ref{reflexthm} which holds in very general circumstances.
\begin{lemma}\label{homismult}
For any $i,j,k \in Q_0$, there is an isomorphism of $\C[\Mm]$-modules
$$ \wh{\Psi} :\C[\Mg_{jk}] \lra{\cong} \Hom_{\C[\Mm]}(\C[\Mg_{ij}], \C[\Mg_{ik}]) $$
defined by $z \mapsto \{ \varphi_z \colon \wt{z} \mapsto z \wt{z} \}$.
\end{lemma}
\begin{proof}
The map is well defined and injectivity follows from cancellation in $\C[\underline{M}]$. Therefore we just need to show that $\wh{\Psi}$ is surjective. 

Let $ \varphi \in \Hom_{\C[\Mm]}(\C[\Mg_{ij}], \C[\Mg_{ik}]) $ be any element. We note that if $m,m^{\prime} \in M_{ij}$ then their difference is an element of $\Mm$. Using the fact that $ \varphi$ is a $\C[\Mm]$-morphism we see that 
\begin{equation} \label{indeqation}
\varphi(z^{m})= \varphi(z^{m-m^{\prime}}z^{m^{\prime}}) =z^{m-m^{\prime}}\varphi(z^{m^{\prime}})
\end{equation}
Define $\xi:= \varphi(z^{m})/z^{m} \in \C[\Mg_{jk}]$ which we see from equation~(\ref{indeqation}) is independent of the choice of $m$. Then, for every $m \in M_{ij} $
$$ \varphi_\xi(z^m) = \xi z^m =\varphi(z^m) $$
and so $\varphi= \varphi_\xi$.
\end{proof}

We now consider the relationship between the $\C[\Mm]$-module 
$$\mathcal{H}:= \Hom_{\C[\Mm]}(\C[\Mg_{ij}], \C[\Mg_{ik}])$$ 
and the $\C[\Mm^+]$-module 
$$\mathcal{H}^+:= \Hom_{\C[\Mm^+]}(\C[\Mg_{ij}^+], \C[\Mg_{ik}^+])$$
Let $m,m^{\prime} \in M_{ij}^+$ be any two elements and suppose $ \varphi \in \mathcal{H}^+$. By Lemma~\ref{posshift} there exists an element $\zeta \in M_o^+$ such that $ m-m^{\prime} + \zeta \in \Mm^+$. 
Then, using the fact that $ \varphi$ is a $\C[\Mm^+]$-morphism we see that
$$ \varphi(z^{m})= \varphi(z^{m-m^{\prime}+ \zeta} z^{-\zeta}z^{m^{\prime}}) =z^{m-m^{\prime}}z^{\zeta}\varphi(z^{-\zeta}z^{m^{\prime}})=z^{m-m^{\prime}}\varphi(z^{m^{\prime}}) $$
Thus $\xi_\varphi := \varphi(z^{\wt{m}})/z^{\wt{m}}$ is independent of choice of $\wt{m} \in M_{ij}^+$ and there is a well defined map:
$$\iota:\mathcal{H}^+ \hookrightarrow \mathcal{H} \qquad \varphi \mapsto \wh{\varphi} $$
where $ \wh{\varphi}(z^m):= \xi_\varphi  z^m $, which is also independent of the choice of $\wt{m} \in M_{ij}^+$. 
We note that $\iota$ extends each element $\varphi \in \mathcal{H}^+$ to a map $\wh{\varphi}$ on the whole of $\C[\Mg_{ij}]$ whose restriction to $\C[\Mg_{ij}^+]$ is $\varphi$, i.e. 
$$\wh{\varphi}|_{\C[\Mg_{ij}^+]} = \varphi : \C[\Mg_{ij}^+] \lra{} \C[\Mg_{ik}^+]$$
One can readily check that $\iota$ is injective. Furthermore, if we consider $\mathcal{H}$ as a $\C[\Mm^+]$-module by restricting the $\C[\Mm]$-module structure, we see that it is a morphism of $\C[\Mm^+]$-modules. 
\begin{proof}[Proof of Proposition~\ref{reflexthm}]
As in the proof of Lemma~\ref{homismult}, it can be seen that for any $i,j,k \in Q_0$, the map 
$$ \Psi :\C[\Mg_{jk}^+] \lra{} \Hom_{\C[\Mm^+]}(\C[\Mg_{ij}^+], \C[\Mg_{ik}^+]) $$
defined by $z \mapsto \{ \varphi_z \colon \wt{z} \mapsto z \wt{z} \}$ is well defined and injective. We consider the commutative square
\[ \begin{CD}
\mathcal{H}^+ @>{\iota}>> \mathcal{H} \\
@A{\Psi}AA                        @A{\wh{\Psi}}AA       \\
\C[\Mg_{jk}^+] @>{\subseteq}>> \C[\Mg_{jk}]
\end{CD}
\]
To complete the proof we need to show that $ \Psi $ is a surjective map. Let $\varphi \in \mathcal{H}^+$ and let $\wh{\varphi} = \iota(\varphi)$ be its extension in $\mathcal{H}$. By Lemma~\ref{homismult} the map $\wh{\Psi}$ on the right of the diagram is an isomorphism so there exists $z \in \C[M_{jk}]$ such that $\wh{\varphi} = \wh{\varphi}_z : \wt{z} \mapsto z\wt{z}$. Writing $z$ in the form $z=\sum_{m \in M_{jk}}\alpha_m z^m$ we see that for any $m^{\prime} \in M_{ij}^+$,
$$ \varphi(z^{m^{\prime}}) = \wh{\varphi}(z^{m^{\prime}}) = \sum_{m \in M_{jk}}\alpha_m z^{m+m^{\prime}} \in \C[M_{ik}^+] $$
In particular, for each $m \in M_{jk}$ such that $\alpha_m \neq 0$, we note that $m+m^{\prime} \in M_{ik}^+$ for all $m^{\prime} \in M_{ij}^+$. Now because the algebra $A$ is obtained from an algebraically consistent dimer model, we can use Lemma~\ref{posit} to deduce that $m \in M_{jk}^+$. Thus $z \in \C[M_{jk}^+]$, and $\varphi= \Psi(z)$.
\end{proof}
\begin{corollary} \label{reflexcor}
The $R$-module $\C[M_{jk}^+]$ is reflexive for all $j,k \in Q_0$.
\end{corollary}
\begin{proof}
As $\C[M_{ii}^+]=R$, for all $i \in Q_0$, it follows from Proposition~\ref{reflexthm} that $(\C[M_{jk}^+])^\vee \cong \C[M_{kj}^+]$ and hence $\C[M_{jk}^+]$ is reflexive.
\end{proof}

\section{Non-commutative crepant resolutions}
We recall the definition of an NCCR introduced by Van den Bergh in \cite{VdB}. 
\begin{definition}\label{NCCRdefn}
Let $R$ be a normal Gorenstein domain. An NCCR of $R$ is a homologically homogeneous $R$-algebra of the form $A= \End_R(M)$ where $M$ is a reflexive $R$-module.
\end{definition}
If $R$ is equi-dimensional of dimension $n$, which will be the case in our examples, the statement that $A$ is homologically homogeneous is equivalent to saying that all simple $A$-modules have projective dimension $n$. We now prove the main theorem of this chapter.
\begin{theorem}\label{NCCRthm}
%
Given an algebraically consistent dimer model on a torus, the corresponding toric algebra $A \cong \C[\underline{M}^+]$ is an NCCR of the (commutative) ring $R:= \C[\Mm^+]$ associated to that dimer model. 
\end{theorem}
\begin{proof}
By Theorem~\ref{AlgCY3}, if we have an algebraically consistent dimer model on a torus then the corresponding toric algebra $A$ is a Calabi-Yau algebra of global dimension 3. By Proposition~2.6 (see also Remark~3.2(1)) in \cite{VdB2} we see that $A$ is homologically homogeneous. Fix any $i\in Q_0$ and let
$$\mathcal{B}_i:=\bigoplus_{j \in Q_0}\C[\Mg_{ij}^+]$$
Then using Proposition~\ref{reflexthm} we note that:
$$ A = \bigoplus_{j,k \in Q_0} \C[\Mg_{jk}^+] \cong \bigoplus_{j,k \in Q_0} \Hom_{R}(\C[\Mg_{ij}^+], \C[\Mg_{ik}^+]) \cong \operatorname{End}_R(\mathcal{B}_i) $$
where $R$ is the centre of $A$ (see Lemma~\ref{cent}). Therefore to complete the proof we just need to show that $\mathcal{B}_i$ is a reflexive $R$-module. However this follows straight from Corollary~\ref{reflexcor}.
\end{proof}

Finally we recall from Section~\ref{stiengul} that Gulotta in \cite{Gulotta} and Stienstra in \cite{Stienstra2} prove that for any lattice polygon $V$, there exists a geometrically consistent dimer model which has $V$ as its perfect matching polygon. Thus it is possible to associate to every Gorenstein affine toric threefold, a geometrically consistent dimer model. Therefore we have proved the following result.

\begin{theorem}
Every Gorenstein affine toric threefold admits an NCCR, which can be obtained via a geometrically consistent dimer model.
\end{theorem}


\backmatter
\bibliographystyle{amsplain}

\printindex

\end{document}